\numberwithin{equation}{section}
\let\cal\mathcal
\def\Cscr{{\cal C}}
\def\Dscr{{\cal D}}
\def\Fscr{{\cal F}}
\def\Oscr{{\cal O}}
\def\Uscr{{\cal U}}
\let\blb\mathbb
\def \ZZ{{\blb Z}}
\def\id{\text{id}}
\def\Id{\operatorname{id}}
\def\Lotimes{\overset{L}{\otimes}}
\def\Mod{\operatorname{Mod}}
\def\mod{\operatorname{mod}}
\def\Gr{\operatorname{Gr}}
\def\rad{\operatorname {rad}}
\def\GL{\operatorname {GL}}
\def\Ext{\operatorname {Ext}}
\def\End{\operatorname {End}}
\def\im{\operatorname {im}}
\def\coker{\operatorname {coker}}
\def\ker{\operatorname {ker}}
\def\Ker{\operatorname {ker}}
\def\Tor{\operatorname {Tor}}
\def\End{\operatorname {End}}
\def\id{{\operatorname {id}}}
\def\add{\operatorname {add}}
\def\r{\rightarrow}
\def\d{\downarrow}
\def\u{\uparrow}
\def\GL{\operatorname {GL}}
\DeclareMathOperator{\res}{res}
\DeclareMathOperator{\ann}{ann}
\DeclareMathOperator{\eval}{ev}
\DeclareMathOperator{\coeval}{coev}
\DeclareMathOperator{\Ob}{Ob}
\let\invlim\projlim
\theoremstyle{definition}
\newtheorem{lemma}{Lemma}[section]
\newtheorem{proposition}[lemma]{Proposition}
\newtheorem{theorem}[lemma]{Theorem}
\newtheorem{corollary}[lemma]{Corollary}
\newtheorem{lemmas}{Lemma}[subsection]
\newtheorem{propositions}[lemmas]{Proposition}
\newtheorem{theorems}[lemmas]{Theorem}
\newtheorem{corollarys}[lemmas]{Corollary}
\newtheorem{example}[lemma]{Example}
\newtheorem{examples}[lemmas]{Example}
\newtheorem{definitions}[lemmas]{Definition}
\newtheorem{remark}[lemma]{Remark}
\newtheorem{remarks}[lemmas]{Remark}
\DeclareMathOperator\Hom{Hom}
\DeclareMathOperator\comod{Comod}
\DeclareMathOperator\Vect{Vect}
\DeclareMathOperator\VVect{\textbf{Vect}}
\DeclareMathOperator\coend{coend}
\DeclareMathOperator\aut{\underline{aut}}
\DeclareMathOperator{\uaut}{\underline{aut}}
\DeclareMathOperator{\uend}{\underline{end}}
\def\perf{\operatorname{perf}}
\def\ot{\otimes}
\def\opp{\operatorname{op}}
\def\cube{\operatorname{Cube}}
\mathchardef\mhyphen="2D
\newcounter{todocounter}
\DeclareDocumentCommand\addreference{g}{\stepcounter{todocounter}\todo[color = blue!30, fancyline]{\thetodocounter. Add reference\IfNoValueF{#1}{: #1}}\xspace}
\DeclareDocumentCommand\checkthis{g}{\stepcounter{todocounter}\todo[color = red!50, fancyline]{\thetodocounter. Check this\IfNoValueF{#1}{: #1}}\xspace}
\DeclareDocumentCommand\fixthis{g}{\stepcounter{todocounter}\todo[color = orange!50, fancyline]{\thetodocounter. Fix this\IfNoValueF{#1}{: #1}}\xspace}
\DeclareDocumentCommand\expand{g}{\stepcounter{todocounter}\todo[color = green!50, fancyline]{\thetodocounter. Expand\IfNoValueF{#1}{: #1}}\xspace}
\title[The Manin Hopf algebra]
{The Manin Hopf algebra of a Koszul Artin-Schelter regular algebra is quasi-hereditary}
\author{Theo Raedschelders and Michel Van den Bergh}
\address{(Theo Raedschelders) \newline Departement Wiskunde, Vrije Universiteit Brussel, 
Pleinlaan $2$, B-1050 Elsene \newline E-mail address: {\tt traedsch@vub.ac.be}}
\address{(Michel Van den Bergh) \newline Departement WNI, Universiteit Hasselt, Universitaire Campus \\
B-3590 Diepenbeek \newline E-mail address: {\tt michel.vandenbergh@uhasselt.be}}
\begin{document}

\begin{abstract}
  For any Koszul Artin-Schelter regular algebra $A$, we consider a
  version of the universal Hopf algebra $\uaut(A)$ coacting on $A$,
  introduced by Manin. To study the representations (i.e. finite
  dimensional comodules) of this Hopf algebra, we use the
  Tannaka-Krein formalism. Specifically, we construct an explicit
  combinatorial rigid monoidal category $\Uscr$, equipped with a
  functor~$M$ to finite dimensional vector spaces such that $\uaut(A)
  = \coend_\Uscr(M)$. Using this pair $(\Uscr,M)$ we show that
  $\uaut(A)$ is quasi-hereditary as a coalgebra and in addition is
  derived equivalent to the representation category of $\Uscr$.
\end{abstract}

\maketitle

\setcounter{tocdepth}{1}
{\hypersetup{linkcolor=black}
\tableofcontents
}

\newcommand\blfootnote[1]{%
  \begingroup
  \renewcommand\thefootnote{}\footnote{#1}%
  \addtocounter{footnote}{-1}%
  \endgroup
}

\blfootnote{\textit{2010 Mathematics Subject Classification}. Primary 16S10,16S37,16S38,16T05,16T15,20G42.}
\blfootnote{\textit{Key words and phrases}. Hopf algebras, monoidal categories, quasi-hereditary algebras.}
\blfootnote{The first author is an aspirant at the FWO.} 
\blfootnote{The second author is a senior researcher at the FWO.}

\section{Introduction}

\subsection{Motivation}
Let $k$ be an algebraically closed field. 

In his beautiful notes on non-commutative geometry \cite{manin}  Manin
constructs for any graded algebra $A=k\oplus A_1\oplus A_2\oplus\cdots$ 
   a bialgebra
$\uend(A)$ and a Hopf algebra $\uaut(A)$ coacting on it in a universal way.
The Hopf algebra $\uaut(A)$ should be thought of as the non-commutative
symmetry group of $A$.

Both $\uend(A)$ and $\uaut(A)$ are  large objects. Typically they are of exponential growth even if $A$ has polynomial growth.
An unbiased observer might regard this as a good thing. It simply means that $A$ has a large number of non-commutative symmetries.

Still, it is always tempting to look for smaller quotients of $\uaut(A)$  more amenable to 
commutative intuition. Hopf algebras arising in this way are 
traditional quantum groups such as $\Oscr_q(\GL_d)$ which are deformations
of coordinate rings of algebraic groups.  But
such nice quotients 
rarely exist.  For instance no traditional quantum group
can coact on a 3-dimensional elliptic Sklyanin algebra~\cite{Ohn}, a basic object in non-commutative algebraic geometry
\cite{AS}. 
On the other hand, despite its size $\uend(A)$ is
 reasonable whenever $A$ is. For example when~$A$ is Koszul, a complete description of the  representation
theory  of $\uend(A)$ (as a coalgebra) was obtained in~\cite{kriegk-vandenbergh}. So there are no issues with $\uend(A)$, even if $A$
is a Sklyanin algebra. 

In this paper we will revisit the  work in \cite{kriegk-vandenbergh} and fully extend it to 
 $\uaut(A)$. This provides yet more evidence that $\uend(A)$ and $\uaut(A)$ are well behaving objects which should be studied on their own terms.
However $\uaut(A)$, which is simply the Hopf envelope of $\uend(A)$, turns out to be a much more complicated object than $\uend(A)$, making the generalisation highly non-trivial.
To get a sense of this one may try first to describe $\uaut(A)$ by explicit generators and relations in the most basic
case $A=k[x_1,\ldots,x_d]$. Consulting Appendix \ref{ref-A-156} will
likely convince the reader that working with explicit equations is not a good way to proceed.
Indeed in this paper we completely avoid explicit equations by relying on the Tannakian formalism instead. This will be explained below in \S\ref{ref-1.3-9}, 
after we have stated our main results.
\subsection{Main results}
As is often the case in non-commutative algebraic geometry it seems that some type of Gorenstein property is necessary for $\uaut(A)$ to be well behaved.
Therefore, in this paper, we assume throughout that $A$ is a Koszul Artin-Schelter regular algebra  \cite{AS}\footnote{AS-regular algebras are also known as graded twisted Calabi-Yau algebras.}
of global dimension $d$. 
Thus among other things~$A$ is a finitely presented quadratic graded algebra  $TV/(R)$ such that the  minimal resolution
of $k$ has the form
\[
0\r A\otimes R_d\r \cdots \r A\otimes R_l\r \cdots \r A\otimes R \r A\otimes V\r A\r k\r 0
\]
with\footnote{We usually omit tensor product signs.} $
R_l:=\bigcap_{i+j+2=l}V^i R V^j $. In particular we have $R_2=R$ and
for uniformity we also put $R_1=V$. It follows from the basic
properties of AS-regular algebras that $\dim R_d=1$ and that moreover
the obvious inclusions $R_d\hookrightarrow R_a R_{d-a}$ define
non-degenerate pairings between $R_a$ and $R_{d-a}$. These properties
characterise the AS-regular algebras among the Koszul
ones \cite{smith}. 

For a coalgebra $C$ let $\comod(C)$ be its category of finite dimensional comodules, also called representations.
It is easy to see that the $(R_l)_l$ are representations of $\uaut(A)$, with $R_d$ being invertible.  We may use the $(R_l)_l$ as building blocks to construct more complicated representations. 
To formalise this let $\Lambda^+$ be the free monoid $\langle r_1,\ldots,r_d\rangle$ on the variables $r_1,\ldots,r_d$ and similarly let 
$\Lambda=\langle r_1,\ldots,r_{d-1}, r_d^{\pm 1}\rangle$. We put $M(r_i)=R_i$ and
we use this to define $M(\lambda)$ for arbitrary $\lambda\in \Lambda$ by sending products to tensor products.

Next we equip $\Lambda$ with the left and right invariant partial ordering generated by $r_{a+b}<r_a r_b$, $1<r_ar_d^{-1}r_{d-a}$. We also equip $\Lambda$ with two order preserving dualities $(-)^\ast$
and ${}^\ast(-)$ 
respectively determined by $r_a^\ast=r_{d-a}r_d^{-1}$ and ${}^\ast r_a=r_d^{-1}r_{d-a}$. 
Put
\begin{equation}
\label{ref-1.1-0}
\begin{aligned}
	\nabla(\lambda) &= \coker \bigg( \bigoplus_{\substack{M(\mu)\r M(\lambda)
\text{ in $\comod(\uaut(A))$}
\\ \mu < \lambda}} M(\mu) \to M(\lambda) \bigg)\\
\Delta(\lambda) &= \ker \bigg( M(\lambda) \to \bigoplus_{\substack{M(\lambda)\r M(\mu) \text{ in $\comod(\uaut(A))$}\\ \mu < \lambda}} M(\mu) \bigg).
\end{aligned}
\end{equation}
The following theorem is our first main result.
\begin{theorems}[Corollary \ref{ref-6.3.2-134}, Theorem \ref{ref-7.1.3-147}]
\label{ref-1.2.1-1}
The coalgebra $\uaut(A)$ is a quasi-hereditary coalgebra in the sense of Donkin~\cite{donkin} 
with standard representations $\Delta(\lambda)_{\lambda\in \Lambda}$ and costandard 
representations $\nabla(\lambda)_{\lambda\in \Lambda}$.
\end{theorems}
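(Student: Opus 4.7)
The plan is to leverage the Tannakian presentation $\uaut(A) = \coend_\Uscr(M)$ so that structural questions about the coalgebra $\uaut(A)$ reduce to combinatorial questions about the rigid monoidal category $\Uscr$. To verify Donkin's axioms for a quasi-hereditary coalgebra I need (i) an interval-finite poset $\Lambda$ labelling the simples $L(\lambda)$, (ii) standards and costandards whose composition factors $L(\mu)$ satisfy $\mu \leq \lambda$ with multiplicity one when $\mu = \lambda$, and (iii) a good filtration on every injective cogenerator. The poset candidate is of course $\Lambda$, and the standard and costandard candidates are the objects defined in \eqref{ref-1.1-0}.

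First, the family $(M(\lambda))_{\lambda \in \Lambda}$ generates $\comod(\uaut(A))$ by the coend presentation, so every simple appears as a composition factor of some $M(\lambda)$; combined with \eqref{ref-1.1-0} this lets me identify the simples with the tops of the $\nabla(\lambda)$ and obtain a bijection between $\Lambda$ and the set of isomorphism classes of simples. The statements for $\Delta(\lambda)$ then follow from the corresponding statements for $\nabla(\lambda)$ via the rigidity of $\Uscr$ and the order-preserving duality $(-)^\ast$ on $\Lambda$. The core technical step is to show that each $M(\lambda)$ admits a filtration by costandards $\nabla(\mu)$ with $\mu \leq \lambda$, in which $\nabla(\lambda)$ appears exactly once as the top quotient. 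This would be derived from a detailed analysis of morphisms in $\Uscr$: the generating relations $r_{a+b} < r_a r_b$ and $1 < r_a r_d^{-1} r_{d-a}$ correspond respectively to the multiplication maps $R_a \otimes R_b \to R_{a+b}$ and to the non-degenerate Koszul-duality pairings $R_a \otimes R_{d-a} \to R_d$ guaranteed by AS-regularity. The desired $\Ext^1(\Delta(\lambda), \nabla(\mu))$-vanishing then follows from an internal Hom computation against the costandard filtrations, and good filtrations of the injective cogenerators are obtained from the cofinality of the $M(\lambda)$ in $\comod(\uaut(A))$.

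The main obstacle is the second generating relation $1 < r_a r_d^{-1} r_{d-a}$, which has no analogue in the $\uend(A)$ case treated in \cite{kriegk-vandenbergh} because it requires inverting $M(r_d)$. Concretely, one must verify that the Hopf envelope operation taking $\uend(A)$ to $\uaut(A)$ adjoins precisely the morphisms encoded by $r_d^{-1}$ and by the two dualities $(-)^\ast$, ${}^\ast(-)$, and not more: if additional morphisms were created the existing $\nabla$-filtrations of the $M(\lambda)$ would collapse and the multiplicity-one condition at $\lambda$ would fail. This is where the AS-regular hypothesis is crucial, since it is exactly the non-degeneracy of the pairings $R_a \otimes R_{d-a} \to R_d$ that ensures the inverse of $M(r_d)$ interacts with the $M(r_i)$ in the tightly controlled manner dictated by the two dualities on $\Lambda$. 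Once this compatibility is established, Donkin's axioms follow mechanically from the combinatorics of $\Lambda$ and the filtrations of the $M(\lambda)$.
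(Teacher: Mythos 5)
There is a genuine gap: your proposal sketches the statement you want, but it does not identify the mechanism the paper actually uses, and the steps you wave at as ``following mechanically'' are where all the work is. The paper does not verify Donkin's module-theoretic axioms directly; it constructs a heredity cochain in $\coend_\Uscr(M)^\ast$ (Definition~\ref{ref-2.11.1-56}) by filtering $\Lambda$ by finite saturated subsets $\Lambda_1\subset\Lambda_2\subset\cdots$ with incomparable differences, and its entire technical heart is Theorem~\ref{ref-6.2.3-120}: the exact sequence
\[
0 \to \prod_{\lambda\in\Lambda_2-\Lambda_1}\Hom_k(\nabla(\lambda),\Delta(\lambda)) \to \End_{\Uscr_2}(M) \to \End_{\Uscr_1}(M) \to 0.
\]
Establishing this sequence requires the distributive-lattice results (Proposition~\ref{ref-4.4-89}, going back to the Koszul property of $A$ and Polishchuk--Positselski's machinery) together with the gluing Lemma~\ref{ref-6.2.4-122}, and establishing that the kernel is a heredity ideal requires the nontrivial Lemmas~\ref{ref-6.2.8-127}--\ref{ref-6.2.11-131}. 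None of this appears in your proposal; you would be unable to prove, say, surjectivity of the restriction map $\End_\Uscr(M)\to\End_{\Uscr_1}(M)$ (Corollary~\ref{ref-6.2.6-125}), which is needed even to show that $L(\lambda)$ is simple (Proposition~\ref{ref-6.2.7-126}).

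Two further problems. First, you treat the $\Ext^1(\Delta(\lambda),\nabla(\mu))$-vanishing as an input to be established by an ``internal Hom computation against costandard filtrations''; in fact this orthogonality is a \emph{consequence} of quasi-hereditariness (Proposition~\ref{ref-2.10.8-53}(2)) and cannot be used as an independent ingredient without circularity. Likewise, the existence of $\nabla$-filtrations on injective hulls is not at all a formal consequence of the $M(\lambda)$ generating $\comod(\uaut(A))$. Second, the identification $\uaut(A)\cong\coend_\Uscr(M)$, which you propose to re-verify inside the proof by arguing that the Hopf envelope ``adjoins precisely the morphisms encoded by $r_d^{-1}$ and the dualities,'' is a prerequisite established independently (Theorem~\ref{ref-5.1-92}, via Pareigis' universal property for adjoining duals); folding it into the quasi-hereditariness argument conflates two logically separate steps. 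You also have the socle/top roles reversed ($L(\lambda)$ is the socle of $\nabla(\lambda)$, not its top) and describe $\phi_{a,b}$ as a multiplication map, whereas it is the inclusion $R_{a+b}\hookrightarrow R_aR_b$. The high-level framework is right, but the proof as written has no content where the difficulty actually lies.
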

We refer the reader not familiar with quasi-hereditary algebras to 
\S\ref{ref-2.10-45},\S\ref{ref-2.11-55} 
below for a short introduction. Here we will be content with noting
that the fact that $\uaut(A)$ is quasi-hereditary immediately implies that it has a large
number of standard properties. See Proposition \ref{ref-2.10.8-53} for a non-exhaustive list. For example, for every $\lambda\in
\Lambda$, there is, up to scalar, a unique non-zero morphism
$\Delta(\lambda)\r \nabla(\lambda)$ and its image $L(\lambda)$ is a
simple $\uaut(A)$-representation. Furthermore all the simple $\uaut(A)$-representations
are of this form. The representations
$\Delta(\lambda)$, $\nabla(\lambda)$ have simple top and socle
respectively and their other composition factors $L(\mu)$ satisfy
$\mu<\lambda$. In fact they are maximal with respect to these
properties.

Of course the reader might object that definition \eqref{ref-1.1-0}
is not very useful since it refers to the very object $\uaut(A)$ we
are studying. In fact the actual definition we use in the body of the
paper does not have this defect. It will be given in~\eqref{ref-1.3-8} below
after we have introduced some more notation.

It is easy to see that the (co)standard representations are compatible with the dualities on $\Lambda$ as follows: 
\[
M(\lambda)^\ast=M(\lambda^\ast), \quad \nabla(\lambda)^\ast=\Delta(\lambda^\ast), \quad L(\lambda)^\ast=L(\lambda^\ast)
\]
and a corresponding statement for left dualities. 
 We also have
\[
M(r_a)=\Delta(r_a)=\nabla(r_a)=L(r_a).
\]
In particular the $R_a$ are simple $\uaut(A)$-representations. 
Moreover we have short exact sequences  (see \eqref{ref-6.10-109}) 
\[
0\r \nabla(w\wedge w')\r \nabla(w)\otimes \nabla(w')\r \nabla(ww')\r 0
\]
where $\wedge$ is some simple operation on $\Lambda$.

Denote by $\Fscr(\Delta)$ (respectively $\Fscr(\nabla)$) the categories of $\uaut(A)$-comodules that have a $\Delta$-filtration (respectively $\nabla$-filtration).
\begin{corollarys}
\begin{enumerate}
\item \label{ref-1-2} $\Fscr(\Delta)$ and $\Fscr(\nabla)$ are closed under tensor products. 
\item \label{ref-2-3} $M(\lambda)\in \Fscr(\Delta)\cap \Fscr(\nabla)$.
\end{enumerate}
\end{corollarys}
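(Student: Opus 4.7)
The plan is to extract both statements from the short exact sequence
\[
0 \to \nabla(w \wedge w') \to \nabla(w) \otimes \nabla(w') \to \nabla(ww') \to 0
\]
recalled just above the corollary, combined with the identity $M(r_a) = \Delta(r_a) = \nabla(r_a)$ and the duality $\nabla(\lambda)^\ast = \Delta(\lambda^\ast)$.

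For part (1) I would handle $\Fscr(\nabla)$ by a standard double induction. Since $\otimes_k$ is exact in each argument and $\Fscr(\nabla)$ is closed under extensions (by splicing filtrations), it suffices to show that $\nabla(w) \otimes W \in \Fscr(\nabla)$ for all $w \in \Lambda$ and all $W \in \Fscr(\nabla)$; this reduces by induction on the length of a $\nabla$-filtration of $W$ to the base case $W = \nabla(w')$, which is precisely the displayed sequence. A second induction on the length of a $\nabla$-filtration of $V$ then gives $V \otimes W \in \Fscr(\nabla)$ for arbitrary $V, W \in \Fscr(\nabla)$. To deal with $\Fscr(\Delta)$, I would apply the exact $k$-linear duality $(-)^\ast$ to the displayed sequence. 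Using $\nabla(\mu)^\ast = \Delta(\mu^\ast)$ and the natural isomorphism $(V \otimes W)^\ast \cong W^\ast \otimes V^\ast$ one obtains
\[
0 \to \Delta((ww')^\ast) \to \Delta(w'^\ast) \otimes \Delta(w^\ast) \to \Delta((w \wedge w')^\ast) \to 0,
\]
to which the same double induction applies verbatim.

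Part (2) is then formal. Since $M$ is multiplicative on $\Lambda$, $M(\lambda)$ is a tensor product of letters $M(r_a)$ for $1 \le a \le d$ and possibly $M(r_d^{-1})$. Each $M(r_a) = \Delta(r_a) = \nabla(r_a)$ lies tautologically in $\Fscr(\Delta) \cap \Fscr(\nabla)$, while $M(r_d^{-1})$ is one-dimensional (dual to $R_d$) and hence simple, which via $L(\lambda)^\ast = L(\lambda^\ast)$ with $\lambda = r_d$ forces $M(r_d^{-1}) = \Delta(r_d^{-1}) = \nabla(r_d^{-1})$. Iterating (1) over the letters of $\lambda$ places $M(\lambda)$ in $\Fscr(\Delta) \cap \Fscr(\nabla)$. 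I do not anticipate any genuine obstacle: given the short exact sequence and the identification $M(r_a) = \Delta(r_a) = \nabla(r_a)$, the rest is routine filtration chasing, the only detail requiring care being the bookkeeping around $r_d^{-1}$.
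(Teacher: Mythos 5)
Your proposal is correct and takes essentially the same approach as the paper: part (1) is exactly the closure-under-extension argument that the paper summarizes as ``immediate from Proposition~\ref{ref-6.1.5-108}'' (which records both the $\nabla$- and the dual $\Delta$-short exact sequences), and part (2) is the same reduction to the generators $r_1,\dots,r_{d-1},r_d^{\pm 1}$. Your explicit treatment of $r_d^{-1}$ is a sensible bit of care that the paper leaves implicit in the phrase ``$M(r_i)=\nabla(r_i)=\Delta(r_i)$''.
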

Property \eqref{ref-2-3} combined with the theory of quasi-hereditary (co)algebras
(see \S\ref{ref-2.10-45}, \S\ref{ref-2.11-55}) shows that the
  $M(\lambda)$ are partial tilting modules (but they are not
  indecomposable). Property~\eqref{ref-1-2} is not a formal
  consequence of the quasi-hereditary property, which in any case is not concerned with the monoidal structure,
but it also holds for example for representations of
  algebraic groups \cite{donkin3,mathieu}.

In general the representation theory of $\uaut(A)$ is very similar to that of algebraic groups. For example 
we have the following result.
\begin{propositions}[Corollary \ref{ref-6.3.3-135}]
 \label{ref-1.2.3-4} 
The representation ring of $\uaut(A)$
 is given by
\[
\ZZ\langle r_1,\ldots,r_{d-1},r_d,r_d^{-1}\rangle
\]
where $r_i$ corresponds to $[R_i]$.
\end{propositions}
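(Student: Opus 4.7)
The plan is to identify $R(\uaut(A)) := K_0(\comod(\uaut(A)))$ with the free Laurent ring $\ZZ\langle r_1,\ldots, r_{d-1}, r_d, r_d^{-1}\rangle$ via the classes of the modules $M(\lambda)$. Since $R_d = M(r_d)$ is $1$-dimensional its class in $R(\uaut(A))$ is invertible, so by the universal property of the free Laurent ring the assignment $r_i \mapsto [R_i]$ extends to a ring homomorphism
\[
\phi\colon \ZZ\langle r_1,\ldots, r_{d-1}, r_d, r_d^{-1}\rangle \longrightarrow R(\uaut(A)).
\]
Because $M$ sends products in $\Lambda$ to tensor products, $\phi$ carries the natural $\ZZ$-basis $\{\lambda\}_{\lambda \in \Lambda}$ of the source to the family $\{[M(\lambda)]\}_{\lambda\in\Lambda}$ in the target, so it is enough to show the latter is a $\ZZ$-basis of $R(\uaut(A))$.

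By Theorem \ref{ref-1.2.1-1} the classes of simples $\{[L(\lambda)]\}_{\lambda \in \Lambda}$ form a $\ZZ$-basis of $R(\uaut(A))$ and one has $[\nabla(\lambda)] = [L(\lambda)] + \sum_{\mu < \lambda} m^\lambda_\mu [L(\mu)]$, so $\{[\nabla(\lambda)]\}_{\lambda\in\Lambda}$ is already a $\ZZ$-basis by unitriangularity. I would then establish an analogous upper-triangular relation
\[
[M(\lambda)] = [\nabla(\lambda)] + \sum_{\kappa < \lambda} n^\lambda_\kappa [\nabla(\kappa)]
\]
by induction on the length of $\lambda \in \Lambda$. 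For $|\lambda|\leq 1$ this is immediate since $M(r_i) = \nabla(r_i)$ (both equal $L(r_i)$, by the remarks in the excerpt). For the inductive step, write $\lambda = \mu \nu$ with $\mu, \nu$ strictly shorter, so $M(\lambda) = M(\mu) \otimes M(\nu)$, and apply the short exact sequence
\[
0 \to \nabla(\mu \wedge \nu) \to \nabla(\mu) \otimes \nabla(\nu) \to \nabla(\mu\nu) \to 0
\]
from \eqref{ref-6.10-109} after expanding each of $[M(\mu)]$, $[M(\nu)]$ via the induction hypothesis and invoking left/right invariance of the partial order to collect all non-leading contributions in terms of $\{[\nabla(\kappa)] : \kappa < \lambda\}$.

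Combining the two triangular relations expresses $[M(\lambda)]$ unitriangularly in the basis $\{[L(\kappa)]\}_{\kappa \leq \lambda}$, so $\{[M(\lambda)]\}_{\lambda\in\Lambda}$ is a $\ZZ$-basis and $\phi$ is a ring isomorphism. The main obstacle is the strict inequality $\mu \wedge \nu < \mu \nu$ needed to absorb the $\nabla(\mu \wedge \nu)$ term into the lower order part at each inductive step; verifying it requires a careful analysis of the $\wedge$ operation in terms of the generating relations $r_{a+b} < r_a r_b$ and $1 < r_a r_d^{-1} r_{d-a}$ of the partial order on $\Lambda$.
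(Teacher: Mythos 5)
Your proof is correct, but the argument for unitriangularity differs from the paper's. The paper deduces that $\{[M(\lambda)]\}_{\lambda\in\Lambda}$ is a $\ZZ$-basis of $G_0(\uaut(A))$ directly from part~(1) of the same corollary: the formula $[M(\lambda):\nabla(\mu)] = |\Uscr_{\u}(\mu,\lambda)|$, combined with $\Uscr_{\u}(\lambda,\lambda)=\{\id\}$ and the fact that any nonidentity morphism $\mu\to\lambda$ in $\Uscr_\u$ satisfies $\mu<\lambda$ and $l(\mu)<l(\lambda)$, gives unitriangularity at once, with no induction. Your inductive route --- expanding $[M(\mu)\ot M(\nu)]$ in the costandard basis via the short exact sequence $0 \to \nabla(u\wedge v) \to \nabla(u)\ot\nabla(v) \to \nabla(uv) \to 0$ of Proposition~\ref{ref-6.1.5-108} --- also works and avoids invoking the multiplicity count, at the cost of losing the explicit values of $[M(\lambda):\nabla(\mu)]$ (which the paper reuses in the proof of Theorem~\ref{ref-7.1.3-147}). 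To make your inductive step airtight you should note two things you use implicitly: that the sums are finite, which follows from the Donkin condition that $\pi(\lambda)$ is finite (part of the quasi-hereditary package in Theorem~\ref{ref-1.2.1-1}); and that $\alpha\le\mu$, $\beta\le\nu$ with one inequality strict forces $\alpha\beta < \mu\nu$, which follows from left/right invariance together with the strict additivity of the degree $d_\u$ across the generating relations.

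One correction: the strict inequality $\mu\wedge\nu < \mu\nu$ that you flag as the ``main obstacle'' requires essentially no work. Writing $\mu = w r_a r_d^{-x}$ and $\nu = r_d^x r_b w'$ in reduced form with $a+b\le d$, one has $\mu\wedge\nu = w r_{a+b} w'$ while $\mu\nu = w r_a r_b w'$, so the inequality is literally the generating relation $r_{a+b}<r_a r_b$ transported by left and right invariance of the partial order; there is nothing further to verify.
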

The reader should compare this to the fact that in any characteristic the representation ring of $\GL_d$ is of the form $\ZZ[r_1,\ldots,r_{d-1},r_d,r_d^{-1}]$.

The following result clarifies the relation between $\uend(A)$ and $\uaut(A)$.
\begin{propositions}[Corollary \ref{ref-6.3.5-143}]
\label{ref-1.2.4-5}
$\uend(A)$ is the minimal subcoalgebra of $\uaut(A)$ whose representations
have simple composition factors in the set $\{L(\lambda)_{\lambda\in \Lambda^+}\}$.
\end{propositions}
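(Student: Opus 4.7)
The plan is to identify $\uend(A)$, viewed inside $\uaut(A)$, with the coefficient subcoalgebra $C_0 := \sum_{\lambda\in\Lambda^+} C(L(\lambda))$, where $C(W) \subseteq \uaut(A)$ denotes the coefficient subcoalgebra of a comodule $W$. By the standard dictionary between subcoalgebras of $\uaut(A)$ and sets $S$ of simple $\uaut(A)$-comodules, $C_0$ is simultaneously the minimal subcoalgebra containing every $L(\lambda)$ with $\lambda\in\Lambda^+$ as a subcomodule and the maximal subcoalgebra whose comodules all have composition factors in $\{L(\lambda):\lambda\in\Lambda^+\}$; this is the minimality statement of the proposition, so it remains to establish $\uend(A)=C_0$.

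For the inclusion $C_0\subseteq\uend(A)$, each $M(\lambda)$ with $\lambda\in\Lambda^+$ is by construction a tensor product of the $\uend(A)$-comodules $R_1,\ldots,R_d$ and is therefore itself an $\uend(A)$-comodule; Theorem~\ref{ref-1.2.1-1} realises $L(\lambda)$ as the image of the canonical map $\Delta(\lambda)\to \nabla(\lambda)$, hence as a subquotient of $M(\lambda)$, so $C(L(\lambda))\subseteq \uend(A)$ for every $\lambda\in\Lambda^+$. For the reverse inclusion $\uend(A)\subseteq C_0$, it suffices to check that every simple $\uend(A)$-comodule is an $L(\mu)$ with $\mu\in\Lambda^+$. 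Via the Tannakian picture developed earlier in the paper, $\uend(A)=\coend_{\Uscr^+}(M|_{\Uscr^+})$ where $\Uscr^+\subseteq\Uscr$ is the (non-rigid) monoidal subcategory generated by the objects $r_1,\ldots,r_d$; this identifies the category of $\uend(A)$-comodules with the subcategory of $\comod(\uaut(A))$ generated under sub-quotients by the $M(\lambda)$ for $\lambda\in\Lambda^+$. Consequently every simple $\uend(A)$-comodule is a composition factor of some such $M(\lambda)$, and by Theorem~\ref{ref-1.2.1-1} is therefore an $L(\mu)$ with $\mu\le\lambda$.

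To close the argument I need the combinatorial claim that $\lambda\in\Lambda^+$ and $\mu\le\lambda$ force $\mu\in\Lambda^+$. This follows from inspecting the two generating relations $r_{a+b}<r_a r_b$ and $1<r_a r_d^{-1} r_{d-a}$ of the partial order: a descending step of the first kind is a substitution $r_a r_b\mapsto r_{a+b}$ which preserves $\Lambda^+$, while a descending step of the second kind removes an occurrence of $r_a r_d^{-1} r_{d-a}$ and in particular cannot be applied to a word in $\Lambda^+$, which contains no $r_d^{-1}$. The main obstacle is really the Tannakian identification of $\uend(A)$ with a coend over $\Uscr^+$ and the injectivity of the resulting map to $\uaut(A)$; once these structural facts are in place, the proof reduces to the combination of quasi-heredity and this elementary combinatorial observation about the partial order.
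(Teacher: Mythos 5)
There is a genuine gap, and it lies in the very first step of your argument. You set $C_0=\sum_{\lambda\in\Lambda^+}C(L(\lambda))$, the subcoalgebra generated by the coefficient coalgebras of the simples, and claim that this is ``simultaneously the minimal subcoalgebra containing every $L(\lambda)$ with $\lambda\in\Lambda^+$ as a subcomodule and the maximal subcoalgebra whose comodules all have composition factors in $\{L(\lambda):\lambda\in\Lambda^+\}$.'' These two coalgebras do not coincide unless the latter is cosemisimple. The former is (contained in) the coradical of the maximal one; the maximal one is Donkin's $\uaut(A)(\Lambda^+)=O_{\Lambda^+}\uaut(A)$, which must also capture all self-extensions among the $L(\lambda)$, $\lambda\in\Lambda^+$. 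In the present situation these extensions are genuinely nonzero: in $\comod(\uend(A))$ one has $\Delta(\lambda)=M(\lambda)$, so e.g.\ $M(r_1r_1)=V\otimes V$ is an indecomposable non-split extension of $L(r_1r_1)$ by $L(r_2)$, and $\uend(A)$ is far from cosemisimple. Hence $\uend(A)\neq C_0$, and the equality you are trying to prove is false as stated. Correspondingly, your deduction ``for $\uend(A)\subseteq C_0$ it suffices to check that every simple $\uend(A)$-comodule is an $L(\mu)$ with $\mu\in\Lambda^+$'' only yields $\uend(A)\subseteq\uaut(A)(\Lambda^+)$, not $\uend(A)\subseteq C_0$.

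What the paper actually proves (Corollary \ref{ref-6.3.5-143}, via Theorem \ref{ref-6.3.4-140} applied to $\Lambda_1=\Lambda^+$) is the identity $\uend(A)=\uaut(A)(\Lambda^+)$: the subcoalgebra whose comodule category is \emph{exactly} the full subcategory of $\comod(\uaut(A))$ of objects with all composition factors in $\{L(\lambda):\lambda\in\Lambda^+\}$, equivalently the minimal subcoalgebra $D$ such that every such $\uaut(A)$-comodule is already a $D$-comodule. Besides the two ingredients you correctly single out (the Tannakian identification $\uend(A)\cong\coend_{\Uscr_\u^+}(M^+)$, and the injectivity of $\coend_{\Uscr_\u^+}(M^+)\to\coend_{\Uscr}(M)$ coming from surjectivity of the dual restriction map, Corollary \ref{ref-6.2.6-125}), the essential missing ingredient in your argument is closure under extensions: $\mod(\End_{\Uscr_\u^+}(M))$ is closed under extensions inside $\mod(\End_\Uscr(M))$. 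This is Lemma \ref{ref-6.2.10-130} in the paper, and it is a non-formal consequence of the quasi-hereditary structure -- concretely, of the fact that the kernel ideal $I$ of the restriction map $\End_{\Uscr_2}(M)\to\End_{\Uscr_1}(M)$ is idempotent (Lemma \ref{ref-6.2.8-127}). Once one has (i) $\comod(\uend(A))$ sits inside $\comod(\uaut(A))$ as a full subcategory closed under subquotients, (ii) the simples it contains are exactly $\{L(\lambda):\lambda\in\Lambda^+\}$, and (iii) it is closed under extensions, then indeed $\comod(\uend(A))$ is the full subcategory of objects belonging to $\Lambda^+$, which forces $\uend(A)=\uaut(A)(\Lambda^+)$. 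Your first direction ($\sum_\lambda C(L(\lambda))\subseteq\uend(A)$) and your observation that $\Lambda^+$ is saturated in $(\Lambda,\le)$ are both correct and are used in (ii), but the argument cannot be closed without (iii), and this is exactly the step you have omitted.
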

So the relation
between $\comod(\uaut(A))$ and $\comod(\uend(A))$ is similar to the one that exists between the representations of $\GL_n$ and its
full monoidal subcategory of polynomial representations. 

A subset $\pi \subset \Lambda$ is called saturated if $\mu \leq \lambda \in \pi$ implies $\mu \in \pi$. 
As $\Lambda^+$ is a saturated subposet of $\Lambda$ it follows from Proposition \ref{ref-1.2.4-5}
and the theory of quasi-hereditary coalgebras (\S\ref{ref-2.10-45},\S\ref{ref-2.11-55})
that $\uend(A)$ has an induced quasi-hereditary
structure. In fact, as was  mentioned, the category $\comod(\uend(A))$ has already
been completely described in~\cite{kriegk-vandenbergh}. Specifically it is a
direct sum of representation categories of explicit directed quivers. Our next aim will be to state a similar
result for $\comod(\uaut(A))$. It will however not take place on the abelian level but on the derived level.

First we upgrade $\Lambda$ to a monoidal category defined by an explicit presentation. More specifically we introduce a rigid monoidal category $\Uscr$ such that $\Ob(\Uscr)=\Lambda$
with generating morphisms $\phi_{a,b}:r_{a+b}\r r_ar_b$, $\theta_{a,b}:r_{a}r^{-1}_d r_{b}\r r_{a+b-d}$ (with $r_0=1$), satisfying
certain natural relations 
given in \S\ref{ref-3-62} below. 
The category $\Uscr$ turns out to be highly structured. In particular it has a triangular decomposition: the classes of morphisms $(\phi_{a,b})_{a,b}$, $(\theta_{a,b})_{a,b}$ generate two 
directed Koszul subcategories $\Uscr_{\u}$, $\Uscr_{\d}$ of~$\Uscr$
such that every morphism $f$ in $\Uscr$ can be uniquely written as\footnote{Technically $\Uscr$ is an example of a Reedy category. See ~\cite{riehl-verity}.}  $f_{\u}\circ f_{\d}$ with $f_{\u}\in \Uscr_{\u}$, $f_{\d}\in \Uscr_{\d}$.
This makes it in particular possible to explicitly compute the $\Hom$-sets in~$\Uscr$. It is also easy to see that $M$ may be upgraded to a functor $\Uscr\r \comod(\uaut(A))$.

We define the category $\Mod(\Uscr^\circ)$ of right $\Uscr$-modules as the category of
contravariant functors from $\Uscr$ to vector spaces. Let $\perf(\Uscr^\circ)$
be the triangulated category of finite complexes of finitely generated
projective right $\Uscr$-modules. One may make $\perf(\Uscr^\circ)$ into a monoidal
triangulated category by putting $k\Uscr(-,\lambda)\otimes
k\Uscr(-,\mu)=k\Uscr(-,\lambda\mu)$. We have the following result.
\begin{theorems}[Theorem \ref{ref-7.1.3-147} and Corollary \ref{ref-7.2.1-151}] 
\label{ref-1.2.5-6}
The monoidal functor
\[
M:k\Uscr\r \comod(\uaut(A)):\lambda\mapsto M(\lambda)
\]
is  fully faithful and its derived version
\begin{equation}
\label{ref-1.2-7}
M:\perf(\Uscr^\circ)\r D^b(\comod(\uaut(A)))
\end{equation}
induced by
$
k\Uscr(-,\lambda)\mapsto M(\lambda)
$
is an equivalence of monoidal triangulated categories.
\end{theorems}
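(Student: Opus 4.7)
The plan is to handle fully faithfulness on $k\Uscr$ first, and then extend to the derived equivalence via Ext-vanishing; monoidal compatibility is automatic, since on both sides the tensor product is given by concatenation in $\Lambda$ on objects, and both $M$ and the Yoneda embedding $\lambda\mapsto k\Uscr(-,\lambda)$ are strict monoidal by construction.

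For fully faithfulness on $k\Uscr$, the Reedy factorisation $f=f_\u\circ f_\d$ yields
\[
\Hom_\Uscr(\lambda,\mu)\;\cong\;\bigoplus_\nu \Hom_{\Uscr_\u}(\nu,\mu)\otimes\Hom_{\Uscr_\d}(\lambda,\nu),
\]
with an explicit combinatorial basis built out of compositions of the generators $\phi_{a,b}$ and $\theta_{a,b}$. On the target side, since each $M(\lambda)$ lies in $\Fscr(\Delta)\cap\Fscr(\nabla)$ and $\dim\Hom(\Delta(\alpha),\nabla(\beta))=\delta_{\alpha,\beta}$ in any quasi-hereditary coalgebra, one obtains
\[
\dim\Hom_{\uaut(A)}(M(\lambda),M(\mu))\;=\;\sum_\nu [M(\lambda):\Delta(\nu)]\cdot[M(\mu):\nabla(\nu)].
\]
The task is to match these two calculations: the $(\Delta,\nabla)$-multiplicities of $M(\lambda)$ should be recovered from the combinatorics of $\Uscr_\d$ and $\Uscr_\u$ respectively. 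Injectivity of the canonical map $k\Uscr(\lambda,\mu)\to\Hom_{\uaut(A)}(M(\lambda),M(\mu))$ follows because any relation in the image would descend to a relation between the natural inclusions $R_{a+b}\hookrightarrow R_aR_b$ and the perfect pairings $R_a\otimes R_{d-a}\to R_d$, and such relations can be ruled out directly from the AS-regularity of~$A$.

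Once fully faithfulness is established, the derived statement follows from Ext-vanishing on both sides. The representables in $\Mod(\Uscr^\circ)$ are projective, so $\Ext^{>0}$ vanishes between them. In $\comod(\uaut(A))$, $\Ext^{>0}(M(\lambda),M(\mu))=0$ because $M(\lambda)\in\Fscr(\Delta)$, $M(\mu)\in\Fscr(\nabla)$ and $\Ext^{>0}(\Delta,\nabla)=0$ in any quasi-hereditary coalgebra. Combined with the degree-zero result, this shows \eqref{ref-1.2-7} is fully faithful on the triangulated subcategory generated by the $k\Uscr(-,\lambda)$, which is all of $\perf(\Uscr^\circ)$. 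Essential surjectivity reduces to showing that the $M(\lambda)$ generate $D^b(\comod(\uaut(A)))$ as a thick subcategory: the indecomposable tilting modules $T(\lambda)$ generate by standard quasi-hereditary theory, and each partial tilting $M(\lambda)$ decomposes as $T(\lambda)$ together with summands $T(\mu)$ indexed by $\mu<\lambda$, so a downward induction on~$\Lambda$ extracts every $T(\lambda)$ from the image.

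The main obstacle will be the combinatorial matching in the fully faithful step: verifying that the Reedy basis of $\Hom_\Uscr(\lambda,\mu)$ has exactly the cardinality required by the multiplicity sum above. This forces a careful analysis of how the generators $\phi_{a,b}$ and $\theta_{a,b}$ interact with the $\Delta$- and $\nabla$-filtrations of $M(\lambda)$ — in particular how the short exact sequences relating $\nabla(w\wedge w')$, $\nabla(w)\otimes\nabla(w')$ and $\nabla(ww')$ encode the defining relations of~$\Uscr_\u$, and dually for~$\Uscr_\d$. Once this combinatorial identification is in place, the passage to derived categories is a routine application of the quasi-hereditary formalism.
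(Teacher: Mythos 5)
Your plan has the right skeleton — compute $\dim\Hom_{\uaut(A)}(M(\lambda),M(\mu))$ via $(\Delta,\nabla)$-multiplicities, compare with $\dim k\Uscr(\lambda,\mu)$ via the Reedy factorisation, and establish one direction of the comparison map — and the derived step is essentially correct: your Ext-vanishing argument for full faithfulness is the one in the paper, and your essential surjectivity argument via a downward induction extracting each tilting $T(\lambda)$ from $M(\lambda)$ is a valid alternative to the paper's (which instead uses that the $\nabla(u)$ generate $D^b(\comod(\uaut(A)))$ together with $[M(u):\nabla(u)]=1$ and all other composition factors strictly smaller). However, there are two gaps in the abelian-level step. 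You flag the ``combinatorial matching'' $[M(\lambda):\nabla(\mu)]=|\Uscr_\u(\mu,\lambda)|$ (and dually) as the main obstacle without indicating how to close it. This is the content of Corollary~\ref{ref-6.3.3-135} and it is not free: the paper compares the recursion for $|\Uscr_\u(v,u)|$, obtained by evaluating the Koszul resolution~\eqref{ref-3.12-75} of $S_u$ at $v$, with the recursion for $\dim\Hom_{\uaut(A)}(\Delta(v),M(u))$, obtained by applying $\Hom_{\uaut(A)}(\Delta(v),-)$ to the resolution~\eqref{ref-6.39-139} of $\nabla(u)$ and using $\Ext^{>0}(\Delta(v),M(u))=0$ (which in turn needs $M(u)\in\Fscr(\nabla)$). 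Without this, the dimension count you want to ``match'' is simply not available.

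The second and more serious issue is your injectivity claim. You assert that any linear dependence among the $M(f_i)$, $f_i\in\Uscr(\lambda,\mu)$, would ``descend to a relation between the natural inclusions $R_{a+b}\hookrightarrow R_aR_b$ and the perfect pairings $R_a\otimes R_{d-a}\to R_d$, and such relations can be ruled out directly from the AS-regularity of $A$.'' This is an assertion, not an argument. A linear relation among compositions of $M(\phi_{a,b})$ and $M(\theta_{a,b})$ living in high tensor powers of $V$ is a delicate statement about the Koszul combinatorics of $A$; it does not fall out of the non-degeneracy of the pairings alone, and proving it directly amounts to showing that the defining relations of $\Uscr$ exhaust all relations satisfied by $M$ — essentially the problem you are trying to solve. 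The paper avoids this direction entirely: once both dimensions match, it suffices to prove \emph{surjectivity} of $k\Uscr(\lambda,\mu)\to\Hom_{\uaut(A)}(M(\lambda),M(\mu))$, which follows by an easy induction because combining~\eqref{ref-6.39-139} for $\Delta(\mu)$ with its dual coresolution for $\nabla(\lambda)$ writes every morphism $M(\lambda)\to M(\mu)$ as a sum of morphisms factoring through strictly smaller objects, plus (when $\lambda=\mu$) a scalar multiple of the identity. You should switch your one-direction argument to surjectivity.
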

As is explained in \S\ref{ref-7-144} the first claim may be
regarded as a form of Schur-Weyl duality, whereas the second claim is a derived 
version of the classical Tannaka-Krein duality for abelian categories
with faithful fiber functors (see Theorem \ref{ref-2.9.3-41}). Theorem~\ref{ref-1.2.5-6} may also be interpreted as saying that $k\Uscr$ is Morita equivalent to (a suitably defined categorical  version of) the Ringel dual (see \S\ref{ref-2.10-45}) of $\uaut(A)$.
See Remark \ref{ref-7.2.2-152}.

The equivalence \eqref{ref-1.2-7} may be used to transfer the standard
$t$-structure on the derived category $D^b(\comod(\uaut(A)))$ to one on $\perf(\Uscr^\circ)$. In \S\ref{ref-7.2-149} below we 
are able to give an intrinsic description of this induced $t$-structure referring solely to  properties of $\Uscr$. As a corollary we obtain.
\begin{theorems}[Theorem \ref{ref-7.2.3-153}] The category
  $\comod(\uaut(A))$ as a monoidal category only depends on the global
  dimension of $A$ and not on $A$ itself.
\end{theorems}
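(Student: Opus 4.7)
The plan is to reduce the independence from $A$ to the fact that the combinatorial category $\Uscr$ depends only on $d$. By construction, the objects of $\Uscr$ form the monoid $\Lambda=\langle r_1,\ldots,r_{d-1},r_d^{\pm 1}\rangle$ and the generating morphisms $\phi_{a,b}\colon r_{a+b}\to r_ar_b$ and $\theta_{a,b}\colon r_ar_d^{-1}r_b\to r_{a+b-d}$, together with the relations recorded in \S\ref{ref-3-62}, make no reference to $A$; all of this data is manufactured purely from the integer~$d$. In particular $\perf(\Uscr^\circ)$, as a monoidal triangulated category, is an invariant of~$d$ alone.

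Next I would invoke the derived Tannakian equivalence of Theorem~\ref{ref-1.2.5-6}: the monoidal functor
\[
M\colon\perf(\Uscr^\circ)\xrightarrow{\;\sim\;}D^b(\comod(\uaut(A)))
\]
transfers the standard $t$-structure on the right-hand side to a $t$-structure on $\perf(\Uscr^\circ)$ whose heart is equivalent, as a monoidal abelian category, to $\comod(\uaut(A))$. Hence, if this transferred $t$-structure can be characterised in terms of data available inside $\Uscr$, the heart is manifestly an invariant of $\Uscr$, and therefore of~$d$.

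The crux is therefore the intrinsic description of the $t$-structure promised in \S\ref{ref-7.2-149}. I would write out the aisles $\perf(\Uscr^\circ)^{\leq 0}$ and $\perf(\Uscr^\circ)^{\geq 0}$ using only the combinatorial input of $\Uscr$: the objects $\lambda\in\Lambda$, the partial order, the dualities $(-)^\ast$ and ${}^\ast(-)$, and the distinguished morphisms in $\Uscr_{\u}$ and $\Uscr_{\d}$ that appeared in the triangular decomposition. Concretely, one expects the connective part to be generated under extensions and shifts by the standard objects $\Delta(\lambda)$, which inside $\perf(\Uscr^\circ)$ arise as explicit finite complexes of representable functors built from the $\phi_{a,b}$; the coconnective part is then the orthogonal described via the $\nabla(\lambda)$ constructed from the $\theta_{a,b}$. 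Once this is set up, the $t$-structure is visibly formulated entirely in terms of $\Uscr$.

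The main obstacle is precisely this last step: verifying that the intrinsic specification of the aisles genuinely recovers the transferred $t$-structure, i.e.\ that the combinatorially defined $\Delta(\lambda)$ and $\nabla(\lambda)$ on the $\Uscr$-side correspond under $M$ to the standard and costandard objects of $\comod(\uaut(A))$ and generate the correct aisles. This is a quasi-hereditary tilting-theoretic check, and once it is done the conclusion is immediate: since both $\Uscr$ and the chosen aisles depend only on $d$, so does the heart $\comod(\uaut(A))$ as a monoidal abelian category, completing the proof.
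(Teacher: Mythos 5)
Your proposal is correct and follows essentially the same route as the paper: use the derived equivalence $M\colon\perf(\Uscr^\circ)\xrightarrow{\sim}D^b(\comod(\uaut(A)))$, observe that $\Uscr$ depends only on~$d$, and characterise the transferred $t$-structure on $\perf(\Uscr^\circ)$ intrinsically via the objects $\Delta_{\Uscr}(\lambda)=M^{-1}(\Delta(\lambda))$ and $\nabla_{\Uscr}(\lambda)=M^{-1}(\nabla(\lambda))$, which are $A$-independent by virtue of the explicit Koszul-type resolutions \eqref{ref-6.39-139} and their duals. The only cosmetic difference is that the paper specifies the aisles by $\Ext$-vanishing against the $\Delta_{\Uscr}(\lambda)$ and $\nabla_{\Uscr}(\lambda)$ (using $\Delta(\lambda)\twoheadrightarrow L(\lambda)\hookrightarrow\nabla(\lambda)$ to match the natural $t$-structure), whereas you describe the connective aisle as the extension--shift closure of the $\Delta_{\Uscr}(\lambda)$; these are equivalent descriptions for a quasi-hereditary heart.
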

In other words by letting $A$ vary we obtain non-trivial examples of Morita equivalent Hopf algebras \cite[\S5]{schauenburg}.

Let us close this section by giving better formulas for the standard and costandard representations which were promised above. From
Theorem \ref{ref-1.2.5-6}(1) we deduce that \eqref{ref-1.1-0} is equivalent to:
\begin{equation}
\label{ref-1.3-8} 
\begin{aligned}
	\nabla(\lambda) &= \coker \bigg( \bigoplus_{\substack{\mu \to \lambda \in\Uscr\\ \mu < \lambda}} M(\mu) \to M(\lambda) \bigg)\\
	\Delta(\lambda) &= \ker \bigg( M(\lambda) \to \bigoplus_{\substack{\lambda \to \mu\in \Uscr \\ \mu < \lambda}} M(\mu) \bigg).
\end{aligned}
\end{equation}
These formulas no longer refer to $\uaut(A)$.
\subsection{The Tannaka-Krein formalism}
\label{ref-1.3-9}
For a more elaborate discussion of the Tannakian formalism see \S\ref{ref-2.9-39} below. Let $\Cscr$ be a category and
let $F:\Cscr\r \Vect$ be a functor to finite dimensional vector spaces. We refer to such a functor as a fiber functor. The natural transformations
from $F$ to itself form a  $k$-algebra $\End(F)$, equipped with a natural pseudo-compact topology and its continuous dual is 
a coalgebra denoted by
$\coend_\Cscr(F)$. There is an obvious evaluation functor
\[
\operatorname{ev}_F:\Cscr\r \comod(\coend_\Cscr(F)):X\mapsto FX
\]
If $C$ an arbitrary coalgebra equipped with additional
algebraic structure then this is 
reflected into the categorical properties of $\comod(C)$. For example if $C$
is equipped with a bialgebra structure then $\comod(C)$ is a monoidal category.
 The Tannakian
formalism goes in some sense the other way. It connects categorical structures on $\Cscr$ to algebraic
structures on $\coend_\Cscr(F)$.
In particular we have
the following properties~\cite{pareigis-1}
\begin{itemize}
\item If $\Cscr$ is monoidal and $F$ is a monoidal functor\footnote{We always assume that for a monoidal functor  the coherence maps $FA \otimes FB \to F(A \otimes B)$ and $1 \to F1$ are invertible.}
then $\coend_\Cscr(F)$ is a bialgebra and $\eval_F$ is a monoidal functor.  
\item  If in addition $\Cscr$ has right duals, then $\coend_\Cscr(F)$ is a 
	Hopf algebra.
\item If in addition $\Cscr$ is rigid, then $\coend_\Cscr(F)$ is a Hopf algebra with invertible antipode.
\end{itemize}
Moreover we have compatibility with duals \cite[Theorem 4.5]{pareigis-1}:
\begin{itemize}
\item Let $\Cscr$ be a monoidal category, let $\Cscr^\ast$ be obtained from $\Cscr$ by adjoining right duals and let $F^\ast:\Cscr^\ast\r\Vect$ be the corresponding monoidal extension of~$F$. Then
$\coend_{\Cscr^\ast}(F^\ast)$ is the Hopf envelope of $\coend_\Cscr(F)$.
\end{itemize}
The key observation in this paper is that $\uend(A)$ and $\uaut(A)$ can
be realised as $\coend_\Cscr(F)$ for suitably chosen nice fiber functors
and that moreover such realisations can be used to prove
properties about them. 

Let $\Cscr$ be the monoidal category generated by objects $r_1$,
$r_2$ and a single morphism $\phi_{11}:r_2\r r_1r_1$ and let $F$ be
the fiber functor given by $F(r_i)=R_i$ and $F(\phi_{11})$ being the
inclusion $R_2\hookrightarrow R_1R_1$. Then it is easy to see that
$\uend(A)=\coend_{\Cscr}(F)$. However this is not yet  what we want
as the pair $(\Cscr,F)$ insufficiently captures the structure of $\comod(\uend(A))$.  However starting
from $(\Cscr,F)$ 
and some algebraic manipulations we obtain 
\begin{theorems}[Theorem \ref{ref-5.1-92}] 
Let $\Uscr_{\u}^+$ be the full subcategory of $\Uscr$ with objects~$\Lambda^+$. Let $M^+$ be
the restriction of $M$ to $\Uscr_{\u}^+$. Then
\begin{align*}
		\uend(A) & \cong \coend_{\Uscr_{\u}^+}(M^+), \\
		\uaut(A) & \cong \coend_{\Uscr}(M).
\end{align*}
\end{theorems}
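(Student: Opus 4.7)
My strategy is to build up from the elementary identification $\uend(A) \cong \coend_\Cscr(F)$ noted in \S\ref{ref-1.3-9} (with $\Cscr$ generated by $r_1, r_2$ and $\phi_{11}$), passing through two successive enlargements of the source category: first to $\Uscr_{\u}^+$, and then to all of $\Uscr$. I verify that the coend changes in the expected way at each stage.

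For the first isomorphism, I will show that the algebra map $\End(M^+) \to \End(F)$ induced by the canonical functor $\Cscr \to \Uscr_{\u}^+$ is an isomorphism; dualising then yields the first coend isomorphism. Surjectivity (equivalently, the existence of the induced coalgebra map $\coend_\Cscr(F) \to \coend_{\Uscr_{\u}^+}(M^+)$) is immediate. For injectivity, given $\eta \in \End(F)$ I construct a preimage $\tilde{\eta} \in \End(M^+)$ as follows: since $M(r_a) = R_a = \bigcap_{i+j+2=a} V^i R V^j$ is canonically a subspace of $V^{\otimes a}$, define $\tilde{\eta}_{r_a}$ to be the restriction of $(\eta_V)^{\otimes a}$. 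The key compatibility is that $(\eta_V)^{\otimes a}$ preserves every $V^i \otimes R \otimes V^j$, and this follows from naturality of $\eta$ at $\phi_{11}$, which forces $\eta_R$ to be the restriction of $(\eta_V)^{\otimes 2}$ to $R$. Hence each such $V^iRV^j$, and therefore their intersection $R_a$, is preserved. Naturality of $\tilde{\eta}$ with respect to every $\phi_{a,b}$ is automatic from the construction.

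For the second isomorphism I invoke Pareigis's compatibility with duals \cite[Theorem 4.5]{pareigis-1}, quoted in \S\ref{ref-1.3-9}: the Hopf envelope of a bialgebra of the form $\coend_\Cscr(F)$ is computed as the coend over the free monoidal extension of $\Cscr$ by right duals. Since by definition $\uaut(A)$ is the Hopf envelope of $\uend(A)$, and since the first isomorphism identifies the latter as $\coend_{\Uscr_{\u}^+}(M^+)$, it suffices to identify $\Uscr$ with the universal right-rigid monoidal extension of $\Uscr_{\u}^+$. The duality data required are dictated by the fiber functor: $r_d$ is forced to become invertible (as $\dim R_d = 1$), the object $r_a$ acquires right dual $r_{d-a} r_d^{-1}$, and the generators $\theta_{a,b}$ realise partial evaluation maps (e.g.\ $\theta_{d,0}\colon r_d r_d^{-1} \to 1$ is the evaluation at $r_d$).

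The principal obstacle is the verification that the explicit presentation of $\Uscr$ given in \S\ref{ref-3-62} records precisely the relations forced by the zigzag/triangle identities in combination with the relations already present in $\Uscr_{\u}^+$ - no more and no less. I expect this to hinge on the triangular decomposition $f = f_{\u} \circ f_{\d}$ advertised in the excerpt, which supplies a normal form for morphisms in $\Uscr$. Once the analogous normal form is established for morphisms in the rigid completion of $\Uscr_{\u}^+$ (whose $\Hom$-sets can be read off from the Koszul subcategories $\Uscr_{\u}$ and $\Uscr_{\d}$), the two categories can be matched object-for-object and morphism-for-morphism, yielding the second coend isomorphism.
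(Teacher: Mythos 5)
Your treatment of the first isomorphism is essentially the paper's approach (the paper only passes through an intermediate free category $\Cscr_2=\langle r_1,\dots,r_d\,|\,\phi_{a,b}\rangle$ as a bookkeeping device, noting $\End_{\Cscr_2}(M^+)=\End_{\Uscr_\u^+}(M^+)$), but you introduce a small error in the key extension step. The algebra $\End(F)$ in the Tannakian setup consists of \emph{ordinary} natural transformations, not monoidal ones: a priori $\eta_{r_1^a}\in\End(V^{\otimes a})$ has no reason to equal $(\eta_V)^{\otimes a}$, and indeed they differ in general. What naturality at $\phi_{11}$ forces is that $\eta_{r_1^a}$ preserves each subspace $V^iRV^j$, hence their intersection $R_a$, so you should set $\tilde\eta_{r_a}$ equal to the restriction of $\eta_{r_1^a}$, not of $(\eta_V)^{\otimes a}$; the latter need not preserve $R_a$. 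With this fix the argument is correct (though your labels ``surjectivity'' and ``injectivity'' are interchanged: constructing a preimage of $\eta$ is the surjectivity of the restriction).

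For the second isomorphism there is a genuine gap. You propose to ``identify $\Uscr$ with the universal right-rigid monoidal extension of $\Uscr_\u^+$,'' but these categories are \emph{not} equivalent. In $\Uscr_\u^{+,\ast}$ the right dual $r_a^\ast$ is a freely adjoined object and the object map $r_a^\ast\mapsto r_{d-a}r_d^{-1}$ into $\Uscr$ is far from bijective (for instance $r_a^\ast$ and $r_{d-a}r_d^\ast$ have the same image); nor is $r_d^\ast$ literally $r_d^{-1}$ in $\Uscr_\u^{+,\ast}$ — it only becomes so after inverting the unit/counit morphisms for $r_d$. So one cannot match the two categories ``object-for-object and morphism-for-morphism.'' The paper instead proves a chain of isomorphisms $\uaut(A)\cong\coend_{\Uscr_\u^{+,\ast}}(M^{+,\ast})\cong\coend_{\Uscr_\u^\ast}(M^\ast)\cong\coend_\Uscr(M)$, where the first step is Pareigis, and the remaining steps involve (i) the equivalence $\Uscr_\u^{+,\dagger}\simeq\Uscr_\u^\ast$ after inverting the $r_d$-duality morphisms, and (ii) a second localisation $\Uscr_\u^\dagger$ of $\Uscr_\u^\ast$ at explicitly defined morphisms $\pi_a:r_{d-a}^\ast\to r_ar_d^{-1}$ together with a fullness argument for $I^\dagger:\Uscr_\u^\dagger\to\Uscr$. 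In both localisation steps one must check that $M$ sends the inverted morphisms to isomorphisms — this is exactly where AS-regularity enters — and that the coend is unchanged. None of this is captured by your sketch, which underestimates what is being claimed: the passage from $\Uscr_\u^{+,\ast}$ to $\Uscr$ is not a categorical identification but a coend-preserving comparison through several intermediaries.
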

Now assume that $\Lambda_1$, $\Lambda_2$ are saturated subsets 
of $\Lambda$ such that the elements of $\Lambda_2-\Lambda_1$ are incomparable.
Let $\Uscr_i\subset \Uscr$ be the full subcategories of $\Uscr$ with object sets $\Lambda_i$. 
The key technical result that enters in the proof of Theorem \ref{ref-1.2.1-1} is the following.
\begin{theorems}[Theorem \ref{ref-6.2.3-120}] \label{ref-1.3.2-10}
There is an exact sequence
\begin{equation}
\label{ref-1.4-11}
	0 \to \prod_{\lambda\in\Lambda_2-\Lambda_1}\Hom_k(\nabla(\lambda),\Delta(\lambda)) \to \End_{\Uscr_2}(M) \to \End_{\Uscr_1}(M) \to 0,
\end{equation}
 where $\nabla(\lambda),\Delta(\lambda)$ are as in \eqref{ref-1.3-8} above.
\end{theorems}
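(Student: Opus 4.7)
My plan is to dissect the restriction map $\End_{\Uscr_2}(M)\to\End_{\Uscr_1}(M)$ using the triangular (Reedy) factorisation $f=f_\u\circ f_\d$ of morphisms in~$\Uscr$, together with the combined force of saturation of the $\Lambda_i$ and the incomparability of $\Lambda_2-\Lambda_1$. For surjectivity I would argue at the coalgebra level: by Theorem~\ref{ref-5.1-92} one has $\coend_\Uscr(M)=\uaut(A)$, and for each full subcategory $\Uscr_i\subset\Uscr$ the coend $\coend_{\Uscr_i}(M)$ embeds into $\uaut(A)$ as the subcoalgebra generated by the matrix coefficients of $\{M(\lambda)\}_{\lambda\in\Lambda_i}$. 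The subcoalgebra inclusion $\coend_{\Uscr_1}(M)\hookrightarrow\coend_{\Uscr_2}(M)$ then dualises to the required surjection of pseudo-compact algebras.

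An element of the kernel is a family $\{\eta_\lambda\colon M(\lambda)\to M(\lambda)\}_{\lambda\in\Lambda_2-\Lambda_1}$ of linear maps satisfying naturality for every morphism of~$\Uscr_2$. The crucial observation is that if $f\colon\mu_1\to\mu_2$ is any non-identity morphism of $\Uscr_2$ with at least one endpoint in $\Lambda_2-\Lambda_1$, then the intermediate object $\mu'$ in its Reedy factorisation (satisfying $\mu'\leq\mu_1$ and $\mu'\leq\mu_2$) necessarily lies in~$\Lambda_1$: indeed $\mu'\in\Lambda_2$ by saturation, and by the incomparability of distinct elements of $\Lambda_2-\Lambda_1$ it cannot lie in $\Lambda_2-\Lambda_1$ without equalling both $\mu_1$ and $\mu_2$, in which case strict Reedyness would force $f=\mathrm{id}$. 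Since $\eta_{\mu'}=0$, all naturality conditions involving two elements of $\Lambda_2-\Lambda_1$ become automatic as soon as we impose the two ``mixed'' constraints
\[
\eta_\lambda\circ M(f)=0\quad(f\colon\mu\to\lambda,\ \mu\in\Lambda_1),\qquad M(g)\circ\eta_\lambda=0\quad(g\colon\lambda\to\nu,\ \nu\in\Lambda_1).
\]

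One more application of the Reedy argument identifies the subspaces these conditions cut out with those appearing in~\eqref{ref-1.3-8}. Every $f\colon\mu\to\lambda$ with $\mu\in\Lambda_1$ factors through some $\mu''<\lambda$ (a purely down-type morphism would force $\lambda<\mu\in\Lambda_1$, hence $\lambda\in\Lambda_1$ by saturation, contradicting $\lambda\in\Lambda_2-\Lambda_1$); conversely every $\mu<\lambda$ in~$\Lambda$ lies in~$\Lambda_1$. Therefore $\sum_f\im M(f)$ coincides with the kernel of $M(\lambda)\twoheadrightarrow\nabla(\lambda)$, and symmetrically $\bigcap_g\ker M(g)=\Delta(\lambda)$. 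It follows that each $\eta_\lambda$ factors uniquely as $M(\lambda)\twoheadrightarrow\nabla(\lambda)\to\Delta(\lambda)\hookrightarrow M(\lambda)$, which yields the identification of the kernel with $\prod_{\lambda\in\Lambda_2-\Lambda_1}\Hom_k(\nabla(\lambda),\Delta(\lambda))$.

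The most delicate step I expect is the surjectivity, since the fully faithfulness of $M\colon k\Uscr\to\comod(\uaut(A))$ (Theorem~\ref{ref-1.2.5-6}) is not yet at our disposal and, in fact, will depend on the present result. Establishing the subcoalgebra interpretation of $\coend_{\Uscr_i}(M)$ therefore has to be done directly, from the explicit generators-and-relations presentation of~$\Uscr$ and the combinatorial description of its Hom-sets afforded by the Reedy decomposition.
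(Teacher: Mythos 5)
Your analysis of the kernel is essentially correct and aligns with the paper's own argument: the Reedy factorisation $f=f_\u\circ f_\d$ together with saturation and incomparability reduces the naturality constraints on $\eta_\lambda$ to the two ``mixed'' conditions, and those conditions identify $\eta_\lambda$ with a map factoring through $\nabla(\lambda)\to\Delta(\lambda)$, exactly as in the paper's computation of $\ker\beta$ via Lemma~\ref{ref-6.2.4-122}(2).

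However, the surjectivity argument has a genuine gap, and indeed a circularity. You propose to deduce surjectivity of $\End_{\Uscr_2}(M)\to\End_{\Uscr_1}(M)$ from the claim that each $\coend_{\Uscr_i}(M)$ embeds as a subcoalgebra of $\uaut(A)$. But for a full subcategory $\Uscr_i\subset\Uscr$, injectivity of the canonical map $\coend_{\Uscr_i}(M)\to\coend_\Uscr(M)$ is \emph{dual} to surjectivity of the restriction $\End_\Uscr(M)\to\End_{\Uscr_i}(M)$ of pseudo-compact algebras; this is precisely the statement (Corollary~\ref{ref-6.2.6-125}, which follows from the theorem under discussion) that you would need to prove first. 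In other words, the ``subcoalgebra interpretation of $\coend_{\Uscr_i}(M)$'' is an output of the theorem, not an input. You flag this as the delicate step and say it ``has to be done directly'', but you give no mechanism to do so. The paper's mechanism is entirely different and is the mathematical heart of the proof: it reduces the extension problem for a single $v\in\Lambda_2-\Lambda_1$ to the linear-algebra Lemma~\ref{ref-6.2.4-122}, whose hypotheses are that the families $\{\im M(\alpha_i)\}$ ($\alpha_i:u_i\to v\in\Uscr_{\u,1}$) and $\{\ker M(\beta_j)\}$ ($\beta_j:v\to w_j\in\Uscr_{\d,1}$) each generate a \emph{distributive} lattice of subspaces of $M(v)$. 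This distributivity is supplied by Proposition~\ref{ref-4.4-89}, which ultimately comes from the Koszul property of $A$; the remark following Lemma~\ref{ref-6.2.4-122} (three lines through the origin) makes clear that without it the extension step fails. Your proposal omits distributivity entirely, and that is where the argument breaks down.
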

Starting with \eqref{ref-1.4-11} we may construct
a heredity cochain in $\coend_{\Uscr}(M)$ (see \S\ref{ref-2.11-55}) which
yields that $\coend_{\Uscr}(M)$ is quasi-hereditary.

\subsection{Relation with other work}
Various types of universal quantum groups have been introduced in the literature.
E.g.\ \cite{bichon,bichon-dubois-violette,chirvasitu,walton-wang}.  See 
in particular \cite[\S2]{walton-wang} for a very nice survey. These papers either consider quotients of $\uaut(A)$
or else discuss different or restricted settings. 

In   \cite{pareigis-1} Pareigis shows how to use the Tannakian formalism to
obtain presentations for $\uaut(A)$. An application of this idea is given
in Appendix \ref{ref-A-156}. However as was already pointed out, we do 
not use explicit presentations in this paper. Instead we use
the Tannakian formalism directly to obtain strong results on the
representation theory of $\uaut(A)$. We believe this idea is new.

In the case that $A=k[x_1,\ldots,x_d]$ the properties of $\uend(A)$
and $\uaut(A)$ are strongly connected to those of so-called Manin matrices
(Appendix \ref{ref-A-156}). For an extensive survey on Manin-matrices see
\cite{chevrov-falqui-rubtsov}.  For applications to integrable systems
see \cite{rubtsov-silantyev-talalaev}.
The bialgebra $\uend(A)$  was used in \cite{hai-lorenz} to obtain a vast
generalisation of the classical MacMahon master theorem in combinatorics.

In our companion paper \cite{raedschelders-vandenbergh} we give a more in-depth
discussion of $\aut(k[x_1,x_2])$.  The methods in loc.\ cit.\ are more
akin to those used in the theory of algebraic groups. 
In particular 
 we construct the standard and
costandard representations using induction from a Borel subalgebra and we use
highest weight theory to construct the simple representations. Such methods are outside the scope of the current paper.

\section{Preliminaries}

\subsection{Notation and conventions}

Let~$k$ denote an algebraically closed field. By convention $\otimes=\otimes_k$, and often we will omit the tensor sign altogether, for example~$V \otimes_k V=V \otimes V=VV=V^2$. Also, $\VVect$ denotes the category of vector spaces over~$k$, and $\Vect$ is its full subcategory of finite dimensional vector spaces. Similarly, we use $\VVect_{\ZZ}$ and $\Vect_{\ZZ}$ for $\ZZ$-graded vector spaces.

We will need to consider the strict monoidal categories generated by a
number of objects $\{X_i\}_i$, morphisms $\{f_j\}_j$ and relations
$\{\phi_k\}_k$. We will use the notation~$\langle \{X_i\}_i |
\{f_j\}_j | \{\phi_k\}_k \rangle$ for such a category. The details of
this construction are spelled out in Section~3.1 of~\cite{pareigis-1}.

We often pass from a coalgebra $C$ to its dual algebra $C^*$. In the case of infinite dimensional coalgebras, we freely use the duality between coalgebras and pseudocompact algebras on the one hand, and the equivalence between $C$-comodules and discrete $C^*$-modules on the other. By $\comod(C)$ we denote the category of finite dimensional $C$-comodules. We will refer to an object in $\comod(C)$ as a `representation' of $C$.

\subsection{Modules over linear categories}

We briefly review some familiar notions from ring theory in the context of $k$-linear categories, viewed as `rings with several objects'. This analogy has been fully developed in~\cite{mitchell}. 

Let $\mathcal{C}$ denote a small $k$-linear category. A left
$\mathcal{C}$-module is then a covariant linear functor $F:\mathcal{C}
\to \VVect$. These functors form a Grothendieck category $\Mod
\mathcal{C}$. We will also write $\mod \Cscr\subset \Mod\Cscr$ for the full subcategory spanned by finitely
generated modules. In our applications
$\mod \Cscr$ will always be an abelian category.

If $\mathcal{D}$ is another small $k$-linear category,
then a $\mathcal{C}$-$\mathcal{D}$-bimodule is a linear covariant
functor $\mathcal{C} \to \Mod \mathcal{D^{\opp}}$, or equivalently a
$k$-linear bifunctor $\mathcal{D}^{\opp} \times \mathcal{C} \to
\VVect$. If $F:\Cscr \to \Dscr$ is a $k$-linear functor then we will
regard it as a $\Cscr$-$\Dscr$-bimodule by putting
$F(D,C)=\Dscr(D,F(C))$. The $\Cscr$-$\Cscr$-bimodule corresponding to
the identity functor is $\Cscr(-,-)$ (or informally $\Cscr$ itself).

\subsection{Distributive lattices of vector spaces}
\label{ref-2.3-12}
In Section~\ref{ref-6-101}, we will need the following simple properties of lattices of vector spaces.

	\begin{propositions}\cite[Ch. 1, \S 7]{polishchuk-positselski}
	\label{ref-2.3.1-13}
	Suppose a collection of subspaces $V_1, \ldots, V_n$ of a vector space $V$ generates a distributive lattice. 
	Then:
		\begin{enumerate}
		\item there is a basis $\{v_{i}\}_i$ of $V$ such that each of the subspaces $V_i$ is the linear span of a set of
		vectors $v_i$,
		\item the dual collection $V_1^{\perp}, \ldots, V_n^{\perp} \subset V^*$ also generates a distributive lattice,
		\item for any other distributive collection $V'_1, \ldots, V'_n \subset V'$ the tensor product collection 
		$V_1 \ot V'_1, \ldots, V_n \otimes V'_n \subset V \ot V'$ is distributive.
		\end{enumerate}
	\end{propositions}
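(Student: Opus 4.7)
The plan is to observe that (1) is the heart of the matter: (2) and (3) will follow from it essentially by taking dual bases and tensor product bases respectively. So I would focus first on (1) and then dispatch (2) and (3) in a few lines each.

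For (1), I would argue by induction on $n$. The base case $n=1$ is immediate (take a basis of $V_1$ and extend to $V$). For the inductive step, assume we have a basis $\{v_i\}_{i\in I}$ of $V$ and subsets $I_1,\ldots,I_{n-1}\subset I$ such that $V_k=\operatorname{span}(v_i:i\in I_k)$ for $k<n$. Partition $I$ into cells $I=\bigsqcup_\alpha C_\alpha$ where two indices lie in the same cell iff they belong to exactly the same $I_k$'s; correspondingly $V=\bigoplus_\alpha W_\alpha$ with $W_\alpha=\operatorname{span}(v_i:i\in C_\alpha)$. The key point is then to show that $V_n=\bigoplus_\alpha(V_n\cap W_\alpha)$; given this, one replaces each basis of $W_\alpha$ by a basis of $W_\alpha$ adapted to the subspace $V_n\cap W_\alpha$ and is done. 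To establish this decomposition one uses that each $W_\alpha$ can be built from the $V_k$ $(k<n)$ using $+$ and $\cap$ up to choosing a splitting, and then the distributivity of the full lattice generated by $V_1,\ldots,V_n$ lets one commute $V_n\cap(-)$ past the sum $\bigoplus W_\alpha$; alternatively one can bypass this issue by invoking Birkhoff's representation theorem for the finite distributive lattice $L$ generated by $V_1,\ldots,V_n,0,V$ and building a basis indexed by the join-irreducibles of $L$, where the join-irreducibles get assigned their "layer" complements $W/W^-$.

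For (2), take a basis $\{v_i\}_{i\in I}$ of $V$ as in (1), with $V_k=\operatorname{span}(v_i:i\in I_k)$. The dual basis $\{v_i^\ast\}_{i\in I}$ of $V^\ast$ then satisfies $V_k^\perp=\operatorname{span}(v_i^\ast:i\in I\setminus I_k)$, so the $V_k^\perp$ are all spanned by subsets of a common basis; since any collection of subsets of a set generates a distributive sublattice of the Boolean lattice $2^I$, the conclusion follows. (One can also note directly that $(-)^\perp$ is a lattice anti-isomorphism onto its image, swapping $+$ and $\cap$, and distributivity is self-dual.)

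For (3), apply (1) to obtain bases $\{v_i\}_{i\in I}$ of $V$ and $\{v'_j\}_{j\in J}$ of $V'$ compatible with the $V_k$ and $V'_k$ respectively. Then $\{v_i\otimes v'_j\}_{(i,j)\in I\times J}$ is a basis of $V\otimes V'$ with $V_k\otimes V'_k=\operatorname{span}(v_i\otimes v'_j:(i,j)\in I_k\times J_k)$, and again any family of subsets of $I\times J$ generates a distributive sublattice of $2^{I\times J}$.

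The main obstacle is the inductive step in (1): one must argue that distributivity is strong enough to guarantee that the new subspace $V_n$ respects whatever direct-sum decomposition of $V$ was produced by the induction hypothesis, even though the individual summands $W_\alpha$ of that decomposition are not themselves forced to lie in the lattice generated by $V_1,\ldots,V_{n-1}$ (they depended on a choice of splitting). This is the step where I expect to have to work carefully, either by unwinding the definitions of the cells and using distributivity repeatedly, or by side-stepping the issue via the Birkhoff route.
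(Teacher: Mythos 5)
The paper does not actually prove this proposition; it is cited verbatim from Polishchuk--Positselski, so there is no internal proof to compare against and I assess your argument on its own terms.

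Parts (2) and (3) are correct once (1) is granted: if each $V_k$ is spanned by a subset $I_k$ of a fixed basis, then the annihilators $V_k^\perp$ are spanned by the complementary dual basis vectors, and the products $V_k\otimes V'_k$ are spanned by $I_k\times J_k$ inside the tensor product basis; in both cases the resulting subspaces generate a sublattice of a Boolean lattice of subsets, which is distributive. This part of your write-up is sound.

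For part (1) the inductive step as you describe it is not merely delicate -- it is false, and no amount of careful unwinding will rescue it. You take an arbitrary basis adapted to $V_1,\ldots,V_{n-1}$, form the cells $W_\alpha$, and claim $V_n=\bigoplus_\alpha (V_n\cap W_\alpha)$. Already for $n=2$ in $V=k^2$, set $V_1=ke_1$ and $V_2=k(e_1+e_2)$. The lattice generated is $\{0,V_1,V_2,V\}$, four elements, hence distributive. The induction hypothesis may well hand you the adapted basis $\{e_1,e_2\}$, with cells $W_{\{1\}}=ke_1$ and $W_\emptyset=ke_2$; but $V_2\cap W_{\{1\}}=V_2\cap W_\emptyset=0$, so the asserted decomposition gives $0\neq V_2$. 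The underlying reason is the one you half-notice yourself: the cells, in particular the free cell $W_\emptyset$ (a chosen complement of $\sum_{k<n}V_k$ in $V$), are not elements of the lattice generated by $V_1,\ldots,V_n$; they depend on the basis. Distributivity of that lattice therefore cannot be used to ``commute $V_n\cap(-)$ past $\bigoplus W_\alpha$'', because the summands are outside the lattice where distributivity holds. The counterexample exhibits exactly this: one choice of complement ($ke_2$) is incompatible with $V_2$, another ($k(e_1+e_2)$) would have been fine. Consequently you cannot hope to refine an arbitrary basis adapted to $V_1,\ldots,V_{n-1}$; the adapted basis must be built with all of $V_1,\ldots,V_n$ in view from the outset. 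The Birkhoff route you sketch -- index basis blocks by the join-irreducibles $W$ of the finite distributive lattice $L=\langle V_1,\ldots,V_n,0,V\rangle$, choose a complement of the unique cocover $W^-\in L$ inside each such $W$, and then verify that every $U\in L$ is the direct sum of the complements attached to join-irreducibles $\leq U$ -- is the right idea and is essentially how the result is proved in the literature, but it has to replace the cell-refinement step entirely rather than serve as a fallback for a claim that is in fact false.
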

	
	\begin{propositions}\cite[Ch. 1, Proposition 7.2]{polishchuk-positselski}
	\label{ref-2.3.2-14}
	Given a collection of subspaces $V_1, \ldots, V_n \subset V$ such that any proper subset 
	$V_1, \ldots, \widehat{V_k}, \ldots, V_n$ generates a distributive lattice. Then the following are equivalent:
	\begin{enumerate}
	\item the collection $V_1, \ldots, V_n$ generates a distributive lattice,
	\item the complex of vector spaces 
		\begin{equation}
		0 \to \bigcap_s V_s \to \cdots \to \bigoplus_{t_1 < \cdots < t_{n-i}} \bigcap_{s=1}^{n-i} V_{t_i} \to \cdots \to 
		\bigoplus_t V_t \to V
		\end{equation}
		is exact everywhere except for the rightmost term.
	\end{enumerate}
	\end{propositions}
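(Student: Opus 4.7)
My strategy is to use Proposition~\ref{ref-2.3.1-13}(1) --- which identifies a distributive collection of subspaces with the existence of a common adapted basis of $V$ --- for the forward direction, and to proceed by induction on $n$ via a short exact sequence of complexes for the reverse direction.

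\textbf{Direction (1) $\Rightarrow$ (2).} I would apply Proposition~\ref{ref-2.3.1-13}(1) to obtain a basis $\{e_\alpha\}$ of $V$ such that each $V_k$ is spanned by $\{e_\alpha : e_\alpha \in V_k\}$. For each $\alpha$, set $I(\alpha) := \{k : e_\alpha \in V_k\}$. The complex then decomposes as a direct sum $\bigoplus_\alpha C_\alpha^\bullet$, where $C_\alpha^\bullet$ has a one-dimensional summand in degree $n - |T|$ for every $T \subseteq I(\alpha)$. Thus $C_\alpha^\bullet$ is, up to a degree shift, the augmented simplicial chain complex of the full simplex on vertex set $I(\alpha)$. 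For $I(\alpha) \neq \emptyset$ this is acyclic (the simplex is contractible); for $I(\alpha) = \emptyset$ the summand contributes only to the rightmost cohomology. Hence the complex is exact except possibly at the rightmost term, and the cokernel at $V$ equals $V / \sum_k V_k$.

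\textbf{Direction (2) $\Rightarrow$ (1).} I would induct on $n$, with the cases $n \leq 2$ trivial. For $n \geq 3$, I would split the complex $C^\bullet(V_1, \ldots, V_n; V)$ according to whether the index $n$ belongs to the subset $T \subseteq \{1, \ldots, n\}$, yielding a short exact sequence of complexes
\begin{equation*}
0 \to C^\bullet(V_1 \cap V_n, \ldots, V_{n-1} \cap V_n; V_n) \to C^\bullet(V_1, \ldots, V_n; V) \to C^\bullet(V_1, \ldots, V_{n-1}; V)[-1] \to 0.
\end{equation*}
The distributivity hypothesis on $V_1, \ldots, V_{n-1}$ combined with the already-established direction (1) $\Rightarrow$ (2) makes the right-hand complex exact except at its rightmost term. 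Together with the assumed exactness of the middle complex except at its rightmost term, the long exact sequence in cohomology forces the left-hand subcomplex to be fully exact; in particular $V_n = \sum_{i<n} (V_i \cap V_n)$.

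\textbf{Conclusion and the main obstacle.} Any $(n-2)$-subset of $\{V_1 \cap V_n, \ldots, V_{n-1} \cap V_n\}$ generates inside $V_n$ a sublattice of a distributive lattice among the $V_i$'s (one of the proper $(n-1)$-subsets of the full collection), hence is itself distributive. Applying the induction hypothesis inside $V_n$ therefore yields distributivity of $V_1 \cap V_n, \ldots, V_{n-1} \cap V_n$ in $V_n$. Combined with the distributivity of $V_1, \ldots, V_{n-1}$ in $V$ and with the identity $V_n = \sum (V_i \cap V_n)$, this should give distributivity of the full collection. The main obstacle lies in this last gluing step: an adapted basis of $V_n$ and an adapted basis of $V$ for the proper subset $V_1, \ldots, V_{n-1}$ need not be mutually compatible, so one must carefully modify the portion of the $V$-basis lying outside $V_n$ so that the merged basis is simultaneously adapted to all $n$ subspaces. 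The identity $V_n = \sum_{i<n} (V_i \cap V_n)$ is crucial here, since it confines $V_n$ to the sublattice generated by the remaining $V_i$, which is precisely what makes the required change of basis possible.
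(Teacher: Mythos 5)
Your direction $(1)\Rightarrow(2)$ is correct: with the adapted basis from Proposition~\ref{ref-2.3.1-13}(1) the complex splits into augmented chain complexes of simplices, exactly as you say.

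The reverse direction has a concrete error. In the complex of the statement the differential goes from larger index sets $T$ to smaller ones (the leftmost term is $\bigcap_s V_s$, the rightmost is $V$). Therefore the terms with $n\notin T$ are the ones stable under the differential: they form the \emph{subcomplex}, which is $C^\bullet(V_1,\ldots,V_{n-1};V)[-1]$, while the terms with $n\in T$ give the \emph{quotient} $C^\bullet(V_1\cap V_n,\ldots,V_{n-1}\cap V_n;V_n)$. You have the short exact sequence the wrong way around, and the error is not cosmetic: with your orientation the long exact sequence would make the left-hand complex fully exact and give $V_n=\sum_{i<n}(V_i\cap V_n)$, which is simply false for distributive collections --- take $V=k^3$ and $V_i=ke_i$, where the complex is exact but $V_3\neq 0=\sum_{i<3}(V_i\cap V_3)$. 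With the corrected short exact sequence what the long exact sequence actually yields (using $(1)\Rightarrow(2)$ for the proper subset $V_1,\ldots,V_{n-1}$) is that $C^\bullet(V_1\cap V_n,\ldots,V_{n-1}\cap V_n;V_n)$ is exact except in the rightmost degree, where the cohomology is identified with $\ker\bigl(V/\textstyle\sum_{i<n}V_i\to V/\textstyle\sum_i V_i\bigr)\cong V_n/(V_n\cap\sum_{i<n}V_i)$; comparing with the intrinsic cokernel one gets the modular identity $V_n\cap\sum_{i<n}V_i=\sum_{i<n}(V_i\cap V_n)$, not $V_n=\sum_{i<n}(V_i\cap V_n)$.

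Beyond this, the final gluing step --- combining the distributivity of $V_1,\ldots,V_{n-1}$ in $V$, the distributivity of $V_1\cap V_n,\ldots,V_{n-1}\cap V_n$ in $V_n$, and the above modular identity to produce a simultaneously adapted basis for all $n$ subspaces --- is exactly where the real content lies, and you only flag it as an obstacle rather than proving it. As written the argument therefore establishes some necessary conditions but does not complete the proof of $(2)\Rightarrow(1)$. The overall inductive strategy via the sub/quotient splitting is reasonable, but you would need to correct the orientation of the exact sequence, replace the false identity by the modular one, and actually carry out the basis-merging argument (or invoke a lattice-theoretic criterion, e.g.\ reducing distributivity to the modular identities so obtained for all permutations of the indices).
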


In \cite{polishchuk-positselski} Polishchuk and Positselski considered 
the category $Q_n$ which is  an
$n$-dimensional hypercube with commuting faces. Objects in $Q_n$ are subsets $I\subset \{1,\ldots,n\}$ with  arrows
$I\r J$ for $I\subset J$. 
\begin{theorem} \label{ref-2.1-15} \cite[Ch. 2, Lemma 9.1]{polishchuk-positselski}. 
Let $V_1, \ldots, V_n \subset V$ be a collection of subspaces and consider
the functor
\[
F:Q_n^\circ\to \VVect: I\mapsto \cap_{i\in I} V_i.
\]
The collection $(V_i)_i$  generates a distributive lattice if and only if $F$ can be extended to an
exact functor
\[
F:\mod(Q_n)\to \VVect
\]
which maps the projective object $Q_n(I,-)$ in $\mod(Q_n)$ to $F(I)$.
\end{theorem}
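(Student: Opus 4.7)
The strategy is to identify the candidate extension concretely. Since $\mod(Q_n)$ is generated under colimits by the representable projectives $Q_n(I,-)$, any exact extension of $F$ is automatically right exact and must therefore coincide with the coend
\[
\tilde F(M) \;=\; \int^{I\in Q_n} F(I)\otimes M(I),
\]
which is always well-defined, right exact, and by the co-Yoneda lemma sends $Q_n(I,-)\mapsto F(I)$. The theorem is thus equivalent to the assertion that $\tilde F$ is left exact if and only if the $V_i$ generate a distributive lattice.

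For the direction ``exact $\Rightarrow$ distributive'' I proceed by induction on $n$, with a trivial base. At the inductive step, any proper $S\subsetneq\{1,\ldots,n\}$ defines a lower subposet $Q_S\subset Q_n$ with an exact left Kan extension $i_!\colon \mod(Q_S)\to\mod(Q_n)$ sending $Q_S(I,-)\mapsto Q_n(I,-)$, so that $\tilde F\circ i_!$ exactly extends $F|_{Q_S}$; the induction hypothesis then yields distributivity of every proper sub-collection, verifying the hypothesis of Proposition~\ref{ref-2.3.2-14}. Next, consider the Koszul complex
\[
0\to Q_n(\{1,\ldots,n\},-)\to\cdots\to\bigoplus_{|T|=1}Q_n(T,-)\to Q_n(\emptyset,-)\to k_\emptyset\to 0,
\]
whose differentials are signed sums of the natural transformations from single-element inclusions; exactness is checked pointwise, as evaluation at $J\in Q_n$ reduces to the augmented simplicial chain complex of a simplex on $|J|$ vertices. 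Applying $\tilde F$ yields an exact sequence whose truncation is the complex of Proposition~\ref{ref-2.3.2-14} for the full collection, and Proposition~\ref{ref-2.3.2-14} then delivers the required distributivity.

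For the converse direction, since $\mod(Q_n)$ has finitely many simple objects $k_J$ ($J\subset\{1,\ldots,n\}$) and every object has a finite composition series, it suffices to verify that the left derived functors $L_i\tilde F$ vanish on each $k_J$ for $i>0$. The Koszul-type resolution of $k_J$ on the upper set $\{T:T\supset J\}$ has $k$th term $\bigoplus_{T\supset J,|T\setminus J|=k}Q_n(T,-)$, and applying $\tilde F$ produces precisely the Proposition~\ref{ref-2.3.2-14} complex for the restricted collection $V'_j:=V_j\cap W$ (for $j\notin J$) inside $W:=\bigcap_{i\in J}V_i$. By Proposition~\ref{ref-2.3.1-13}(i), distributivity of the full collection yields a basis of $V$ compatible with every $V_i$; intersecting with $W$ gives a basis of $W$ compatible with each $V'_j$, so the restricted collection is itself distributive and Proposition~\ref{ref-2.3.2-14} supplies the needed exactness.

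The main technical point is the bookkeeping of the two Koszul resolutions—pointwise verification of exactness and identification, after applying $\tilde F$, with the complexes of Proposition~\ref{ref-2.3.2-14} for the sub-collection or restricted collection in question. Once these identifications are in place both directions reduce cleanly to the two propositions already at hand.
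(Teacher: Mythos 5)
The paper states this theorem as a citation to Polishchuk--Positselski~[Ch.~2, Lemma~9.1] and supplies no proof of its own, so there is no in-text argument to compare your proposal against; I can only assess the proposal on its own terms. Your proof is correct. The identification of any exact (hence right exact, additive) extension with the coend $-\otimes_{Q_n}F$ is sound, since a right exact additive functor out of $\mod(Q_n)$ is determined by its restriction to the representables. The left Kan extension $i_!\colon\mod(Q_S)\to\mod(Q_n)$ for the order-ideal inclusion $Q_S\subset Q_n$ is indeed exact, because $(i_!M)(J)=M(S\cap J)$, and it sends $Q_S(I,-)$ to $Q_n(I,-)$, so $\tilde F\circ i_!$ exactly extends $F|_{Q_S}$ and the induction applies. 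Both Koszul complexes you write down are pointwise contractible (augmented chain complex of a simplex), so the forward direction gives the exactness required in Proposition~\ref{ref-2.3.2-14}(2) after the subcollection step has secured its hypothesis. For the converse, the reduction to $L_i\tilde F(k_J)=0$ for $i>0$ is legitimate because $\mod(Q_n)$ is a finite-length category, and the common-basis argument via Proposition~\ref{ref-2.3.1-13}(1) correctly shows that the restricted collection $(V_j\cap W)_{j\notin J}$ in $W=\bigcap_{i\in J}V_i$ is again distributive, so Proposition~\ref{ref-2.3.2-14} finishes. This is the natural argument and is presumably close to the cited proof.
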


\subsection{Koszul categories and algebras}

Let ${{\Cscr}}$ be a graded $k$-linear category, i.e.\ a category enriched in $\VVect_{\ZZ}$. We let ${{\Cscr}}_n$ be 
the morphisms of degree $n$ in ${{\Cscr}}$. In this way ${{\Cscr}}_0$ is a $k$-linear category.
We say that ${{\Cscr}}$ is connected if ${{\Cscr}}_n$ contains no morphisms for $n<0$ and only scalar multiples 
of the identity morphisms for $n=0$. From now on we assume ${{\Cscr}}$ is connected, ${{\Cscr}}_n(x,y)$ is finite-dimensional for all $n \in \ZZ$ and $x,y \in {{\Cscr}}$, and for any $x,y \in {{\Cscr}}$, there are only finitely many $z \in {{\Cscr}}$ that satisfy both ${{\Cscr}}(x,z) \neq 0$ and ${{\Cscr}}(z,y) \neq 0$. We let $\Gr({{\Cscr}})$ be the category of graded left ${{\Cscr}}$-modules, in other words: covariant graded functors ${{\Cscr}} \r \VVect_{\ZZ}$.
For $M\in \Gr({{\Cscr}})$ and $n\in \ZZ$ we define $M(n)\in \Gr({{\Cscr}})$ by $M(n)(x)_m=M(x)_{m+n}$.
Then $\Gr({{\Cscr}})$ has a system of projective generators given by $({{\Cscr}}(x,-)(n))_{x\in\Ob({{\Cscr}}),n\in \ZZ}$. 

Let $S_x\in \Gr({{\Cscr}})$ for $x\in \Ob({{\Cscr}})$ be defined by 
\begin{equation}
\label{ref-2.2-16}
S_x(y)=
\begin{cases}
k&\text{$x=y$}\\
0&\text{otherwise}
\end{cases}
\end{equation}
It is clear that $S_x$ is a simple object in $\Gr({{\Cscr}})$. 
	\begin{definitions}
	\label{ref-2.4.1-17}
	  The category ${{\Cscr}}$ is Koszul if it is connected and for each $x\in
	  \Ob({{\Cscr}})$ the object $S_x$ has a ``linear'' projective resolution,
	  i.e. a resolution of the form
		\begin{equation}
		\label{ref-2.3-18}
 		\cdots \r  \bigoplus_{i\in I_2}  {{\Cscr}}(x_{2,i},-)(-2) \r \bigoplus_{i\in I_1} {{\Cscr}}(x_{1,i},-)(-1)\r {{\Cscr}}(x,-)\r S_x\r 0
		\end{equation}
	\end{definitions}
Note that we do not assume that the $I_i$ are finite. 
	\begin{remarks}
	It is easy to see that the resolution \eqref{ref-2.3-18}, if it exists, must be unique, up to unique
	isomorphism.
	\end{remarks}
If ${{\Cscr}}$ is Koszul then ${{\Cscr}}$ is generated by
${{\Cscr}}_1$ over ${{\Cscr}}_0$ and moreover ${{\Cscr}}$ is quadratic in the sense that the
canonical map
\[
T_{{{\Cscr}}_0}({{\Cscr}}_1)/(R)\r {{\Cscr}}
\]
with 
\[
R=\ker({{\Cscr}}_1\otimes_{{{\Cscr}}_0}{{\Cscr}}_1\xrightarrow{f\otimes g\r fg} {{\Cscr}}_2)
\]
is an isomorphism, where 
	\begin{equation}
	T_{{{\Cscr}}_0}({{\Cscr}}_1)(-,-)={{\Cscr}}_0(-,-) \oplus {{\Cscr}}_1(-,-) \oplus \big( {{\Cscr}}_1(-,-) \otimes_{{{\Cscr}}_0} {{\Cscr}}_1(-,-) \big) \oplus \cdots.
	\end{equation}
is the free tensor category. We recall some standard facts which are proved exactly as in the ring case.
	\begin{lemmas}
	${{\Cscr}}$ is Koszul if and only if ${{\Cscr}}^\circ$ is Koszul.
	\end{lemmas}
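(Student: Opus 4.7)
The plan is to characterise Koszulity of $\Cscr$ by a Tor-condition that is manifestly symmetric under passing to the opposite category. Write $S^x\in\Mod(\Cscr^\circ)$ for the graded right $\Cscr$-module supported at $x$, defined analogously to $S_x$ in \eqref{ref-2.2-16}.

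First I would show that $\Cscr$ is Koszul if and only if for all $x,y\in\Ob(\Cscr)$ and all $i\geq 0$, the graded vector space $\Tor^\Cscr_i(S^x,S_y)$ is concentrated in internal degree $i$. The forward implication is immediate: tensoring the linear resolution \eqref{ref-2.3-18} of $S_y$ with $S^x$ over $\Cscr$ produces a complex whose $i$-th term is a vector space placed in internal degree $i$, forcing its homology to lie there. The converse proceeds by induction, building a minimal resolution of $S_y$ step by step: the $i=0$ Tor-concentration determines the top generators; the $i=1$ condition for the first syzygy ensures the next differential is of degree exactly one, and so on. The finiteness assumptions on $\Cscr$ guarantee that each graded component is finite-dimensional, so the minimal resolution is well-posed at every step.

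Next I would invoke the standard left--right symmetry of Tor: $\Tor^\Cscr_i(S^x,S_y)$ can equally well be computed as the $i$-th homology of a projective resolution of $S^x$ in $\Mod(\Cscr^\circ)$ tensored over $\Cscr$ with $S_y$. Consequently the Tor-concentration condition is symmetric under $\Cscr\leftrightarrow\Cscr^\circ$: by the first step, it characterises Koszulity of $\Cscr$, and applied with the roles of left and right modules interchanged, it characterises Koszulity of $\Cscr^\circ$. This yields the desired equivalence.

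The main point where the categorical setting requires care (and where the bulk of the bookkeeping hides) is that the index sets $I_i$ in \eqref{ref-2.3-18} are allowed to be infinite while the objects of $\Cscr$ themselves form an arbitrary set. The standing hypotheses that each $\Cscr_n(x,y)$ is finite-dimensional and that only finitely many $z\in\Cscr$ satisfy $\Cscr(x,z)\neq 0$ and $\Cscr(z,y)\neq 0$ ensure that all relevant tensor products and their homologies are computed degreewise without convergence issues, so the classical ring-theoretic arguments (cf.\ \cite{polishchuk-positselski}) carry over essentially verbatim.
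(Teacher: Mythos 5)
Your argument is correct and follows exactly the same route as the paper: the paper's proof simply observes that Koszulity is equivalent to $\Tor^{\Cscr}_i(S_x,S_y)$ being concentrated in a single internal degree (the paper writes degree $-i$; you write $+i$, a harmless sign-convention difference), and that this condition is manifestly left--right symmetric. You have merely spelled out the two implications of that Tor characterisation, which the paper states without elaboration.
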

	\begin{proof} We may consider $S_x$ also as a graded right
	  ${{\Cscr}}$-module. Then ${{\Cscr}}$ is Koszul if and only if
	  $\Tor_{{{\Cscr}}}^i(S_x,S_y)$ lives purely in degree $-i$ for all $x,y\in
	  \Ob({{\Cscr}})$. This characterisation is obviously left right symmetric.
	\end{proof}
	
	\begin{lemmas}\cite[Ch.1, \S 6,7 and Ch.2, \S 3,4]{polishchuk-positselski} 
	\label{ref-2.4.4-19}
	Let ${{\Cscr}}=T_{{{\Cscr}}_0}({{\Cscr}}_1)/(R)$ be a connected quadratic category.
	Then ${{\Cscr}}$ is Koszul if and only if the ${{\Cscr}}_0$-bimodules
	\[
	R_{ij}={{\Cscr}}_1^{\otimes_{{{\Cscr}}_0}i}\otimes_{{{\Cscr}}_0} R\otimes_{{{\Cscr}}_0} {{\Cscr}}_1^{\otimes_{{{\Cscr}}_0}j},
	\]
	for $i+j+2=n$, generate a distributive lattice in ${{\Cscr}}_1^{\otimes_{{{\Cscr}}_0} n}$.

	Moreover if we put
	\[
	R_n=\bigcap_{i+j+2=n} R_{ij}
	\]
	and $R_1={{\Cscr}}_1$, $R_0={{\Cscr}}_0$
	then the resolution \eqref{ref-2.3-18} is of the form 
		\begin{multline}
		\label{ref-2.5-20}
		\cdots \r \bigoplus_y {{\Cscr}}(y,-)\otimes_k R_n(x,y) \r \cdots \r \bigoplus_y {{\Cscr}}(y,-)\otimes_k R_2(x,y)  \\
		\r \bigoplus_y {{\Cscr}}(y,-)\otimes_k {{\Cscr}}_1(x,y) \r {{\Cscr}}(x,-)\r S_x\r 0
		\end{multline}
	where the differentials are given by the composition
	\begin{multline*}
	\bigoplus_y {{\Cscr}}(y,-)\otimes_k R_n(x,y)\r \bigoplus_{y,z} {{\Cscr}}(y,-)\otimes_k {{\Cscr}}_1(z,y)\otimes_k R_{n-1}(x,z) \\
	\r \bigoplus_z {{\Cscr}}(z,-)\otimes_k R_{n-1}(x,z)
	\end{multline*}
	\end{lemmas}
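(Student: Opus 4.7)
The plan is to mimic the classical Koszul theory for graded algebras (as in \cite[Ch.~2, §3]{polishchuk-positselski}), treating~$\Cscr$ as a ``ring with several objects'' and reducing all questions to pointwise statements of vector spaces by evaluating the ${{\Cscr}}_0$\dash bimodules at pairs of objects $(x,y)\in\Ob(\Cscr)^2$. Since $\Cscr_0$ is semisimple (it is generated by the identities on objects, so $\Cscr_0(x,y)=k\cdot \id_x$ if $x=y$ and $0$ otherwise), the tensor products $\otimes_{\Cscr_0}$ are harmless: for fixed endpoints they unwind into ordinary tensor products of vector spaces indexed by chains of intermediate objects.

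First I would write down the candidate complex \eqref{ref-2.5-20} intrinsically. Fix $x\in\Ob(\Cscr)$. Evaluating at an arbitrary object, the complex becomes a direct sum over chains $x=y_0,y_1,\ldots$ of complexes whose $n$\dash th term is $R_n(x,y)=\bigcap_{i+j+2=n}R_{ij}(x,y)$ sitting inside $\Cscr_1^{\otimes_{\Cscr_0} n}(x,y)$, a finite-dimensional vector space. The differential is the alternating sum obtained from the inclusions $R_n\hookrightarrow R_{i-1,j}\cap R_{i,j-1}$ followed by the collapse into $R_{n-1}$-pieces; exactly as in the algebra case one checks $d^2=0$ directly from the quadratic relations, and the tail $\Cscr(x,-)\to S_x\to 0$ is exact by construction.

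The core of the argument is to identify exactness of this complex with the distributivity of the lattice generated by the $R_{ij}$. For this I would apply Proposition~\ref{ref-2.3.2-14} (or equivalently Theorem~\ref{ref-2.1-15}) pointwise: the complex at $(x,y)$ is, up to reindexing, exactly the complex appearing in Proposition~\ref{ref-2.3.2-14} for the collection $\{R_{ij}(x,y)\}_{i+j+2=n}\subset \Cscr_1^{\otimes_{\Cscr_0} n}(x,y)$. So exactness of \eqref{ref-2.5-20} at the slot involving $R_n$, for every $(x,y)$, is equivalent to distributivity of the $R_{ij}$ in degree~$n$. Running this for all $n$ simultaneously gives: \eqref{ref-2.5-20} is a resolution of $S_x$ if and only if the $R_{ij}$ generate a distributive lattice in $\Cscr_1^{\otimes_{\Cscr_0} n}$ for every $n\geq 2$.

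Finally I would tie this to the definition of Koszulity. If $\Cscr$ is Koszul, the linear resolution \eqref{ref-2.3-18} exists and is unique; comparing the generators in each degree with those forced by quadraticity shows that the $n$\dash th syzygy module must equal $\bigcap_{i+j+2=n} R_{ij}$, giving one direction and the formula $R_n=\bigcap R_{ij}$. Conversely, if the distributivity condition holds, the explicit complex \eqref{ref-2.5-20} built from the $R_n$ is exact by the previous paragraph, and it is manifestly linear, so $S_x$ has a linear projective resolution and $\Cscr$ is Koszul. The main obstacle is really just bookkeeping, namely verifying that the bimodule-level differentials in \eqref{ref-2.5-20} reduce, after evaluation at $(x,y)$, to the canonical differentials of the hypercube complex in Proposition~\ref{ref-2.3.2-14}; once this translation is set up, Polishchuk--Positselski's vector space distributivity lemma does all the real work.
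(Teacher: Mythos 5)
Your overall strategy (reduce the bimodule statement to a pointwise vector-space statement using the semisimplicity of $\Cscr_0$, then invoke the Polishchuk--Positselski lattice machinery) is the right one, and it is indeed what makes the categorical generalisation from the algebra case to ``rings with several objects'' essentially formal. The paper itself gives no proof here and simply defers to \cite{polishchuk-positselski}.

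However, there is a genuine gap in the step you call the ``core of the argument.'' You assert that, after evaluation, the Koszul complex \eqref{ref-2.5-20} is ``up to reindexing, exactly the complex appearing in Proposition~\ref{ref-2.3.2-14}'' for the subspaces $R_{ij}(x,y)\subset\Cscr_1^{\otimes_{\Cscr_0}n}(x,y)$. This is not the case. Look at internal degree $m$ of \eqref{ref-2.5-20} (single-object case, to lighten notation): the terms are
\[
0\to R_m\to \Cscr_1\otimes R_{m-1}\to \Cscr_2\otimes R_{m-2}\to\cdots\to \Cscr_{m-1}\otimes \Cscr_1\to \Cscr_m\to 0,
\]
i.e.\ subquotients of $\Cscr_1^{\otimes m}$ of the form $\bigl(\Cscr_1^{\otimes j}/\textstyle\sum_k W'_k\bigr)\otimes\bigcap_l W''_l$, whereas the complex in Proposition~\ref{ref-2.3.2-14} has terms $\bigoplus_{t_1<\cdots<t_s}\bigcap_p W_{t_p}$, direct sums of plain intersections. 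The two complexes have the same extreme terms ($R_m=\bigcap W_k$ and $\Cscr_m=\Cscr_1^{\otimes m}/\sum W_k$) but different interiors; there is no reindexing that turns one into the other. Consequently the equivalence ``\eqref{ref-2.5-20} exact $\Leftrightarrow$ the $R_{ij}$ distributive'' does not follow by a direct appeal to Proposition~\ref{ref-2.3.2-14}.

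What actually makes the argument work in \cite{polishchuk-positselski} is the stronger structural result recorded here as Theorem~\ref{ref-2.1-15}: distributivity of the collection is equivalent to the existence of an \emph{exact} extension of $I\mapsto\bigcap_{i\in I}W_i$ to a functor $\mod(Q_n)\to\Vect$, and the terms $\Cscr_j\otimes R_{m-j}$ and the Koszul differentials are then all obtained by applying this exact functor to suitable objects and morphisms of $\mod(Q_n)$ (equivalently, one may choose a simultaneous adapted basis via Proposition~\ref{ref-2.3.1-13}(1), in which everything becomes combinatorial). This is considerably more than invoking the hypercube complex of Proposition~\ref{ref-2.3.2-14} ``pointwise,'' and it is precisely the part of the proof you have compressed away. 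You should either carry out this identification via Theorem~\ref{ref-2.1-15}, or explicitly reduce to the single-object case and cite the corresponding statement in \cite{polishchuk-positselski} for algebras, noting that the reduction is justified because the $\Cscr_0$-bimodule tensor products unwind pointwise over $\Ob(\Cscr)^2$.
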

	
	This complex will be called the Koszul complex of $x$ and will be denoted $K_{\bullet}({{\Cscr}})(x)$.
	
	\begin{remarks}
\label{ref-2.4.5-21}
	If ${{\Cscr}}$ has only a single object $x$, ${{\Cscr}}(x,x)\cong A$ with ${{\Cscr}}_0(x,x) \cong k$ and ${{\Cscr}}_1(x,x) \cong V$, then Definition~\ref{ref-2.4.1-17} reduces to the classical definition of Koszulity for the connected graded $k$-algebra $A$. Also, the complex~\eqref{ref-2.5-20} coincides with the classical Koszul complex
		$$
		K_{\bullet}(A): \cdots \to A \otimes_k A_n^{!*} \to \cdots \to A \otimes_k A_2^{!*} \to A \otimes_k A_1^{!*} \to A 
		\to k \to 0,
		$$
	since 
		\begin{align}
		A_n^{!*} = \bigg(V^{* \otimes n} / \sum_{i+j+2=n} R_{ij}^{\perp}\bigg)^* 
		= \bigg(\sum_{i+j+2=n} R_{ij}^{\perp}\bigg)^{\perp} 
		= \bigcap_{i+j+2=n} R_{ij}
		=R_n.
		\end{align}
	\end{remarks}

\subsection{Quadratic categories, confluence and Koszulity}

Below we assume that~${{\Cscr}}$ is a connected graded category given by
a homogeneous presentation $T_{{{\Cscr}}_0}({{\Cscr}}_1)/(R)$ where 
$R\subset {{\Cscr}}_1\otimes_{{{\Cscr}}_0}{{\Cscr}}_1$. We assume that we are given
 bases $(f_{x,y,i})_{i\in I_{x,y}}$  for ${{\Cscr}}_1(x,y)$ and we let $F$ be the 
union of all these bases.

Assume furthermore that $F$ is equipped with a total ordering and use this
to equip the composable words in $F$ (which form a basis for $T_{{{\Cscr}}_0}({{\Cscr}}_1)$)
with the lexicographic ordering.
Assume that
$R$ has a basis given by
\begin{equation}
\label{ref-2.7-22}
fg=r 
\end{equation}
where $f,g \in F$ and $r$ is a linear combination of quadratic words in $F$ which
are strictly smaller than $fg$.
\begin{lemmas}\cite{berger}
\label{ref-2.5.1-23}
Assume that the basis elements in $R$ are confluent (the overlaps
of length 3 resolve correctly). Then:
\begin{enumerate}
\item
${{\Cscr}}$ has a basis consisting
of compositions $f_1\cdots f_n$  in $F$ such that no pair $f_if_{i+1}$
occurs as a lefthandside in \eqref{ref-2.7-22}.
\item
${{\Cscr}}$ is Koszul.  Moreover
any sequence of composable arrows $f_1\cdots f_n$ in $F$
where each pair $f_if_{i+1}$ occurs as lefthand side of \eqref{ref-2.7-22}
can be uniquely completed to an element  $f_1\cdots f_n-g$
of $R_n$ where $g$ is a linear combination  of compositions of element in $F$ which are
strictly smaller than $f_1\cdots f_n$. Moreover the elements
of $R_n$ so obtained form a basis for $R_n$.
\end{enumerate}
\end{lemmas}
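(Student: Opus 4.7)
The plan is to adapt Bergman's diamond lemma and Berger's PBW criterion for Koszulity to the categorical setting; everything in sight is local in the source and target, so the category-theoretic generalisation only requires taking care that all concatenations be composable. Call an element $f_1 \cdots f_n$ of the free tensor category $T_{\Cscr_0}(\Cscr_1)$ \emph{reduced} if no consecutive pair $f_i f_{i+1}$ occurs as the left-hand side of a relation in~\eqref{ref-2.7-22}. Reduced words span $\Cscr$ in each degree, since any non-reduced word contains some $f_i f_{i+1}$ that can be rewritten via $f_i f_{i+1}=r$, producing a sum of strictly smaller (in lex order) compositions; well-foundedness of the lexicographic order on composable words in $F$ ensures this rewriting terminates. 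The confluence hypothesis on length-$3$ overlaps, combined with the Newman-lemma style argument that length-$3$ confluence implies global confluence when rewriting is terminating, then shows that every word has a unique normal form. Consequently reduced words are linearly independent in $\Cscr$, proving part~(1).

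For part~(2), I would invoke Lemma~\ref{ref-2.4.4-19}, which reduces Koszulity to showing that the collection $(R_{ij})_{i+j+2=n}$ generates a distributive lattice in $\Cscr_1^{\otimes_{\Cscr_0} n}$. The strategy is to produce a basis of $\Cscr_1^{\otimes_{\Cscr_0} n}$ consisting of words in $F$, each of which lies in $R_{ij}$ or not, so that distributivity becomes automatic by Proposition~\ref{ref-2.3.1-13}. Concretely, for every composable sequence $f_1, \ldots, f_n$ in $F$ in which each consecutive pair $f_i f_{i+1}$ is a leading term of some relation in~\eqref{ref-2.7-22}, I would construct a ``lifted'' element $\widetilde{f_1 \cdots f_n} = f_1\cdots f_n - g$ of $T_{\Cscr_0}(\Cscr_1)_n$ by iterated rewriting: whenever an intermediate form contains a leading pair, subtract the corresponding relation; confluence guarantees that the procedure converges to a unique expression whose tail $g$ is a linear combination of strictly smaller compositions, and that $\widetilde{f_1\cdots f_n}$ lies in $R_{ij}$ precisely when every pair $f_{i+1}f_{i+2}, \ldots, f_{i+j+1}f_{i+j+2}$ (with $i$, $j$ determining the critical window) is a leading term. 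Together with the reduced words from part~(1), these liftings form a basis of $\Cscr_1^{\otimes_{\Cscr_0} n}$ adapted to every $R_{ij}$, which establishes distributivity and identifies $R_n=\bigcap_{i+j+2=n} R_{ij}$ with the span of the liftings associated to fully critical sequences.

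The main obstacle is the careful bookkeeping in part~(2): one must verify both that the lifting $\widetilde{f_1\cdots f_n}$ exists and is uniquely determined modulo reduced words (which is a global statement derived from the local length-$3$ confluence), and that the subspaces $R_{ij}$, once described in terms of this basis, are precisely the linear spans of subsets of basis elements. This is essentially the categorical reformulation of Berger's original argument in~\cite{berger}, with the composability constraints on words being the only genuinely new feature; since all rewriting steps preserve source and target, no extra work is needed beyond tracking this bookkeeping throughout.
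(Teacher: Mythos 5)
The paper does not prove this lemma: it is cited directly to Berger~\cite{berger}, so there is no proof in the paper to compare against. Your treatment of part~(1) is essentially correct: it is the categorical form of Bergman's diamond lemma, with the standard Newman-style passage from length-$3$ confluence to global confluence of a terminating rewriting system.

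For part~(2), reducing Koszulity to distributivity via Lemma~\ref{ref-2.4.4-19} and exhibiting a basis of $\Cscr_1^{\otimes_{\Cscr_0} n}$ adapted to all the $R_{ij}$ is the right strategy, but your construction has three gaps. First, your stated membership criterion ``$\widetilde{w}\in R_{ij}$ precisely when every pair $f_{i+1}f_{i+2},\ldots,f_{i+j+1}f_{i+j+2}$ is a leading term'' does not match the definition $R_{ij}=\Cscr_1^{\otimes_{\Cscr_0} i}\otimes_{\Cscr_0} R\otimes_{\Cscr_0}\Cscr_1^{\otimes_{\Cscr_0} j}$, which constrains only the single pair in positions $(i{+}1,i{+}2)$. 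Second, an adapted basis needs a vector for every word, not just for the reduced ones and the fully critical ones; the partially critical words, which index the intermediate intersections $\bigcap_{i\in S} R_{i-1,\,n-i-1}$ for proper $S\subsetneq\{1,\ldots,n-1\}$, are absent from your proposed basis. Third, for a fully critical $w$, defining $\widetilde{w}=w-g$ with $g$ the normal form reached by iterated rewriting only places $\widetilde{w}$ in $(R)\cap\Cscr_1^{\otimes_{\Cscr_0} n}=\sum_{i+j+2=n}R_{ij}$, which is in general strictly larger than $R_n=\bigcap_{i+j+2=n} R_{ij}$: each rewriting step subtracts an element of one particular $R_{ij}$, but subsequent reductions of the lower-order remainder need not preserve membership in that $R_{ij}$. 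Showing that the cumulative difference nonetheless lands in every $R_{ij}$ simultaneously is the real content of Berger's theorem and cannot be waved through as ``confluence guarantees''; the usual route is to pass to the associated graded for the word-order filtration, where distributivity is the trivial monomial statement, and then use confluence to show that the filtration commutes with the relevant intersections. That argument is missing here.
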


\subsection{Koszul algebras and monoidal categories}
\label{ref-2.6-24}
Here we recall some results from \cite{kriegk-vandenbergh,polishchuk-positselski} rephrased in the
language we will use in the current paper.
Let $\Lambda^+_{\infty}=\langle r_1,\ldots,r_i,r_{i+1},\ldots\rangle$ be the 
free monoid with infinitely many generators
and let $\Uscr_{\u,\infty}^+$ be the strict monoidal categories derived
from $\Lambda^+_{\infty}$ as follows:
\begin{enumerate}
\item We freely adjoin morphisms $\phi_{a,b}:r_{a+b}\r r_ar_b$.
\item We impose the relations
\begin{equation}
\label{ref-2.8-25}
\xymatrix{
r_{a+b+c}\ar[d]_{\phi_{a+b,c}}\ar[rr]^{\phi_{a,b+c}} && r_ar_{b+c}\ar[d]^{r_a\phi_{b,c}}\\
r_{a+b}r_{c}\ar[rr]_{\phi_{a,b}r_c} && r_ar_{b}r_c
}
\end{equation}
\end{enumerate}
writing $u$ for $\Id_u$ and suppressing tensor products as usual. We will also define a second operation $\wedge$ on $\Uscr_{\u,\infty}^+$ by putting
for $w=r_{i_1}\cdots r_{i_m}$, $w'= r_{j_1}\cdots r_{j_m}$
\[
w\wedge w'
:=r_{i_1}\cdots r_{i_m+j_1}\cdots r_{j_m}
\]
We declare $1\wedge w$ and $w\wedge 1$ to be undefined. There is an obvious canonical map
\[
\phi_{w,w'}:w\wedge w'\r ww'
\]
in $\Uscr_{\u,\infty}^+$ obtained from $\phi_{i_m,j_1}$.
The following is a direct corollary of results in  \cite{polishchuk-positselski}.
\begin{theorems} \begin{enumerate}
\item
The monoidal structure  on $\Uscr^+_{\u,\infty}$ may be extended to a biexact monoidal structure 
on $\mod(\Uscr^{+}_{\u,\infty})$ via 
\begin{equation}
\label{ref-2.9-26}
\Uscr^{+}_{\u,\infty}(-,u)
\otimes \Uscr^{+}_{\u,\infty}(-,v)=\Uscr^{+}_{\u,\infty}(-,uv)
\end{equation}
\item For $u\in \Ob(\Uscr^{+}_{\u,\infty})$ put $P_u=\Uscr^{+}_{\u,\infty}(-,u)$ and let $S_u$ be the simple quotient of $P_u$ (see \eqref{ref-2.2-16}). Then for $u,v
\neq 1$ there is a commutative diagram in $\mod(\Uscr^{+}_{\u,\infty})$
\begin{equation}
\label{ref-2.10-27}
\xymatrix{
&P_{u\wedge v}\ar@/^2em/[rr]^{P_{\phi_{u,v}}}\ar@{->>}[d]\ar@{^(->}[r] &P_{u}\otimes P_v\ar@{->>}[d]\ar@{=}[r]& P_{uv}\ar@{->>}[d]\\
0\ar[r]&S_{u\wedge v}\ar[r] & S_u\otimes S_v\ar[r]& S_{uv}\ar[r]&0
}
\end{equation}
with the lower row being exact. 
\end{enumerate}
\end{theorems}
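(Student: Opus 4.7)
For Part (1), the plan is to define the monoidal structure on $\mod(\Uscr^+_{\u,\infty})$ by Day convolution extending \eqref{ref-2.9-26}. Concretely, given finite presentations $P^1\to P^0\to F\to 0$ and $Q^1\to Q^0\to G\to 0$ by sums of representables, declare $F\otimes G$ to be the cokernel of $(P^1\otimes Q^0)\oplus(P^0\otimes Q^1)\to P^0\otimes Q^0$ computed on representables via \eqref{ref-2.9-26}. Associativity and functoriality are routine from the associativity of $\Lambda^+_\infty$, and right exactness in each variable is automatic. For biexactness, the category $\Uscr^+_{\u,\infty}$ must be shown to be Koszul: apply Lemma~\ref{ref-2.5.1-23} to the pentagon relations \eqref{ref-2.8-25}, which are confluent under the order that ``splits off the leftmost piece first'' (the only overlap $\phi_{a,b,c,d}$ on $r_{a+b+c+d}$ resolves uniquely). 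Then tensoring a Koszul resolution of any module $F$ by a representable $P_v$ yields, via \eqref{ref-2.9-26}, a Koszul-type resolution of $F\otimes P_v$; this implies $P_v\otimes-$ (and symmetrically $-\otimes P_u$) is exact on representables, hence exact everywhere, so biexactness follows.

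For Part~(2), the top morphism $P_{u\wedge v}\to P_u\otimes P_v=P_{uv}$ is $P_{\phi_{u,v}}$ by Yoneda, where $\phi_{u,v}:u\wedge v\to uv$ is applied at the boundary. Injectivity follows from the explicit confluent basis of Lemma~\ref{ref-2.5.1-23}: composition $f\mapsto \phi_{u,v}\circ f$ sends normal-form morphisms $w\to u\wedge v$ to pairwise distinct normal-form morphisms $w\to uv$ (the boundary generator can never be cancelled by a rewriting rule since it has no interior overlap with the factorizations of $w\to u\wedge v$). The commutative square on the right is formal: the map $P_u\otimes P_v\twoheadrightarrow S_u\otimes S_v$ is the tensor of canonical surjections, and $P_{uv}\twoheadrightarrow S_{uv}$ is identified with it under the right-hand equality of the top row, producing the rightmost arrow $S_u\otimes S_v\twoheadrightarrow S_{uv}$ (which is surjective by right exactness).

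For the bottom-row exact sequence, compute $S_u\otimes S_v$ by right exactness. Writing $S_u=P_u/\sum_{u'\to u,\,u'\neq u}\mathrm{im}(P_{u'}\to P_u)$ (the sum running over strict refinements of $u$) and similarly for $S_v$, one obtains
\begin{equation}
S_u\otimes S_v=P_{uv}\Big/\Big(\sum_{u'\neq u}\mathrm{im}(P_{u'v})+\sum_{v'\neq v}\mathrm{im}(P_{uv'})\Big).
\end{equation}
Every morphism $w\to uv$ in $\Uscr^+_{\u,\infty}$ either refines ``purely inside $u$'' or ``purely inside $v$'' (in which case it lies in the denominator) or else its decomposition crosses the boundary between $u$ and $v$, in which case it factors uniquely through the boundary morphism $\phi_{u,v}:u\wedge v\to uv$. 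Consequently the quotient is supported precisely at $w=uv$ (spanned by $\id_{uv}$, giving $S_{uv}$) and at $w=u\wedge v$ (spanned by $\phi_{u,v}$, giving $S_{u\wedge v}$), and the boundary-crossing refinements of $u\wedge v$ are themselves killed, producing the desired short exact sequence. Commutativity of the left square follows because both compositions $P_{u\wedge v}\to S_{u\wedge v}\hookrightarrow S_u\otimes S_v$ and $P_{u\wedge v}\hookrightarrow P_u\otimes P_v\twoheadrightarrow S_u\otimes S_v$ send $\id_{u\wedge v}$ to the class of $\phi_{u,v}$.

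The main obstacle I anticipate is the biexactness in Part~(1): right exactness is formal from the presentation-by-projectives definition, but left exactness (equivalently, flatness of each $P_u$ for Day convolution) uses the Koszul structure in an essential way. Verifying confluence of \eqref{ref-2.8-25} is straightforward by inspection, but one must then carefully chain Koszul resolutions through the tensor product to conclude $\mathrm{Tor}$-vanishing of the relevant functors.
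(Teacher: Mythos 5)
Your Part (2) argument is a direct combinatorial verification, different in spirit from the paper's, which reduces to the hypercube categories $Q_n$ of Polishchuk--Positselski and then cites \cite[Prop.~2.2.1]{kriegk-vandenbergh}. Your computation is essentially right: by right exactness $S_u\otimes S_v = P_{uv}\big/\bigl(\sum_{u'<u}\operatorname{im} P_{u'v}+\sum_{v'<v}\operatorname{im} P_{uv'}\bigr)$, and since $\Uscr^+_{\u,\infty}$ is a poset one checks by a case analysis on coarsenings of $uv$ (does the $u/v$ boundary form a cut point or does a block cross it?) that the quotient is supported only at $uv$ and $u\wedge v$, each one-dimensionally, yielding the exact sequence. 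You should however tighten the case analysis: a boundary-crossing $w\neq u\wedge v$ is killed because either some non-crossing block has size $\ge 2$ (so it factors through a proper $u'v$ or $uv'$), or the crossing block strictly contains $\{i_m,j_1\}$ (factor through $u'v$ or $uv'$ by coarsening only on the relevant side).

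Part (1), however, has a genuine gap in the biexactness argument. The step ``tensoring a Koszul resolution of $F$ by $P_v$ yields a Koszul-type resolution of $F\otimes P_v$; this implies $P_v\otimes-$ is exact on representables, hence exact everywhere'' is circular: ``exact on representables'' is not a coherent notion (exactness concerns sequences, not objects), and the assertion that tensoring a projective resolution of $F$ by $P_v$ again produces a resolution \emph{is} the left-exactness you want to prove. Koszulity of the category does not by itself imply biexactness of Day convolution. The paper proves biexactness by transporting the monoidal structure through the explicit equivalence $\Uscr^+_{\u,\infty}\cong Q^\circ$ with $Q=\coprod_{n\ge -1}Q_n$: under this equivalence, Day convolution becomes the Polishchuk--Positselski tensor $\cube_m\times\cube_n\to\cube_{m+n+1}$ sending $(X_\bullet,Y_\bullet)$ to the functor with $Z_{I\cup(J+m+1)}=X_I\otimes Y_J$, which is a slot-wise tensor product of vector spaces and therefore manifestly biexact. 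That identification is the ingredient your argument is missing. (A secondary issue: confluence for $\Uscr^+_{\u,\infty}$ \emph{as a non-monoidal linear category} requires checking both the pentagon overlaps and the ``far-apart commuting'' relations analogous to~\eqref{ref-3.2-64}; you only mention the former.)
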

\begin{proof}
\begin{enumerate}
\item
Put $Q=\coprod_{n\ge -1} Q_n$ where $Q_n$ for $n\ge 0$ was introduced in \S\ref{ref-2.3-12} and $Q_{-1}:=\{\bullet\}$ is a point.
There is an equivalence of categories
\begin{equation}
\label{ref-2.11-28}
\Uscr_{\u,\infty}^+\cong Q^\circ
\end{equation}
which on objects is given by
\begin{align*}
r_{i_1} \cdots r_{i_k}&\mapsto 
\{1,\ldots,i_1-1,i_1+1, \ldots, i_1+i_2-1,i_1+i_2+1, \ldots, i_1+i_2+\cdots +i_k-1\}\\
& \qquad\qquad\qquad\qquad\qquad\qquad\qquad \in Q_{i_1+i_2+\cdots +i_k-1}\\
1&\mapsto \bullet\in Q_{-1}
\end{align*}
The equivalence \eqref{ref-2.11-28} may be used to make $Q$ into a monoidal
category. This monoidal category was introduced in \cite{polishchuk-positselski}. Explicitly it is given by
\begin{equation}
\label{ref-2.12-29}
\otimes:Q_m \times Q_n \to Q_{m+n+1}:I,J\mapsto I\cup (J+m+1)
\end{equation}
for $m,n\ge 0$. One extends this to $Q$ by declaring $\bullet\in Q_{-1}$
to be a unit object. The operation $\wedge$ on $\Uscr_{\u,\infty}^+$ corresponds to the 
operation
\begin{equation}
\label{ref-2.13-30}
\wedge:Q_m \times Q_n \to Q_{m+n}:I,J\mapsto I\cup (J+m)
\end{equation}
which is undefined of $m=-1$ or $n=-1$.
The category $\mod(Q_n)$ is denoted by $\cube_n$ in \cite[\S2.9]{polishchuk-positselski} and 
it is equipped with a monoidal structure
\[
\otimes:\cube_m\times \cube_n\r \cube_{m+n+1}:
(X_\bullet)\otimes (Y_\bullet)\mapsto (Z_\bullet)
\]
where 
\[
Z_{I\cup (J+m+1)}=Z_{I\cup \{m+1\}\cup (J+m+1)}:=X_I\otimes Y_J
\]
This monoidal structure is obviously biexact and it is easily seen
to coincide with  \eqref{ref-2.9-26}.
\item 
This is implicitly contained in \cite{polishchuk-positselski} and explicitly in \cite[Prop.\ 2.2.1]{kriegk-vandenbergh} after translating to the current setting via \eqref{ref-2.11-28}\eqref{ref-2.12-29}\eqref{ref-2.13-30}.\qed
\end{enumerate}
\def\qed{}
\end{proof}
\begin{theorems} Let $A=TV/(R)$ with $R\subset V\otimes V$ be a Koszul algebra. Let $R_n=\cup_{i+j+2=n} V^iRV^j$, $R_1=V$. Consider the monoidal functor 
\[
M^+:\Uscr^+_{\u,\infty}\r \VVect
\]
which sends $r_a$ to $R_a$ and $\phi_{a,b}$ to the inclusion $R_{a+b}\hookrightarrow R_aR_b$. Then
\begin{enumerate}
\item $M^+$ can be extended to an exact monoidal functor
\[
M^+:\mod(\Uscr^{+,\circ}_{\u,\infty})\r \VVect:\Uscr^+_{\u,\infty}(-,u)\mapsto M^+(u)
\]
\item For $u\in \Uscr_{\u,\infty}^+$ put 
\[
\nabla(u)=\coker\left(\bigoplus_{v\r u\in \Uscr_{\u,\infty}^+, v\neq u}M^+(v)\r M^+(u)\right)
\]
Then for $u,v\neq 1$ we have a commutative diagram 
\begin{equation}
\label{ref-2.14-31}
\xymatrix{
&M^+(u\wedge v)\ar@/^2em/[rr]^{M^+(\phi_{u,v})}\ar@{->>}[d]\ar@{^(->}[r] &M^+(u)\otimes M^+(v)\ar@{->>}[d]\ar@{=}[r]& M^+(uv)\ar@{->>}[d]\\
0\ar[r]&\nabla(u\wedge v)\ar[r] & \nabla(u)\otimes \nabla(v)\ar[r]& \nabla(uv)\ar[r]&0
}
\end{equation}
with the lower row being exact. 
\end{enumerate}
\end{theorems}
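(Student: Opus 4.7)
For part (1), the key input is the distributivity of the lattice generated by the subspaces $V_s^n := V^{s-1}RV^{n-1-s} \subset V^{\otimes n}$ (for $s = 1,\ldots,n-1$), which follows from the Koszulity of $A$ via Lemma~\ref{ref-2.4.4-19}. Under the monoidal equivalence $\Uscr_{\u,\infty}^+ \cong Q^{\circ}$ of~\eqref{ref-2.11-28}, an object $u = r_{i_1}\cdots r_{i_k}$ of total degree $n$ corresponds to the subset $I_u \in Q_{n-1}$ obtained by removing the ``glue points'' $i_1,\, i_1+i_2,\, \ldots,\, i_1+\cdots+i_{k-1}$ from $\{1,\ldots,n-1\}$, and a direct calculation using the definition $R_a = \bigcap_{i+j+2=a} V^iRV^j$ shows $M^+(u) = \bigcap_{s \in I_u} V_s^n$. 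By Theorem~\ref{ref-2.1-15}, the distributivity of $\{V_s^n\}_s$ produces an exact functor $\mod(Q_{n-1}) \to \VVect$ sending $Q_{n-1}(I,-) \mapsto \bigcap_{s \in I} V_s^n$; assembling these over all $n$ (together with the trivial piece in degree zero) yields the desired extension $M^+ : \mod(\Uscr_{\u,\infty}^{+,\circ}) \to \VVect$. Monoidality on representables is then a direct computation: with $u,v$ of degrees $m+1,n+1$, formula~\eqref{ref-2.12-29} gives $I_{uv} = I_u \cup (I_v + m + 1)$, and the intersection $\bigcap_{s \in I_{uv}} V_s^{m+n+2}$ splits through $V^{\otimes(m+n+2)} = V^{\otimes(m+1)} \otimes V^{\otimes(n+1)}$ as $(M^+(u) \otimes V^{\otimes(n+1)}) \cap (V^{\otimes(m+1)} \otimes M^+(v)) = M^+(u) \otimes M^+(v)$, matching~\eqref{ref-2.9-26}. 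Biexactness is then automatic from the exactness of each factor.

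For part (2), the plan is to apply the exact monoidal functor $M^+$ from part (1) directly to the diagram~\eqref{ref-2.10-27}. Monoidality identifies $M^+(P_u \otimes P_v)$ with $M^+(u) \otimes M^+(v) = M^+(uv)$, and exactness ensures that the inclusion $P_{u \wedge v} \hookrightarrow P_u \otimes P_v$ of the previous theorem is sent to an inclusion $M^+(u\wedge v) \hookrightarrow M^+(u) \otimes M^+(v)$; concretely this map is the inclusion $R_{i_1}\cdots R_{i_k+j_1}\cdots R_{j_l} \hookrightarrow R_{i_1}\cdots R_{i_k}R_{j_1}\cdots R_{j_l}$ induced by $\phi_{i_k,j_1}$. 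To identify $M^+(S_u)$ with $\nabla(u)$, note that since $\Uscr_{\u,\infty}^+(v,u)$ is at most one-dimensional, $S_u$ admits the presentation
\[
\bigoplus_{\substack{v \to u \in \Uscr_{\u,\infty}^+ \\ v \neq u}} P_v \longrightarrow P_u \longrightarrow S_u \longrightarrow 0,
\]
and applying the exact $M^+$ gives precisely the defining presentation of $\nabla(u)$. The exactness of the bottom row of~\eqref{ref-2.14-31} then transports from that of~\eqref{ref-2.10-27}.

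The principal technical obstacle is establishing monoidality of the extension at the module level. The essential input is Proposition~\ref{ref-2.3.1-13}(3): tensor products of distributive collections remain distributive. Without this one cannot simultaneously guarantee exactness of $M^+$ on each factor \emph{and} compatibility with the hypercube tensor product~\eqref{ref-2.9-26}; the splitting-of-intersections argument used above implicitly invokes this joint distributivity. All remaining verifications --- the explicit identification of $M^+(u)$ as a multiple intersection, the matching of the $\wedge$-operation on $\Uscr_{\u,\infty}^+$ with~\eqref{ref-2.13-30}, and the passage from diagram~\eqref{ref-2.10-27} to~\eqref{ref-2.14-31} --- are then bookkeeping.
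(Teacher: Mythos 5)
Your proof is correct and follows essentially the same route as the paper, which for part (1) simply cites Theorem~\ref{ref-2.1-15} and for part (2) observes $M^+(P_u)=M^+(u)$, $M^+(S_u)=\nabla(u)$ and applies exactness and monoidality of $M^+$ to diagram~\eqref{ref-2.10-27}; you have merely filled in details the paper leaves implicit (the identification $M^+(u)=\bigcap_{s\in I_u}V_s^n$, and the one-line presentation of $S_u$ justifying $M^+(S_u)=\nabla(u)$, which uses that $\Uscr^+_{\u,\infty}\cong Q^\circ$ is a poset).

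One small calibration on your closing paragraph: the identity
\[
\bigl(M^+(u)\otimes V^{\otimes(n+1)}\bigr)\cap\bigl(V^{\otimes(m+1)}\otimes M^+(v)\bigr)=M^+(u)\otimes M^+(v)
\]
that gives monoidality of the extension on representables is pure linear algebra (for subspaces $A\subset X$, $B\subset Y$ one always has $(A\otimes Y)\cap(X\otimes B)=A\otimes B$) and needs no distributivity. Proposition~\ref{ref-2.3.1-13}(3) is the ingredient that makes the $\cube$-tensor product on $\mod(\Uscr^{+,\circ}_{\u,\infty})$ biexact, which is the content of the preceding theorem rather than of this one; once that biexactness is in hand, exactness of $M^+$ together with monoidality on representables propagates monoidality to all modules by taking presentations. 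So the claim is right, but the division of labour between the two theorems is slightly different from what you describe.
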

\begin{proof}
\begin{enumerate}
\item This is a direct consequence of Theorem \ref{ref-2.1-15}.
\item We have $M^+(u)=M^+(P_u)$, $ \nabla(u)=M^+(S_u) $.  The commutativity of \eqref{ref-2.14-31}
  now follows from \eqref{ref-2.10-27} and the fact that $M^+$ is exact
  and monoidal.\qed
\end{enumerate}
\def\qed{}\end{proof}
\subsection{Artin-Schelter regular algebras}
\label{ref-2.7-32}

Let $A$ denote a finitely generated homogeneous $k$-algebra, i.e. $A$ is of the form $TV/(R)$, where $V$ is a finite dimensional $k$-vector space and $R$ is a subspace contained in $V^{\otimes N}$. In particular, this algebra is graded and $A_0 = k$. 

	\begin{definitions} \cite{AS}
	\label{ref-2.7.1-33}
	The algebra $A$ is Artin-Schelter regular of dimension $d$ if
		\begin{enumerate}
		\item $A$ has finite global dimension $d$,
		\item 	
				\begin{equation*}
				\Ext^i_{A}(k,A) = \left \{ 
					\begin{array}{ll}
						0 & \textrm{if $i \neq d$} \\
						k(l) & \textrm{if $i=d$},
					\end{array} \right.
				\end{equation*}
		\end{enumerate}
	for some $l \in \mathbb{Z}$.
	\end{definitions}
	
	\begin{remarks}
	\begin{enumerate}
	\item One often also requires finite GK-dimension. We will not need it in what follows.
	\item One can define both left and right AS-regular algebras, but it turns out that the definition is left-right symmetric, see~\cite[Proposition 3.6]{he-torrecillas-vanoystaeyen-zhang}. In particular, Definition~\ref{ref-2.7.1-33} is unambiguous.
	\end{enumerate}
	\end{remarks}
	
Recall the following:	
	
	\begin{propositions}\cite[Lemma 1.2]{reyes-rogalski-zhang}
	The algebra $A$ is AS-regular of dimension $d$ if and only if it is graded skew Calabi-Yau of dimension $d$, that is
		\begin{enumerate}
		\item $A$ is homologically smooth of dimension $d$, i.e. $A$ is perfect and has projective dimension $d$ as a $k$-linear bimodule over $A$,
		\item $A$ is rigid Gorenstein, i.e. there is a graded algebra automorphism $\mu:A \to A$ such that
				\begin{equation*}
				\Ext^i_{A^e}(A,A^e) = \left \{ 
					\begin{array}{ll}
						{}^{\mu} A^{\id}(l) & \textrm{if $i=d$} \\
						0 & \textrm{if $i \neq d$}
					\end{array} \right.
				\end{equation*}
			as graded $A$-bimodules,
		\end{enumerate}
	for some $l \in \mathbb{Z}$.
	\end{propositions}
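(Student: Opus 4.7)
The plan is to prove both directions by exploiting a minimal projective bimodule resolution $P_{\bullet}\to A$ over $A^e=A\otimes A^{\opp}$ and the base change isomorphism $k\otimes^L_A(-):D(A^e)\to D(A)$, which connects $\Ext^*_{A^e}(A,A^e)$ with $\Ext^*_A(k,A)$. Since $A$ is connected graded and finitely presented, such a minimal resolution exists and its length equals the projective dimension $\pd_{A^e}(A)$; moreover $k\otimes_A P_\bullet\to k$ is the minimal projective resolution of $k$ as a left $A$-module, so the length also equals $\gldim A$. Hence the dimensional equality and homological smoothness part of the equivalence reduce to controlling when this common length is finite.

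For the direction ($\Rightarrow$), suppose $A$ is AS-regular of dimension $d$. Then $\gldim A = d<\infty$, so $\pd_{A^e}(A)=d$, giving homological smoothness. Applying $\Hom_{A^e}(-,A^e)$ to the minimal bimodule resolution produces a complex $C^{\bullet}$ computing $\Ext^*_{A^e}(A,A^e)$, and a standard adjunction $\Hom_{A^e}(P,A^e)\otimes^L_A k\simeq \Hom_A(k\otimes_A P,A)$ (after suitable duality on one factor) shows
\[
k\otimes^L_A R\Hom_{A^e}(A,A^e)\simeq R\Hom_A(k,A).
\]
By hypothesis the right-hand side is concentrated in degree $d$ with value $k(l)$. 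A connectivity / minimality argument (each $P_i$ is finitely generated in a single degree) then forces $R\Hom_{A^e}(A,A^e)$ to be concentrated in degree $d$ and to be a rank-one invertible graded bimodule shifted by~$l$. Any such object is necessarily isomorphic to ${}^{\mu}A^{\id}(l)$ for a unique graded algebra automorphism $\mu$, yielding the rigid Gorenstein condition.

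For the direction ($\Leftarrow$), suppose $A$ is graded skew Calabi-Yau of dimension $d$. Homological smoothness gives $\pd_{A^e}(A)=d$, hence $\gldim A\le d$ by base change. Applying the same isomorphism in the other direction,
\[
R\Hom_A(k,A)\simeq k\otimes^L_A R\Hom_{A^e}(A,A^e)\simeq k\otimes^L_A\, {}^{\mu}A^{\id}(l)[-d]\simeq k(l)[-d],
\]
where the last step uses that $k\otimes^L_A A = k$ because $k$ is concentrated in a single homological degree and $A$ acts trivially on the right on $k$ (the twist $\mu$ plays no role after collapsing with $k$ on the left). This reads off $\Ext^i_A(k,A)=0$ for $i\neq d$ and $\Ext^d_A(k,A)=k(l)$, so $A$ is AS-regular of dimension $d$.

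The main technical obstacle is the careful bookkeeping of the left/right bimodule structures and the internal grading in the base change isomorphism, in particular verifying that $k\otimes^L_A\, {}^{\mu}A^{\id}\cong k$ as graded left $A$-modules (up to the shift $l$) irrespective of $\mu$, and that minimality of the resolution rigidifies $R\Hom_{A^e}(A,A^e)$ enough to conclude that a rank-one bimodule concentrated in one cohomological degree must be of the form ${}^{\mu}A^{\id}(l)$. Once these linear-algebraic checks are performed, both implications follow cleanly from the single adjunction identity above.
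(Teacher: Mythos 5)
The paper does not supply its own proof of this proposition; it is imported verbatim from Reyes--Rogalski--Zhang \cite[Lemma 1.2]{reyes-rogalski-zhang}, so there is no in-text argument to compare against. Your overall strategy --- pass from bimodule $\Ext$ to one-sided $\Ext$ through the base-change identity $k\otimes^L_A \RHom_{A^e}(A,A^e)\simeq \RHom_A(k,A)$ and then invoke Nakayama-type rigidity --- is essentially the one used in the literature, and your $(\Leftarrow)$ direction goes through once the left/right bookkeeping you flag is carried out.

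The $(\Rightarrow)$ direction, however, has a genuine gap at the sentence claiming that ``a connectivity/minimality argument (each $P_i$ is finitely generated in a single degree) forces $\RHom_{A^e}(A,A^e)$ to be concentrated in degree $d$.'' The parenthetical already assumes Koszulity, which is not a hypothesis of the proposition as cited, but the more serious issue is that minimality of $P_\bullet$ does not yield concentration on its own. The dual complex $Q^\bullet=\Hom_{A^e}(P_\bullet,A^e)$ is indeed minimal as a complex of free bimodules, but applying $k\otimes_A -$ kills only one half of the bimodule radical $\mathfrak m\otimes A + A\otimes\mathfrak m$: the differentials of $k\otimes_A Q^\bullet$ remain nonzero. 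Being quasi-isomorphic to $k(l)[-d]$ therefore forces the \emph{terms} $k\otimes_A Q^i$ to be the (nonzero, for all $i=0,\dots,d$) terms of the minimal one-sided free resolution of $k(l)$ shifted by $d$; in particular $Q^i\neq 0$ for $i<d$ and no direct Nakayama on the terms can kill them. What is actually required is a separate lemma: a bounded complex of finitely generated graded free modules over a connected graded algebra is exact in a given cohomological degree whenever its reduction modulo $\mathfrak m$ is exact there (proved by inductively splitting off a contractible summand at an end of the complex, using that a map of graded free modules is a split mono precisely when its reduction mod $\mathfrak m$ is injective). Applying this to $Q^\bullet$ viewed as a complex of one-sided free modules gives $H^i(Q^\bullet)=0$ for $i<d$, and only then is $Q^\bullet[d]$ a free bimodule resolution of $\omega:=\Ext^d_{A^e}(A,A^e)$. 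Tensoring that resolution with $k$ on one side gives $\Tor^A_i(\omega,k)=0$ for $i>0$ and $\omega\otimes_A k\cong k(l)$, whence $\omega$ is free of rank one on each side; the bimodule structure on a two-sided free rank-one module is what finally produces the twist $\mu$ and the identification $\omega\cong {}^{\mu}A^{\id}(l)$. This two-step mechanism (exactness lemma for complexes, then one-sided freeness of the top $\Ext$) is the content that ``minimality rigidifies $\RHom_{A^e}(A,A^e)$'' glosses over.
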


\subsection{Universal coacting bialgebras and Hopf algebras}
\label{ref-2.8-34}

Throughout 
	\begin{equation}
	A=k\oplus A_1\oplus A_2\oplus \cdots
	\end{equation}
is an $\mathbb{N}$-graded algebra such that $\dim A_i<\infty$ for all $i$.
We first introduce the universal coacting
bialgebra $\uend(A)$ which is defined using a suitable universal property. Every
bialgebra has a universal enveloping Hopf algebra, which in the case
of $\uend(A)$ will be denoted $\uaut(A)$. This Hopf algebra also
satisfies a universal property and is in fact the universal coacting
Hopf algebra of $A$.

	\begin{definitions}
	\label{ref-2.8.1-35}
	The universal coacting algebra of $A$, denoted $\uend(A)$, is an algebra equipped with an 	
	algebra morphism $\delta_A:A \to \uend(A) \ot A$, such that $\delta_A(A_n)\subset \uend(A)\ot A_n$ satisfying the following universal property: 
	for any algebra $B$ and algebra morphism $f: A \to B \ot A$, such that 
	$f(A_n) \subset B \ot A_n$, there exists a unique morphism 
	$g:\uend(A) \to B$ such that the diagram
	$$
	\begin{tikzcd}
		A \drar[swap]{f} \rar{\delta_A} & \uend(A) \ot A \dar[dashed]{g \ot 1}\\
		& B \ot A 
	\end{tikzcd}
	$$
	commutes.
	\end{definitions}

The existence of this algebra is essentially due to Manin~\cite{manin}. In the following proposition we collect some properties of this algebra.

	\begin{definitions} Let $B$ be a bialgebra. A $B$-comodule algebra is an algebra $A$ equipped with
	an algebra morphism $f:A\rightarrow B\otimes A$ which makes $A$ into a comodule over $B$.
	\end{definitions}
	
	\begin{propositions}\cite[Proposition 1.3.8]{pareigis-2}
	\label{ref-2.8.3-36}
	\begin{enumerate}
	\item The universal coacting algebra of $A$ is a bialgebra,
	\item $A$ is an $\uend(A)$-comodule 
	algebra via $\delta_A$,
	\item $\uend(A)$ also satisfies a different universal property:
	if $B$ is any bialgebra, and $f:A \to B \otimes A$ equips $A$ with the structure
	of a $B$-comodule algebra such that $f(A_n)\subset B\otimes A_n$, then there is a unique morphism of bialgebras $g:\uend(A) \to B$ such 
	that the diagram
	$$
	\begin{tikzcd}
		A \drar[swap]{f} \rar{\delta_A} & \uend(A) \otimes A \dar[dashed]{g \otimes 1} \\
		& B \otimes A
	\end{tikzcd}
	$$
	commutes.
	\end{enumerate}
	\end{propositions}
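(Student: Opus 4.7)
The plan is to construct the bialgebra structure on $\uend(A)$ purely formally, using the universal property of Definition~\ref{ref-2.8.1-35} applied to various algebra morphisms built from $\delta_A$ itself. Everything will follow from the uniqueness clause of the universal property.

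First, for part (1), I would define the comultiplication $\Delta:\uend(A)\to \uend(A)\otimes\uend(A)$ and the counit $\epsilon:\uend(A)\to k$ as follows. Consider the algebra morphism
\[
(\delta_A\otimes\id_A)\circ\delta_A:A\to \uend(A)\otimes\uend(A)\otimes A.
\]
Since $\delta_A$ preserves the grading on the $A$-factor, so does this composite. By the universal property of $\uend(A)$ (with $B=\uend(A)\otimes\uend(A)$) there is a unique algebra morphism $\Delta:\uend(A)\to\uend(A)\otimes\uend(A)$ making the obvious square commute. Similarly, applying the universal property to the trivial coaction $1\otimes\id_A:A\to k\otimes A$ gives a unique algebra morphism $\epsilon:\uend(A)\to k$. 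The coassociativity and counit axioms then follow by applying the uniqueness clause to the two composites $(\Delta\otimes 1)\circ\Delta$ and $(1\otimes\Delta)\circ\Delta$, both of which satisfy the same universal factorisation property with respect to the triple-iterated coaction, and similarly for the counit axioms.

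For part (2), the equations $(\Delta\otimes\id_A)\circ\delta_A=(\id_{\uend(A)}\otimes\delta_A)\circ\delta_A$ and $(\epsilon\otimes\id_A)\circ\delta_A=\id_A$ are built into the definitions of $\Delta$ and $\epsilon$ above, so $\delta_A$ makes $A$ into a comodule. Because $\delta_A$ is itself an algebra morphism by construction, $A$ is in fact a $\uend(A)$-comodule algebra.

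For part (3), given a bialgebra $B$ and an algebra morphism $f:A\to B\otimes A$ which makes $A$ into a $B$-comodule algebra and which preserves the grading on $A$, Definition~\ref{ref-2.8.1-35} already produces a unique algebra morphism $g:\uend(A)\to B$ with $(g\otimes\id_A)\circ\delta_A=f$. It remains to show $g$ is a bialgebra morphism, i.e.\ $\Delta_B\circ g=(g\otimes g)\circ\Delta$ and $\epsilon_B\circ g=\epsilon$. For the comultiplication, both $\Delta_B\circ g$ and $(g\otimes g)\circ\Delta$ are algebra morphisms $\uend(A)\to B\otimes B$, and I would verify that each of the resulting morphisms $A\to B\otimes B\otimes A$ obtained by tensoring with $\id_A$ and composing with $\delta_A$ equals $(f\otimes\id_A)\circ f=(\id_B\otimes f)\circ f$ (using coassociativity of $f$ as a $B$-coaction). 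The uniqueness clause of the universal property (applied to the algebra $B\otimes B$ and the morphism $(\id_B\otimes f)\circ f$) then forces $\Delta_B\circ g=(g\otimes g)\circ\Delta$. The counit identity is proved analogously, using $(\epsilon_B\otimes\id_A)\circ f=\id_A$.

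The main (minor) obstacle is purely bookkeeping: one must check that all the relevant morphisms constructed from $\delta_A$, $f$, $\Delta$, $\epsilon$ preserve the grading on the $A$-tensor factor, so that the universal property of $\uend(A)$ actually applies. Once this is verified, the proof is a string of applications of the uniqueness clause, with no substantial computation required.
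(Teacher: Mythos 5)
The paper itself does not prove this proposition; it simply cites \cite[Proposition~1.3.8]{pareigis-2}. Your argument is essentially the standard one (and presumably the one in Pareigis): build the comultiplication and counit by feeding the iterated coaction and the trivial coaction into the universal property of Definition~\ref{ref-2.8.1-35}, and derive all coalgebra axioms and the bialgebra-morphism property of $g$ from the uniqueness clause. The logical structure is sound, and the key observation that one must check grading preservation on the $A$-factor before applying the universal property is correctly flagged.

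One small but genuine slip: the morphism you use to define $\Delta$ is written $(\delta_A \otimes \id_A)\circ\delta_A$, which does not type-check, since $\delta_A$ has domain $A$ while the target of the inner $\delta_A$ has first tensor factor $\uend(A)$. What you mean (and what your own formula in part~(2), namely $(\Delta\otimes\id_A)\circ\delta_A=(\id_{\uend(A)}\otimes\delta_A)\circ\delta_A$, confirms) is the composite
\[
A\xrightarrow{\ \delta_A\ }\uend(A)\otimes A\xrightarrow{\ \id_{\uend(A)}\otimes\delta_A\ }\uend(A)\otimes\uend(A)\otimes A .
\]
With that correction the rest goes through: both $(\Delta\otimes\id)\circ\Delta$ and $(\id\otimes\Delta)\circ\Delta$, when tensored with $\id_A$ and precomposed with $\delta_A$, give $(\id\otimes\id\otimes\delta_A)\circ(\id\otimes\delta_A)\circ\delta_A$, so uniqueness forces coassociativity; similarly for the counit, and for part~(3) both $\Delta_B\circ g$ and $(g\otimes g)\circ\Delta$ induce $(\id_B\otimes f)\circ f$.
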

	\begin{propositions}\cite[Theorem 2.6.3]{pareigis-2}
	\label{ref-2.8.4-37}
	Let $B$ be a bialgebra. Then there exists a Hopf algebra $H(B)$, called the Hopf envelope of $B$, 
	and a homomorphism of bialgebras $i:B \to H(B)$ such that for every Hopf algebra $H$ and for every 
	homomorphism of bialgebras $f:B \to H$, there is a unique homomorphism of Hopf algebras 
	$g:H(B) \to H$ such that the diagram
	$$
	\begin{tikzcd}
		B \drar[swap]{f} \rar{i} & H(B) \dar[dashed]{g} \\
		& H 
	\end{tikzcd}
	$$
	commutes.
	\end{propositions}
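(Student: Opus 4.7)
The plan is to construct $H(B)$ explicitly as a colimit of bialgebras obtained by iteratively adjoining a candidate antipode and then forcing the antipode axioms. The key observation is that a bialgebra antihomomorphism $S \colon B \to C$ is the same data as a bialgebra homomorphism $B^{\opp,\coop} \to C$, where $B^{\opp,\coop}$ denotes $B$ equipped with opposite multiplication and opposite comultiplication.

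First, I would construct $B_1$ as the coproduct $B \sqcup B^{\opp,\coop}$ in the category of bialgebras. Coproducts in this category exist and can be built as a quotient of the tensor algebra on the underlying coalgebra $B \oplus B^{\opp,\coop}$ by the algebra relations coming from both factors; the comultiplication is induced componentwise. This gives a canonical bialgebra map $i_1 \colon B \to B_1$ together with a bialgebra antihomomorphism $S_1 \colon B \to B_1$ obtained by composing $B \to B^{\opp,\coop} \to B_1$. Although $S_1$ is a formal candidate for an antipode on $i_1(B)$, the axioms $\sum S_1(x_{(1)}) i_1(x_{(2)}) = \sum i_1(x_{(1)}) S_1(x_{(2)}) = \epsilon(x) \cdot 1$ need not hold in $B_1$.

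Next, I would let $I_1 \subset B_1$ be the two-sided ideal generated by the expressions $\sum S_1(x_{(1)}) i_1(x_{(2)}) - \epsilon(x)\cdot 1$ and $\sum i_1(x_{(1)}) S_1(x_{(2)}) - \epsilon(x)\cdot 1$ for $x \in B$, and verify (using the coalgebra structure on $B$ and standard Sweedler calculations) that $I_1$ is in fact a biideal, so that $B_2 := B_1/I_1$ inherits a bialgebra structure and $S_1$ descends. In $B_2$ the antipode axioms hold on the image of $B$, but $B_2$ contains additional generators on which no inversion has been imposed. The remedy is to iterate: apply the same construction to $B_2$, obtaining $B_3$, and so on. Setting $H(B) := \varinjlim_n B_n$ produces a bialgebra in which every element has an antipode, because any element lives in some $B_n$ and its antipode is defined in $B_{n+1}$; one checks that this assembles into a single bialgebra antihomomorphism $S \colon H(B) \to H(B)$ satisfying the antipode axioms globally.

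Finally, I would verify the universal property. Given a bialgebra map $f \colon B \to H$ with $H$ a Hopf algebra, composing with the antipode of $H$ yields a bialgebra antihomomorphism $B \to H$, hence a bialgebra map $B^{\opp,\coop} \to H$; the universal property of the coproduct gives a unique $B_1 \to H$ extending $f$, and since the antipode axioms hold in $H$ this factors through $B_2$, and by induction through each $B_n$, yielding a unique bialgebra map $H(B) \to H$ extending $f$. Because $B$ generates $H(B)$ as a bialgebra (and hence as a Hopf algebra), this map is unique and automatically respects antipodes. The main obstacle is the bookkeeping in the middle step: one must check carefully that the ideals $I_n$ generated by the antipode relations are biideals, and that the colimit antipode is well-defined; both follow from Sweedler-notation manipulations together with the fact that $S$ is a bialgebra antihomomorphism by construction.
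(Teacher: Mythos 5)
The paper does not prove this result; it is cited from Pareigis's lecture notes \cite{pareigis-2}, so there is no in-paper argument to compare against. Your iterative colimit construction is correct and essentially the standard one (it is close in spirit to Manin's original construction and to Pareigis's). Both checks you defer do go through. For the biideal claim, with $r_x=\sum S_1(x_{(1)})i_1(x_{(2)})-\epsilon(x)\cdot 1$, the fact that $i_1$ is a coalgebra homomorphism and $S_1$ a coalgebra antihomomorphism gives $\epsilon(r_x)=0$ and
\[
\Delta(r_x)=\textstyle\sum S_1(x_{(2)})i_1(x_{(3)})\otimes S_1(x_{(1)})i_1(x_{(4)})-\epsilon(x)\,1\otimes 1\ \equiv\ 0\ \pmod{I_1\otimes B_1+B_1\otimes I_1},
\]
and since $\Delta$ is an algebra map and $I_1\otimes B_1+B_1\otimes I_1$ is a two-sided ideal of $B_1\otimes B_1$, the same holds for the ideal $I_1$ generated by the $r_x$ and $r'_x$. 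For the well-definedness of the colimit antipode, the point is the compatibility $S_{n+1}\circ j_n=j_{n+1}\circ S_n$ of the candidate antipodes with the transition maps $j_n\colon B_n\to B_{n+1}$: both sides are two-sided convolution inverses of the coalgebra-to-algebra morphism $j_{n+1}\circ j_n\colon B_n\to B_{n+2}$, and such inverses are unique.

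There is one false assertion in your last step: ``$B$ generates $H(B)$ as a bialgebra.'' The sub-bialgebra of $H(B)$ generated by $i(B)$ is not closed under $S$; already for $B=k[\NN]$ the Hopf envelope is $k[\ZZ]$, and $-1$ does not lie in the sub-bialgebra generated by $\NN$. What is true, and is what the uniqueness argument actually needs, is that $\bigcup_{n\ge 0}S^n(i(B))$ generates $H(B)$ as an algebra, i.e.\ $i(B)$ generates $H(B)$ as a Hopf algebra. Combined with the standard fact that any bialgebra morphism between Hopf algebras automatically intertwines antipodes, this forces $g(S^n(i(b)))=S_H^n(f(b))$ and hence determines $g$ uniquely.
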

        We will denote the Hopf envelope of $\uend(A)$ by
        $\uaut(A)$. Using Definition~\ref{ref-2.8.1-35}, there is a
        morphism of algebras $\delta_A:A \to \uaut(A) \otimes A$ such
        that $A$ is a comodule-algebra over $\uaut(A)$. This easily
        gives the final universal property.

	\begin{corollarys}
	\label{ref-2.8.5-38}
	If $H$ is a Hopf algebra and $A$ is an $H$-comodule algebra by $f:A \to H \otimes A$ such that $f(A_n)\subset H\otimes A_n$, then there is a 
	unique morphism of Hopf algebras $g:\uaut(A) \to H$ such that the diagram 
	$$
	\begin{tikzcd}
	A \drar[swap]{f} \rar{\delta_A} & \uaut(A) \otimes A \dar[dashed]{g \otimes 1} \\
	& H \otimes A
	\end{tikzcd}
	$$
	commutes.
	\end{corollarys}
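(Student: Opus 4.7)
The plan is to deduce the corollary by composing the two universal properties already recorded: the bialgebra universal property of $\uend(A)$ from Proposition~\ref{ref-2.8.3-36}(3), and the Hopf envelope universal property from Proposition~\ref{ref-2.8.4-37}. Since $\uaut(A)$ was defined as the Hopf envelope $H(\uend(A))$, and the coaction $\delta_A: A \to \uaut(A) \otimes A$ on the Hopf algebra is by construction the composition of $\delta_A:A \to \uend(A)\otimes A$ with $(i\otimes 1)$, the argument should be essentially a diagram chase.

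First I would observe that any Hopf algebra $H$ is in particular a bialgebra, so the given $H$-comodule algebra structure $f:A\to H\otimes A$ with $f(A_n)\subset H\otimes A_n$ makes $A$ into a $B$-comodule algebra for the bialgebra $B=H$ in the sense of Proposition~\ref{ref-2.8.3-36}. Applying that proposition yields a unique bialgebra morphism $h:\uend(A)\to H$ with $(h\otimes 1)\circ \delta_A=f$. Next, since $H$ is a Hopf algebra, the Hopf envelope property of Proposition~\ref{ref-2.8.4-37} applied to $h$ produces a unique Hopf algebra morphism $g:\uaut(A)\to H$ such that $g\circ i=h$, where $i:\uend(A)\to \uaut(A)$ is the canonical map. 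The coaction of $\uaut(A)$ on $A$ being $(i\otimes 1)\circ \delta_A$, we then get
\[
(g\otimes 1)\circ \delta_A^{\uaut(A)} \;=\; (g\otimes 1)\circ(i\otimes 1)\circ \delta_A^{\uend(A)} \;=\; (h\otimes 1)\circ \delta_A^{\uend(A)} \;=\; f,
\]
which is the required commutativity.

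For uniqueness, suppose $g,g':\uaut(A)\to H$ are two Hopf algebra morphisms making the diagram commute. Then both $g\circ i$ and $g'\circ i$ are bialgebra morphisms $\uend(A)\to H$ satisfying the universal property of Proposition~\ref{ref-2.8.3-36}(3) with respect to $f$, so they coincide. Uniqueness in Proposition~\ref{ref-2.8.4-37} then forces $g=g'$.

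I do not expect any genuine obstacle here: the statement is a formal consequence of the two universal properties, and the only thing to be careful about is that the coaction of $\uaut(A)$ on $A$ factors through $i$, which is built into the construction of $\uaut(A)$ as the Hopf envelope of $\uend(A)$.
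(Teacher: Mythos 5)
Your proof is correct and follows exactly the same route as the paper: apply the bialgebra universal property of $\uend(A)$ (Proposition~\ref{ref-2.8.3-36}) to obtain $h:\uend(A)\to H$, then the Hopf envelope universal property (Proposition~\ref{ref-2.8.4-37}) to lift it to $g:\uaut(A)\to H$. You merely spell out the commutativity and uniqueness checks that the paper leaves implicit.
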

	\begin{proof}
	First use the universal property of Proposition~\ref{ref-2.8.3-36} to get a morphism 
	$g':\uend(A) \to H$, and then use the one of Proposition~\ref{ref-2.8.4-37} to get a map $g$.
	\end{proof}
Following this corollary we call  $\uaut(A)$ the universal
        coacting Hopf algebra of $A$.
\subsection{Tannaka-Krein formalism}
\label{ref-2.9-39}

In this section we describe how the universal coacting bialgebras and Hopf algebras can be seen from the viewpoint of Tannakian duality theory. 

Let $\Cscr$ denote a category and $F:\Cscr \to \Vect$ a functor. Below we refer to such a functor as a fiber functor. The natural transformations of $F$ form an algebra, denoted by $\End(F)$. This algebra has a natural pseudocompact topology: the base open sets are given by 
	\begin{equation}
\label{ref-2.16-40}
	\cap_{X \in \alpha} \Ker(\End(F) \xrightarrow{\eval_X} \End(FX))
	\end{equation}
for all finite subsets $\alpha \subset \Ob(\Cscr)$. For a review of pseudo-compact $k$-algebras see \cite[\S4]{vandenbergh}.

	\begin{definitions}
	The coalgebra $\coend(F)$ is the continuous dual of the pseudocompact algebra $\End(F)$.
	\end{definitions}

	\begin{remarks}
	If we want to emphasise the domain category of $F$, we will also write $\coend_\Cscr(F)$.
	\end{remarks}

For $X \in \Cscr$, it is obvious that $FX$ is a finite-dimensional left $\End(F)$-module, so $FX$ is a left $\coend(F)$-comodule. This association defines an evaluation functor
	\begin{equation}
	\eval_F: \Cscr \to \comod(\coend(F)): X \mapsto FX.
	\end{equation}
Classical Tannaka-Krein theory is concerned with when this functor is an equivalence of categories, and there is the following fundamental theorem.

	\begin{theorems}\cite[Ch.IV, \S 4]{gabriel}
	\label{ref-2.9.3-41}
	If $\Cscr$ is a $k$-linear abelian category and $F:\Cscr \to \Vect$ is an exact and faithful functor, then the evaluation functor $\eval_F$ is an equivalence of categories.
	\end{theorems}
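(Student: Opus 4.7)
The plan is to verify that $\eval_F$ is faithful, full, and essentially surjective in turn, invoking the hypotheses with increasing strength. Faithfulness is immediate: the composition of $\eval_F$ with the forgetful functor $\comod(\coend(F))\to \Vect$ equals $F$, which is faithful by assumption.

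For fullness, a morphism $\eval_F(X)\to \eval_F(Y)$ in $\comod(\coend(F))$ is the same, under the duality between coalgebras and pseudo-compact algebras, as a continuous $\End(F)$-linear map $\phi:FX\to FY$. By the description \eqref{ref-2.16-40} of the pseudo-compact topology on $\End(F)$, any such $\phi$ factors through the discrete quotient $\End(F|_{\Cscr_0})$ for some finite full subcategory $\Cscr_0\subset \Cscr$ containing both $X$ and $Y$. Regarding $\Cscr_0$ as a ``ring with several objects'' in the sense of Mitchell, the $(X,Y)$-component of any endomorphism of $F|_{\Cscr_0}$ is, by faithfulness of $F|_{\Cscr_0}$, induced by an element of $\Hom_{\Cscr}(X,Y)$, producing the required preimage of $\phi$.

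For essential surjectivity, take $V\in \comod(\coend(F))$ and embed it in a finite dimensional subcoalgebra $C_0\subset \coend(F)$. Dually, $C_0^*$ is a finite dimensional discrete quotient of $\End(F)$ which factors through $\End(F|_{\Cscr_0})$ for some finite full subcategory $\Cscr_0$; since $\Cscr$ is abelian we may enlarge and replace $\Cscr_0$ by the one-object subcategory spanned by the direct sum $Y$ of its objects. This yields an embedding of coalgebras $C_0\hookrightarrow (FY)^*\otimes FY$, which in turn exhibits $V$ as a subcomodule of $F(Y^{\oplus N})$ for some $N$. Presenting $V$ as the image of some comodule map $F(Y^{\oplus M})\to F(Y^{\oplus N})$, fullness lifts this to a morphism $Y^{\oplus M}\to Y^{\oplus N}$ in $\Cscr$, and the exactness of $F$ identifies its image in $\Cscr$ with a preimage of $V$ under $\eval_F$.

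The main obstacle is essential surjectivity, and within it the identification $\coend(F) = \dirlim_{Y\in \Cscr}\, (FY)^*\otimes FY$ of coalgebras, equivalent to the claim that each finite dimensional subcoalgebra of $\coend(F)$ embeds into some $(FY)^*\otimes FY$. Once this pseudo-compact dual of the description $\End(F) = \projlim_Y \End(FY)$ is established, the remainder of the argument is formal.
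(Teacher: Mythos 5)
The paper does not actually prove this statement; it only cites Gabriel, so there is no proof in the paper to compare against and I can only assess your argument on its own terms. Your faithfulness step is fine, but the fullness step has a genuine gap. You claim that an $\End(F|_{\Cscr_0})$-linear map $\phi:FX\to FY$ is induced by a morphism $X\to Y$ in $\Cscr$ ``by faithfulness of $F|_{\Cscr_0}$.'' Faithfulness gives only \emph{injectivity} of the comparison map $\Hom_{\Cscr}(X,Y)\to\Hom_{\End(F|_{\Cscr_0})}(FX,FY)$; what you need is \emph{surjectivity}, and that is a double-centralizer--type statement which is essentially the entire content of the theorem. Note also that nowhere in your fullness argument do you use that $\Cscr$ is abelian or that $F$ is exact, yet these hypotheses are precisely what make the theorem true --- for a general additive $\Cscr$ with a merely faithful $F$, $\eval_F$ need not be full. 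The standard route is to first prove the reconstruction lemma that every $\End(F)$-submodule of $FZ$ is of the form $FW$ for a (unique) subobject $W\subset Z$, a statement that uses abelianness and exactness crucially, and then apply it to the graph of $\phi$ in $F(X\oplus Y)$. That lemma is the missing core.

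The essential-surjectivity step inherits this gap (it invokes fullness) and also has technical errors of its own. The image of the coalgebra map $(FY)^*\otimes FY=\End(FY)^*\to\coend(F)$ is $\bigl(\im(\End(F)\to\End(FY))\bigr)^*$, which is a \emph{quotient} coalgebra of $\End(FY)^*$; so a finite-dimensional subcoalgebra $C_0\subset\coend(F)$ sits inside that quotient and does not in general embed as a subcoalgebra into $(FY)^*\otimes FY$. (Already for $\Cscr=\Vect$, $F=\id$: here $\End(F)=k$, $\coend(F)=k$, and $k$ admits no coalgebra embedding into $\End_k(FY)^*$ once $\dim FY\geq 2$.) Correspondingly, the identification $\coend(F)=\dirlim_Y (FY)^*\otimes FY$ that you defer as the ``main obstacle'' is not a correct statement: $\coend(F)$ is a coend, i.e.\ a \emph{quotient} of $\bigoplus_Y (FY)^*\otimes FY$ by the naturality relations, and it is the directed union of the quotient coalgebras $\bigl(\im(\End(F)\to\End(FY))\bigr)^*$, not of the $(FY)^*\otimes FY$ themselves. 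Likewise $\End(F)=\projlim_Y\End(FY)$ is false (it is an end, cut out by naturality inside $\prod_Y\End(FY)$). Making these identifications precise, and proving that each such image is large enough to contain the coefficient coalgebra of any given comodule, is again exactly where the abelian and exactness hypotheses must enter.
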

	
The general philosophy of Tannakian duality theory is that additional categorical structures on $\Cscr$ give additional algebraic structures on the coalgebra $\coend(F)$. 

	\begin{propositions}\cite{pareigis-1} 
	\label{ref-2.9.4-42}
	If the category $\Cscr$ is monoidal and $F$ is a monoidal functor, then $\coend(F)$ is a bialgebra and $\eval_F$ is a monoidal functor. If $\Cscr$ has right duals, then $\coend(F)$ is a 
	Hopf algebra. If $\Cscr$ is rigid, then $\coend(F)$ is a Hopf algebra with invertible antipode.
	\end{propositions}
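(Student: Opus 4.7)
The plan is to produce each structure on $\coend(F)$ by invoking the universal property of $\coend(F)$ as the continuous dual of the pseudocompact algebra $\End(F)$; equivalently, $\coend(F)$ is initial among coalgebras $C$ equipped with a natural coaction $F\to C\otimes F$. Each additional categorical hypothesis on $(\Cscr,F)$ will supply a new natural coaction on a derived functor, and the universal property will convert it into the sought algebraic structure on $\coend(F)$.

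For the bialgebra claim, consider the functor $F\boxtimes F:\Cscr\times\Cscr\to\Vect$, $(X,Y)\mapsto FX\otimes FY$, which carries a canonical coaction of $\coend(F)\otimes\coend(F)$. Using the monoidal coherence isomorphism $\phi_{X,Y}:FX\otimes FY\xrightarrow{\sim}F(X\otimes Y)$, I transport this to a natural $\coend(F)\otimes\coend(F)$-coaction on the composite $F(-\otimes-)$; separately, the canonical $\coend(F)$-coaction on $F$ restricts along $\otimes_\Cscr$ to a second coaction on the same functor. The universal property of $\coend(F)$ applied to these two coactions yields a coalgebra morphism $m:\coend(F)\otimes\coend(F)\to\coend(F)$; coherence of $\phi$ forces $m$ to be associative, and the unit isomorphism $\phi_0:k\xrightarrow{\sim}F\mathbf{1}$ supplies a compatible unit $k\to\coend(F)$. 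The bialgebra axiom, compatibility with the original comultiplication, is automatic since both structure maps are constructed from natural coactions on $F$, and monoidality of $\eval_F$ then holds by construction.

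When $\Cscr$ has right duals, each $X$ carries a dual $X^*$ with evaluation and coevaluation maps; applying $F$ exhibits $FX^*$ as a vector-space right dual of $FX$, and the induced natural coaction on the functor $X\mapsto FX^*$ yields, via universality applied to $\coend(F)^{\coop}$, a coalgebra anti-endomorphism $S:\coend(F)\to\coend(F)$ whose antipode axioms translate directly from the rigidity identities $(\mathrm{id}\otimes\mathrm{ev})(\mathrm{coev}\otimes\mathrm{id})=\mathrm{id}$. If $\Cscr$ is rigid, the symmetric construction with left duals produces a two-sided inverse $S^{-1}$. The main obstacle is the bialgebra part, specifically ensuring that the two coactions on $F(-\otimes-)$ are genuinely comparable by a map into $\coend(F)$ itself rather than into a larger universal coalgebra; equivalently, one must verify that the prospective comultiplication $\End(F)\to\End(F)\mathbin{\hat\otimes}\End(F)$ implicit in the recipe $\alpha_{X\otimes Y}=\sum(\beta_i)_X\otimes(\gamma_i)_Y$ is well-defined and continuous with respect to the pseudocompact topology \eqref{ref-2.16-40}, which is the one place where finite-dimensionality of the $FX$ and the specific form of the topology on $\End(F)$ are essentially used.
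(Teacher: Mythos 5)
The paper gives no proof of this proposition---it is cited from Pareigis---so your argument must stand on its own. Your plan follows the standard Tannakian strategy: produce the multiplication on $\coend(F)$ from the two coactions on $F(-\otimes-):\Cscr\times\Cscr\to\Vect$, and the antipode from the right-dual structure, invoking universality each time. That is the right plan, but the load-bearing lemma is missing. The universal property of a coend gives coalgebra maps \emph{out of} the coend of the functor under consideration; from your two coactions on $F(-\otimes-)$ you get two coalgebra maps $\coend(F(-\otimes-))\to\coend(F)\otimes\coend(F)$ and $\coend(F(-\otimes-))\to\coend(F)$, and the multiplication $m$ only exists once the first of these is shown to be an isomorphism, i.e.\ $\coend_{\Cscr\times\Cscr}(F\boxtimes F)\cong\coend_\Cscr(F)\otimes\coend_\Cscr(F)$---or dually, that $\alpha\mapsto(\phi_{X,Y}^{-1}\circ\alpha_{X\otimes Y}\circ\phi_{X,Y})_{X,Y}$ actually defines a continuous map $\End(F)\to\End(F)\ctensor\End(F)\subset\End(F\boxtimes F)$. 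You flag exactly this as ``the main obstacle'' and correctly note it uses finite-dimensionality of the $FX$ and the pseudocompact topology, but you never prove it; saying where the hypotheses are used is a diagnosis, not a proof. The needed Fubini/commutation-of-colimits argument over finite-dimensional objects (or, dually, that the completed tensor product exhausts all natural endomorphisms of $F\boxtimes F$) has to be written out, and it is the real content of the proposition.

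The antipode paragraph inherits the same issue. Obtaining an anti-endomorphism $S$ from the right-dual functor is fine in principle, but verifying the antipode identities $m\circ(S\otimes\id)\circ\Delta=\eta\epsilon=m\circ(\id\otimes S)\circ\Delta$ against the zig-zag identities in $\Cscr$ presupposes that $m$ and $\Delta$ are fully constructed, so the gap in the bialgebra step propagates forward rather than being independent of it.
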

	\begin{remarks}
	Note that a monoidal functor automatically preserves duals, so this does not have to be imposed on $F$ in Proposition \ref{ref-2.9.4-42}.
	\end{remarks}

Now for every monoidal category there is a universal way of adjoining duals to it.

	\begin{lemmas}\cite[Lemma 4.2]{pareigis-1}
	\label{ref-2.9.6-43}
	Let~$\Cscr$ be a small monoidal category. Then there exists a unique small monoidal category 
	$\Cscr^*$ admitting right duals and a monoidal functor $*:\Cscr \to \Cscr^*$ such that for any small monoidal
	category $\Dscr$ admitting right duals and monoidal functor $F$, there exists a unique monoidal functor $F^*$ making the 
	diagram
	\begin{equation}
	\begin{tikzcd}
	\Cscr \rar{*} \dar[swap]{F} & \Cscr^* \dlar[dashed]{F^*} \\
	\mathcal{D}
	\end{tikzcd}
	\end{equation}
	commute.
	\end{lemmas}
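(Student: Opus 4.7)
The strategy is to build $\Cscr^\ast$ by a generators-and-relations construction that formally adjoins a right dual to every object of $\Cscr$, and then verify the universal property. Concretely, I would take objects of $\Cscr^\ast$ to be finite (possibly empty) words in the symbols $\{X, X^\vee : X \in \Ob(\Cscr)\}$, with tensor product given by concatenation and unit given by the empty word. For morphisms, start from the free strict monoidal category on these generating objects, the morphisms of $\Cscr$ (placed in the ``covariant'' slots), and formal evaluation and coevaluation arrows $\eval_X : X^\vee \otimes X \to 1$, $\coeval_X : 1 \to X \otimes X^\vee$ for each $X \in \Cscr$. Then impose the relations that (i) make the inclusion $\ast : \Cscr \to \Cscr^\ast$, $X \mapsto X$, a strict monoidal functor (so composition in $\Cscr$ and identities are preserved), and (ii) enforce the two triangle (zig-zag) identities for each pair $(\eval_X, \coeval_X)$.

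Uniqueness of $\Cscr^\ast$ (up to unique monoidal isomorphism over $\Cscr$) is automatic from the universal property, so the substantive claim is existence. Given $\Dscr$ with right duals and a monoidal $F : \Cscr \to \Dscr$, define $F^\ast$ on objects by sending $X$ to $F(X)$ and $X^\vee$ to a chosen right dual $F(X)^\vee$ in $\Dscr$, extended to words by tensor product. On generating morphisms send morphisms of $\Cscr$ via $F$, and send the formal $\eval_X, \coeval_X$ to the genuine evaluation and coevaluation maps in $\Dscr$ associated to the chosen duality data. Since these latter maps satisfy the triangle identities in $\Dscr$ by definition, $F^\ast$ descends to the quotient by relation (ii), and it is monoidal and strictly commutes with $\ast$ by construction.

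For uniqueness of $F^\ast$: any monoidal functor $G : \Cscr^\ast \to \Dscr$ extending $F$ is forced on the objects $X$ to equal $FX$, and on $X^\vee$ to pick a right dual of $FX$, which is unique up to unique isomorphism once $\eval$ and $\coeval$ are specified. Since $G$ must send $\eval_X$ and $\coeval_X$ to the evaluation and coevaluation of this dual (because they are characterised by the triangle identities together with a choice of object), $G$ is determined up to canonical identification of the chosen right duals. The mild subtlety here is that one has to fix a choice of right dual for each object of $\Dscr$ to get strict equality rather than an isomorphism; with such a choice in hand, $F^\ast$ is uniquely determined, which is the formulation the lemma uses.

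The main obstacle is essentially bookkeeping: writing down the relations precisely enough that one can verify both that $\Cscr \to \Cscr^\ast$ is fully faithful (so that no new identifications among morphisms of $\Cscr$ are imposed) and that the candidate $F^\ast$ is well-defined on equivalence classes. The fully-faithful part is what genuinely needs an argument; it can be obtained by exhibiting a normal form for morphisms in $\Cscr^\ast$ via repeated use of the zig-zag identities to remove adjacent $\coeval$-$\eval$ pairs, reducing each morphism to a ``straightened'' composite whose $\Cscr$-part is determined. Once this is in place, the universal property falls out by the construction, as in \cite[Lemma 4.2]{pareigis-1}.
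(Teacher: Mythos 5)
The paper itself gives no proof of this lemma; it is cited verbatim from \cite[Lemma 4.2]{pareigis-1}, so there is no internal argument to compare against. Your generators-and-relations construction is the standard one and is, up to presentation details, exactly what Pareigis does: freely adjoin formal right duals $X^{\vee}$ together with $\eval_X$ and $\coeval_X$, impose the triangle identities together with the relations already present in $\Cscr$, and verify the universal property by sending the formal duality data to chosen duality data in $\Dscr$. Well-definedness of $F^{\ast}$ is essentially automatic in this scheme, since each generating relation of $\Cscr^{\ast}$ either comes from $\Cscr$ (preserved because $F$ is a functor) or is a triangle identity (which holds in $\Dscr$ by the definition of a right dual). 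One small correction to your framing: you describe showing that $\ast : \Cscr \to \Cscr^{\ast}$ is fully faithful as ``what genuinely needs an argument,'' but the lemma as stated makes no claim of full faithfulness, and the universal property can be verified without it. The normal-form argument via cancelling $\coeval$--$\eval$ pairs that you sketch is the right tool for \emph{that} stronger statement, but it is not needed here; invoking it as the crux of the proof overstates what must be shown. You are right, however, to flag that the ``unique $F^{\ast}$'' formulation implicitly presumes a chosen system of right duals in $\Dscr$ (or suitable strictness), since otherwise uniqueness only holds up to canonical monoidal isomorphism.
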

	
The passage from $\Cscr$ to $\Cscr^*$ is compatible with the algebraic structures on $\coend(F)$.

	\begin{propositions}\cite[Theorem 4.5]{pareigis-1}
	\label{ref-2.9.7-44}
	For a monoidal category $\Cscr$ and a monoidal functor $F:\Cscr \to \Vect$, there is an isomorphism of Hopf algebras
		\begin{equation}
		H(\coend(F)) \cong \coend(F^*).
		\end{equation}
	\end{propositions}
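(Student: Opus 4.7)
The strategy is to show that $\coend(F^\ast)$ satisfies the same universal property as $H(\coend(F))$, namely that of the Hopf envelope of $\coend(F)$.

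First I would construct a comparison map $H(\coend(F)) \to \coend(F^\ast)$. The inclusion $\ast: \Cscr \to \Cscr^\ast$ is a monoidal functor satisfying $F^\ast \circ \ast = F$. By functoriality of the $\coend$ construction on pairs (category, fiber functor), this induces a bialgebra morphism $\coend_\Cscr(F) \to \coend_{\Cscr^\ast}(F^\ast)$. Since the target is already a Hopf algebra by Proposition~\ref{ref-2.9.4-42}, Proposition~\ref{ref-2.8.4-37} applied to the Hopf envelope yields a unique Hopf algebra morphism $\alpha: H(\coend(F)) \to \coend(F^\ast)$.

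Next I would construct an inverse $\beta: \coend(F^\ast) \to H(\coend(F))$. Composing $\eval_F : \Cscr \to \comod(\coend(F))$ with the functor $\comod(\coend(F)) \to \comod(H(\coend(F)))$ induced by $\coend(F) \to H(\coend(F))$, we obtain a monoidal functor $G: \Cscr \to \comod(H(\coend(F)))$. Because $H(\coend(F))$ is a Hopf algebra, its category of finite-dimensional comodules has right duals. Hence by the universal property of $\Cscr^\ast$ (Lemma~\ref{ref-2.9.6-43}), $G$ extends uniquely to a monoidal functor $G^\ast: \Cscr^\ast \to \comod(H(\coend(F)))$; composition with the forgetful functor to $\Vect$ gives a monoidal functor extending $F$, which by uniqueness must be $F^\ast$. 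This means $G^\ast$ factors as $\eval_{F^\ast}$ followed by a functor coming from a bialgebra morphism $\coend(F^\ast) \to H(\coend(F))$, and since $\coend(F^\ast)$ is already a Hopf algebra the morphism is automatically a Hopf algebra map; call it $\beta$.

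Finally I would verify that $\alpha$ and $\beta$ are mutually inverse. Both composites $\alpha \circ \beta$ and $\beta \circ \alpha$ are Hopf algebra endomorphisms fitting into diagrams that, after unwinding definitions, are dictated by the identity monoidal functor on $\Cscr^\ast$ and the identity on $\coend(F)$ respectively. Uniqueness in the universal properties of Lemma~\ref{ref-2.9.6-43} and Proposition~\ref{ref-2.8.4-37} forces both composites to be identities. The main obstacle is bookkeeping: one must carefully track how $\coend$ turns monoidal functors on one side into (co)algebra morphisms on the other, and in particular check that an extended fiber functor on $\Cscr^\ast$ factors through $\eval_{F^\ast}$. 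Once that dictionary is set up the argument is purely a chase through universal properties.
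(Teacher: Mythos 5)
The paper does not prove this proposition; it simply cites it from Pareigis~\cite[Theorem 4.5]{pareigis-1}. Your reconstruction is correct and, to the best of my knowledge, follows the same strategy as the cited source: verify that $\coend(F^\ast)$ has the universal property of the Hopf envelope of $\coend(F)$ by constructing the two comparison maps from the universal properties of $\coend$, of $\Cscr^\ast$ (Lemma~\ref{ref-2.9.6-43}), and of the Hopf envelope (Proposition~\ref{ref-2.8.4-37}), and invoking uniqueness. The one step you flag as ``bookkeeping'' is indeed where the real content lies: one needs the universal property of $\coend$ in the form ``any monoidal lift of $F^\ast$ to $\comod(D)$ for a coalgebra $D$ factors uniquely through $\eval_{F^\ast}$ via a bialgebra morphism $\coend(F^\ast)\to D$,'' and this must be checked to be compatible with the pseudocompact topology used to define $\coend$ in \S\ref{ref-2.9-39}. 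This is standard but worth being aware of; once granted, your argument is complete.
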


\subsection{Finite dimensional quasi-hereditary algebras}
\label{ref-2.10-45}
We first recall the ring-theoretical definition of quasi-hereditary algebras
\cite{dlab-ringel-1}.
In this section $A$ is always a finite dimensional $k$-algebra with Jacobson radical~$\rad(A)$. 

\begin{definitions}
\label{ref-2.10.1-46}
A two-sided ideal~$I$ of~$A$ is called a heredity ideal if
\begin{enumerate}
\item $I$ is idempotent, i.e. $I=I^2$,
\item $I_{A}$ is projective,
\item $I \rad(A) I=0$.
\end{enumerate} 
\end{definitions}
We now state some properties of heredity ideals, see~\cite[Appendix C]{deng-du-parshall-wang}.

	\begin{lemmas}
	Assume $I$ is a heredity ideal in $A$. Then:
	\begin{enumerate}
	\item $I$ is also projective as a left $A$-module. Thus the notion of heredity ideal is left right symmetric.
	\item The obvious graded morphism 
		\begin{equation*}
		\Ext^{\bullet}_{A/I}(M,N) \to \Ext^{\bullet}_{A}(M,N)
		\end{equation*}
	is an isomorphism for $M,N \in \mod-A/I$. 
	\item The algebra $A$ has finite global dimension iff $A/I$ has finite global 
	dimension.
	\end{enumerate}
	\end{lemmas}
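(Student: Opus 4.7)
I would prove the three parts sequentially; parts (2) and (3) are essentially formal once one has the right structural description of $I$, while the technical heart lies in (1). For (1), the plan is to produce an idempotent $e \in A$ with $I = AeA$, after which left-right symmetry becomes manifest. Since $A$ is finite dimensional (hence semiperfect) and $I_A$ is finitely generated projective, one decomposes $I = e_1 A \oplus \cdots \oplus e_n A$ for pairwise orthogonal primitive idempotents $e_i \in I$. Taking $e = \sum_i e_i$, we get $eA = I$; since $I$ is two-sided, $AeA \subseteq I = eA \subseteq AeA$, so $I = AeA$. The hypothesis $I \rad(A) I = 0$ forces $\rad(eAe) = e \rad(A) e = 0$, so $eAe$ is semisimple. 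The multiplication map $Ae \otimes_{eAe} eA \to AeA$ is then an isomorphism of $A$-bimodules, and since $eA$ is projective as a left module over the semisimple ring $eAe$, it follows that $I \cong Ae \otimes_{eAe} eA$ is a summand of $(Ae)^m$, hence projective as a left $A$-module.

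\textbf{Part (2).} The projectivity hypothesis gives a length-one $A$-projective resolution $0 \to I \to A \to A/I \to 0$. Combined with the derived change-of-rings adjunction
\[
\RHom_A(M,N) \cong \RHom_{A/I}(M, \RHom_A(A/I, N))
\]
for $A/I$-modules $M$ and $N$, it suffices to verify $\RHom_A(A/I, N) \cong N$, i.e.\ $\Ext^i_A(A/I, N) = 0$ for $i \geq 1$. The higher Ext's ($i \geq 2$) vanish directly from the length-one resolution, and $\Ext^1_A(A/I, N)$ is the cokernel of $N \to \Hom_A(I, N)$. The crucial observation is that $\Hom_A(I, N) = 0$: any $\phi \colon I \to N$ satisfies $\phi(I) = \phi(I^2) = \phi(I) \cdot I \subseteq N \cdot I = 0$, using $I = I^2$ together with $IN = 0$.

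\textbf{Part (3) and main obstacle.} The implication $\gldim(A) < \infty \Rightarrow \gldim(A/I) < \infty$ is immediate from (2). Conversely, given any right $A$-module $M$, consider $0 \to MI \to M \to M/MI \to 0$. Using $I = AeA$ with $eAe$ semisimple, $MI \cong Me \otimes_{eAe} eA$ is a summand of $(eA)^m$, hence $A$-projective. The quotient $M/MI$ is an $A/I$-module of $A/I$-projective dimension at most $\gldim(A/I)$, and a horseshoe argument combining an $A/I$-projective resolution of $M/MI$ with the length-one $A$-resolution of $A/I$ gives $\operatorname{pd}_A(M/MI) \leq \gldim(A/I) + 1$. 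The short exact sequence then yields $\gldim(A) \leq \gldim(A/I) + 1$. The main obstacle throughout is the idempotent extraction $I = AeA$ in (1): it is here that one genuinely uses the semiperfectness of $A$ together with the interplay between $I^2 = I$, $I \rad(A) I = 0$, and right projectivity. Once this structural handle is in place, the remaining assertions are clean homological consequences of $I^2 = I$ combined with one-sided projectivity.
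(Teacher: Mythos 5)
Part (2) is correct: the key observation that $\Hom_A(I,N)=0$ for every right $A/I$-module $N$, via $\phi(I)=\phi(I^2)=\phi(I)\cdot I\subseteq NI=0$, is exactly right, and combined with the length-one resolution $0\to I\to A\to A/I\to 0$ it gives the change-of-rings isomorphism. Parts (1) and (3), however, each rest on a false intermediate claim.

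In (1), the assertion that one can choose pairwise orthogonal idempotents $e_i\in I$ with $I=e_1A\oplus\cdots\oplus e_nA$, and hence $I=eA$, is wrong: a heredity ideal is in general not a direct summand of $A_A$. Take $A$ the path algebra of $1\to2\to3$ and $I=Ae_2A$: this $I$ is a heredity ideal, but $A/I\cong S_1\oplus S_3$ with $\operatorname{pd}_A S_1=1$, so $A/I$ is not projective and $I$ is not a summand of $A_A$; concretely, the only idempotents in $I$ are $0$ and $e_2+b\alpha+c\beta+bc\alpha\beta$, no two of which are orthogonal. The correct statement is only $I=AeA$ for \emph{some} idempotent $e$ (which follows from $I^2=I$ by lifting an idempotent generator of $(I+\rad A)/\rad A$ into $I$ and then applying Nakayama to the idempotent ideal $I/AeA\subseteq\rad(A/AeA)$). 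Moreover the crucial assertion that the multiplication map $Ae\otimes_{eAe}eA\to AeA$ is an isomorphism is stated without proof, and this is precisely where right projectivity of $I$ is used: the surjection splits as right $A$-modules, its kernel $K$ satisfies $Ke=0$, and the source is generated as a right $A$-module by its image under $(-)\cdot e$, forcing $K=0$.

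In (3), the claim $MI\cong Me\otimes_{eAe}eA$, and with it the conclusion that $MI$ is projective, is false. Take $A$ the path algebra of $1\to 2$, $I=Ae_1A=e_1A$, and $M=S_1$: then $MI=S_1$, which has $\operatorname{pd}_A S_1=1$, while $Me\otimes_{eAe}eA\cong S_1\otimes_k e_1A$ is two-dimensional. What is true is $M\otimes_A I\cong Me\otimes_{eAe}eA$ (tensor the bimodule isomorphism from (1) with $M$ over $A$), and this \emph{is} projective; but the natural map $M\otimes_A I\to MI$ has kernel $\Tor_1^A(M,A/I)$, which need not vanish. A correct argument uses the four-term exact sequence $0\to\Tor_1^A(M,A/I)\to M\otimes_A I\to M\to M/MI\to 0$: both outer terms are $A/I$-modules, hence by (2) and $\operatorname{pd}_A(A/I)\le 1$ have $\operatorname{pd}_A\le\gldim(A/I)+1$, while $M\otimes_A I$ is projective; this yields the correct bound $\gldim A\le\gldim(A/I)+2$, not $\gldim(A/I)+1$.
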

	\begin{definitions}[Ring theoretical definition]
	\label{ref-2.10.3-47}
	The algebra~$A$ is quasi-hereditary if it has a filtration by heredity ideals, i.e. there is a chain
	\begin{equation}
	0=J_0 \subset J_1 \subset \cdots \subset J_{m-1} \subset J_m=A
	\end{equation}
	of ideals of~$A$ such that for any~$1 \leq t \leq m$,~$J_t/J_{t-1}$ is a heredity ideal in $A/J_{t-1}$. Such a chain is called a heredity chain.
	\end{definitions}
        Next we discuss the module theoretical definition of
        quasi-hereditary algebras.  let $\{L(\lambda) \ \vert \
        \lambda \in \Lambda \}$ be a complete set of non-isomorphic
        simple $A$-modules for some partially ordered set
        $(\Lambda,\leq)$.  A subset $\pi \subset \Lambda$ is called
        saturated if $\mu \leq \lambda \in \pi$ implies $\mu \in
        \pi$. For $\lambda \in \Lambda$, put $\pi(\lambda)=\{\mu \in
        \Lambda \ \vert \ \mu < \lambda\}$. This is a saturated subset
        of~$\Lambda$. Let~$V$ be an $A$-representation.  For $\pi
        \subset \Lambda$ we say that $V$ belongs to $\pi$ if all
        composition factors of $V$ are in the set $\{L(\lambda) \
        \vert \ \lambda \in \pi \}$. We write $O_\pi(V)$ for the
        module that is maximal amongst all submodules of $V$ belonging
        to $\pi$. Similar we write $O^\pi(V)$ for the minimal
        submodule of $V$ such that $V/O^\pi(V)$ belongs to $\pi$.  By
        $I(\lambda)$, $P(\lambda)$  we denote respectively the injective hull
and the projective cover of the simple module
        $L(\lambda)$. Then $\nabla(\lambda) \supset L(\lambda)$ is the
        submodule of $I(\lambda)$ defined by
	\begin{equation}
	\label{ref-2.21-48}
	\nabla(\lambda)/L(\lambda)=O_{\pi(\lambda)}(I(\lambda)/L(\lambda))
	\end{equation}
The $\nabla(\lambda)$ are called costandard modules. The standard modules $\Delta(\lambda)$ are defined dually as
	$$
	\Delta(\lambda)=P(\lambda)/O^{\pi(\lambda)}(\rad P(\lambda))
	$$
Denote by $\Fscr(\Delta)$ (respectively $\Fscr(\nabla)$) the categories of $A$-modules that have a $\Delta$-filtration (respectively $\nabla$-filtration).
	\begin{definitions}[Module theoretical definition]\cite{donkin}
	\label{ref-2.10.4-49}
	The algebra $A$ is quasi-hereditary 
with respect to the poset $(\Lambda,\le)$ if
	\begin{enumerate}
		\item $I(\lambda) \in \Fscr(\nabla)$,
		\item $[I(\lambda):\nabla(\lambda)]=1$,
		\item If $[I(\lambda):\nabla(\mu)] \neq 0$, then $\mu \geq \lambda$.
	\end{enumerate}
	\end{definitions}
\begin{remarks}
The definition can be equivalently stated in terms
of standard modules and projectives. For the generalisation to infinite dimensional coalgebras it is convenient to use costandard modules.
\end{remarks}
\begin{propositions}\cite[Theorem 3.6]{cline-parshall-scott}
\label{ref-2.10.6-50}
The ring theoretical and module theoretical definitions are equivalent.
\end{propositions}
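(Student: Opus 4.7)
The plan is to prove each direction of the equivalence separately, with induction on the length of the heredity chain (respectively, on the size of the poset $\Lambda$) providing the overarching strategy. Throughout I will work with primitive idempotents corresponding to the simples $L(\lambda)$ and exploit the interaction between these idempotents and the two-sided ideal structure.

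For the direction \emph{ring-theoretical implies module-theoretical}, I would start from a heredity chain $0 = J_0 \subset J_1 \subset \cdots \subset J_m = A$ and extract from it a poset $(\Lambda,\le)$ together with the costandard modules. Each step $J_t/J_{t-1}$ in $A/J_{t-1}$ is idempotent and projective, so it decomposes as a direct sum of indecomposable projectives in $\mod(A/J_{t-1})$ whose simple tops form the ``new'' simples at stratum $t$. I would define the partial order by declaring $\mu < \lambda$ whenever $L(\mu)$ appears at an earlier stratum than $L(\lambda)$. The key computation is then to identify $\nabla(\lambda)$ with the submodule of $I(\lambda)$ consisting of elements annihilated by the ideal that kills the stratum of $\lambda$; the three conditions of Definition~\ref{ref-2.10.4-49} follow by comparing the layers of a filtration of $I(\lambda)$ coming from the chain $(J_t)_t$ with the definition~\eqref{ref-2.21-48}, using that inside a single stratum the relevant quotient algebra is semisimple (because $J \rad(A) J = 0$).

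For the direction \emph{module-theoretical implies ring-theoretical}, I would pick a maximal element $\lambda \in \Lambda$ and let $e_\lambda$ be a primitive idempotent whose associated simple is $L(\lambda)$. I claim that the trace ideal $J_1 = A e_\lambda A$ is a heredity ideal. Maximality of $\lambda$ guarantees $\pi(\lambda)=\emptyset$, so $\Delta(\lambda)=P(\lambda)$ and $\nabla(\lambda)=I(\lambda)$ when restricted to the appropriate quotient; consequently $P(\lambda)=Ae_\lambda$ is such that $e_\lambda A e_\lambda$ is a division algebra, giving $J_1 \rad(A) J_1 = 0$, and one easily checks $J_1^2 = J_1$ and that $J_1$ is projective as a right module using conditions (1)--(3). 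Passing to $A/J_1$ one obtains an algebra whose simple set is indexed by $\Lambda \setminus \{\lambda\}$, and the induced poset, costandard modules, and injective hulls on $A/J_1$ inherit the module-theoretic quasi-heredity conditions (this requires checking that $\nabla(\mu)$ and $I(\mu)$ for $\mu\ne \lambda$ are unchanged when one quotients by $J_1$, which follows from maximality of $\lambda$ and the vanishing of $\operatorname{Hom}$'s into simples not in the orbit of $\lambda$). Induction on $|\Lambda|$ then produces a full heredity chain.

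The main technical obstacle I anticipate is the backward direction, specifically the verification that $J_1 = Ae_\lambda A$ really is a heredity ideal: showing $J_1 \rad(A) J_1 = 0$ and that $J_1$ is projective both require a careful analysis of how the simples at a maximal $\lambda$ interact with the radical, and depend crucially on the fact that $\nabla(\lambda)$ is simple at a maximal element, together with $[I(\mu):\nabla(\lambda)]$ vanishing when $\mu \not\geq \lambda$. Once this local computation is in place, the inductive step and the forward direction are largely bookkeeping, using the standard correspondence between idempotent ideals and Serre subcategories of $\mod(A)$.
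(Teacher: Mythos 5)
Your overall strategy mirrors the standard Cline--Parshall--Scott/Dlab--Ringel argument that the paper is citing, and in the backward direction it agrees with the paper's own two-paragraph sketch (pick a maximal $\lambda$, form the trace ideal $Ae_\lambda A$, show it is a heredity ideal, and induct on $A/Ae_\lambda A$). So the skeleton is right.

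However, there is a genuine confusion about the direction of the partial order, and it infects the key step. With the paper's conventions in \S\ref{ref-2.10-45}, $\pi(\lambda)=\{\mu\mid\mu<\lambda\}$, the costandard module $\nabla(\lambda)$ has composition factors $L(\mu)$ with $\mu\le\lambda$, and $O^{\pi}(V)$ is the \emph{minimal} submodule such that $V/O^{\pi}(V)$ belongs to $\pi$. Consequently $\pi(\lambda)=\emptyset$ characterises \emph{minimal} elements (not maximal ones as you claim), and when $\pi(\lambda)=\emptyset$ one gets $O^{\pi(\lambda)}(\rad P(\lambda))=\rad P(\lambda)$, hence $\Delta(\lambda)=L(\lambda)$, not $\Delta(\lambda)=P(\lambda)$. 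Similarly, $\nabla(\lambda)$ is simple precisely at \emph{minimal} $\lambda$, contrary to your final paragraph. The conclusion you want for the trace-ideal argument --- $\Delta(\lambda)=P(\lambda)$ and $\nabla(\lambda)=I(\lambda)$ for maximal $\lambda$ --- is correct, but it is not obtained from $\pi(\lambda)=\emptyset$; it follows from conditions (2) and (3) of Definition~\ref{ref-2.10.4-49} applied to $I(\lambda)$ (the only $\nabla(\mu)$ that can appear in a $\nabla$-filtration of $I(\lambda)$ with $\mu\ge\lambda$ is $\nabla(\lambda)$, and it appears exactly once), and dually for $P(\lambda)$. The same sign issue appears in your forward direction: with the paper's convention the simples with $J_1 L(\lambda)\ne 0$ are the \emph{maximal} ones, i.e.\ the earlier stratum corresponds to larger, not smaller, elements, whereas you declare $\mu<\lambda$ if $L(\mu)$ occurs at an earlier stratum. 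If you swap every ``maximal'' and ``minimal'' and every ``$<$'' and ``$>$'' consistently, the argument becomes sound, but as written the justification of the heredity ideal properties is based on an incorrect premise.
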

For the benefit of reader we indicate how one may go from a heredity chain
to a partial ordering and back.
Assume first that $A$ is equipped with a heredity chain 
\[
	0=J_0 \subset J_1 \subset \cdots \subset J_{m-1} \subset J_m=A
\]
 Suppose
$\{L(\lambda) | \lambda \in \Lambda\}$ is a (not yet ordered)
labelling of the non-isomorphic simple~$A$-modules. Let $a(\lambda)$
be the maximal index $j$ such that $J_jL(\lambda)=0$. We put
$\lambda>\mu$ iff $a(\lambda)<a(\mu)$. One may show that $A$ is quasi-hereditary in the sense of Definition~\ref{ref-2.10.4-49} with this ordering on $\Lambda$.
With this choice of ordering the standard modules are given by the 
indecomposable summands of $J_{i+1}/J_i$ and the costandard modules
are given by the indecomposable summands of $(J_{i+1}/J_i)^\ast$.

Conversely assume that $A$ is quasi-hereditary with respect to the poset $(\Lambda,\le)$. We may refine $\le$ to a total ordering $\lambda_1>\lambda_2>\cdots$.
We put $J_0=0$ and $J_i=O^{\pi(\lambda_i)}(A)$. One checks this yields a heredity chain.

The reader will note that stating that an algebra is quasi-hereditary
involves specifying some extra data (for example a partial ordering on the
simples or a heredity chain). Two such quasi-hereditary structures are 
considered equivalent if they yield the same collection of standard and
costandard modules. This may also be expressed by using the notion
of a minimal partial ordering.
Specifically, if~$A$ is a quasi-hereditary algebra with poset
$(\Lambda,\le)$ then there is a unique minimal partial ordering
$\le_{\min}$ on $\Lambda$ such that $(A,\Lambda,\le_{\min})$ is
quasi-hereditary with the same standard and costandard comodules as
$(A,\Lambda,\le)$.  The minimal partial ordering is the minimal
ordering on $\Lambda$ with the following property (see
\cite[\S1]{parshall-scott-wang}):
\begin{equation}
\label{ref-2.22-51}
[\Delta(\mu):L(\lambda)]\neq 0\text{ or }[\nabla(\mu):L(\lambda)]\neq 0\Rightarrow \lambda\le_{\min} \mu
\end{equation}
\begin{lemmas} 
\label{ref-2.10.7-52} Assume that $A$ is a quasi-hereditary algebra. Let
$<$ be some partial order on $\Lambda$. Then $<$ is finer than the minimal ordering if for every total ordering $\lambda_1<'\lambda_2<'\cdots$ refining $<$, $A$
is quasi-hereditary with respect to this ordering with the original standard and costandard comodules. 
\end{lemmas}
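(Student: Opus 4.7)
The plan is to show directly that the stated condition forces $<_{\min}\subseteq{<}$ as relations on $\Lambda$. Since $<_{\min}$ is characterised by \eqref{ref-2.22-51} as the smallest partial order satisfying that implication, it is enough to verify that $<$ itself satisfies \eqref{ref-2.22-51}: whenever $\lambda\neq\mu$ and $[\Delta(\mu):L(\lambda)]\neq 0$ or $[\nabla(\mu):L(\lambda)]\neq 0$, one has $\lambda<\mu$.

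To this end I argue by contradiction. Suppose such a pair $(\lambda,\mu)$ exists with $\lambda\not<\mu$. Then either $\mu<\lambda$ in $<$, or $\lambda$ and $\mu$ are incomparable under $<$. In both cases, a standard order-extension argument (Szpilrajn) produces a total ordering $<'$ refining $<$ in which $\mu<'\lambda$: in the first case this holds automatically, and in the second case one freely chooses $\mu<'\lambda$ before extending. By the hypothesis of the lemma, $A$ is quasi-hereditary with respect to $<'$ with the same standard and costandard modules $\Delta(\mu)$, $\nabla(\mu)$ as before.

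Now I invoke the basic consequence of the module-theoretic definition (Definition~\ref{ref-2.10.4-49}) that in any quasi-hereditary ordering every composition factor $L(\nu)$ of $\Delta(\mu)$ different from its simple top $L(\mu)$, and every composition factor $L(\nu)$ of $\nabla(\mu)$ different from its simple socle $L(\mu)$, satisfies $\nu<'\mu$. Applying this with $\nu=\lambda\neq\mu$ yields $\lambda<'\mu$, contradicting $\mu<'\lambda$. Hence no such pair exists and \eqref{ref-2.22-51} holds for $<$, so $<_{\min}\subseteq{<}$. The only potential technical obstacle is the existence of a total refinement of $<$ realising any prescribed strict inequality between a pair of elements not already ordered the opposite way, but this is immediate from Szpilrajn's theorem, so there is no real difficulty.
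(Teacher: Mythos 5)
Your proof is correct and takes essentially the same route as the paper's: show that $<$ satisfies the characterising implication \eqref{ref-2.22-51} of the minimal ordering by contradiction, choosing a total refinement $<'$ of $<$ with $\mu <' \lambda$ and then deriving $\lambda <' \mu$ from the fact that $A$ is quasi-hereditary with respect to $<'$. You simply spell out the Szpilrajn extension step and the structural fact about composition factors of $\Delta(\mu)$, $\nabla(\mu)$ a bit more explicitly than the paper does.
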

\begin{proof} Assume $[\Delta(\mu):L(\lambda)]\neq 0$ or $[\nabla(\mu):L(\lambda)]\neq 0$. If $\lambda\not<\mu$ then we may choose a total ordering such that $\mu<'\lambda$. But this is in contradiction with the fact that we must have $\lambda<'\mu$ since $A$ is quasi-hereditary with respect to $<'$.
\end{proof}
For further reference we remind the reader of some standard properties
of quasi-hereditary algebras. 
\begin{propositions}\cite{dlab-ringel-2}
\label{ref-2.10.8-53}
Assume that $A$ is quasi-hereditary with respect to $(\Lambda,\le)$.
\begin{enumerate}
\item $\Delta(\lambda)$, $\nabla(\lambda)$ are Schurian.
\item One has
$
\Ext^i(\Delta(\lambda),\nabla(\mu))=k^{\delta_{i0}\delta_{\lambda\mu}}
$.
\item The image of the non-zero map $\Delta(\lambda)\r\nabla(\lambda)$
(unique up to a scalar by (2)) is equal to $L(\lambda)$.
\item If $\Ext^\ast(\Delta(\lambda),\Delta(\mu))\neq 0$ then $\lambda\le\mu$.
\item If $\Ext^\ast(\nabla(\lambda),\nabla(\mu))\neq 0$ then $\lambda\ge\mu$.
\end{enumerate}
\end{propositions}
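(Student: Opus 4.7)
The plan is to reduce all five assertions to two elementary building blocks: $\Hom(\Delta(\lambda), L(\mu)) = k\,\delta_{\lambda\mu}$ and its dual $\Hom(L(\mu), \nabla(\lambda)) = k\,\delta_{\lambda\mu}$. The first is immediate because $\Delta(\lambda)$ is a quotient of $P(\lambda)$, which has simple top $L(\lambda)$; the second follows because the inclusion $\nabla(\lambda) \subset I(\lambda)$ identifies socles, both equal to $L(\lambda)$. The composition-factor structure of $\Delta(\lambda)$ and $\nabla(\lambda)$ (simple top/socle $L(\lambda)$, with the remaining factors $L(\nu)$ satisfying $\nu<\lambda$) is immediate from the definitions.

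From this, (1), (3) and the $i=0$ case of (2) all come out by top/socle arguments. For (1), any $f \in \End(\Delta(\lambda))$ induces a scalar $c$ on the simple top, and $f-c\cdot\id$ then factors through $\rad \Delta(\lambda)$ whose composition factors all satisfy $\nu<\lambda$, forcing $f = c\cdot\id$ by the building block; the dual argument gives $\End(\nabla(\lambda))=k$. For the $i=0$ case of (2) and (3) simultaneously: a nonzero $f \colon \Delta(\lambda) \to \nabla(\mu)$ has image with top quotient $L(\lambda)$ and simple socle $L(\mu)$, yet all composition factors bounded above by both $\lambda$ and $\mu$, forcing $\lambda = \mu$. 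In this diagonal case the same analysis shows that $f$ kills $\rad \Delta(\lambda)$ and so factors as $\Delta(\lambda) \twoheadrightarrow L(\lambda) \hookrightarrow \nabla(\lambda)$, simultaneously yielding $\Hom(\Delta(\lambda), \nabla(\lambda)) = k$ and identifying the image as $L(\lambda)$.

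The main obstacle is the higher-$\Ext$ vanishing in (2). My plan is to exploit the injective-side definition: the hypotheses $I(\mu) \in \Fscr(\nabla)$, $[I(\mu):\nabla(\mu)] = 1$, and $[I(\mu):\nabla(\nu)] \neq 0 \Rightarrow \nu \geq \mu$ imply that the cokernel $Q$ of $\nabla(\mu) \hookrightarrow I(\mu)$ has a $\nabla$-filtration by $\nabla(\nu)$ with $\nu > \mu$. Iterating on successive cokernels produces an injective resolution
\[
0 \to \nabla(\mu) \to I(\mu) \to I^1 \to I^2 \to \cdots
\]
whose positive-degree terms are direct sums of $I(\nu)$ with $\nu > \mu$, with termination guaranteed by the finite heredity chain of Proposition~\ref{ref-2.10.6-50}. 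Dimension shifting along $0 \to \nabla(\mu) \to I(\mu) \to Q \to 0$ then expresses $\Ext^i(\Delta(\lambda), \nabla(\mu))$ for $i \geq 1$ as a combination of lower $\Ext$-groups against $\nabla(\nu)$ with $\nu > \mu$, enabling an induction on the partial order that bottoms out at the already-computed $i=0$ values.

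Statements (4) and (5) are formally dual, so consider (5). Using the injective resolution constructed above, $\Ext^i(\nabla(\lambda), \nabla(\mu))$ is computed via $\Hom(\nabla(\lambda), I(\nu))$ for $\nu \geq \mu$; since the socle of $\nabla(\lambda)$ is $L(\lambda)$ and $I(\nu)$ is the injective hull of $L(\nu)$, non-vanishing forces $\lambda = \nu \geq \mu$. Statement (4) is handled symmetrically using a $\Delta$-filtered projective resolution of $\Delta(\mu)$, whose existence is formally dual to the injective construction and equally follows from the heredity-chain characterisation via Proposition~\ref{ref-2.10.6-50}. The entire argument pivots on the resolution constructed in the third paragraph; everything else is bookkeeping.
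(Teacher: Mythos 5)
The paper does not prove this proposition; it cites it from Dlab--Ringel, so there is no in-paper argument to compare against. Evaluating your proposal on its own terms: the preliminary reductions (the two building blocks, the argument for (1), for (3), and for the $i=0$ case of (2)) are all sound. The problem is the treatment of higher $\Ext$ in (2), and it is a genuine gap rather than a presentational one.

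First, the assertion that iterating the cokernel construction produces an injective coresolution $0\to\nabla(\mu)\to I(\mu)\to I^1\to I^2\to\cdots$ with \emph{every} positive-degree term a sum of $I(\nu)$, $\nu>\mu$, is not established and cannot be established at this point. The first step is fine: $Q=I(\mu)/\nabla(\mu)$ is $\nabla$-filtered by $\nabla(\nu)$ with $\nu>\mu$, and the socle of any $\nabla$-filtered module sits inside the sum of the socles of its filtration factors, so $I^1$ is indeed a sum of $I(\nu)$, $\nu>\mu$. But to iterate you need $I^1/Q$ to again lie in $\Fscr(\nabla)$, i.e.\ you need $\Fscr(\nabla)$ to be closed under cokernels of injections. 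That closure property is a theorem of Ringel that is \emph{deduced from} the vanishing $\Ext^{>0}(\Delta(\lambda),\nabla(\mu))=0$; assuming it here is circular. The fix is to avoid the full coresolution and work with the single short exact sequence $0\to\nabla(\mu)\to I(\mu)\to Q\to 0$ together with downward induction on $\mu$: this immediately kills $\Ext^i$ for $i\geq 2$.

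Second, and independently, even the one-step dimension shift does not settle $i=1$ when $\lambda>\mu$. You get an exact sequence
\[
\Hom(\Delta(\lambda),I(\mu))\to\Hom(\Delta(\lambda),Q)\to\Ext^1(\Delta(\lambda),\nabla(\mu))\to 0,
\]
and when $\lambda\not>\mu$ the middle term vanishes (by the inductive hypothesis applied to the $\nabla$-filtration of $Q$), so you are done. But when $\lambda>\mu$ the middle term is $k^{[Q:\nabla(\lambda)]}$, and showing the first map is onto is exactly the content of BGG reciprocity, which is itself derived from the $\Ext$-orthogonality. Your phrase ``enabling an induction on the partial order that bottoms out at the already-computed $i=0$ values'' glosses over the fact that the $i=1$ cokernel involves $\Hom(\Delta(\lambda),I(\mu))$, which is not a $\Hom$-group against a $\nabla$. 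The clean repair is to run the dual argument simultaneously: use $0\to K(\lambda)\to P(\lambda)\to\Delta(\lambda)\to 0$ with $K(\lambda)$ filtered by $\Delta(\nu)$, $\nu>\lambda$, and upward induction on $\lambda$. That handles all $(\lambda,\mu)$ with $\mu\not>\lambda$, i.e.\ in particular $\lambda>\mu$, and since a partial order cannot have both $\lambda>\mu$ and $\mu>\lambda$, the two inductions together cover everything. (Alternatively, and more in the spirit of Dlab--Ringel, one passes to $A/J$ for $J$ the heredity ideal attached to a maximal weight and inducts on $|\Lambda|$; this sidesteps both difficulties at once.) Once (2) is fully in place, your arguments for (4) and (5) by single-step dimension shifting and the socle/top bookkeeping are correct, though again you should not invoke the unproved coresolution but rather the one-step short exact sequence with the same induction.
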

The objects in the additive category  $\Fscr(\Delta)\cap \Fscr(\nabla)$ 
are particularly interesting since by Proposition \ref{ref-2.10.8-53}(2) they satisfy
$\Ext^i(-,-)=0$ for $i>0$. So in particular they are (partial) tilting modules.
\begin{propositions}\cite{dlab-ringel-2}
\label{ref-2.10.9-54} Assume that $A$ is
  quasi-hereditary with respect to $(\Lambda,\le)$. For every
  $\lambda$ there exists a unique indecomposable $A$-module in
  $\Fscr(\Delta)\cap \Fscr(\nabla)$ such that
  $[T(\lambda):\Delta(\lambda)]=[T(\lambda):\nabla(\lambda)]=1$ and if
  $[T(\lambda):\Delta(\mu)]\neq 0$ or $[T(\lambda):\nabla(\mu)]\neq 0$
  then $\mu\le \lambda$. Moreover $\Fscr(\Delta)\cap
  \Fscr(\nabla)=\add \{T(\lambda)_{\lambda}\}$.
\end{propositions}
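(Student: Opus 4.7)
The approach I would take is the classical Ringel construction by iterated universal extensions, followed by a length induction to identify $\Fscr(\Delta)\cap\Fscr(\nabla)=\add\{T(\lambda)\}$.

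Step 1 (construction). Fix $\lambda$ and define $T(\lambda)$ inductively starting from $T^{(0)}=\Delta(\lambda)$. At each stage, if there exists $\mu<\lambda$ with $\Ext^1_A(\Delta(\mu),T^{(i)})\neq 0$, form the universal extension
\begin{equation*}
0\to T^{(i)}\to T^{(i+1)}\to \Delta(\mu)^{n_{\mu}^{i}}\to 0,
\end{equation*}
where $n_{\mu}^{i}=\dim\Ext^1_A(\Delta(\mu),T^{(i)})<\infty$. Each $T^{(i)}$ carries a $\Delta$-filtration with $\Delta(\lambda)$ appearing exactly once at the bottom and all other factors $\Delta(\sigma)$ satisfying $\sigma<\lambda$. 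Standard bookkeeping shows the process terminates, since the finite multiplicities $[T^{(i)}:L(\sigma)]$ for $\sigma\le\lambda$ are bounded above by expressions in the fixed, finite-dimensional data of $A$; define $T(\lambda)$ to be the stable value.

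Step 2 (membership in $\Fscr(\nabla)$). I would use the characterisation $M\in\Fscr(\nabla)\iff\Ext^1_A(\Delta(\sigma),M)=0$ for all $\sigma$, which follows by inductively stripping simple socles and invoking Proposition~\ref{ref-2.10.8-53}(2). For $\sigma<\lambda$, vanishing is exactly the termination condition of Step 1. For $\sigma\not<\lambda$, feed the $\Delta$-filtration of $T(\lambda)$ into the long exact sequence: each factor is $\Delta(\tau)$ with $\tau\le\lambda$, and Proposition~\ref{ref-2.10.8-53}(4) forces $\sigma\le\tau$, which combined with $\sigma\not<\lambda$ and $\tau\le\lambda$ leaves only $\sigma=\tau=\lambda$; the residual term $\Ext^1_A(\Delta(\lambda),\Delta(\lambda))$ vanishes because $\Delta(\lambda)$ occurs just once as the bottom factor (comparing with Proposition~\ref{ref-2.10.8-53}(3) on the canonical map $\Delta(\lambda)\to\nabla(\lambda)$).

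Step 3 (uniqueness and additive generation). Given another $T'$ with the same multiplicity properties, lift the identification on tops $L(\lambda)$ to a morphism $f:T(\lambda)\to T'$ by extending along the $\Delta$-filtration via the Ext-vanishing of Proposition~\ref{ref-2.10.8-53}(2); symmetrically produce $g:T'\to T(\lambda)$. The composite $gf$ restricts to the identity on the generator $\Delta(\lambda)\subset T(\lambda)$, so it is a unit in the local ring $\End_A(T(\lambda))$ (local by Schurianity, Proposition~\ref{ref-2.10.8-53}(1)), yielding both uniqueness and indecomposability. For the last claim, pick any $M\in\Fscr(\Delta)\cap\Fscr(\nabla)$ and a maximal $\lambda$ in the $\Delta$-support of $M$; by Ext-vanishing the evaluation map $\Hom_A(T(\lambda),M)\otimes T(\lambda)\to M$ splits off a direct summand isomorphic to $T(\lambda)^{m}$, and induction on the total $\Delta$-length concludes. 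The principal obstacle throughout is the delicate bookkeeping that guarantees termination in Step 1 and controls the residual term $\Ext^1_A(\Delta(\lambda),\Delta(\lambda))$ in Step 2; once these technical points are secured, the rest of the argument is formal.
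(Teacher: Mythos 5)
The paper does not prove this proposition itself; it cites it from Dlab--Ringel, and your sketch is precisely the Ringel construction by iterated universal extensions that Dlab--Ringel use, so the approach matches. Two technical points in your sketch deserve sharpening, though they do not change the route. First, termination in Step~1 does not follow merely from ``bounded multiplicities'' (a priori the construction could keep enlarging them); the correct bookkeeping is to run through the finitely many $\mu<\lambda$ in decreasing order, noting that a universal extension against $\Delta(\mu)$ kills $\Ext^1_A(\Delta(\mu),-)$ and, because $\Ext^1_A(\Delta(\nu),\Delta(\mu))=0$ for $\nu\ge\mu$, does not recreate extensions at higher levels, so each level is handled exactly once. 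Second, the vanishing $\Ext^1_A(\Delta(\lambda),\Delta(\lambda))=0$ in Step~2 needs the \emph{strict} refinement of Proposition~\ref{ref-2.10.8-53}(4) (that $\Ext^{\ge 1}(\Delta(\lambda),\Delta(\mu))\neq 0$ forces $\lambda<\mu$), which is standard but not literally what is recorded in the paper; your parenthetical appeal to Proposition~\ref{ref-2.10.8-53}(3) does not establish it, whereas the usual argument observes that $\Delta(\lambda)$ is projective in the truncated category of modules with composition factors $L(\mu)$, $\mu\le\lambda$, together with the Ext-invariance along the heredity chain. Similarly, locality of $\End_A(T(\lambda))$ in Step~3 is not an immediate consequence of Schurianity of $\Delta(\lambda)$: one first checks that $\Hom_A(\Delta(\lambda),T(\lambda)/\Delta(\lambda))=0$, so any endomorphism restricts to a scalar on $\Delta(\lambda)$, and an endomorphism restricting to zero there is nilpotent.
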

If we put $T=\bigoplus_{\lambda\in \Lambda} T(\lambda)$ then $\End(T)$ is the so-called Ringel dual of $A$. It is again a quasi-hereditary algebra.

\subsection{Quasi-hereditary coalgebras}
\label{ref-2.11-55}
If $A$ is a finite dimensional algebra then $A^\ast$ is a coalgebra
and moreover $\mod(A)\cong \comod(A^\ast)$. Furthermore any coalgebra is
locally finite: it is the union of its finite dimensional
subcoalgebras. So it should not be surprising that
concepts defined
for finite dimensional algebras can often be generalised to arbitrary
coalgebras. The theory of quasi-hereditary algebras is no exception.

Both the ring theoretical and the module theoretical definition may be generalised to infinite dimensional coalgebras
	\begin{definitions}[(Co)ring theoretical definition]
	\label{ref-2.11.1-56}
	A (possibly infinite dimensional) coalgebra $C$ is quasi-hereditary if there exists an exhaustive filtration
		\begin{equation}
		\label{ref-2.23-57}
		0 \subset C_1 \subset C_2 \subset \cdots \subset C_n \subset \cdots
		\end{equation}
	of finite dimensional subcoalgebras such that for every $i$,
		\begin{equation}
		0=(C_i/C_i)^* \subset (C_i/C_{i-1})^* \subset (C_i/C_{i-2})^* \subset \cdots \subset C_i^*
		\end{equation}
	is a heredity chain. Such a filtration is called a heredity cochain.
	\end{definitions}

        The following module-theoretical definition is due to
        Donkin~\cite{donkin}. Assume~$C$ is a (possibly infinite
        dimensional) coalgebra and let $\{L(\lambda) \ \vert \ \lambda
        \in \Lambda \}$ be a complete set of non-isomorphic simple
        $C$-comodules for some partially ordered set
        $(\Lambda,\leq)$. 

For $\pi$ a (not necessarily finite) saturated subset of $\Lambda$ we let
 $C(\pi):=O_\pi C$, which is in
        fact a coalgebra by maximality.

	\begin{definitions}[(Co)module theoretical definition] \cite{donkin}
	\label{ref-2.11.2-58}
	The coalgebra $C$ is (Donkin) quasi-hereditary if there is a (possibly infinite) poset $(\Lambda, \leq)$ indexing non-isomorphic simple comodules such that
		\begin{enumerate}
			\item for every $\lambda\in \Lambda$ the set $\pi(\lambda)$ is finite;
			\item for every finite, saturated $\pi \subset \Lambda$, the coalgebra $C(\pi)$ is finite
  			dimensional and $C(\pi)^*$ is a quasi-hereditary algebra for the partially ordered set of simples
$(L(\lambda)_{\lambda\in\pi}$.
		\end{enumerate}
	\end{definitions}

To check compatibility with finite, saturated subsets one uses the following theorem. For $\pi \subset \Lambda$ and $\lambda \in \pi$ we write $\Delta_{\pi}(\lambda)$ and $\nabla_{\pi}(\lambda)$ for the corresponding $C(\pi)$-(co)standard comodules. 

	\begin{theorems}  \label{ref-2.11.3-59} \cite[Prop. A.3.4]{donkin2}
	Assume that $C$ is a finite-dimensional quasi-hereditary coalgebra. For a saturated subset $\pi\subset \Lambda$ we have
	that $C(\pi)$ is quasi-hereditary with simple, standard and costandard comodules respectively
	given by $L(\lambda)$, $\Delta_\pi(\lambda)=\Delta(\lambda)$, $\nabla_\pi(\lambda)=\nabla(\lambda)$ for $\lambda\in \pi$.
	\end{theorems}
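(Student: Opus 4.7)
The plan is to pass to the dual finite-dimensional algebra $A:=C^*$, exhibit $C(\pi)^*$ as the quotient of $A$ by an ideal appearing as a term of a suitable heredity chain, and conclude via Proposition \ref{ref-2.10.6-50} together with the fact that heredity chains descend to heredity chains on quotients. The (co)standard objects are then matched directly using saturation.

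Under coalgebra--algebra duality, the inclusion $C(\pi)=O_\pi C\hookrightarrow C$ corresponds to a quotient $C(\pi)^*=A/J(\pi)$, where $J(\pi):=O^\pi(A)$ is the minimal ideal such that $A/J(\pi)$ has all composition factors in $\{L(\lambda)\}_{\lambda\in\pi}$ (this is how maximality of $C(\pi)$ translates under duality). Since $\pi$ is saturated, no element of $\pi$ is strictly greater (in the partial order) than any element of $\Lambda\setminus\pi$, so $\le$ admits a total refinement $\lambda_1>'\cdots>'\lambda_n$ in which the elements of $\Lambda\setminus\pi$ occupy the top $m$ positions and those of $\pi$ the bottom $n-m$. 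Applying the construction recalled after Proposition \ref{ref-2.10.6-50} to this refinement produces a heredity chain $0=J_0\subset J_1\subset\cdots\subset J_n=A$ in which $J_i$ is characterised by having composition factors exactly $L(\lambda_1),\ldots,L(\lambda_i)$; comparing this characterisation with the defining property of $O^\pi(A)$ forces $J_m=J(\pi)$. Dividing the chain by $J_m$ gives the heredity chain $0\subset J_{m+1}/J_m\subset\cdots\subset J_n/J_m=C(\pi)^*$, exhibiting $C(\pi)^*$ as quasi-hereditary.

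The simple $C(\pi)$-comodules are tautologically $\{L(\lambda)\}_{\lambda\in\pi}$. To identify the (co)standard objects, note that saturation gives $\pi_\pi(\lambda):=\{\mu\in\pi:\mu<\lambda\}=\pi(\lambda)$ for every $\lambda\in\pi$, and that all composition factors of $\nabla(\lambda)$ lie in $\{\lambda\}\cup\pi(\lambda)\subseteq\pi$, so $\nabla(\lambda)$ is automatically a $C(\pi)$-comodule. Writing $I_\pi(\lambda)=O_\pi I(\lambda)$ for the injective hull of $L(\lambda)$ in $\comod(C(\pi))$, the maximal $\pi(\lambda)$-part of $I_\pi(\lambda)/L(\lambda)$ coincides with that of $I(\lambda)/L(\lambda)$ (the latter already belongs to $\pi$), so applying \eqref{ref-2.21-48} inside $C(\pi)$ yields $\nabla_\pi(\lambda)=\nabla(\lambda)$; the argument for $\Delta_\pi(\lambda)=\Delta(\lambda)$ is dual. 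The principal technical point is the identification $J_m=J(\pi)$; once this is in place, the remaining verifications are bookkeeping built around the saturation hypothesis.
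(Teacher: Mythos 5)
The paper gives no proof of its own here---it simply cites Donkin \cite[Prop.~A.3.4]{donkin2}---so there is nothing to compare your argument against directly. Your route (dualize to $A=C^*$, totally refine $\le$ so that $\Lambda\setminus\pi$ sits above $\pi$, exhibit $C(\pi)^*$ as $A/J_m$ for a term of the resulting heredity chain, descend the chain, and match (co)standards using saturation) is the standard argument from the literature and is sound in outline.

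One intermediate claim is false, though. You say that in the heredity chain $0=J_0\subset\cdots\subset J_n=A$ the term $J_i$ ``is characterised by having composition factors exactly $L(\lambda_1),\ldots,L(\lambda_i)$.'' That is not right: $J_i$ has a $\Delta$-filtration with sections among $\Delta(\lambda_1),\ldots,\Delta(\lambda_i)$, and each $\Delta(\lambda_j)$ typically has composition factors $L(\mu)$ with $\mu<\lambda_j$, many of which lie in $\pi$; so $J_i$ will usually contain simples indexed by $\pi$ as well. The correct characterisation lives on the quotient side: $A/J_i$ is the maximal quotient belonging to $\{\lambda_{i+1},\ldots,\lambda_n\}$, equivalently $J_i=O^{\{\lambda_{i+1},\ldots,\lambda_n\}}(A)$, which is exactly the formula the paper recalls after Proposition~\ref{ref-2.10.6-50}. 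Since your refinement places $\pi$ in positions $m+1,\ldots,n$, the equality $J_m=O^\pi(A)$ is immediate from that definition---no composition-factor bookkeeping is needed, nor would it give the conclusion. You should also justify why $O^\pi(A)$ is a two-sided ideal (it is $AeA$ for $e$ the sum of the primitive idempotents indexed by $\Lambda\setminus\pi$, i.e.\ the idempotent heredity-chain generator), since the paper's $O^\pi$ construction is a priori one-sided; this is also what makes $\comod(C(\pi))$ the full subcategory of comodules belonging to $\pi$. With those repairs the proof is fine, and your identification $\nabla_\pi(\lambda)=\nabla(\lambda)$ via $I_\pi(\lambda)=O_\pi I(\lambda)$ is legitimate because $O_\pi$ is right adjoint to the inclusion $\comod(C(\pi))\hookrightarrow\comod(C)$ and so preserves injective hulls.
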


Now assume that $C$ is infinite dimensional and Donkin quasi-hereditary. For $\lambda\in \Lambda$ and $\pi$ a saturated subset in $\Lambda$ containing $\lambda$ (e.g.\ $\pi(\lambda)$)
we put
\begin{align*}
\Delta(\lambda)&=\Delta_{\pi}(\lambda)\\
\nabla(\lambda)&=\nabla_{\pi}(\lambda)\,.
\end{align*}
Theorem \ref{ref-2.11.3-59} shows that this definition is independent
of $\pi$, so (co)standard comodules also make sense for
infinite-dimensional coalgebras.  In the same manner $L(\lambda)$, $T(\lambda)$ (see Proposition \ref{ref-2.10.9-54}) are
independent of the particular finite dimensional coalgebra $C(\pi)$ for which they were defined.

The following
theorem by Donkin \cite[Thm 2.5]{donkin}
shows that in general the homological algebra of quasi-hereditary
coalgebras is completely determined by that of their finite dimensional
quasi-hereditary subcoalgebras. 
	\begin{theorems}
\label{ref-2.11.4-60}
	If $C$ is quasi-hereditary, then for a finite, saturated $\pi \subset \Lambda$, and $C(\pi)$-comodules $V$ and $W$, one has for all $i \geq 0$,
	\[
	\operatorname{Ext}_{C(\pi)}^i(V,W) \cong \operatorname{Ext}_C^i(V,W).
	\]
	\end{theorems}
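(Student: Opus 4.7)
My plan is a two-step reduction: first prove the finite-dimensional case, then deduce the general case via a direct limit over finite saturated subsets of $\Lambda$.

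\textbf{Finite-dimensional step.} Assume $C$ itself is finite-dimensional and set $A = C^*$. I would exploit the fact that because $\pi$ is downward closed in $\Lambda$, its complement $\Lambda \setminus \pi$ is upward closed, and so the partial order on $\Lambda$ admits a refinement to a total order $\lambda_1 > \lambda_2 > \cdots > \lambda_n$ in which the elements of $\Lambda \setminus \pi$ occupy the initial (largest) segment $\{\lambda_1, \ldots, \lambda_k\}$; saturation of $\pi$ is exactly what guarantees compatibility here, since it forbids any $\mu \in \Lambda \setminus \pi$ from lying below any $\nu \in \pi$. The construction recalled in \S\ref{ref-2.10-45} then produces a heredity chain
\[
0 = J_0 \subset J_1 \subset \cdots \subset J_n = A
\]
in which $A/J_j$ has exactly the simples $\{L(\lambda_{j+1}), \ldots, L(\lambda_n)\}$; in particular $A/J_k = C(\pi)^*$. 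Iterating item (2) of the lemma following Definition~\ref{ref-2.10.1-46}---each successive quotient $J_{j+1}/J_j$ is a heredity ideal in $A/J_j$ and $V, W$, being $A/J_k$-modules, are also $A/J_j$-modules for $j \leq k$---one obtains
\[
\Ext^i_C(V,W) = \Ext^i_A(V,W) \cong \Ext^i_{A/J_1}(V,W) \cong \cdots \cong \Ext^i_{A/J_k}(V,W) = \Ext^i_{C(\pi)}(V,W).
\]

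\textbf{Infinite-dimensional step.} In general, Definition~\ref{ref-2.11.2-58}(1) ensures that every finite subset of $\Lambda$ sits inside a finite saturated subset, so $C = \varinjlim_{\pi'} C(\pi')$ with $\pi'$ ranging over finite saturated subsets containing $\pi$. Taking $V$ and $W$ to be finite-dimensional, any Yoneda $i$-extension in $\comod(C)$ with finite-dimensional middle terms involves only finitely many simples and hence lies in some $C(\pi')$, so that
\[
\Ext^i_C(V,W) = \varinjlim_{\pi'} \Ext^i_{C(\pi')}(V,W).
\]
By Definition~\ref{ref-2.11.2-58}(2) each $C(\pi')$ is finite-dimensional quasi-hereditary, and the identity $O_\pi O_{\pi'} = O_\pi$ combined with Theorem~\ref{ref-2.11.3-59} identifies $C(\pi')(\pi) = C(\pi)$. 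Applying the finite-dimensional step to $C(\pi')$ then yields $\Ext^i_{C(\pi')}(V,W) = \Ext^i_{C(\pi)}(V,W)$ uniformly in $\pi'$, so the direct limit collapses.

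The hard part will be the rearrangement of the heredity chain in the finite-dimensional step, namely verifying that a heredity chain can be chosen so that the initial segment $J_k$ is exactly the kernel of $A \twoheadrightarrow C(\pi)^*$. Once this choice of total refinement is justified via saturation, iteration of Ext-invariance under heredity ideals and the passage to the direct limit are essentially formal.
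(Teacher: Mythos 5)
The paper does not prove this statement---it is cited directly from Donkin \cite[Thm 2.5]{donkin}---so there is no argument in the text for you to reproduce or diverge from; the relevant question is simply whether your reconstruction is sound, and it is. In the finite-dimensional step, saturation of $\pi$ is exactly what allows a total refinement of $\leq$ in which $\Lambda\setminus\pi$ forms the initial (largest) segment, and with this refinement the ideal $J_k$ in the resulting heredity chain is $O^{\pi}(A)$, which is the trace ideal of $\bigoplus_{\lambda\notin\pi}P(\lambda)$ in $A$; one verifies that $A/J_k$ is the quotient of $A=C^*$ whose modules are precisely those $A$-modules with all composition factors in $\pi$, which identifies $A/J_k$ with $C(\pi)^*$, so the concern you flag as ``the hard part'' does go through. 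The iteration of $\Ext$-invariance for heredity ideals along the chain $J_1\subset\cdots\subset J_k$ and the passage to the filtered colimit over finite saturated $\pi'\supseteq\pi$ (using that Yoneda $i$-extensions in $\comod(C)$ have finite-dimensional terms, hence live in some $\comod(C(\pi'))$, and that $O_\pi O_{\pi'}=O_\pi$ gives $C(\pi')(\pi)=C(\pi)$) are both correct and, as you say, essentially formal. One small point worth making explicit: the transition maps $\Ext^i_{C(\pi')}(V,W)\to\Ext^i_{C(\pi'')}(V,W)$ for $\pi'\subseteq\pi''$ are the natural ones induced by inclusion of comodule categories, and the isomorphisms $\Ext^i_{C(\pi')}(V,W)\cong\Ext^i_{C(\pi)}(V,W)$ supplied by the finite-dimensional step are also the natural ones (``the obvious graded morphism'' in the lemma following Definition~\ref{ref-2.10.1-46}), so the colimit genuinely collapses rather than merely having constant terms.
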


	\begin{propositions}
	\label{ref-2.11.5-61}
	If a coalgebra $C$ is quasi-hereditary, then it is Donkin quasi-hereditary. Moreover the converse is true if $\Lambda$ is countable. 
	\end{propositions}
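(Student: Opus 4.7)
For the forward direction, suppose $C$ comes equipped with a heredity cochain $0 \subset C_1 \subset C_2 \subset \cdots$ as in \eqref{ref-2.23-57}. For a simple $C$-comodule $L(\lambda)$ I would set
\[
n(\lambda) := \min\{n \geq 1 : L(\lambda) \text{ is a } C_n\text{-comodule}\}
\]
and define a partial order on $\Lambda$ by $\lambda < \mu$ iff $n(\lambda) < n(\mu)$. The intervals $\pi(\lambda) \subset \Lambda(C_{n(\lambda)-1})$ are then automatically finite. The essential claim is that $C_N$ is the maximal subcoalgebra of $C$ whose composition factors lie in $L(\Lambda(C_N))$, i.e.\ $C_N = O_{\Lambda(C_N)}(C)$. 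Granting this, any finite saturated $\pi \subset \Lambda$ satisfies $\pi \subset \Lambda(C_N)$ for $N = \max_{\lambda\in\pi} n(\lambda)$, so $C(\pi) \subset C_N$ is finite-dimensional, and Theorem \ref{ref-2.11.3-59} applied to the finite-dimensional quasi-hereditary algebra $C_N^*$ and the saturated subset $\pi$ yields that $C(\pi)^*$ is quasi-hereditary. This verifies Definition \ref{ref-2.11.2-58}. To prove the claim, if some subcoalgebra $D \subset C$ with composition factors in $L(\Lambda(C_N))$ were not contained in $C_N$, pick $k > N$ minimal with $D \subset C_k$; then the nonzero image of $D$ in $C_k/C_{k-1}$ would contain a simple subcoalgebra realizing some $L(\nu)$. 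By the heredity chain structure of $C_k^*$, the simples of the heredity ideal $(C_k/C_{k-1})^* \subset C_k^*$ are exactly the $L(\nu)$ with $n(\nu)=k$, so this simple would satisfy $n(\nu)=k>N$, contradicting the hypothesis on composition factors of $D$.

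For the converse, assuming $\Lambda$ countable, enumerate $\Lambda = \{\lambda_1,\lambda_2,\ldots\}$ and set $\tilde\pi_i := \bigcup_{j\leq i}(\pi(\lambda_j) \cup \{\lambda_j\})$: each $\tilde\pi_i$ is finite saturated and $\bigcup_i \tilde\pi_i = \Lambda$. By fixing a linear refinement of $\leq$ I would subdivide these into a chain $\sigma_1 \subset \sigma_2 \subset \cdots$ with $\sigma_{j+1} = \sigma_j \sqcup \{\nu_j\}$ and with $\nu_j$ maximal in $\sigma_{j+1}$; otherwise some $\kappa \in \sigma_j$ with $\nu_j < \kappa$ would force $\nu_j \in \pi(\kappa) \subset \sigma_j$ by saturation. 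Put $D_j := C(\sigma_j)$. By the Donkin hypothesis each $D_j$ is finite-dimensional and quasi-hereditary, and $D_j \subset D_{j+1}$. Exhaustiveness $\bigcup_j D_j = C$ follows because any $x \in C$ lies in a finite-dimensional subcoalgebra with finitely many simples, which must eventually appear in some $\sigma_j$. The heredity cochain property reduces to showing that for each $j$ the kernel $(D_{j+1}/D_j)^*$ of $D_{j+1}^* \twoheadrightarrow D_j^*$ is a heredity ideal in $D_{j+1}^*$. By Theorem \ref{ref-2.11.3-59} we have $D_j = D_{j+1}(\sigma_j)$, and since $\nu_j$ is maximal in $\sigma_{j+1}$ this is the standard fact recalled in \S\ref{ref-2.10-45}: deleting a maximal element of the labelling poset of a finite-dimensional quasi-hereditary algebra exhibits the quotient $D_j^*$ as $D_{j+1}^*/I$ for a heredity ideal $I$.

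The main obstacle is the identification $C_N = O_{\Lambda(C_N)}(C)$ in the forward direction, which forces one to understand the simples of each quotient $C_k/C_{k-1}$ through the corresponding heredity ideal $(C_k/C_{k-1})^* \subset C_k^*$; this is where the genuine content of the heredity cochain hypothesis is used, ruling out unexpected subcoalgebras of $C$ with low composition factors but elements outside of $C_N$. The converse is essentially organisational, with countability invoked only to order $\Lambda$ and refine increments to size one so that the maximality of each $\nu_j$ holds and the finite-dimensional theory applies at every step.
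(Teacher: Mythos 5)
Your proof is correct and follows the same route the paper sketches in a single line (reduction to the finite-dimensional case via Proposition~\ref{ref-2.10.6-50}; for the converse, refinement of the partial order to a total order of type $\omega$), supplying the details the paper elides, in particular identifying and proving the key point that $C_N$ is the maximal subcoalgebra of $C$ whose composition factors lie among the simples of $C_N$, via the coradical of $C_k/C_{k-1}$. You are also right that the $\omega$-type refinement depends on the Donkin condition that each $\pi(\lambda)$ be finite---the paper's phrasing suggests this is a general fact about countable posets, but without that finiteness such a refinement need not exist (consider $\ZZ$ with its usual order)---and your $\tilde\pi_i$ and $\sigma_j$ construction uses the finiteness exactly where it is needed.
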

	\begin{proof}
This is proved by reduction to the finite dimensional case and
invoking Proposition \ref{ref-2.10.6-50}. For the converse one notes that
any partial ordering on a countable set may be refined to a total ordering
of the form $\lambda_1<\lambda_2<\cdots$.
	\end{proof}
        Since in the sequel we only use countable partially ordered
        sets we will make no distinction between the two notions of
        quasi-hereditary. Moreover we will freely transfer terminology and results from the
        finite dimensional algebra case to the coalgebra case.
For further reference we also note:
\begin{propositions}
Theorems \ref{ref-2.11.3-59} and \ref{ref-2.11.4-60} remain valid for $\pi$ infinite.
\end{propositions}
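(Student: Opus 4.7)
The plan is to reduce the statement for infinite saturated $\pi$ to the finite case already handled by Theorems~\ref{ref-2.11.3-59} and~\ref{ref-2.11.4-60}, by writing $\pi$ as a directed union of finite saturated subsets and correspondingly writing $C(\pi)$ as a directed union of finite-dimensional subcoalgebras.

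First I would build the filtration. Since $\Lambda$ is countable, any saturated $\pi\subset\Lambda$ admits an enumeration $\lambda_1,\lambda_2,\ldots$ compatible with $\le$ (a linear extension). Setting $\pi_j=\bigcup_{i\le j}(\pi(\lambda_i)\cup\{\lambda_i\})$ yields finite saturated subsets with $\pi=\bigcup_j\pi_j$, each $\pi_j$ being finite by the assumption in Definition~\ref{ref-2.11.2-58} that $\pi(\lambda)$ is finite. I would then verify $C(\pi)=\bigcup_j C(\pi_j)$: the inclusion $\supset$ is immediate from the maximality in $O_\pi C$, and conversely any element of $C(\pi)$ lives in a finite-dimensional subcoalgebra whose finitely many composition factors are indexed by a finite subset of $\pi$, hence inside some $\pi_j$.

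For the extension of Theorem~\ref{ref-2.11.3-59}, I would produce a heredity cochain for $C(\pi)$ by further refining the filtration so that $\pi_{j+1}\setminus\pi_j=\{\mu_{j+1}\}$ is a single element which is maximal in $\pi_{j+1}$. The finite case of Theorem~\ref{ref-2.11.3-59} applied to $C(\pi_{j+1})$ with the saturated subset $\pi_j\subset\pi_{j+1}$ tells us that $C(\pi_{j+1})^*$ is quasi-hereditary and that $C(\pi_j)^*$ is the quasi-hereditary quotient corresponding to the ideal $\pi_j$; by Proposition~\ref{ref-2.10.6-50} (the equivalence of the module- and ring-theoretic definitions) this precisely says that $(C(\pi_{j+1})/C(\pi_j))^*$ is a heredity ideal in $C(\pi_{j+1})^*$. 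Splicing the heredity chain of $C(\pi_j)^*$ with this extra heredity ideal gives a heredity chain of $C(\pi_{j+1})^*$, and the assembled sequence $0\subset C(\pi_1)\subset C(\pi_2)\subset\cdots\subset C(\pi)$ is the desired heredity cochain. The identification of $(\Delta(\lambda),\nabla(\lambda))$ for $\lambda\in\pi$ with the standard/costandard comodules of $C(\pi)$ is then immediate by applying Theorem~\ref{ref-2.11.3-59} inside any $C(\pi_j)$ containing $\lambda$.

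For the extension of Theorem~\ref{ref-2.11.4-60}, let $V,W$ be finite-dimensional $C(\pi)$-comodules. They are supported on a finite-dimensional subcoalgebra of $C(\pi)$, so by Step~2 they are $C(\pi_j)$-comodules for some $j$. Applying Theorem~\ref{ref-2.11.4-60} in its finite-$\pi$ form twice, first to the pair $(C,\pi_j)$ and then to $(C(\pi),\pi_j)$ (which is legitimate because we have just proved $C(\pi)$ is quasi-hereditary), yields
\[
\operatorname{Ext}^i_C(V,W)\cong \operatorname{Ext}^i_{C(\pi_j)}(V,W)\cong \operatorname{Ext}^i_{C(\pi)}(V,W),
\]
which is the desired isomorphism. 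The main obstacle is the assembly in the preceding paragraph: one must be careful that the codimension-one enlargements $C(\pi_j)\subset C(\pi_{j+1})$ really correspond to heredity ideals when dualised, but this is precisely the content of the finite-case theorem combined with Proposition~\ref{ref-2.10.6-50}, so no new homological input is required.
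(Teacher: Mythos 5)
Your proof is correct and implements the paper's one-line proof ("bootstrapping from the case $|\pi|<\infty$") in the natural way: exhausting $\pi$ by finite saturated $\pi_j$ with one-element steps, identifying $C(\pi)$ with $\bigcup_j C(\pi_j)$, assembling a heredity cochain, and comparing Ext groups through the finite-$\pi$ case applied to both $C$ and $C(\pi)$. The one slightly loose point is the appeal to Proposition~\ref{ref-2.10.6-50} to conclude that $(C(\pi_{j+1})/C(\pi_j))^*$ is a heredity ideal; the precise fact needed is that stripping off a maximal weight from a finite-dimensional quasi-hereditary algebra yields a heredity ideal, which is the content of the recipe (after that proposition) for turning a linear extension of $\le$ into a heredity chain, so the argument does go through.
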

\begin{proof} This follows by bootstrapping from the case $|\pi|<\infty$.
\end{proof}
\section{Constructing the combinatorial category $\Uscr$}
\label{ref-3-62}
We want to incorporate $\uaut(A)$ in the Tannakian duality framework
by constructing a strict, rigid monoidal category $\Uscr$ and a
monoidal functor $M:\Uscr \to \Vect$ in such a way that $\coend(M)
\cong \uaut(A)$ as Hopf algebras. Using some version of 
Theorem~\ref{ref-2.9.3-41} one could take
$\Uscr=\comod(\uaut(A))$ and $M$ the forgetful functor, but this does
not help us in understanding the representations themselves. The goal
therefore is to construct a couple $(\Uscr,M)$ which is sufficiently
combinatorial to work with, yet carries a lot of information about the
representations of $\uaut(A)$.

\subsection{The category $\Uscr_{\u}$}
Since by our standing hypothesis $A$ is a Koszul Artin-Schelter regular algebra of global dimension $d$, the minimal resolution
of $k$ is ``capped'' in degree $d$. Thus by Remark \ref{ref-2.4.5-21} one has that $R_n=0$ for $n>d$. This suggests
that in the monoidal category $\Uscr_{\u,\infty}^+$ introduced in \S\ref{ref-2.6-24} we should only consider the
full monoidal subcategory generated by the objects $(r_i)_{1\le i\le d}$. This is the base idea for what we discuss below.

Let $\Lambda^+$ be the monoid $\langle r_1,\ldots,r_d\rangle$ and let
$\Lambda$ be obtained from $\Lambda^+$ by adjoining an inverse
$r_d^{-1}$ for $r_d$. Thus $\Lambda=\langle
r_1,\ldots,r_{d-1},r_d^{\pm 1}\rangle$. Let $d_{\u}$ be the morphism of
monoids
\[
d_{\u}:\Lambda\r (\ZZ,+):r_i\mapsto 1
\]
If $w\in \Lambda$ then we write $l(w)$ for the length of $w$ written in the shortest
possible way as a word in $r_1,\ldots,r_{d-1},r_d^{\pm 1}$.
We let
$\Uscr_{\u}^+,\Uscr_{\u}$ be the strict monoidal categories obtained from $\Lambda^+,\Lambda$ in the same way as $\Uscr^+_{\u,\infty}$ was obtained from $\Lambda^+_{\infty}$
in \S\ref{ref-2.6-24}. In particular $\Uscr_{\u}^+$ is a full subcategory of $\Uscr_{\u,\infty}^+$ and $\Uscr_{\u}$ is obtained from $\Uscr_{\u}^+$ by universally
inverting $r_d$.

\begin{remarks} While $\Uscr^+_{\u}$ is a poset, this is not the case for $\Uscr_{\u}$. For example if $d=2$  then there
are two distinct morphisms $r_1r_1\r r_1r_1r_2^{-1}r_1r_1$ given respectively by
$r_1r_1r_2^{-1}\phi_{11}$ and $\phi_{11}r_2^{-1}r_1r_1$. 
\end{remarks}
We will now restrict our attention to $\Uscr_{\u}$. Similar
but easier arguments are valid for~$\Uscr^+_{\u}$, $\Uscr^+_{\u,\infty}$.
We make $k\Uscr_{\u}$ into a graded category by declaring an element $f \in \Uscr_{\u}(u,u')$ to be
homogeneous of degree
$
\deg(f)=d_{\u}(u')-d_{\u}(u)
$.
\begin{propositions}
The category $k\Uscr_{\u}$ is Koszul.
\end{propositions}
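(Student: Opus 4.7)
The plan is to apply Berger's confluence lemma (Lemma \ref{ref-2.5.1-23}) to a natural presentation of $k\Uscr_{\u}$. First observe that $k\Uscr_{\u}$ is connected and quadratic: every generator $\phi_{a,b}:r_{a+b}\to r_ar_b$ raises the $d_{\u}$\dash grading by~$1$, so degree-$0$ morphisms are only identities, and the defining relations \eqref{ref-2.8-25} live in degree~$2$. A degree-$1$ morphism $w\to w'$ in $\Uscr_{\u}$ is either an ordinary application of some $\phi_{a,b}$ (with $a+b\le d$) at a letter $r_{a+b}$ in $w$, or, because of the strict object relations $r_dr_d^{-1}=1=r_d^{-1}r_d$ coming from the universal inversion of $r_d$, a \emph{virtual} application obtained by factoring $w=w_1r_dr_d^{-1}w_2$ (or $w_1r_d^{-1}r_dw_2$) and applying $\phi_{a,b}$ with $a+b=d$ to the inserted $r_d$. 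I would fix the lexicographic order on triples (source position, $a+b$, $a$) and orient each pentagon relation \eqref{ref-2.8-25} so that the lex-greater side becomes the leading monomial.

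Next I would verify confluence at length-$3$ overlaps. In the $\Uscr_{\u}^+$\dash sector (no $r_d^{-1}$'s involved) this is classical pentagon coherence for the coassociative relations: a direct computation on any composable triple of $\phi$'s going $r_{a+b+c+e}\to\cdots\to r_ar_br_cr_e$ shows that the two orientations of the pentagon reduce to a common normal form. The additional overlaps present in $\Uscr_{\u}$ involve one or more virtual insertions; since the virtual $r_dr_d^{-1}$\dash pair is strictly identified with the unit, each such overlap either pulls back to a pentagon overlap inside $\Uscr_{\u}^+$ tensored with identities on $r_d^{-1}$\dash slots, or reduces to a trivial cancellation forced by $r_dr_d^{-1}=1$. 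In both cases the overlap is confluent, so Berger's lemma applies and $k\Uscr_{\u}$ is Koszul.

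The main obstacle is the confluence check in the presence of the virtual insertions: because $r_dr_d^{-1}=1$ produces new generating morphisms at every slot of a word, many new length-$3$ overlaps appear and verifying that they all resolve consistently is delicate. The key observation that makes this tractable is that virtual insertions at slots disjoint from a given $\phi_{a,b}$\dash position commute with that $\phi_{a,b}$, so the only non-trivial overlaps are those where a real and a virtual $\phi$ act at coincident or adjacent positions; these reduce, after strict $r_dr_d^{-1}$\dash cancellation, to configurations already covered by the pentagon inside $\Uscr_{\u}^+$.
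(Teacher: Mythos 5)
Your high-level strategy coincides with the paper's: present $k\Uscr_{\u}$ as a connected quadratic graded category on generators of the form $u\phi_{a,b}v$ (your ``ordinary'' versus ``virtual'' distinction just reflects that $u,v$ range over all of $\Lambda$, including words reducing to the identity), choose a total ordering in which the position of $\phi$ inside the word is the dominant key, orient the relations accordingly, and invoke Berger's confluence criterion (Lemma~\ref{ref-2.5.1-23}). However, there is a substantive gap in your identification of the relation space $R$. You write that ``the defining relations \eqref{ref-2.8-25} live in degree $2$'', which accounts only for the pentagon-type relations. When passing from the monoidal presentation of $\Uscr_{\u}$ to a presentation as a plain graded category on generators $u\phi_{a,b}v$, the interchange law of the strict monoidal structure contributes a second family of degree-$2$ relations --- the ``tautological'' relations \eqref{ref-3.2-64} of the paper --- expressing that two $\phi$'s applied at disjoint slots commute. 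These are independent of the pentagons, have nothing to do with $r_d^{-1}$ or ``virtual insertions'' (they already occur within $\Uscr_{\u}^+$), and without them in $R$ the quadratic quotient $T_{{{\Cscr}}_0}({{\Cscr}}_1)/(R)$ is strictly larger than $k\Uscr_{\u}$, so confluence of the pentagon-only rewriting system would not establish the proposition.

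You do invoke this disjoint-slot commutativity as a ``key observation'' to simplify your overlap analysis, but in a Berger-style argument it must be placed in $R$, after which you must also resolve the length-$3$ overlaps involving \eqref{ref-3.2-64}, both with itself and mixed with the pentagons. These do not ``reduce to configurations already covered by the pentagon inside $\Uscr_{\u}^+$'' as you claim --- they are genuinely new overlap types that the paper checks explicitly. Relatedly, appealing to ``pentagon coherence'' conflates the coherence theorem for the monoidal category (which composites of structure morphisms are equal) with confluence of the quadratic rewriting system (whether the chosen leading monomials give a confluent reduction); Berger's criterion requires the latter, verified overlap by overlap.
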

\begin{proof} 
By definition  $\Uscr_{\u}$ as non-monoidal category is generated by
\[
U_{\u,1}:=\{u\phi_{a,b}v\mid u,v\in \Ob(U_{\u}),1\le a,b\le d, a+b\le d\}
\]
Since $\deg(u\phi_{a,b}v)=1$ it is clear that $k\Uscr_{\u}$ is connected. One checks
that if
\[
u\phi_{a,b}v=u'\phi_{a',b'}v'
\]
then $u=u'$, $v=v'$ and $a=a'$, $b=b'$ so that  $\Uscr_{\u,1}$ forms a basis for $(k\Uscr_{\u})_1$.

The relations of $\Uscr_{\u}$ as non-monoidal category are given by
\begin{equation}
\label{ref-3.1-63}
\xymatrix{
ur_{a+b+c}v\ar[d]_{u\phi_{a+b,c}v}\ar[rr]^{u\phi_{a,b+c}v} && ur_ar_{b+c}v\ar[d]^{ur_a\phi_{b,c}v}\\
ur_{a+b}r_{c}v\ar[rr]_{u\phi_{a,b}r_c v} && ur_ar_{b}r_cv
}
\end{equation}
together with the tautological relations
\begin{equation}
\label{ref-3.2-64}
\xymatrix{
ur_{a+b}vr_{c+e}w\ar[d]_{ur_{a+b}v\phi_{c,e}w}\ar[rr]^{u\phi_{a,b}vr_{c+e}w} && ur_a r_b vr_{c+e}w\ar[d]^{ur_ar_bv\phi_{c,e}w}\\
ur_{a+b} vr_cr_ew \ar[rr]_{u\phi_{a,b}vr_cr_ew}&&ur_ar_bvr_cr_ew
}
\end{equation}
with $u,v\in \Ob(\Uscr_{\u})$. We now choose of a total order on the generators of $\Uscr_{\u}$ in such a way that 
if  $l(u)<l(u')$ then
\begin{equation}
\label{ref-3.3-65}
u\phi_{a,b}v<u'\phi_{a',b'}v',
\end{equation}
This has the effect that if we write the relations on $\Uscr_{\u}$ linearly in the form
\eqref{ref-2.7-22}, the lefthand side will be the upper branch in the commutative diagrams
\eqref{ref-3.1-63}\eqref{ref-3.2-64}.

We  claim that the relations \eqref{ref-3.1-63}\eqref{ref-3.2-64} are confluent. This implies that $k\Uscr_{\u}$ is Koszul by Lemma~\ref{ref-2.5.1-23}. There are
4 possible overlaps to check. We give one example which is the overlap of \eqref{ref-3.2-64} with
itself.
\[
(ur_ar_b \phi_{c,e}v)\circ (ur_a\phi_{b,c+e}v)\circ(u\phi_{a,b+c+e}v)
\]
Rewriting, starting with the first two factors yields
\begin{align*}
&[(ur_ar_b \phi_{c,e}v)\circ (ur_a\phi_{b,c+e}v)]\circ(u\phi_{a,b+c+e}v)\\
&=(ur_a\phi_{b,c}r_ev)\circ [(ur_a\phi_{b+c,e}v)\circ (u\phi_{a,b+c+e}v)]\\
&=[(ur_a\phi_{b,c}r_ev)\circ   (u\phi_{a,b+c}r_ev)]  \circ (u\phi_{a+b+c,e}v)\\
&=(u\phi_{a,b}r_cr_ev)\circ   (u\phi_{a+b,c}r_ev)  \circ (u\phi_{a+b+c,e}v)
\end{align*}
Starting with the last two factors we get
\begin{align*}
&(ur_ar_b \phi_{c,e}v)\circ [(ur_a\phi_{b,c+e}v)\circ(u\phi_{a,b+c+e}v)]\\
&=[(ur_ar_b \phi_{c,e}v)\circ (u\phi_{a,b}r_{c+e}v)]\circ (u\phi_{a+b,c+e}v)\\
&=(u\phi_{a,b} r_{c}r_ev)\circ [(ur_{a+b}\phi_{c,e}v)\circ (u\phi_{a+b,c+e}v)]\\
&=(u\phi_{a,b} r_{c}r_ev)\circ (u\phi_{a+b,c}r_ev)\circ (u\phi_{a+b+c,e}v)\qed
\end{align*}
\def\qed{}\end{proof}

\begin{remarks}
Not every composable pair of generators for $\Uscr_{\u}$ participates in a quadratic relation. For example 
if $d=2$ then the composition 
\[
r_2 \xrightarrow{\phi_{11}} r_1r_1 \xrightarrow{r_1r_2^{-1}\phi_{11}r_1} r_1r_2^{-1}r_1r_1r_1
\]
cannot be written in any other way.
\end{remarks}

\begin{remarks}
Note that $r^p_d\phi_{a,d-a}r_d^{-p-1}$ is a generator starting in $1$ and ending in $r_d^pr_{a}r_{d-a}r_d^{-p-1}$.
So every object in $\Uscr_{\u}$ has infinitely many outgoing morphisms of degree one. On the other hand it is easy to see that if $u'\r u\in \Uscr_{\u}$ is not
the identity then $l(u')<l(u)$. Hence there are only a finite number of
morphisms in $\Uscr_{\u}$ with target a given object $u$.
\end{remarks}
	\begin{lemmas}
	\label{ref-3.1.5-66}
	\begin{enumerate}
	\item
	All maps in $\Uscr_{\u}$ are mono's. In other words if there is a commutative
	diagram in $\Uscr_{\u}$
	\[
	\xymatrix@1{
	\bullet \ar@/^1em/[r]^\alpha \ar@/_1em/[r]_\beta & \bullet \ar[r]^\delta & \bullet
	}
	\]
	then $\alpha=\beta$.
	
	\item If there is a commutative diagram in $\Uscr_{\u}$
	\begin{equation}
	\label{ref-3.4-67}
	\xymatrix{
	&\bullet\ar[dr]^\gamma\\
	\bullet\ar[ur]^\alpha\ar[dr]_\beta&&\bullet\\
	&\bullet\ar[ur]_\delta
	}
	\end{equation}
	then the fiber product of $\gamma$ and $\delta$ exists.
	
	\item
	If in diagram \eqref{ref-3.4-67} $\gamma$, $\delta\in \Uscr_{\u,1}$ then
	$\gamma$, $\delta$ are not of the form
	\[
	\xymatrix{
	&ur_ar_{b+c}v\ar[dr]^{ur_a\phi_{b,c}v}\\
	&&ur_ar_br_cv\\
	&ur_{a+b}r_{c}v\ar[ur]_{u\phi_{a,b}r_cv}
	}
	\]
	for $a+b+c>d$ and this is also not true if we exchange $\gamma,\delta$.
	\item If in diagram \eqref{ref-3.4-67} $\gamma$, $\delta\in \Uscr_{\u,1}$
	  then the fiber product is given by the diagrams~\eqref{ref-3.1-63} and~\eqref{ref-3.2-64} (which exist because of (3)).
	\end{enumerate}
	\end{lemmas}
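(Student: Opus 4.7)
The plan is to establish a combinatorial normal form for morphisms in $\Uscr_\u$ using the confluence proved just above (together with Lemma~\ref{ref-2.5.1-23}) and then deduce parts (1)--(4) combinatorially.

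Every morphism $f\colon w\to w'$ admits a unique reduced decomposition as a composition of generators $u\phi_{a,b}v$, no consecutive pair of which matches a left-hand side of \eqref{ref-3.1-63} or \eqref{ref-3.2-64}. Equivalently, $f$ is encoded by a parse-tree datum: after choosing word representatives (modulo adjacent $r_dr_d^{-1}$ cancellation), a sign- and order-preserving bijection between the $r_d^{-1}$-letters of $w$ and those of $w'$, together with, on positive sub-words, a partition of the positive letters of $w'$ into contiguous blocks---one per positive letter of $w$---with the indices of each block summing correctly. For (1), if $\delta\alpha=\delta\beta$ for $\delta\colon w'\to w''$ and $\alpha,\beta\colon w\to w'$, observe that composition refines parse-tree partitions: each $w'$-block of $w''$ prescribed by $\delta$ is further subdivided into $w$-sub-blocks by $\alpha$. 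From $\delta$ and $\delta\alpha$ the sub-block partitions are uniquely recovered, so $\alpha=\beta$. For (2), the parse-tree partitions of the common target induced by $\gamma$ and $\delta$ admit a common refinement (witnessed by $\alpha,\beta$ in \eqref{ref-3.4-67}); this refinement defines an object $P$ with structure maps realising the fiber product, universal because any cone provides a yet-finer refinement factoring through $P$.

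For (3), with $\gamma=ur_a\phi_{b,c}v$ and $\delta=u\phi_{a,b}r_cv$, the parse-tree analysis forces the middle of the common source to reduce to a single positive letter $r_{a+b+c}$: the relevant target letters $r_a,r_{b+c},r_{a+b},r_c$ are all single letters, so each block of the source partition contains exactly one letter, and the two partitions $(a,b+c)$ and $(a+b,c)$ of the middle can only be simultaneously realised by the single letter $r_{a+b+c}$ split in two different ways. Existence of $r_{a+b+c}$ in $\Uscr_\u$ then requires $a+b+c\leq d$; the symmetric case is identical. For (4), a pair of degree-one generators $\gamma,\delta$ sharing a target acts by $\phi$'s either at overlapping positions of the target word (giving the associativity setup of \eqref{ref-3.1-63}, with fiber product $ur_{a+b+c}v$ valid by (3)) or at disjoint positions (giving the commutativity setup of \eqref{ref-3.2-64}, whose top-left corner is always an object of $\Uscr_\u$).

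The main obstacle will be rigorously formalising the parse-tree description in the presence of the invertible generator $r_d^{-1}$, in particular verifying the bijection between normal forms and combinatorial data when adjacent $r_d r_d^{-1}$ pairs may be inserted or cancelled at several positions. Once this is in hand, parts (1)--(4) reduce to direct bookkeeping on the resulting partitions.
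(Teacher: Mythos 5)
Your proposal takes a genuinely different route from the paper's. The paper proves (1)--(4) by a coupled double induction on the quantities $\deg(\alpha)+2w(\delta)$ and $\deg(\alpha)+w(\gamma)+w(\delta)$, never attempting a global combinatorial classification of morphisms; the authors even remark that ``one would expect these to be relatively straightforward combinatorial facts'' but that the proof they have ``is not very direct.'' You instead propose to first establish a complete combinatorial normal form (a ``parse tree'': choice of non-reduced representative of the source, matching of $r_d^{-1}$-letters, and a contiguous-block partition of the positive letters of the target), and then read off (1)--(4) as bookkeeping. If the normal-form description were in hand, your derivations of (1)--(4) look sound: monicity follows from uniqueness of refinements, fiber products correspond to common refinements, and (3)/(4) follow by inspecting how two degree-one splits interact. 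This would indeed be a cleaner and more conceptual proof than the paper's.

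However, there is a genuine gap, and you have correctly identified it yourself: the bijection between morphisms of $\Uscr_\u$ and parse-tree data is precisely the content that needs proving, and it is not routine. The subtlety is exactly the one you flag, and which the paper's own remark before the lemma highlights (the two distinct maps $r_1r_1\to r_1r_1r_2^{-1}r_1r_1$ for $d=2$): morphisms can ``materialise'' $r_d^{-1}$-letters that are invisible in the reduced source word, so the parse data lives on a \emph{chosen} non-reduced representative, and one must show (a) that the normal-form words furnished by confluence and Lemma~\ref{ref-2.5.1-23} are in bijection with well-formed parse data (no over- or under-counting when $r_dr_d^{-1}$ pairs may be inserted at several positions, possibly nested or adjacent), and (b) that composition of morphisms corresponds to composition of partitions. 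Neither of these is verified in your sketch, and they cannot be waved away: the entire lemma is essentially equivalent to this bijection, so asserting it as a ``main obstacle'' to be filled in later means the proof is incomplete at its load-bearing step. The paper's authors appear to have tried exactly this direct route and fallen back to the double induction; that is circumstantial evidence that the normal-form lemma is delicate. To complete your proof you would need to isolate and prove the parse-tree correspondence as a separate proposition (with an explicit induction on the number of $r_d^{-1}$-letters, or a careful rewriting argument matching the confluence normal forms to partitions), at which point the remaining derivations of (1)--(4) would indeed go through.
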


\begin{proof}
One would expect these to be relatively straightforward combinatorial facts. Unfortunately the proof we have is not very direct and proceeds via a double induction.
We first note the following:
\begin{equation}
\label{ref-3.5-68}
\text{To prove (1)(2) we may assume $\gamma\neq \delta\in \Uscr_{\u,1}$.} 
\end{equation}

If $\alpha=u\phi_{a,b}v\in \Uscr_{\u,1}$ for $u,v\in \Ob(\Uscr_{\u})$ then we will put $w(\alpha)=l(u)$. 
For $n\ge 0$ consider the following statement.
	
	\def\Hyp{\operatorname{Hyp}}
	\def\Hp{({\mathversion{bold}$\Hyp_0$})}
	\def\Hypn{({\mathversion{bold}$\Hyp_n$})}
	\def\Hypnn{({\mathversion{bold}$\Hyp_{n+1}$})}
	
	\noindent \Hypn
	\begin{enumerate}
	\item[(a)] (1) holds for $\delta\in \Uscr_{\u,1}$ and
	$\deg(\alpha)+2w(\delta)\le n$ and
	\item[(b)] If we have a diagram like~\eqref{ref-3.4-67} with  $\gamma\neq\delta\in
	\Uscr_{\u,1}$ and  $\deg(\alpha)+w(\gamma)+w(\delta)\le n$ 
	then \eqref{ref-3.4-67} may be completed as 
	\begin{equation}
	\label{ref-3.6-69}
	\xymatrix{
	&&\bullet\ar[dr]^\gamma\\
	\bullet\ar@/^2em/[urr]^{\alpha}\ar@/_2em/[drr]_{\beta}\ar[r]^\pi &\bullet\ar[ur]^{\zeta}\ar[dr]_{\xi}&&	\bullet\\
	&&\bullet\ar[ur]_\delta
	}
	\end{equation}
	with $\zeta$, $\xi\in \Uscr_{\u,1}$.
	\end{enumerate}
	
  Obviously \Hypn(a) for all $n$ implies (1) for $\delta\in \Uscr_{\u,1}$. Furthermore $\zeta$, $\xi$
  in \Hypn(b) depend only on $\gamma,\delta$ and $\pi$ is unique by (1). Hence
if \Hypn(b) holds for all $n$ then $(\zeta,\xi)$ is the fiber product
of $(\gamma,\delta)$ and from which we easily deduce (2)(3)(4) for
$\gamma,\delta\in \Uscr_{\u}$. We may now conclude by~\eqref{ref-3.5-68}.

It is clear that \Hp\ holds. We will now assume \Hypn\ holds and deduce from it that \Hypnn\ holds.

We start with the proof of \Hypnn(a). We may assume that 
$\alpha$, $\beta$ are written in minimal form as a product of generators according to~\eqref{ref-3.3-65}.

If $\delta\alpha$, $\delta\beta$ are still in minimal form then there is nothing to prove since minimal forms are unique. Hence we may assume that one of them,
say $\delta\beta$ can be further reduced. In other words we may write
$\beta=\beta_1\beta_2$ and
\[
\delta\beta_1=\delta'\beta'_1
\]
with $\delta'\in \Uscr_{\u,1}$ and $w(\delta')<w(\delta)$ (in particular $\delta\neq \delta'$).

We now have 
\[
\delta\alpha=\delta'\beta'_1\beta_2
\]
and
$\deg(\alpha) + w(\delta)+w(\delta')<\deg(\alpha) + 2w(\delta)\le n+1$. Since
$\delta\neq\delta'$ we invoke \Hypn(b) to deduce that there exists an $\epsilon$ such that
$\alpha=\beta_1\epsilon$, $\beta'_1\beta_2=\beta'_1\epsilon$.
Invoking \Hypn(a) we get $\epsilon=\beta_2$ and hence $\alpha=\beta_1\beta_2=\beta$. 

Now we prove \Hypnn(b). We may assume that $\alpha$, $\beta$ are written in minimal form. Since $\gamma\neq \delta$ we may assume that one of the expressions $\gamma\alpha$ or
$\delta\beta$, say the second one,  can be further reduced. In other words we may write
$\beta=\beta_1\beta_2$ and
\[
\delta\beta_1=\delta'\beta'_1
\]
with $\beta_1,\beta'_1,\delta'\in \Uscr_{\u,1}$ and $w(\delta')<w(\delta)$ (in particular $\delta\neq \delta'$). We now have 
\begin{equation}
\label{ref-3.7-70}
\gamma\alpha=\delta'\beta'_1\beta_2
\end{equation}
If $\delta'=\gamma$ then by \Hypnn(a) we find $\alpha=\beta'_1\beta_2$. This
means \Hypnn(b) holds with $\zeta=\beta'_1$, $\xi=\beta_1$, $\pi=\beta_2$.

Hence we reduce to the case $\delta'\neq \gamma$.  Since $\deg(\alpha)
+ w(\gamma)+w(\delta')<\deg(\alpha) + w(\gamma)+w(\delta)\le n+1$ we
deduce from \eqref{ref-3.7-70} and \Hypn(b) that there exist
$\sigma_1,\sigma_2\in \Uscr_{\u,1}$, $\theta\in \Uscr_{\u}$ such that
\begin{align}
\gamma\sigma_1&=\delta'\sigma_2\\
\sigma_1\theta&=\alpha \\
\sigma_2\theta&=\beta'_1\beta_2\label{ref-3.10-71}
\end{align}
Assume $\sigma_2=\beta'_1$. Then we have in $\Uscr_{\u,2}$.
\[
\gamma\sigma_1=\delta'\sigma_2=\delta'\beta'_1=\delta\beta_1
\]
This is impossible by $\delta\neq\delta'\neq\gamma\neq\delta$ since there are at most two ways of writing an element of $\Uscr_{\u,2}$ as a product of elements in $\Uscr_{\u,1}$.

We conclude $\sigma_2\neq\beta'_1$. From \eqref{ref-3.10-71} and \Hypn(b) we deduce the existence of $\sigma_3,\sigma_4\in \Uscr_{\u,1}$, $\theta'\in \Uscr_{\u}$ such that
$\sigma_2\sigma_3=\beta'_1\sigma_4$, 
$\theta=\sigma_3\theta'$ and  $\beta_2=\sigma_4\theta'$.

It is now time to make a diagram of the maps that have been constructed
(plus a few new ones)
\[
\xymatrix{
&&\bullet\ar@{.>}[r]|{\zeta}\ar@{.>}[dr]|(.3){\xi}&\bullet\ar[dr]|{\gamma}&\\
\bullet\ar[r]|{\theta'}&\bullet\ar@{.>}[ur]|{\tau}\ar[r]|{\sigma_3}\ar[dr]|{\sigma_4}&\bullet\ar[ur]|(.3){\sigma_1}\ar[dr]|(.3){\sigma_2}&\bullet\ar[r]|{\delta}&\bullet\\
&&\bullet\ar[r]|{\beta_1'}\ar[ur]|(0.3){\beta_1}&\bullet \ar[ur]|{\delta'}&
}
\]
All maps in this diagram have degree one except $\theta'$. For the benefit of the reader we restate what the relation with the original maps is
\begin{align*}
\alpha&=\sigma_1\sigma_3\theta'\\
\beta&=\beta_1\sigma_4\theta'
\end{align*}
The existence
of the dotted part of the diagram follows from the combinatorics of arrows in $\Uscr_{\u,1}$. Using the arrows in the dotted part we obtain
\begin{align*}
\alpha&=\sigma_1\sigma_3\theta'=\zeta\tau\theta'\\
\beta&=\beta_1\sigma_4\theta'=\xi\tau\theta'
\end{align*}
Putting $\pi=\tau\theta'$ we have constructed $(\zeta,\xi,\pi)$ as in
\eqref{ref-3.6-69}.
\end{proof}
	
Now denote by $\Uscr_{\u}\,\tilde{}$ the category which is equal to $\Uscr_{\u}$ with an initial object $\ast$ adjoined such that $\Uscr_{\u}\,\tilde{}(u,\ast)=\emptyset$ for $u\in \Ob(\Uscr_{\u})$. 

	\begin{propositions}
\label{ref-3.1.6-72}
	\begin{enumerate} 
	\item All morphisms in $\Uscr_{\u}\,\tilde{}$ are mono,
	\item $\Uscr_{\u}\,\tilde{}$ has fiber products.
	\end{enumerate}
	\end{propositions}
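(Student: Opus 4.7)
The plan is to reduce both statements to Lemma~\ref{ref-3.1.5-66} by case analysis on whether the arrows involve the adjoined initial object $\ast$. The guiding observation is that because $\Uscr_{\u}\,\tilde{}(u,\ast)=\emptyset$ for all $u\in\Ob(\Uscr_{\u})$, the only object admitting a morphism into $\ast$ is $\ast$ itself. Consequently $\ast$ will serve as the ``empty fiber product'' whenever a cospan in $\Uscr_{\u}$ admits no common predecessor.

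For (1), I would argue that any $f:a\to b$ in $\Uscr_{\u}\,\tilde{}$ is mono. If $f\in \Uscr_{\u}$, then for parallel $g,h:X\to a$ with $fg=fh$, either $X\in\Ob(\Uscr_{\u})$, in which case $g,h$ lie in $\Uscr_{\u}$ and Lemma~\ref{ref-3.1.5-66}(1) applies directly, or $X=\ast$, in which case $g=h$ is the unique arrow out of the initial object. The remaining possibilities — $f=\id_\ast$, or $f$ the unique arrow $\ast\to b$ — are automatic, since in both cases any parallel pair $g,h:X\to a$ forces $X=\ast$ and hence $g=h=\id_\ast$.

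For (2), given a cospan $v\xrightarrow{\gamma}u\xleftarrow{\delta}v'$, I would split into three subcases. If $u=\ast$, then necessarily $v=v'=\ast$ and the pullback is $\ast$. If $u\neq\ast$ but one of $v,v'$ equals $\ast$, then any cone has apex $\ast$, so $\ast$ is the fiber product. In the main case $u,v,v'\in\Ob(\Uscr_{\u})$, I would further distinguish whether some object $w\in\Uscr_{\u}$ fits into a commutative square as in~\eqref{ref-3.4-67}. If so, Lemma~\ref{ref-3.1.5-66}(2) supplies the fiber product inside $\Uscr_{\u}$, and this remains universal in $\Uscr_{\u}\,\tilde{}$, since the only additional cones have apex $\ast$ and factor uniquely by initiality. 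If no such $w$ exists, then every cone must already have apex $\ast$, so $\ast$ itself is the pullback.

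There is no substantial technical obstacle here: both statements follow quickly once the role of $\ast$ as the empty pullback is recognised. The only delicate point is the last subcase of (2), where one must handle cospans in $\Uscr_{\u}$ precisely when Lemma~\ref{ref-3.1.5-66}(2) is \emph{not} applicable; but this is exactly the logical complement of its hypothesis, so the two alternatives together exhaust every cospan in $\Uscr_{\u}$, and in each the fiber product exists in $\Uscr_{\u}\,\tilde{}$.
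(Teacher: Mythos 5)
Your proof is correct and follows essentially the same approach as the paper: both treat $\ast$ as the empty pullback, dispatch the cases involving $\ast$ by noting that no morphism has target $\ast$, and reduce the main case to Lemma~\ref{ref-3.1.5-66}. Your version merely spells out a few implicit steps (such as why a fiber product computed in $\Uscr_{\u}$ remains universal in $\Uscr_{\u}\,\tilde{}$).
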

	\begin{proof}
\begin{enumerate}
\item
	The only morphisms added are the $\ast \to u$, which are mono since there are no morphisms ending in $\ast$, so the first claim follows from Lemma~\ref{ref-3.1.5-66}(1).
      \item Let $\gamma, \delta$ be morphisms in $\Uscr_{\u}\,\tilde{}$
        with the same target. If one of them is not in~$\Uscr_{\u}$
        then their fiber product is $\ast$. If they are both in
        $\Uscr_{\u}$ and  they participate in a square like
        \eqref{ref-3.4-67} then their fiber product can be computed in
        $\Uscr_{\u}$ using Lemma~~\ref{ref-3.1.5-66}(2). If they do not participate in such a square then
        their fiber product is $\ast$. \qed
\end{enumerate}	
\def\qed{}\end{proof}
\begin{remarks} 
It is necessary to adjoin an initial object. Assume $d=2$. Then the
maps $r_2 r_1 \r r_1r_1 r_1$ and $r_1r_2\r r_1r_1r_1$ have no fiber-product in $\Uscr_{\u}$ but
in $\Uscr_{\u}\,\tilde{}$ the fiber product is $\ast$.
\end{remarks}

	\begin{corollarys}
	\label{ref-3.1.8-73}
Let $x\in \Ob(\Uscr_\u\,\tilde{}\,)$ and let $(f_i:x_i \to x)_i$ be  morphisms in $\Uscr_{\u}$. Then 
the following complex 	with the standard alternating sign maps is exact in $\Mod((\Uscr_{\u}\,\tilde{}\,)^\circ)$ 
	  \begin{multline}
	  \label{ref-3.11-74}
	\cdots \r\bigoplus_{i_1<\cdots<i_n} k\Uscr_\u\,\tilde{}\,(-,\operatorname{source}(f_{i_1}\times_x \dots \times_x f_{i_n}))\r \cdots \\
	\cdots \r \bigoplus_{i<j}
k\Uscr_\u\,\tilde{}\,(-,\operatorname{source}(f_{i}\times_x f_j))\r \cdots \\
 \r \bigoplus_i 
k\Uscr_\u\,\tilde{}\,(-,x_i)
\r k\Uscr_\u\,\tilde{}\,(-,x)
	\end{multline}
	\end{corollarys}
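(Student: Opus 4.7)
The strategy is to prove exactness pointwise: evaluating the complex at each object $y\in\Ob(\Uscr_{\u}\,\tilde{}\,)$ and identifying the resulting complex of vector spaces with a classical Čech--type complex associated with a covering of a set. As in Proposition~\ref{ref-2.3.2-14}, I interpret the word ``exact'' here as ``exact everywhere except possibly at the rightmost term'', which is the only statement the argument will produce.

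\textbf{Step 1 (set-theoretic translation).} Fix $y\in\Ob(\Uscr_{\u}\,\tilde{}\,)$, put $S=\Uscr_\u\,\tilde{}\,(y,x)$, and for each $i$ let $S_i\subseteq S$ denote the image of post-composition with $f_i$. By Proposition~\ref{ref-3.1.6-72}(1) every morphism in $\Uscr_\u\,\tilde{}$ is mono, so this map is injective and $S_i$ genuinely parametrises the morphisms $y\to x$ factoring through $f_i$, the factorisation being unique. By Proposition~\ref{ref-3.1.6-72}(2) the iterated fibre products $f_{i_1}\times_x\cdots\times_x f_{i_n}$ exist, and combining the universal property with the uniqueness of factorisation through the monomorphisms $f_{i_k}$ yields a natural bijection
\[
\Uscr_\u\,\tilde{}\,\bigl(y,\ \operatorname{source}(f_{i_1}\times_x\cdots\times_x f_{i_n})\bigr)\ \xrightarrow{\ \sim\ }\ S_{i_1}\cap\cdots\cap S_{i_n},
\]
sending a morphism to its image in $S$ under the structural map to $x$.

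\textbf{Step 2 (Čech argument).} Under these identifications the complex evaluated at $y$ becomes the standard augmented Čech complex of $k$--vector spaces
\[
\cdots \to \bigoplus_{i_1<\cdots<i_n} k\bigl[S_{i_1}\cap\cdots\cap S_{i_n}\bigr] \to \cdots \to \bigoplus_{i<j} k\bigl[S_i\cap S_j\bigr] \to \bigoplus_i k[S_i] \to k[S].
\]
Splitting according to fibres over $s\in S$ decomposes this into a direct sum of $s$--components, each being the augmented simplicial chain complex of the (possibly infinite) simplex on vertex set $T_s:=\{i:s\in S_i\}$. When $T_s\neq\emptyset$ this component is acyclic (the simplex is contractible), while when $T_s=\emptyset$ its only non--zero term sits in the rightmost position. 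Summing over $s$ shows that the complex is exact at every term except possibly the rightmost. Since exactness in $\Mod((\Uscr_\u\,\tilde{}\,)^\circ)$ is checked objectwise, this concludes the proof.

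\textbf{Main obstacle.} The one substantive point is the natural bijection in Step~1. Existence of fibre products is provided by Proposition~\ref{ref-3.1.6-72}(2), but one must verify that the canonical map from the Hom--set into the intersection is a bijection. Injectivity uses that the composite $\operatorname{source}(f_{i_1}\times_x\cdots\times_x f_{i_n})\to x$ is mono (being a composition of monos). For surjectivity one observes that, because each $f_{i_k}$ is mono, the ``matching family'' data in the universal property of the fibre product collapses to the single datum of a morphism $g:y\to x$ lying in each $S_{i_k}$. Once this dictionary is in place, everything reduces to formal inclusion--exclusion.
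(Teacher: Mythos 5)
Your proof is correct and takes essentially the same approach as the paper's: evaluate pointwise on an object $y$, use that all $f_i$ are mono (Proposition~\ref{ref-3.1.6-72}) to identify $\Uscr_{\u}\,\tilde{}\,(y,\operatorname{source}(f_{i_1}\times_x\cdots\times_x f_{i_n}))$ with the intersection of the subsets $S_{i_k}\subset\Uscr_{\u}\,\tilde{}\,(y,x)$, and then recognise the resulting complex of vector spaces as an acyclic augmented nerve/\v{C}ech complex. Your explicit reading of ``exact'' as ``exact except possibly at the rightmost term'' is the intended one, as the subsequent corollary only uses exactness away from that term (and verifies the cokernel there by hand).
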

	\begin{proof}
	This is immediate after evaluating on $z\in \Ob(\Uscr_{\u}\,\tilde{}\,)$ since fiber products become intersections as all the $f_i$ are mono by Proposition~\ref{ref-3.1.6-72}.
	\end{proof}

	\begin{corollarys}
Let $x\in \Ob(\Uscr_{\u}\,\tilde{})$. Then $S_x$ (see \eqref{ref-2.2-16}) has a linear projective resolution in $\Mod((\Uscr_{\u}\,\tilde{}\,)^\circ)$ 
of the form
	\begin{multline}
	\label{ref-3.12-75}
	\cdots \r\bigoplus_{i_1<\cdots<i_n}  k\Uscr_\u\,\tilde{}\,(-,\operatorname{source}(f_{i_1}\times_x \dots \times_x f_{i_n}))\r \cdots \\
	\cdots \r \bigoplus_{i<j}    k\Uscr_\u\,\tilde{}\,(-,\operatorname{source}(f_{i}\times_x f_j))\r  \bigoplus_{f_i:x_i\r x\in \Uscr_{\u,1}} 
k\Uscr_\u\,\tilde{}\,(-,x_i)    
\r  k\Uscr_\u\,\tilde{}\,(-,x)
\r S_x\r 0.
	\end{multline}
	\end{corollarys}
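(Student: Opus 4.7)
The plan is to specialise Corollary~\ref{ref-3.1.8-73} by taking $(f_i \colon x_i \to x)_i$ to be exactly the collection of generating morphisms in $\Uscr_{\u,1}$ with target $x$. The terms of \eqref{ref-3.12-75} are then projective by the Yoneda lemma, and Corollary~\ref{ref-3.1.8-73} supplies exactness at every position except possibly the rightmost. The task therefore reduces to showing that the cokernel of the final differential
\[
\bigoplus_{f_i\colon x_i \to x \in \Uscr_{\u,1}} k\Uscr_\u\,\tilde{}\,(-, x_i) \to k\Uscr_\u\,\tilde{}\,(-,x)
\]
is naturally isomorphic to $S_x$.

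I would verify this pointwise on objects $z \in \Ob(\Uscr_\u\,\tilde{}\,)$. Every non-identity morphism of $\Uscr_\u$ has strictly positive degree, so for $z = x$ there are no morphisms $x \to x_i$ (they would have degree $-1$); the cokernel at $x$ is therefore $k\Uscr_\u(x,x) = k$. For $z \in \Ob(\Uscr_\u)$ with $z \neq x$, any morphism $g\colon z \to x$ has strictly positive degree and hence factors as a composition of degree-one generators, the final one of which is some $f_i$; thus $g$ lies in the image and the cokernel vanishes at $z$. For $z = \ast$, initiality forces the unique morphism $\ast \to x$ to coincide with $f_i \circ (\ast \to x_i)$ for every $i$, so again the cokernel vanishes. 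These local computations assemble to the identification of the cokernel with $S_x$.

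Linearity of the resolution, in the sense that the projective generators appearing in homological position $-n$ are indexed by sources of $n$-fold fiber products lying at internal degree $d_\u(x) - n$ in $\Uscr_\u$ (or else collapsing to $\ast$), follows from Lemma~\ref{ref-3.1.5-66}(4) by a straightforward induction on $n$, since each degree-one generator lowers the internal degree by one whenever the fiber product persists in $\Uscr_\u$.

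The principal substance of the argument lies upstream, in Corollary~\ref{ref-3.1.8-73} and the confluence and fiber-product analysis of Lemma~\ref{ref-3.1.5-66}; granting these, the present statement is essentially a short bookkeeping exercise identifying the augmentation cokernel with $S_x$, with the only mildly subtle point being the treatment of $z = \ast$, which is handled uniformly by initiality.
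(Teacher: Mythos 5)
Your proof follows the paper's route exactly: invoke Corollary~\ref{ref-3.1.8-73} for exactness at all interior positions and then verify that the cokernel of the augmentation is $S_x$, a step the paper simply calls ``obvious'' while you spell it out pointwise. One small caveat worth flagging, which is arguably already present in the paper's terse treatment: your handling of $z=\ast$ tacitly assumes the index set $\{f_i\colon x_i\to x\in\Uscr_{\u,1}\}$ is nonempty, since you conclude the cokernel at $\ast$ vanishes because the unique arrow $\ast\to x$ factors through some $f_i$. When $x$ has no incoming degree-one generators (e.g.\ $x=1$ or $x=r_a$, $1\le a\le d$, or $r_d^{-1}$), the complex~\eqref{ref-3.12-75} reduces to $k\Uscr_\u\,\tilde{}\,(-,x)\to S_x\to 0$, and evaluating at $\ast$ gives $k\to 0$, so the displayed complex is not literally a resolution of $S_x$ in $\Mod((\Uscr_\u\,\tilde{}\,)^\circ)$; the correct resolution acquires an extra copy of $k\Uscr_\u\,\tilde{}\,(-,\ast)=S_\ast$. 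This makes no difference in the subsequent applications because $\overline{M}$ is defined to kill $\Uscr_\u\,\tilde{}\,(-,\ast)$ (Proposition~\ref{ref-4.3-87}), but it would be worth stating the hypothesis ``some $f_i$ exists,'' or equivalently $l(x)\ge 2$, to make the cokernel identification airtight.
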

	\begin{proof}
	By Corollary~\ref{ref-3.1.8-73} it is sufficient to verify that the cokernel of the next to last non-trivial map is 
	$S_x$ which is obvious.
	\end{proof}
\begin{remarks}
\label{ref-3.1.10-76}
If $u=r_d^{-a_0} u_1r_d^{-a_1}u_2 r_{d}^{-a_2}\cdots r_d^{-a_{n-1}} u_n r_d^{-a_n}$ with $u_i\in \Lambda^+$, $a_i>0$ then 
$f:u'\r u\in \Uscr_{\u,1}$  is obtained from a morphism $f:u'_i\r u_i$ in $\Uscr^+_{\u}$ for certain $i$. In this way the computation of
the fiber products $f_1\times_x\cdots\times_x f_n$ in \eqref{ref-3.12-75}
reduces to the computation of fiber products in 
$\Uscr^+_{\u}$.
\end{remarks}

\subsection{The category $\Uscr_{\d}$}

Define a morphism $(-)^\ast:\Lambda\r \Lambda^\circ$ of monoids
\[
r_a^\ast=r_{d-a}r_d^{-1}
\]
where here and below we put $r_0=1$. We also define
${}^\ast(-):=((-)^\ast)^{-1}$. 
For $u\in \Lambda$ we put $d_{\d}(u):=-d_{\u}(u^\ast)$. Thus
\[
d_{\d}(r_a)=
\begin{cases}
0&\text{if $a\neq d$}\\
1&\text{if $a=d$}
\end{cases}
\]
 For $u,v\in \Lambda$ we put
\[
\Uscr_{\d}(u^*,v^*):=\Uscr_{\u}(v,u)
\]
If we consider $f\in \Uscr_{\u}(v,u)$ as an element of $\Uscr_{\d}(u^*,v^*)$ then we write it as $f^\ast$.
With this definition $\Uscr_{\d}$ is a category and $(-)^\ast$ defines an isomorphism $\Uscr^\circ_{\u} \cong \Uscr_{\d}$. 
By construction this isomorphism is compatible with the monoid structure on $\Ob(\Uscr_{\d})=\Lambda$ and
it makes $\Uscr_{\d}$ into a monoidal category such that $(uv)^\ast=v^\ast u^\ast$ for $u,v\in \Ob(\Uscr_{\d})$
and the same statement for morphisms.

We make $k\Uscr_{\d}$ into a graded category in the same way as $k\Uscr_{\u}$ but using $d_{\d}$ instead of $d_{\u}$. In this way all results for $\Uscr_{\u}$ may be transferred to $\Uscr_{\d}$.

As a monoidal category $\Uscr_{\d}$ is generated by
$\phi_{a,b}^\ast:r_{d-b}r_d^{-1}r_{d-a}r_d^{-1}\r r_{d-a-b}r_d^{-1}$. It will be 
convenient to use more symmetric generators 
\[
\theta_{a,b}=\phi_{d-b,d-a}^\ast \cdot r_d
\]
which is a morphism
\[
\theta_{a,b}:r_ar_d^{-1}r_b\r r_{a+b-d}
\]
The dual version of \eqref{ref-2.8-25} is
	\begin{equation}
	\label{ref-3.13-77}
	\xymatrix{
	r_ar^{-1}_dr_br_d^{-1}r_c\ar[rr]^{\theta_{a,b}r_d^{-1}r_c} \ar[d]_{r_ar_d^{-1} \theta_{b,c}}&& r_{a+b-d}r_d^{-1}r_c\ar[d]^{\theta_{a+b-d,c}}\\
	r_{a} r_d^{-1} r_{b+c-d}\ar[rr]_{\theta_{a,b+c-d}}&&r_{a+b+c-2d}
	}
	\end{equation}

\subsection{The category $\Uscr$}

Let $\tilde{\Uscr}=\Uscr_{\d}\ast \Uscr_{\u}$ be the category with object set $\Lambda$ and morphisms 
freely generated by the morphisms in $\Uscr_{\d}$ and $\Uscr_{\u}$. $\tilde{\Uscr}$ is strict monoidal
in the obvious way.
Let $\Uscr$ be the
monoidal quotient of $\tilde{\Uscr}$ obtained by imposing by the following relations
	
	\begin{enumerate}
	\item
	\begin{equation}
	\label{ref-3.14-78}
	\xymatrix{
	r_{a+b}r_d^{-1}r_c\ar[rr]^{\phi_{a,b}r_d^{-1}r_c}\ar[d]_{\theta_{a+b,c}}&&r_{a}r_b r_d^{-1} r_c\ar[d]^{r_a\theta_{b,c}}\\
	r_{a+b+c-d}\ar[rr]_{\phi_{a,b+c-d}}&&r_{a} r_{b+c-d}
	}
	\end{equation}
	where $d\le b+c$ and where moreover we allow the degenerate cases $a+b=d$ in which case
	we put $\theta_{d,c}=\Id_{r_c}$ and $b+c=d$ in which case we put $\phi_{a,0}=\Id_{r_a}$.
	\item
	\begin{equation}
	\label{ref-3.15-79}
	\xymatrix{
	r_ar_{d}^{-1} r_{b+c}\ar[rr]^{r_ar_d^{-1}\phi_{b,c}}\ar[d]_{\theta_{a,b+c}}&& r_a r_d^{-1} r_br_c\ar[d]^{\theta_{a,b}r_c}\\
	r_{a+b+c-d}\ar[rr]_{\phi_{a+b-d,c}}&& r_{a+b-d}r_c
	}
	\end{equation}
	where $d\le a+b$ and where moreover we allow the degenerate cases $b+c=d$ in which case we put
	$\theta_{a,d}=\Id_{r_a}$ and $a+b=d$ in which case we put $\phi_{0,c}=\Id_c$.
	\end{enumerate}
	
	\begin{propositions}
	\label{ref-3.3.1-80}
	Every morphism $f$ in $\Uscr$ can be written uniquely as a composition
	$
	f_{\u}\circ f_{\d}
	$
	with $f_{\d}$ in $\Uscr_{\d}$ and $f_{\u}$ in $\Uscr_{\u}$.
	\end{propositions}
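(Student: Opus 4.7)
The plan is to present $\Uscr$ explicitly by generators and relations and then apply Lemma~\ref{ref-2.5.1-23} (Bergman's Diamond Lemma) to produce a basis of $\Uscr$ consisting of compositions in normal form. Concretely, I would take as generators the elementary morphisms $u\phi_{a,b}v$ from $\Uscr_\u$ and $u\theta_{a,b}v$ from $\Uscr_\d$ (for $u,v\in\Lambda$), with defining relations given by the associativity diagrams \eqref{ref-2.8-25} and \eqref{ref-3.13-77}, their tautological variants like \eqref{ref-3.2-64}, the tautological commutation between a $\phi$-generator and a $\theta$-generator acting on disjoint tensor factors, and the two mixing relations \eqref{ref-3.14-78} and \eqref{ref-3.15-79}. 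Declaring each generator to have degree one makes $k\Uscr$ connected and quadratic, and choosing a total order in which every $\theta$-generator is lex-larger than every $\phi$-generator puts each relation in the shape $fg=r$ with $fg$ lex-larger than~$r$, so Lemma~\ref{ref-2.5.1-23} applies as soon as the relations are confluent.

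For the existence half of the proposition, view \eqref{ref-3.14-78}, \eqref{ref-3.15-79} and the mixed tautological commutation as one-sided rewrite rules replacing a local $\theta\circ\phi$ pattern (a $\phi$-generator applied first, immediately followed by a $\theta$-generator) by the transposed pattern $\phi'\circ\theta'$. To each composition of generators $g_n\circ\cdots\circ g_1$ I attach the integer $b := \#\{(i,j) : i<j,\ g_i\in\Uscr_{\d,1},\ g_j\in\Uscr_{\u,1}\}$, counting the inversions of the target order. A single application of a swap alters only the inversion count at the swapped positions and decreases $b$ by exactly one, so the rewriting process terminates and yields a composition $f_\u\circ f_\d$ with $f_\u\in \Uscr_\u$ and $f_\d\in\Uscr_\d$.

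For uniqueness, confluence must be checked at every three-letter critical overlap $g_1 g_2 g_3$ in which both $g_1 g_2$ and $g_2 g_3$ are left-hand sides of reductions. A short case analysis shows that these overlaps split into four families: purely inside $\Uscr_\u$ (patterns $\phi\phi\phi$, already resolved in the Koszulity proof of $k\Uscr_\u$); dually inside $\Uscr_\d$ (patterns $\theta\theta\theta$); a mixing relation meeting a $\Uscr_\u$ associativity relation (patterns $\theta\phi\phi$); and dually a $\Uscr_\d$ associativity relation meeting a mixing relation (patterns $\theta\theta\phi$). No pattern of the form $\theta\phi\theta$ or $\phi\theta\phi$ gives a critical overlap, since a $\phi\theta$ sub-word is already in normal form. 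Each remaining mixed critical pair is resolved by chasing the appropriate instances of \eqref{ref-3.14-78}, \eqref{ref-3.15-79} and \eqref{ref-2.8-25} (respectively \eqref{ref-3.13-77}) around a three-dimensional diagram. Once confluence is verified, the irreducible compositions produced by Lemma~\ref{ref-2.5.1-23} are precisely those of the form $f_\u\circ f_\d$, and their linear independence provides the uniqueness.

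The main obstacle will be the systematic bookkeeping for the mixed confluence checks in the degenerate sub-cases flagged inside \eqref{ref-3.14-78} and \eqref{ref-3.15-79} (when $a+b=d$, $b+c=d$, and so on), where one of the $\phi$- or $\theta$-factors collapses to an identity. In those cases the two branches of a critical pair can look syntactically quite different, and have to be matched through an additional application of an associativity relation \eqref{ref-2.8-25} or \eqref{ref-3.13-77} rather than through a second direct mixing rewrite; verifying that these degenerate diagrams really close up is where the argument requires genuine work, while the rest reduces to routine combinatorics.
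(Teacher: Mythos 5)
Your overall strategy---orient the defining relations of $\Uscr$ as a rewriting system, prove termination, and verify local confluence to obtain unique normal forms $f_\u\circ f_\d$---is in spirit the paper's, which phrases the uniqueness step as checking that the swap $p_\d\circ p_\u\mapsto p'_\u\circ p'_\d$ from \eqref{ref-3.14-78}\eqref{ref-3.15-79} is compatible with the relations \eqref{ref-2.8-25} and \eqref{ref-3.13-77}. Your classification of critical overlaps and the observation that $\phi\theta\phi$, $\theta\phi\theta$ create none are correct. The genuine gap is in the framework you invoke: Lemma~\ref{ref-2.5.1-23} is a confluence criterion for \emph{connected quadratic} graded categories, where each relation $fg=r$ has $r$ a linear combination of \emph{two}-letter words. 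The degenerate cases built into \eqref{ref-3.14-78}\eqref{ref-3.15-79} violate this: when $a+b=d$ or $b+c=d$, a factor on the right-hand side collapses to an identity, yielding an inhomogeneous relation equating a two-letter word to a one-letter word or even to the identity. For $d=2$ and $a=b=c=1$ one gets
\[
(r_1\theta_{1,1})\circ(\phi_{1,1}r_2^{-1}r_1)=\Id_{r_1},
\]
so assigning every generator degree one does \emph{not} give a homogeneous (hence neither connected nor quadratic) presentation of $k\Uscr$; this is also reflected in the paper's remark that $\Uscr$ has non-split idempotent endomorphisms, which a connected graded $k$-linear category cannot have. Lemma~\ref{ref-2.5.1-23} therefore does not apply, and the Koszulity conclusion it would deliver is false for $k\Uscr$. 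You would need to replace it by a general diamond lemma for inhomogeneous presentations of $k$-linear categories with a well-founded termination order---still workable, but a different tool. The paper avoids this entirely by working with the quotient categories $\Uscr_\u$, $\Uscr_\d$ directly rather than presenting $k\Uscr$ as a graded quadratic category.

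A smaller point: your counter $b$ as defined counts pairs already in the target order (a $\Uscr_{\d,1}$-generator applied \emph{before} a $\Uscr_{\u,1}$-generator), so a mixing swap \emph{increases} $b$ rather than decreasing it. Either count the complementary pairs $(i,j)$ with $i<j$, $g_i\in\Uscr_{\u,1}$, $g_j\in\Uscr_{\d,1}$, which do strictly decrease, or argue that $b$ is bounded above by $\binom{n}{2}$ on words of length $n$ (which never grows) and strictly increases.
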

	\begin{proof} It is clear that every morphism can be written as
	  $f_{\u}\circ f_{\d}$. To prove uniqueness we have to show that
	  rewriting a product of generators $p_{\d}\circ p_{\u}$ in the form
	  $p_{\u}'\circ p'_{\d}$ using the diagrams
	  \eqref{ref-3.14-78}\eqref{ref-3.15-79} and the monoidal properties of $\Uscr$ is compatible with
	the relations in $\Uscr_{\u}$ \eqref{ref-2.8-25} and in $\Uscr_{\d}$ \eqref{ref-3.13-77}. This is a tedious,
	but finite verification. Let us give one example. Consider
		\begin{equation}
		\label{ref-3.16-81}
		(ur_ar_b\theta_{c,e}v)\circ (u\phi_{a,b}r_cr_d^{-1}r_ev)\circ (u\phi_{a+b,c}r^{-1}_dr_ev):ur_{a+b+c}r_d^{-1}r_ev\r ur_ar_br_{c+e-d}v 
		\end{equation}	
	This can be rewritten in the form $f_{\u}\circ g_{\d}$ as follows.
		\begin{align}
		&[(ur_ar_b\theta_{c,e}v)\circ (u\phi_{a,b}r_cr_d^{-1}r_ev)]\circ (u\phi_{a+b,c}r^{-1}_dr_ev)\\
		&=(u\phi_{a,b} r_{c+e-d}v)\circ [(u r_{a+b} \theta_{c,e} v)\circ (u\phi_{a+b,c}r^{-1}_dr_ev)]	\notag\\
		&=(u\phi_{a,b} r_{c+e-d}v)\circ (u \phi_{a+b,c+e-d}           v) \circ (u\theta_{a+b+c,e}v) \label{ref-3.18-82}
		\end{align}
	However by the relations in $\Uscr_{\u}$ we have that \eqref{ref-3.16-81} is equal to
	\[
	(ur_ar_b\theta_{c,e}v)\circ
	(ur_a\phi_{b,c}r_d^{-1}r_ev)\circ (u\phi_{a,b+c}r_d^{-1}r_ev)
	\]
	which can be rewritten as
	\begin{align}
	&[(ur_ar_b\theta_{c,e}v)\circ (ur_a\phi_{b,c}r_d^{-1}r_ev)]\circ (u\phi_{a,b+c}r_d^{-1}r_ev) \\
	&=  (ur_a\phi_{b,c+e-d}v)\circ                           (ur_a\theta_{b+c,e}v)           \circ (u\phi_{a,b+c}r_d^{-1}r_ev)\notag\\
	&= (ur_a\phi_{b,c+e-d}v)\circ    (u\phi_{a,b+c+e-d}v)     \circ   (u\theta_{a+b+c,e}v)\label{ref-3.20-83}
	\end{align}
	and using the relations in $\Uscr_{\u}$ we see that \eqref{ref-3.18-82} is indeed equal to \eqref{ref-3.20-83}.
	\end{proof} 

\begin{remarks}
The category $\Uscr$ is an example of a Reedy category, see~\cite{riehl-verity}.
\end{remarks}

	\begin{lemmas} \label{ref-3.3.3-84}
$\Uscr$ is a rigid monoidal category.
	\end{lemmas}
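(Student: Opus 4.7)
The plan is to exhibit explicit right and left duals for every generator of $\Uscr$ and verify the triangle identities using the defining relations \eqref{ref-3.14-78} and \eqref{ref-3.15-79}. Since $\Uscr$ is strict monoidal and duals in a monoidal category satisfy $(uv)^\ast = v^\ast u^\ast$, it suffices to exhibit duals for the generators of $\Lambda$, namely $r_1,\ldots,r_{d-1},r_d^{\pm 1}$. For $r_d$ and $r_d^{-1}$ the situation is trivial: by the very definition of $\Lambda$ we have $r_d r_d^{-1} = 1 = r_d^{-1} r_d$ as objects, so we may take $r_d^\ast = {}^\ast r_d = r_d^{-1}$ with identities as evaluation and coevaluation morphisms. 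The triangle identities then reduce to the identity axioms.

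For $1 \le a \le d-1$, I would define $r_a^\ast := r_{d-a}r_d^{-1}$ and ${}^\ast r_a := r_d^{-1}r_{d-a}$, with
\[
\eval_{r_a} := \theta_{d-a,a} : r_{d-a}r_d^{-1}r_a \to 1, \quad \coeval_{r_a} := \phi_{a,d-a}\cdot r_d^{-1} : 1 = r_dr_d^{-1} \to r_a r_{d-a}r_d^{-1}
\]
and symmetrically for the left dual using $\phi_{d-a,a}$ and $\theta_{a,d-a}$. The first triangle identity then asks for the commutativity of the composite
\[
r_a \xrightarrow{\phi_{a,d-a}r_d^{-1}r_a} r_a r_{d-a}r_d^{-1}r_a \xrightarrow{r_a\theta_{d-a,a}} r_a,
\]
which is exactly the instance of relation \eqref{ref-3.14-78} obtained by substituting $(a,b,c)\mapsto (a,d-a,a)$: then $a+b=d$ forces $\theta_{d,a}=\Id_{r_a}$ and $b+c=d$ forces $\phi_{a,0}=\Id_{r_a}$, so the square collapses to the desired identity. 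The second triangle identity similarly follows from \eqref{ref-3.15-79} with $(a,b,c)\mapsto (d-a,a,d-a)$, tensored on the right with $r_d^{-1}$. The analogous arguments using the left dual evaluations and coevaluations reduce to the same two relations with the roles of $\phi$ and $\theta$ switched appropriately.

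Once the generators have duals, the dual of an arbitrary object $u = s_1 s_2 \cdots s_n$ is constructed as $s_n^\ast\cdots s_1^\ast$ with eval and coeval obtained by nesting: e.g.\ $\coeval_u = (s_1 \otimes \coeval_{s_2\cdots s_n} \otimes s_1^\ast) \circ \coeval_{s_1}$, and likewise for evaluation. The triangle identities for $u$ are then a formal consequence of those for the generators and the interchange law in a strict monoidal category.

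The only real obstacle is bookkeeping: ensuring that the formal identifications $r_d r_d^{-1} = 1 = r_d^{-1} r_d$ in $\Lambda$ are handled cleanly so that the coevaluation morphisms really have $1$ as source, and checking the correct instance of the degenerate cases ($a+b = d$ giving $\theta_{d,c}=\Id_{r_c}$, $b+c=d$ giving $\phi_{a,0}=\Id_{r_a}$, and dually in \eqref{ref-3.15-79}) in each triangle identity. No further combinatorics is needed beyond what is already established in Proposition~\ref{ref-3.3.1-80}.
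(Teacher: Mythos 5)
Your proof is correct and takes essentially the same approach as the paper: exhibit explicit duals for the generating objects $(r_a)_a$ with the same evaluation/coevaluation morphisms, and appeal to the defining relations \eqref{ref-3.14-78} and \eqref{ref-3.15-79} for the triangle identities. Your version is more detailed in that you spell out the exact substitutions and degenerate cases under which the relations collapse to the required identities, and you also implicitly correct a typo in the paper's proof where the unit for ${}^\ast r_a$ is written $r_d^{-1}\phi_{d-a,d}$ (which is not a valid morphism) but should read $r_d^{-1}\phi_{d-a,a}$.
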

	\begin{proof} It is enough to exhibit left and right duals for the generating objects
	$(r_a)_a$.
		\begin{enumerate}
		\item ${}^\ast r_d=r_d^\ast=r_d^{-1}$ and the unit/counit morphisms are the identity.
		\item Assume $a\neq d$. Then $r^\ast_a=r_{d-a}r_a^{-1}$. The unit $\eta:1\r r_a r_a^\ast=r_ar_{d-a} r_d^{-1}$ and counit $\epsilon: r_a^\ast\cdot r_a=r_{d-a}r_d^{-1}r_a\r 1$ 		are given by
		\begin{align*}
		\eta&=\phi_{a,d-a}r_d^{-1}\\
		\epsilon&=\theta_{d-a,a}
		\end{align*}
		\item Assume $a\neq d$. Then ${}^\ast r_a=r_a^{-1}r_{d-a}$. The unit $\eta:1\r {}^\ast r_a r_a=r_d^{-1}r_{d-a}r_{a}$ and counit $\epsilon: r_a \cdot {}^\ast r_a=r_{a}r_d^{-1}		r_{d-a}\r 1$ are given by
		\begin{align*}
		\eta&=r_d^{-1}\phi_{d-a,d}\\
		\epsilon&=\theta_{a,d-a}
		\end{align*}
		\end{enumerate}
	The fact that $\eta$, $\epsilon$ satisfy the required compatibilities follows from the relations \eqref{ref-3.14-78}\eqref{ref-3.15-79}.
	\end{proof}

\begin{remarks} 
\begin{enumerate}
\item One checks on generators (both objects and morphisms!) that as functors $\Uscr\r \Uscr^\circ$:
\[
(-)^\ast=r_d\cdot {}^\ast(-)r_d^{-1}
\]

\item The functors ${}^\ast(-)$, $(-)^\ast$ 
restrict to inverse isomorphisms $\Uscr_{\u}\leftrightarrow \Uscr_{\d}^\circ$. The functor $(-)^\ast:\Uscr_{\u}\r \Uscr_{\d}^\circ$
coincides with the identically named one introduced in the beginning of this section as part of the definition of~$\Uscr_{\d}$.
\end{enumerate}
\end{remarks}

\begin{remarks}
The category $\Uscr$ contains non-split projectors, ever after linearising. An example for the case $d=2$ is the composition
	\begin{equation}
	r_1r_1r_2^{-1}r_1 \xrightarrow{r_1 \theta_{11}} r_1 \xrightarrow{\phi_{11}r_2^{-1}r_1} r_1r_1r_2^{-1} r_1.
	\end{equation}
\end{remarks}

\section{From AS-regular algebras to fiber functors on $\Uscr$}
\label{ref-4-85}

Let $A=TV/(R)$ be a Koszul Artin-Schelter regular algebra of global
dimension $d$ (see Section~\ref{ref-2.7-32}) and let $M^+$ be the  monoidal functor
\[
M^+:\Uscr_{\u}^+ \r \Vect:r_a\mapsto R_a
\]
where $R_1=V$ and for $a\ge 2$:
\[
R_a=\bigcap_{i+j+2=a} V^{\otimes i} \otimes R\otimes V^{\otimes i}
\]
which sends $\phi_{a,b}$ to the inclusion $R_{a+b}\hookrightarrow R_aR_b$. This obviously respects the relations in $\Uscr_\u^+$ so $M^+$ really is a functor. Artin-Schelter regularity is crucial for the following result.

	\begin{proposition} 
	The functor $M^+$ can be uniquely extended to a  monoidal functor 
	\[
	M:\Uscr\r \Vect.
	\]
	\end{proposition}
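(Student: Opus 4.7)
The plan is to see what $M$ is forced to do on the new data---the object $r_d^{-1}$ and the generating morphisms $\theta_{a,b}$---and then to verify that these forced assignments respect the defining relations of $\Uscr$. Since $A$ is AS-regular of global dimension $d$, we have $\dim R_d = 1$, so $R_d$ is invertible in $\Vect$. The strict identification $r_d r_d^{-1}=1$ in $\Uscr$, combined with monoidality, forces $M(r_d^{-1}):=R_d^* =: R_d^{-1}$ (with the canonical evaluation pairing), and this determines $M$ on every object of $\Lambda$ by monoidality.

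To define $M$ on $\theta_{a,b}$, I use that the AS-regular inclusion $R_d \hookrightarrow R_a \otimes R_{d-a}$ is non-degenerate, inducing a canonical isomorphism $\alpha_a: R_a \otimes R_d^{-1} \xrightarrow{\sim} R_{d-a}^*$. Combined with the inclusion $\iota: R_b \hookrightarrow R_{d-a}\otimes R_{a+b-d}$ (valid whenever $0\le d-a$ and $a+b\ge d$, which is exactly the range for which $\theta_{a,b}$ is defined), I set
\[
M(\theta_{a,b}) : R_a \otimes R_d^{-1} \otimes R_b \xrightarrow{\alpha_a \otimes \id} R_{d-a}^* \otimes R_b \xrightarrow{\id \otimes \iota} R_{d-a}^* \otimes R_{d-a} \otimes R_{a+b-d} \xrightarrow{\mathrm{ev} \otimes \id} R_{a+b-d}.
\]
Uniqueness then follows from Lemma \ref{ref-3.3.3-84}, which identifies each $\theta_{d-a,a}$ with the counit of the duality $r_a \dashv r_a^*$: a monoidal functor must preserve duality data, so $M$ is forced on these morphisms; relation \eqref{ref-3.15-79} specialised to $a+b=d$ (where $\phi_{0,c}=\id$) gives $\theta_{d-b,b+c} = (\theta_{d-b,b}\, r_c) \circ (r_{d-b}r_d^{-1}\phi_{b,c})$, expressing every remaining $\theta$ as a composite of $\phi$'s and counits, and the degenerate $\theta_{d,j}=\id$ handles the edge case $a=d$. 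So $M$ is forced on every morphism of $\Uscr$.

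For existence it remains to verify that the assignment above respects the relations \eqref{ref-3.13-77}, \eqref{ref-3.14-78}, \eqref{ref-3.15-79}. The mixed relations \eqref{ref-3.14-78} and \eqref{ref-3.15-79} reduce, upon unwinding the isomorphism $\alpha_a$, to the triangle identities in $\Vect$ together with the elementary compatibility of the various inclusions $R_n \hookrightarrow R_i \otimes R_{n-i}$. The main obstacle is the $\theta$-only relation \eqref{ref-3.13-77}: both composites $\theta_{a+b-d,c}\circ(\theta_{a,b}r_d^{-1}r_c)$ and $\theta_{a,b+c-d}\circ(r_ar_d^{-1}\theta_{b,c})$ unwind, after performing the canonical isomorphisms $\alpha_a,\alpha_b$, to two \emph{a priori} different contractions of the iterated inclusion
\[
R_c \hookrightarrow R_{d-a} \otimes R_{d-b} \otimes R_{a+b+c-2d}
\]
against $R_{d-a}^*\otimes R_{d-b}^*$. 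Their agreement is immediate from the coassociativity of the maps $R_n \hookrightarrow R_{n_1}\otimes R_{n_2}\otimes R_{n_3}$ for $n_1+n_2+n_3=n$: these are all literal inclusions of subspaces of $V^n$ via $R_n=\bigcap_{i+j+2=n}V^i R V^j$, so the two factorisations coincide. Equivalently, the whole package of compatibilities is the statement that the Koszul dual $A^!$ is a graded Frobenius algebra, which is exactly the AS-regular hypothesis.
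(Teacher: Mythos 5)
Your proof is correct and follows essentially the same approach as the paper: both define $M$ on objects via tensor products of the $R_a$ and $R_d^{-1}$, and both identify the non-degenerate pairing $R_d \hookrightarrow R_a \otimes R_{d-a}$ furnished by AS-regularity as the key input for defining $M$ on the $\theta$-generators. The paper's own proof is considerably terser---it only explains the definition of $M(\theta_{d-a,a})$ by inverting the induced map $R_{d-a}^*\to R_d^{-1}R_a$ and dismisses the rest as a ``mildly tedious but straightforward verification''---so your closed formula for $M(\theta_{a,b})$ in general, and your reduction of the $\theta$-only relation \eqref{ref-3.13-77} to coassociativity of the inclusions $R_n\hookrightarrow R_i\otimes R_{n-i}$ (equivalently, $A^!$ being Frobenius), is welcome added detail rather than a different method.
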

	\begin{proof}
	This is a mildly tedious but straightforward verification. For any object $u \in \Uscr$, fix an expression of $u$ as a product of generators $(r_a)_a$ of $\Lambda$, and define $M(x)$ to be the tensor product of the corresponding $R_a$ and $R_d^{-1}$. The importance of AS-regularity comes from the need to define $M(\theta_{d-a,a})$. From the inclusion $R_d \to R_a R_{d-a}$ we get a morphism $R_{d-a}^* \to R_d^{-1}R_a$, and this has to be inverted to define $M(\theta_{d-a,a}):R_{d-a}R_d^{-1}R_a \to k$. AS-regularity ensures this can be done.
	\end{proof}
\begin{corollary} 
\label{ref-4.2-86} 
Let $f:u_1\r u$, $g:u_2\r u$ be morphisms in $U_{\u}\,\tilde{}$. Then 
\[
\im M(f\times_u g)=\im M(f)\cap \im M(g)\subset M(u)
\]
with the convention $M(\ast)=0$.
\end{corollary}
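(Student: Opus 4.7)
My proof plan reduces the statement to a concrete intersection identity inside $V^{\otimes N}$, where $N = d_{\u}(u)$. First, if either $u_1$ or $u_2$ equals $\ast$ both sides vanish by the convention $M(\ast) = 0$; so one may assume $u_1, u_2 \in \Uscr_{\u}$. Using Remark~\ref{ref-3.1.10-76} the morphisms $f, g$ decompose blockwise at the $r_d^{-1}$ positions of $u$, and since $M$ is monoidal with $R_d^{-1}$ one-dimensional (by AS-regularity), the images of $f$ and $g$, their intersection, and the fiber product all factor through this tensor decomposition. This reduces the problem to $u, u_1, u_2 \in \Uscr_{\u}^+$.

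In this setting, write $W_p := V^{\otimes(p-1)} \otimes R \otimes V^{\otimes(N-p-1)}$ for $p \in \{1, \ldots, N-1\}$. For any object $u' = r_{n_1} \cdots r_{n_l}$ in $\Uscr_{\u}^+$ of total degree $N$, iterated application of the trivial identity $(W \otimes V^b) \cap (V^a \otimes W') = W \otimes W'$ combined with the definition $R_n = \bigcap_{i+j+2=n} V^i R V^j$ gives
\[
M(u') = R_{n_1} \otimes \cdots \otimes R_{n_l} = \bigcap_{p \in I_{u'}} W_p \ \subset \ V^{\otimes N},
\]
where $I_{u'} \subset \{1,\ldots, N-1\}$ is the set of ``non-cut'' positions of $u'$ (those internal to a single factor $r_{n_j}$). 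Under the equivalence $\Uscr_{\u,\infty}^+ \cong Q^\circ$ from \eqref{ref-2.11-28}, a morphism $u_1 \to u$ in $\Uscr_{\u}^+$ corresponds precisely to an inclusion $I_u \subset I_{u_1}$, and the induced map $M(u_1) \hookrightarrow M(u)$ is the natural inclusion between these intersections. Consequently
\[
\im M(f) \cap \im M(g) = \bigcap_{p \in I_{u_1}} W_p \ \cap \ \bigcap_{p \in I_{u_2}} W_p = \bigcap_{p \in I_{u_1} \cup I_{u_2}} W_p.
\]

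To conclude, observe that in $\Uscr_{\u,\infty}^+$ the fiber product $f \times_u g$ always exists and corresponds via \eqref{ref-2.11-28} to the object with non-cut set $I_{u_1} \cup I_{u_2}$, whose image under $M$ is precisely the intersection on the right above. If this object already lies in $\Uscr_{\u}^+$ we are done. Otherwise the fiber product in $\Uscr_{\u}\,\tilde{}$ equals $\ast$, and the ``phantom'' source in $\Uscr_{\u,\infty}^+$ carries a tensor factor $R_n$ with $n > d$; since $A$ is AS-regular of global dimension $d$, one has $R_n = 0$ for $n > d$, so this factor forces the intersection to vanish as well, matching $M(\ast) = 0$. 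The main conceptual point is that AS-regularity is exactly what makes the combinatorially ``forbidden'' completions on the category side correspond to zero subspaces on the linear side; the rest is bookkeeping between $\Uscr_{\u}^+$ and $\Uscr_{\u,\infty}^+$.
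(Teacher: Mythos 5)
Your proof is correct, and it follows a genuinely different route from the paper's. The paper invokes Lemma~\ref{ref-3.1.5-66}(4) to reduce directly to the case $f,g\in\Uscr_{\u,1}$; the fiber-product square is then one of the explicit diagrams~\eqref{ref-3.1-63} or~\eqref{ref-3.2-64}, and the verification collapses to the single identity $R_{a+b+c}=R_aR_{b+c}\cap R_{a+b}R_c$ for a quadratic algebra. You instead pass all the way to the Polishchuk--Positselski description $\Uscr_{\u,\infty}^+\cong Q^\circ$, realizing each $M(u')$ as the lattice intersection $\bigcap_{p\in I_{u'}}W_p$ of the ``wall'' subspaces $W_p=V^{p-1}RV^{N-p-1}$, so that images of morphisms are indexed by inclusions $I_u\subset I_{u_1}$ and the fiber product is indexed by $I_{u_1}\cup I_{u_2}$; the statement then becomes the tautology that the intersection over a union of index sets is the intersection of intersections. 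This is more explicit and makes transparent that the only role of AS-regularity is the vanishing $R_n=0$ for $n>d$, which handles the $\ast$ case; the paper's version is terser but leaves the inductive reduction from general $f,g$ to degree-one morphisms implicit. The two small points you glide over but which do hold are: (a) the fiber product in $\Uscr_{\u}\,\tilde{}$ equals $\ast$ exactly when the combinatorial union corresponds to a word containing an $r_n$ with $n>d$, since supersets of an ``invalid'' index set remain invalid so no common lower bound exists in $\Uscr_{\u}^+$; and (b) that the blockwise tensor decomposition of Remark~\ref{ref-3.1.10-76}, stated there for $\Uscr_{\u,1}$, extends to arbitrary morphisms and their fiber products (which it does since each degree-one generator acts within a single $\Lambda^+$-block).
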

\begin{proof}
In view of Lemma \ref{ref-3.1.5-66}(4) it suffices to prove this for
$f,g\in \Uscr_{\u,1}$ where it is an easy verification which ultimately boils down to the fact that 
$
R_{a+b+c}=R_a R_{b+c}\cap R_{a+b}R_c
$
which holds for an arbitrary Koszul algebra.
\end{proof}
	\begin{proposition}
	\label{ref-4.3-87}
	The restricted functor $M:\Uscr_\u \to \Vect$ can be extended to an exact monoidal functor $\overline{M}:\mod((\Uscr_{\u}\,\tilde{}\,)^{\circ}) \to \Vect$ such that
$\overline{M}(\Uscr_{\u}\,\tilde{}\,(-,\ast))=0$.
	\end{proposition}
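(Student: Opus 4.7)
The plan is to construct $\overline{M}$ by first defining it on the full subcategory of finitely generated projective modules via $M$ itself, extending to $\mod((\Uscr_\u\,\tilde{}\,)^\circ)$ by projective presentations, and then upgrading the resulting right-exact functor to an exact functor by invoking the Polishchuk--Positselski distributivity criterion (Theorem~\ref{ref-2.1-15}).

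First I would use the fact that the finitely generated projectives in $\mod((\Uscr_\u\,\tilde{}\,)^\circ)$ are the direct summands of direct sums of the $k\Uscr_\u\,\tilde{}\,(-,x)$, with $\Hom$-spaces computed by Yoneda. Setting $\overline{M}(k\Uscr_\u\,\tilde{}\,(-,x)) := M(x)$, with the convention $M(\ast)=0$, and extending linearly to morphisms, uniquely determines $\overline{M}$ on the projective subcategory and automatically gives $\overline{M}(\Uscr_\u\,\tilde{}\,(-,\ast))=0$. Every $N\in\mod((\Uscr_\u\,\tilde{}\,)^\circ)$ admits a presentation $P_1\to P_0\to N\to 0$; defining $\overline{M}(N) := \coker(\overline{M}(P_1)\to\overline{M}(P_0))$ then yields a right-exact extension of $M$.

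To promote right-exactness to genuine exactness, I would apply Theorem~\ref{ref-2.1-15}: it is enough to show that for every $x\in\Ob(\Uscr_\u)$ and every finite family of morphisms $(f_i:x_i\to x)_{i=1}^n$ in $\Uscr_\u\,\tilde{}\,$, the collection of subspaces $\{\im M(f_i)\}_{i=1}^n\subset M(x)$ generates a distributive lattice. Because fiber products in $\Uscr_\u\,\tilde{}\,$ associate (Proposition~\ref{ref-3.1.6-72}), iterating Corollary~\ref{ref-4.2-86} yields
\[
\im M(f_{i_1}\times_x \cdots \times_x f_{i_k}) \;=\; \bigcap_{j=1}^k \im M(f_{i_j}),
\]
so all intersections of images are controlled by the fiber-product combinatorics that Lemma~\ref{ref-3.1.5-66} already computes in $\Uscr_\u\,\tilde{}\,$. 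Distributivity itself then reduces, via factoring each $f_i$ into generators and using that $M$ is monoidal, to the elementary case: by Koszulity of $A$ (Lemma~\ref{ref-2.4.4-19}) the subspaces $R_{ij}= V^{\otimes i}\otimes R\otimes V^{\otimes j}$ generate a distributive lattice in $V^{\otimes n}$, and Proposition~\ref{ref-2.3.1-13}(3) propagates distributivity through the tensor-product constructions that arise at interior positions of a word $u\in\Lambda$.

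Monoidality of $\overline{M}$ is essentially automatic on projectives via the Day-convolution convention $k\Uscr_\u\,\tilde{}\,(-,x)\otimes k\Uscr_\u\,\tilde{}\,(-,y) = k\Uscr_\u\,\tilde{}\,(-,xy)$ (compare~\eqref{ref-2.9-26}), because monoidality of $M:\Uscr_\u\to\Vect$ is already built in; once exactness is known, the monoidal structure extends uniquely and biexactly to all of $\mod$. The main obstacle is the distributivity step: while Corollary~\ref{ref-4.2-86} cleanly translates fiber products into intersections, propagating distributivity from the basic $R_a$'s through \emph{arbitrary} morphisms in $\Uscr_\u\,\tilde{}\,$ requires care, especially because $\Uscr_\u$ (unlike $\Uscr_\u^+$) is not a poset and each object admits infinitely many outgoing morphisms of degree one.
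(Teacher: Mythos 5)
Your construction of $\overline M$ as the right-exact extension of $M$ on projectives matches the paper, but the argument for exactness has a genuine gap, which you in fact flag yourself at the end. You invoke Theorem~\ref{ref-2.1-15} to claim it is ``enough to show that for \emph{every} finite family $(f_i:x_i\to x)$ the images $\{\im M(f_i)\}$ are distributive.'' That theorem is a statement about $\mod(Q_n)$ only; it does not by itself give a criterion for exactness of a right-exact functor on $\mod((\Uscr_\u\,\tilde{}\,)^\circ)$, and checking distributivity for arbitrary morphisms is both harder than needed and not actually derivable from Koszulity of $A$ by ``factoring into generators'' in the way you sketch (distributivity of images of \emph{composites} is the content of Proposition~\ref{ref-4.4-89}, which in the paper is a \emph{consequence} of the present proposition, so proving it first risks circularity).

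The paper's route closes this gap cleanly. Since every finitely generated module over $(\Uscr_\u\,\tilde{}\,)^\circ$ has finite length, $\overline M$ is exact once each simple $S_u$ is $\overline M$-acyclic. One then applies $\overline M$ to the explicit linear (Koszul) resolution~\eqref{ref-3.12-75} of $S_u$; because of Corollary~\ref{ref-4.2-86} the resulting complex is the Polishchuk--Positselski bar-type complex on the family $\{\im M(f_i) : f_i\in\Uscr_{\u,1},\ f_i:u_i\to u\}$, and its exactness away from the last spot follows from Proposition~\ref{ref-2.3.2-14} provided that \emph{this finite family of images of degree-one morphisms} is distributive. Remark~\ref{ref-3.1.10-76} then reduces that check to $u\in\Uscr_\u^+$, where it is exactly the distributivity of the $V^iRV^j\subset V^n$ guaranteed by Koszulity of $A$. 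So the key idea your proposal is missing is: use the Koszul resolution of simples so that only \emph{degree-one} morphisms enter, then reduce to $\Uscr_\u^+$ via Remark~\ref{ref-3.1.10-76}. Your worry about $\Uscr_\u$ having infinitely many outgoing degree-one morphisms is also a red herring here, since only incoming morphisms to a fixed $u$ appear and there are finitely many of those.
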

	\begin{proof}
We extend $M$ to a functor $\Uscr_{\u}\,\tilde{}\r \Vect$ by putting $M(\ast)=0$.
	Define $\overline{M}$ as the unique right exact functor $\overline{M}:
\mod((\Uscr_{\u}\,\tilde{}\,)^{\circ}) \to \Vect$, satisfying $k\Uscr_\u\,\tilde{}\,(-,u) \mapsto M(u)$. 
 We will now show that all the simples $S_u$ are $\overline{M}$-acyclic. Applying 
	$\overline{M}$ to the resolution~\eqref{ref-3.12-75} we get a complex
		\begin{multline}
\label{ref-4.1-88}
		\cdots \r\bigoplus_{i_1<\cdots<i_n} M({\operatorname{source} (f_{i_1}\times_u \dots \times_u f_{i_n})}) \r \cdots \\
		\cdots \r \bigoplus_{i<j} M({\operatorname{source}(f_i\times_x f_j)})\r \bigoplus_{f_i:u_i\r u\in \Uscr_{\u,1}}M(u_i)\r M(u) \r 0,
		\end{multline}
Since by Corollary \ref{ref-4.2-86} we have
\[
		 \im M(f_{i_1}\times_u \dots \times_u f_{i_n})=\bigcap_j \im M(f_{i_j}),
\]
exactness of \eqref{ref-4.1-88} follows from Proposition \ref{ref-2.3.2-14} if 
the subspaces  
		\begin{equation}
		\{\im M(f_i) | \exists f_i:u_i \to u \text{ in } \Uscr_{\u,1}\} \subset M(u)
		\end{equation}
                generate a distributive lattice.  To prove this we may by Remark~\ref{ref-3.1.10-76} assume that
                $u\in\Uscr^+_{\u}$. But then it follows from the Koszul property
of $A$.
	\end{proof}
	\begin{proposition}
	\label{ref-4.4-89}
Let $u\in \Ob(\Uscr)$. The following collections of subspaces 
		\begin{equation}
\label{ref-4.3-90}
\begin{aligned}
\im M(f)&\subset  M(u)\\
\ker M(g)&\subset  M(u)
\end{aligned}
		\end{equation} for $f:v\r u$  and $g:u\r w$ in $\Uscr$
both generate distributive lattices in $M(u)$.
	\end{proposition}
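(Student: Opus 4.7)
My plan is to factor every morphism in $\Uscr$ uniquely as $f = f_\u \circ f_\d$ (Proposition~\ref{ref-3.3.1-80}) and to reduce both claims to the atomic-image distributivity already used inside the proof of Proposition~\ref{ref-4.3-87}. The generators $\phi_{a,b}$ of $\Uscr_\u$ are sent by $M$ to the defining inclusions $R_{a+b} \hookrightarrow R_a R_b$, so $M(f_\u)$ is always injective. Since $\Uscr$ is rigid (Lemma~\ref{ref-3.3.3-84}) and $M$ is a monoidal functor to the rigid category $\Vect$, the canonical identifications $M(u^\ast)\cong M(u)^\ast$ make $M$ on $\Uscr_\d$-morphisms match the linear duals of $\Uscr_\u$-morphisms under the isomorphism $(-)^\ast:\Uscr_\u\xrightarrow{\sim}\Uscr_\d^{\circ}$; in particular $M(f_\d)$ is always surjective. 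Therefore $\im M(f) = \im M(f_\u)$ and $\ker M(g) = \ker M(g_\d)$, and both collections in~\eqref{ref-4.3-90} are finite, since $\Uscr_\u$ has only finitely many morphisms with a given target.

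For the image collection it now suffices to consider $f:v\to u$ in $\Uscr_\u$. Using Lemma~\ref{ref-3.1.5-66}(4) and Proposition~\ref{ref-3.1.6-72}, together with Remark~\ref{ref-3.1.10-76} which reduces the combinatorics to $\Uscr_\u^+$, the source $v$ may be described as the fiber product in $\Uscr_{\u}\,\tilde{}\,$ of all atomic morphisms $f_i:w_i\to u$ (with $f_i \in \Uscr_{\u,1}$) through which $f$ factors. Iterating Corollary~\ref{ref-4.2-86} then yields $\im M(f) = \bigcap_i \im M(f_i)$. The proof of Proposition~\ref{ref-4.3-87} already establishes, via the Koszul property of $A$, that the atomic images $\{\im M(f_i)\}$ generate a distributive lattice in $M(u)$; hence every $\im M(f)$ in our collection lies inside this same distributive lattice, and the full collection therefore generates a distributive lattice.

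For the kernel collection I argue by duality. For $g:u\to w$ in $\Uscr$, the right dual $g^\ast:w^\ast\to u^\ast$ again lies in $\Uscr$ by rigidity, and under the canonical identifications $M(u^\ast)\cong M(u)^\ast$, $M(w^\ast)\cong M(w)^\ast$ the map $M(g^\ast)$ corresponds to the linear transpose $M(g)^\ast$. Hence $(\ker M(g))^\perp = \im M(g^\ast) \subset M(u^\ast)$, so the collection of kernel-perps is a subcollection of all images with target $u^\ast$; by the image case applied to $u^\ast$ this generates a distributive lattice in $M(u^\ast)$. Proposition~\ref{ref-2.3.1-13}(2) then implies that the collection of kernels themselves generates a distributive lattice in $M(u) = M(u^\ast)^\ast$. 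The only non-routine point is the combinatorial identification of $v$ as the fiber product of its atomic factorisations in the image step; once that is in hand, everything else is either a formal consequence of rigidity or a direct appeal to the distributivity already proved in Proposition~\ref{ref-4.3-87}.
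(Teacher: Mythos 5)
Your proof follows the same overall skeleton as the paper's — factor $f = f_\u \circ f_\d$ via Proposition~\ref{ref-3.3.1-80}, use surjectivity of $M(f_\d)$ to reduce images to $\Uscr_\u$, then treat kernels by duality via Proposition~\ref{ref-2.3.1-13}(2) — but the core of the image case is handled by a genuinely different argument. The paper takes the \emph{entire} finite family $\{f_i : v_i \to u\}$ of $\Uscr_\u$-morphisms, applies the exact functor $\overline{M}$ from Proposition~\ref{ref-4.3-87} to the complex~\eqref{ref-3.11-74} (exact by Corollary~\ref{ref-3.1.8-73}), uses Corollary~\ref{ref-4.2-86} to identify the terms as intersections of images, and invokes Proposition~\ref{ref-2.3.2-14} directly. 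You instead reduce to the \emph{atomic} images by asserting that each $\im M(f)$ equals the intersection of the atomic images it refines, and then quote the distributivity of atomic images that the proof of Proposition~\ref{ref-4.3-87} already established from Koszulity of $A$. Since a sublattice of a distributive lattice is distributive, this does give the result, and it is arguably tighter because it avoids a second appeal to the exactness criterion of Proposition~\ref{ref-2.3.2-14}.

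The one step you leave unjustified is the combinatorial assertion that the source $v$ of $f : v \to u$ in $\Uscr_\u$ is the fiber product (in $\Uscr_\u\,\tilde{}\,$) of all atomic morphisms $f_i : w_i \to u$ through which $f$ factors. You cite Lemma~\ref{ref-3.1.5-66}(4), Proposition~\ref{ref-3.1.6-72} and Remark~\ref{ref-3.1.10-76}, but none of these states this. The claim is in fact true: by Remark~\ref{ref-3.1.10-76} it reduces to $\Uscr_\u^+$, and there, under the identification $\Uscr_{\u,\infty}^+ \cong Q^\circ$ of~\eqref{ref-2.11-28}, a morphism $v \to u$ is an inclusion $I_u \subset I_v$ of subsets, the atomics it factors through are exactly the sets $I_u \cup \{p\}$ for $p \in I_v \setminus I_u$, and their fiber product (pushout in $Q$, i.e.\ union) is $I_u \cup (I_v\setminus I_u) = I_v$. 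You should spell out this short argument; as written the step is an unproved claim, and it is precisely the claim that makes your reduction to atomics work where the paper avoids needing it.
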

	\begin{proof}
The second claim follows from the first one by duality ($M$ being a monoidal functor is compatible with duals)
and Proposition \ref{ref-2.3.1-13}(2).

We now prove the first claim.	By Proposition~\ref{ref-3.3.1-80}, $f=f_{\u} \circ f_{\d}$ for $f_{\u} \in \Uscr_{\u}$ and $f_{\d} \in \Uscr_{\d}$. Since the $M(f_{\d})$ are
	all surjective, we may restrict to the (finite!) set $\{f:v\r u\in \Uscr_{\u}\}:=\{f_1,\ldots,f_u\}$. 
Applying the exact functor $\overline{M}$ introduced in Proposition \ref{ref-4.3-87} to \eqref{ref-3.11-74} and using Corollary
\ref{ref-4.2-86}
we get an exact sequence
		\begin{multline}
		\cdots \r\bigoplus_{i_1<\cdots<i_n} \bigcap_{j} \im M(f_{i_j})
\r \cdots \\
		\cdots \r \bigoplus_{i<j} \im M(f_i)\cap \im M(f_j)\r \bigoplus_{f_i}\im  M(f_i)\r M(u) \r 0,
		\end{multline}
It now suffices to apply
	Proposition~\ref{ref-2.3.2-14}.
	\end{proof}

\section{Recovering $\uaut(A)$}
\label{ref-5-91}

Here we will show that it is possible to reconstruct $\uend(A)$ and $\uaut(A)$ from the categories $\Uscr_{\u}^+$, $\Uscr_{\u}$ that were defined in the previous sections.

	\begin{theorem}
	\label{ref-5.1-92}
	Both $\uend(A)$ and $\uaut(A)$ can be reconstructed as follows
		\begin{align}
		\label{ref-5.1-93}
		\uend(A) & \cong \coend_{\Uscr_{\u}^+}(M^+), \\
		\label{ref-5.2-94}
		\uaut(A) & \cong \coend_{\Uscr}(M).
		\end{align}
	These are both isomorphisms of bialgebras (and thus in the second case also of Hopf algebras).
	\end{theorem}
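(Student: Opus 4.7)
The strategy is to establish both isomorphisms by matching universal properties on each side. By Proposition~\ref{ref-2.9.4-42}, $\coend_{\Uscr_{\u}^+}(M^+)$ is universal among bialgebras $B$ equipped with a monoidal lift $\Uscr_{\u}^+\to\comod(B)$ of $M^+$, and $\coend_{\Uscr}(M)$ is universal among Hopf algebras (with invertible antipode) $H$ equipped with a monoidal lift $\Uscr\to\comod(H)$ of $M$. On the other hand, Definition~\ref{ref-2.8.1-35} and Corollary~\ref{ref-2.8.5-38} characterise $\uend(A)$ and $\uaut(A)$ as universal among bialgebras and Hopf algebras, respectively, for which $A$ is a graded comodule algebra. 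The task is therefore to set up a dictionary between the two kinds of data.

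For the first isomorphism, I would check that for any bialgebra $B$, a monoidal lift $F:\Uscr_{\u}^+\to\comod(B)$ of $M^+$ is the same data as a graded $B$-comodule algebra structure on $A$. In one direction, such a comodule algebra structure makes $V=A_1$ into a $B$-subcomodule, forces $R=\ker(V\otimes V\to A_2)$ to be a subcomodule (since multiplication is a comodule map), and hence every $R_a=\bigcap_{i+j+2=a}V^iRV^j$ is a subcomodule of $V^{\otimes a}$ with the inclusions $\phi_{a,b}$ automatically being comodule maps; the coherence relations~\eqref{ref-2.8-25} are automatic from the tensor comodule structure on $V^{\otimes a}$. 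Conversely, a monoidal lift $F$ endows $V=F(r_1)$ with a $B$-comodule structure making $R=F(r_2)$ a subcomodule of $V\otimes V$, and this induces a graded $B$-comodule algebra structure on $A=TV/(R)$. The two constructions are mutually inverse, so the two universal bialgebras coincide.

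For the second isomorphism, the same dictionary must be extended both from $\Uscr_{\u}^+$ to $\Uscr$ and from bialgebras to Hopf algebras. Given a Hopf algebra $H$ and a graded $H$-comodule algebra structure on $A$, the first step already supplies a monoidal lift $\Uscr_{\u}^+\to\comod(H)$; it must be extended uniquely to a monoidal lift $\Uscr\to\comod(H)$. The object $r_d$ maps to the one-dimensional comodule $R_d$, corresponding to a grouplike element $\chi_d\in H$. Since $H$ is a Hopf algebra, $\chi_d$ is invertible via $S(\chi_d)$, so $r_d^{-1}$ is forced to map to the one-dimensional comodule $R_d^{-1}$ on which $H$ coacts via $\chi_d^{-1}$. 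The morphisms $\theta_{a,b}$ are then determined by rigidity: by the proof of Lemma~\ref{ref-3.3.3-84}, they realise the evaluation maps of the dualities $(r_a,r_a^{\ast})$, so inside the rigid target category $\comod(H)$ they are uniquely determined by the already-chosen $\phi_{a,b}$'s. The relations~\eqref{ref-3.14-78},~\eqref{ref-3.15-79} and those of $\Uscr_{\d}$ then translate into the duality axioms and functoriality of taking duals, and are automatically satisfied. The converse direction is immediate by restriction of a monoidal lift on $\Uscr$ back to $\Uscr_{\u}^+$.

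The main obstacle is to verify rigorously that the concrete morphism $M(\theta_{a,b})$, defined at the start of Section~\ref{ref-4-85} by inverting the canonical pairing $R_d\hookrightarrow R_aR_{d-a}$ coming from AS-regularity, really agrees with the abstract evaluation determined by the Hopf algebra structure on $H$; this amounts to checking that the inclusion $R_d\hookrightarrow R_aR_{d-a}$ is a map of $H$-comodules, which is a direct consequence of the first step. A more conceptual alternative is to invoke Proposition~\ref{ref-2.9.7-44}: the first isomorphism yields $\uaut(A)\cong H(\coend_{\Uscr_{\u}^+}(M^+))\cong\coend_{(\Uscr_{\u}^+)^{\ast}}((M^+)^{\ast})$, and the universal property of adjoining right duals (Lemma~\ref{ref-2.9.6-43}) combined with the rigidity of $\Uscr$ (Lemma~\ref{ref-3.3.3-84}) produces a comparison monoidal functor $(\Uscr_{\u}^+)^{\ast}\to\Uscr$ compatible with the fiber functors; a short cofinality argument, using that the image of this comparison functor generates $\Uscr$ as a rigid monoidal category, then yields an isomorphism on $\coend$'s.
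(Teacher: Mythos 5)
The two parts deserve separate comments.

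For the first isomorphism your approach is genuinely different from the paper's and it works. The paper factors through the two\dash object category $\Cscr_1=\langle r_1,r_2\mid r_2\to r_1r_1\rangle$ (whose matching with Definition~\ref{ref-2.8.1-35} is immediate) and then proves that the restriction $\End_{\Uscr_\u^+}(M^+)\to\End_{\Cscr_1}(M^+)$ is an isomorphism by constructing an explicit inverse natural transformation. You instead match the universal property of $\uend(A)$ directly against the Tannakian universal property of $\coend_{\Uscr_\u^+}(M^+)$ by setting up a bijection between graded $B$-comodule algebra structures on $A$ and monoidal lifts $\Uscr_\u^+\to\comod(B)$. This is sound: the key point, that a comodule structure on $V$ making $R$ a subcomodule already determines subcomodule structures on all $R_a$ with the $\phi_{a,b}$ comodule maps and relations~\eqref{ref-2.8-25} automatic, is exactly what makes the dictionary bijective. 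The two proofs buy roughly the same thing; yours avoids introducing the auxiliary categories $\Cscr_1,\Cscr_2$ at the cost of a slightly longer dictionary verification.

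For the second isomorphism your primary argument has a gap. You write that the $\theta_{a,b}$ are ``uniquely determined by rigidity\ldots inside the rigid target category $\comod(H)$''. But $\comod(H)$ is rigid only when the antipode of $H$ is invertible, and the universal property characterizing $\uaut(A)$ (Corollary~\ref{ref-2.8.5-38}) quantifies over \emph{all} Hopf algebras $H$, with no invertibility assumption on $S$. Assuming rigidity of $\comod(H)$ therefore restricts the class of test objects, and one would only conclude that $\coend_{\Uscr}(M)$ is universal among Hopf algebras with invertible antipode carrying a comodule algebra structure on $A$; identifying this with $\uaut(A)$ uses that $\uaut(A)$ itself has invertible antipode, which in the paper is derived as a \emph{corollary} of Theorem~\ref{ref-5.1-92} and hence cannot be assumed here without circularity. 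The gap can be closed: for a general Hopf algebra $H$ the objects $R_a$ do have two\dash sided duals in $\comod(H)$, because $R_{d-a}R_d^{-1}$ is the right dual of $R_a$ (the coevaluation $\phi_{a,d-a}r_d^{-1}$ is a comodule map, and the induced comodule map $R_a^\vee\to R_{d-a}R_d^{-1}$ is bijective by AS\dash regularity, hence an isomorphism), while $R_d^{-1}R_{d-a}$ is then the right dual of \emph{itself-dualized} objects so that $R_a=(R_d^{-1}R_{d-a})^\vee$, exhibiting $R_d^{-1}R_{d-a}$ as a left dual of $R_a$. That argument is not in your write\dash up. Your ``more conceptual alternative'' is essentially what the paper does: $\uaut(A)=H(\uend(A))\cong\coend_{(\Uscr_\u^+)^*}((M^+)^*)$ via Proposition~\ref{ref-2.9.7-44}, followed by a comparison between $(\Uscr_\u^+)^*$ and $\Uscr$. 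But the phrase ``a short cofinality argument'' undersells what is needed: the paper introduces the morphisms $\pi_a$, forms a localization $\Uscr_\u^\dagger$, proves that $I^\dagger:\Uscr_\u^\dagger\to\Uscr$ is full by explicitly lifting the generators $\theta_{a,b}$, and checks that $M^*$ inverts the $\pi_a$ using AS-regularity. None of these steps are formal; you should either carry out the left\dash dual argument above or spell out this localization\dash and\dash fullness argument.
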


To prove this theorem, we use the Tannakian setup. Let $\Cscr_1$ denote the free strict monoidal category with presentation
	\begin{equation}
	\Cscr_1=\langle r_1,r_2 \ \vert \ r_2 \to r_1r_1 \rangle,
	\end{equation}
and let $M^+:\Cscr_1 \to \Vect$ denote the monoidal functor defined by 
	\begin{align}
	r_1 & \mapsto V, \\
	r_2 & \mapsto R, \\
	(r_2 \to r_1r_1) & \mapsto (R \hookrightarrow VV).
	\end{align}
With this data, we can construct a bialgebra $\coend_{\Cscr_1}(M^+)$ and the starting point for the proof of Theorem~\ref{ref-5.1-92} will be the following description of $\uend(A)$ which follows from Definition~\ref{ref-2.8.1-35}.
	
	\begin{proposition}
          \label{ref-5.2-95}
	There is an isomorphism of bialgebras 
		\begin{equation}
		\uend(A) \cong \coend_{\Cscr_1}(M^+).
		\end{equation}
	\end{proposition}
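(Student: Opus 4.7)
The plan is to play the universal property of $\uend(A)$ against the Tannakian universal property of $\coend_{\Cscr_1}(M^+)$ and construct mutually inverse bialgebra maps.

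First I would construct a bialgebra morphism $g: \uend(A) \to \coend_{\Cscr_1}(M^+)$. Writing $B := \coend_{\Cscr_1}(M^+)$, the evaluation functor $\eval_{M^+}: \Cscr_1 \to \comod(B)$ is monoidal by Proposition~\ref{ref-2.9.4-42}, so $V = M^+(r_1)$ carries a $B$-comodule structure and the generating morphism $r_2 \to r_1 r_1$ witnesses that $R \hookrightarrow V \otimes V$ is a morphism of $B$-comodules. Extending the coaction on $V$ multiplicatively to $TV$, the two-sided ideal generated by $R$ is preserved, so $A = TV/(R)$ inherits the structure of a graded $B$-comodule algebra via some $f: A \to B \otimes A$ with $f(A_n) \subset B \otimes A_n$. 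Proposition~\ref{ref-2.8.3-36}(3) then produces the required bialgebra map $g: \uend(A) \to B$.

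Conversely, I would construct $h: \coend_{\Cscr_1}(M^+) \to \uend(A)$ directly from the universal coaction $\delta_A : A \to \uend(A) \otimes A$. Because this coaction is by an algebra map preserving the grading, the subspaces $V = A_1$ and $R = \ker(V \otimes V \to A_2)$ are $\uend(A)$-subcomodules, and the inclusion $R \hookrightarrow V \otimes V$ is a morphism of comodules. This data assembles, by the presentation of $\Cscr_1$, into a strict monoidal functor $F : \Cscr_1 \to \comod(\uend(A))$ whose composition with the forgetful functor is $M^+$. The universal property of $\coend$ (the defining adjunction behind Proposition~\ref{ref-2.9.4-42}) then produces a unique bialgebra map $h : \coend_{\Cscr_1}(M^+) \to \uend(A)$ such that $\eval_F = \eval_{M^+}$ composed with the induced functor $\comod(h)$.

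Finally I would check $g \circ h = \id$ and $h \circ g = \id$. Both compositions are bialgebra maps that intertwine the canonical coactions on $A$ (or equivalently the canonical monoidal functors from $\Cscr_1$), so the uniqueness clauses in Definition~\ref{ref-2.8.1-35} and in the universal property of $\coend_{\Cscr_1}(M^+)$ force each composition to be the identity. The main subtlety, and the step I expect to need the most care, is the first one: packaging the Tannakian formalism as a genuine universal property of $\coend_{\Cscr_1}(M^+)$ among bialgebras $B$ equipped with a lift of $M^+$ along $\comod(B) \to \Vect$, and verifying that this lift precisely corresponds to an algebra coaction on $A$ preserving the grading, so that the two universal properties line up bijectively.
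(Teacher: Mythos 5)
Your proposal is correct and is essentially the argument the paper leaves implicit: the paper simply states that Proposition~\ref{ref-5.2-95} ``follows from Definition~\ref{ref-2.8.1-35}'' and gives no further detail, and your two-sided construction via the universal property of $\uend(A)$ (Definition~\ref{ref-2.8.1-35} and Proposition~\ref{ref-2.8.3-36}(3)) on one hand and the Tannakian universal property of $\coend_{\Cscr_1}(M^+)$ (Proposition~\ref{ref-2.9.4-42} together with the reconstruction principle from~\cite{pareigis-1}) on the other is exactly what that remark packages. Spelling out that the generating datum of $\Cscr_1$ (the single object morphism $r_2 \to r_1 r_1$) corresponds precisely to a graded comodule-algebra structure on $A = TV/(R)$, and then using uniqueness on both sides to obtain the two identities, is the intended proof.
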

		
	\begin{proof}[Proof of Theorem~\ref{ref-5.1-92}]
	We first prove~\eqref{ref-5.1-93}. 
Note that we have the following commutative diagram
		\begin{equation}
		\begin{tikzcd}
		\Cscr_1 \rar[hook] \dar[swap]{M^+} & \Uscr_\u^+ \dlar{M^+} \\
		\Vect
		\end{tikzcd}
		\end{equation}
           Passing to duals we see that by Proposition \ref{ref-5.2-95} it is sufficient to prove that the restriction morphism
\begin{equation}
\label{ref-5.9-96}
\End_{\Uscr_\u^+}(M^+) \r \End_{\Cscr_1}(M^+)
\end{equation}
is an isomorphism.
Put
\[
  \Cscr_2=\langle r_1, \ldots, r_d \ \vert \phi_{a,b}:r_{a+b} \to r_ar_b \rangle
\]
In other words $\Cscr_2$ is obtained from $\Uscr_{\u}^+$ by forgetting all relations. Obviously
\begin{equation}
\label{ref-5.10-97}
\End_{\Cscr_2}(M^+) = \End_{\Uscr_\u^+}(M^+) 
\end{equation}
where $M^+$ on the left is the composition $\Cscr_2\r U_{\u}^+\xrightarrow{M^+}\Vect$.

Combining \eqref{ref-5.9-96} and \eqref{ref-5.10-97} we see that we have to show that 
the composed restriction morphism
		\begin{equation}
		\End_{\Cscr_2}(M^+) \xrightarrow{\res} \End_{\Cscr_1}(M^+).
		\end{equation}
is an isomorphism.
	In fact this map has an inverse: a natural transformation~$\alpha \in \End_{\Cscr_1}(M^+)$ corresponds to giving for each $n$
an	endomorphism~$\alpha_{V^n}:V^n \to V^n$ such that~$\alpha_{V^n}(V^iRV^j) \subset V^iRV^j$. As $M^+(r_i)=\cap_{p+q+2=i} V^p R V^q$
such an $\alpha$ also satisfies~$\alpha_{V^n}(M^+(r_i)) \subset 
	M^+(r_i)$, so we can uniquely extend~$\alpha$ to a 	natural transformation in~$\End_{\Cscr_2}(M^+)$. This defines the inverse of $\res$.
	
	We now prove~\eqref{ref-5.2-94}. 
        Let $\Uscr^{+,\dagger}_{\u}$ be obtained from $\Uscr^{+,\ast}_{\u}$
        by inverting the unit and counit morphism associated with
        $r_d^\ast$. Then the embedding $\Uscr^+_{\u}\r \Uscr_{\u}$ lifts to a functor $\Uscr^{+,\dagger}_{\u}\r \Uscr_{\u}^\ast$ which is easily seen to be an equivalence. The functor $M^+$ extends uniquely to a functor $M^{+,\dagger}:\Uscr^{+,\dagger}_{\u}\r \Vect$ 
and we obtain isomorphisms
\begin{equation}
\label{ref-5.12-98}
\End_{\Uscr_{\u}^\ast}(M)\cong \End_{\Uscr_{\u}^{+,\dagger}}(M^{+,\dagger})\cong \End_{\Uscr_{\u}^{+,\ast}}(M^{+,\ast})
\end{equation}
For the lefthand side of \eqref{ref-5.2-94} we note that by 
Corollary~\ref{ref-2.8.5-38}, \eqref{ref-5.1-93},  \eqref{ref-5.12-98} and Proposition~\ref{ref-2.9.7-44} we have
\begin{equation}
\label{ref-5.13-99}
\uaut(A)=H(\uend(A))=H(\coend_{\Uscr_\u^+}(M^+))=\coend_{\Uscr^{+,\ast}_\u}(M^{+,\ast})=\coend_{\Uscr_\u^*}(M^{*})
\end{equation}
By Lemma~\ref{ref-2.9.6-43} there is a monoidal functor $I^*$ making the two triangles in the diagram
		\begin{equation}
		\begin{tikzcd}
		\Uscr_{\u} \rar{*} \dar[swap]{I} & \Uscr_{\u}^* \dlar[dashed,swap]{I^*} \dar{M^*} \\
		\Uscr \rar{M} & \Vect
		\end{tikzcd}
		\end{equation}
	commute, where $I$ is the embedding. The map $I^\ast$ then yields a restriction morphism
\begin{equation}
\label{ref-5.15-100}
 \End_{\Uscr}(M)\r \End_{\Uscr_\u^*}(M^{*})
\end{equation}
We will show below it is an isomorphism. Dualising this isomorphism and combining it with \eqref{ref-5.13-99} 
we
get a composed isomorphism
\[
\uaut(A)\xrightarrow{\eqref{ref-5.13-99}} \coend_{\Uscr_\u^*}(M^{*})\xrightarrow{\eqref{ref-5.15-100}} \coend_{\Uscr}(M)
\]
which finishes the proof of \eqref{ref-5.2-94}.

We now prove that \eqref{ref-5.15-100} is indeed an isomorphism. 
Define $\pi_a:r^\ast_{d-a}\r r_ar_d^{-1}$ in $\Uscr^\ast_{\u}$ as the composition
\[
r^\ast_{d-a} =r^\ast_{d-a}r_d r_{d}^{-1}\xrightarrow{r^\ast_{d-a} \phi_{d-a,a} r^{-1}_d} r^\ast_{d-a} r_{d-a} r_ar_d^{-1}\xrightarrow{\eval_{d-a}r_ar_d^{-1}} r_ar_d^{-1}
\]
where $\eval_{d-a}$ is the counit morphism. 

Let $\Uscr^{\dagger}_\u$ be obtained  from $\Uscr^{\ast}_\u$ by formally inverting the $\pi_a$ for all $a$. The functor  $I^\ast: \Uscr^\ast_\u\r \Uscr$ inverts $(\pi_a)_a$
and so it extends to a monoidal functor
$I^\dagger: \Uscr^\dagger_\u\r \Uscr$. We claim this functor is full. We can
clearly lift the generators $\phi_{a,b}$ of $\Uscr$ under $I^\dagger$ and to lift the generators $\theta_{a,b}$ we write them in the form $\theta_{a,d-a}r_{a+b-d}\circ r_ar_d^{-1}\phi_{d-a,a+b-d}$. Hence we only have to lift $\theta_{a,d-a}$ and such a lift is given by the following composition
\[
r_ar_d^{-1}r_{d-a}\xrightarrow{\pi_a^{-1}}  r^\ast_{d-a}r_{d-a}\xrightarrow{\eval_{d-a}} 1
\]
From 
AS-regularity
of $A$ one also obtains that $M^*(\pi_a)$ is invertible and hence $M^*$ extends uniquely to a monoidal functor
$M^\dagger: \Uscr^\dagger_{\u}\r \Vect$.
So we now have morphisms of monoidal categories
\[
\Uscr^\ast_{\u}\r \Uscr^\dagger_\u\xrightarrow{I^\dagger} \Uscr
\]
which yields restriction morphisms
\[
\End_{\Uscr}(M)\xrightarrow{\cong} \End_{\Uscr^\dagger_\u}(M^\dagger)\xrightarrow{\cong} \End_{\Uscr^\ast_{\u}}(M^\ast)
\]
whose composition is \eqref{ref-5.15-100}. The second arrow is an isomorphism because
of the fullness of $I^\dagger$.
	\end{proof}
\begin{corollary} $\uaut(A)$ has invertible antipode.
\end{corollary}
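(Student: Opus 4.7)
The plan is to combine Theorem \ref{ref-5.1-92} with the general Tannakian principle recalled in Proposition \ref{ref-2.9.4-42}. The statement is essentially immediate from the machinery already assembled, so the proof is a one-line deduction rather than a calculation.

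First I would recall that by \eqref{ref-5.2-94} we have an isomorphism of Hopf algebras
\[
\uaut(A) \cong \coend_{\Uscr}(M).
\]
Next I would invoke Lemma \ref{ref-3.3.3-84}, which shows that $\Uscr$ is a \emph{rigid} monoidal category (explicit unit and counit morphisms for each generator $r_a$ are exhibited there), together with the fact, used throughout Section \ref{ref-4-85}, that $M:\Uscr \to \Vect$ is a monoidal functor.

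Finally I would appeal to the third bullet of the Tannakian formalism, i.e. the rigid case of Proposition \ref{ref-2.9.4-42}: for a rigid monoidal category $\Cscr$ and a monoidal fiber functor $F:\Cscr \to \Vect$, the Hopf algebra $\coend_{\Cscr}(F)$ has invertible antipode. Applied to $(\Uscr,M)$ this yields that $\coend_\Uscr(M)$, and hence $\uaut(A)$, has invertible antipode. There is no real obstacle here since every ingredient has already been established in the preceding sections; the only thing to note is that the inverse antipode corresponds categorically to the left duals in $\Uscr$ (as opposed to the right duals, which give the antipode itself), and these are available because the duals exhibited in the proof of Lemma \ref{ref-3.3.3-84} come in both left and right versions.
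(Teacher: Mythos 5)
Your proposal is correct and is precisely the argument in the paper: combine Theorem~\ref{ref-5.1-92} (identifying $\uaut(A)$ with $\coend_\Uscr(M)$), Lemma~\ref{ref-3.3.3-84} (rigidity of $\Uscr$), and Proposition~\ref{ref-2.9.4-42} (the rigid case of the Tannakian formalism). The paper states this as a one-line citation; your extra remark about left duals giving the inverse antipode is accurate but not needed beyond what Proposition~\ref{ref-2.9.4-42} already packages.
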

\begin{proof} This follows from Theorem \ref{ref-5.1-92}, Lemma \ref{ref-3.3.3-84} and Proposition \ref{ref-2.9.4-42}.
\end{proof}

\section{$\uaut(A)$ is quasi-hereditary}
\label{ref-6-101}

In this section we show that~$\uaut(A)$ is quasi-hereditary as in Definition~\ref{ref-2.11.2-58}, by using the Tannaka-Krein formalism. In particular $(\Uscr,M)$ denotes the pair defined in Sections~\ref{ref-3-62} and~\ref{ref-4-85}. Remember that $\Lambda$ denotes the monoid $\langle r_1,\ldots,r_{d-1},r_d^{\pm 1} \rangle$ and $\Lambda=\Ob(\Uscr)$. Also for $w \in \Lambda$, $l(w)$ is the length of $w$ written in the shortest possible way as a word in $r_1, \ldots, r_d,r_d^{-1}$.
We equip $\Lambda$ with the minimal left and right invariant ordering satisfying	
				\begin{equation}
				r_{a+b}<r_ar_b, \qquad 1<r_ar^{-1}_dr_{d-a}
				\end{equation}
			for $a,b\ge 1$, $a+b\le d$.

\subsection{Standard and costandard comodules}

Let us define  candidate standard, costandard and simple comodules. By Theorem~\ref{ref-5.1-92}, there is an evaluation functor
	\begin{equation}
	\eval_M:\Uscr \to \comod(\uaut(A)),
	\end{equation}
	so for any~$\lambda \in \Lambda$, we get an $\uaut(A)$-comodule $M(\lambda)$. We define
	\begin{equation}
	\label{ref-6.3-102}
	\begin{aligned}
	\nabla(\lambda) &= \coker \bigg( \bigoplus_{\substack{\mu \to \lambda \in \Uscr\\ \mu < \lambda}} M(\mu) \to M(\lambda) \bigg)
	\end{aligned}
	\end{equation}
and similarly
	\begin{equation}
	\label{ref-6.4-103}
	\begin{aligned}
	\Delta(\lambda) &= \ker \bigg( M(\lambda) \to \bigoplus_{\substack{\lambda \to \mu \in\Uscr\\ \mu < \lambda}} M(\mu) \bigg).
	\end{aligned}
	\end{equation}
	and moreover we let $L(\lambda)$ be the image of the composition
\begin{equation}
\label{ref-6.5-104}
\Delta(\lambda)\r M(\lambda)\r \nabla(\lambda)
\end{equation}
	\begin{propositions}
	\label{ref-6.1.1-105}
	With the above definitions, one has
	\begin{equation}
	\begin{aligned}
	M(\lambda^*) &\cong M(\lambda)^*, & \nabla(\lambda)^* &\cong \Delta(\lambda^*),& L(\lambda)^\ast\cong L(\lambda^\ast) \\
	M({}^*\lambda) &\cong {}^*M(\lambda), & {}^*\nabla(\lambda) &\cong \Delta({}^*\lambda)&{}^\ast L(\lambda)\cong L({}^\ast\lambda) 
	\end{aligned}
	\end{equation}
	\end{propositions}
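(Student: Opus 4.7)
The plan is to deduce all six isomorphisms from three inputs: (i) $M$ is a monoidal functor between rigid monoidal categories, (ii) $(-)^{\ast}$ and ${}^{\ast}(-)$ are involutive anti-equivalences $\Uscr \to \Uscr^{\circ}$, and (iii) both dualities on $\Lambda$ are order preserving. The first column of isomorphisms is essentially formal, the middle column is a diagram-chase applying the exact contravariant functor $(-)^{\ast}$ on $\Vect$ to the defining presentation of $\nabla(\lambda)$, and the third column is obtained by chasing the image.

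First I would establish $M(\lambda^{\ast})\cong M(\lambda)^{\ast}$ and $M({}^{\ast}\lambda)\cong {}^{\ast}M(\lambda)$. This uses Lemma~\ref{ref-3.3.3-84}, which equips $\Uscr$ with explicit right and left duals, together with the general fact (already used implicitly throughout \S\ref{ref-2.9-39}) that a monoidal functor automatically preserves whatever duals exist in its source. On generating objects the isomorphism is read off from the explicit unit/counit formulas in the proof of Lemma~\ref{ref-3.3.3-84}, and it extends to all of $\Lambda$ monoidally.

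Next I would apply the exact contravariant functor $(-)^{\ast}$ to the defining cokernel \eqref{ref-6.3-102} of $\nabla(\lambda)$. Since duality turns cokernels into kernels and commutes with direct sums (the indexing set is finite because $\Uscr$ has only finitely many incoming arrows at a fixed object, by Remark following Lemma~\ref{ref-2.5.1-23}'s discussion), we get
\[
\nabla(\lambda)^{\ast} = \ker\bigg( M(\lambda)^{\ast} \to \bigoplus_{\substack{\mu \to \lambda \in \Uscr \\ \mu < \lambda}} M(\mu)^{\ast} \bigg).
\]
Now I substitute $M(\lambda)^{\ast}=M(\lambda^{\ast})$ and $M(\mu)^{\ast}=M(\mu^{\ast})$ from the previous step. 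The functor $(-)^{\ast}:\Uscr\to\Uscr^{\circ}$ induces a bijection between morphisms $\mu \to \lambda$ in $\Uscr$ and morphisms $\lambda^{\ast}\to\mu^{\ast}$ in $\Uscr$, and since the duality is order preserving and involutive on $\Lambda$, the condition $\mu<\lambda$ becomes $\mu^{\ast}<\lambda^{\ast}$. Reindexing $\nu:=\mu^{\ast}$ turns the right hand side into the defining kernel \eqref{ref-6.4-103} of $\Delta(\lambda^{\ast})$, yielding $\nabla(\lambda)^{\ast}\cong\Delta(\lambda^{\ast})$.

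Finally, for $L(\lambda)$ I would apply $(-)^{\ast}$ to the composition \eqref{ref-6.5-104}. Since $(-)^{\ast}$ is exact, the image of $\Delta(\lambda)\to\nabla(\lambda)$ dualizes to the image of $\nabla(\lambda)^{\ast}\to\Delta(\lambda)^{\ast}$; under the identifications of the previous paragraph this is the composition $\Delta(\lambda^{\ast})\to M(\lambda^{\ast})\to\nabla(\lambda^{\ast})$, whose image is $L(\lambda^{\ast})$ by definition. The statements for ${}^{\ast}(-)$ are obtained by the symmetric argument, replacing right duals by left duals throughout. The only point requiring any care is verifying that the isomorphisms in the middle column are natural enough for the image-chase in the third column to go through without issue, but this is a routine consequence of the fact that the isomorphisms constructed in the first two columns are all induced by a single natural transformation coming from the monoidal structure on $M$.
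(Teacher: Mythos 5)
Your proposal is essentially the paper's own argument: dualize the defining presentation of $\nabla(\lambda)$, use that $M$ is monoidal (hence preserves duals) and that $(-)^{\ast}$ is an order-preserving anti-equivalence on $\Uscr$, and read off $\Delta(\lambda^{\ast})$; the $L(\lambda)$ statement follows by chasing the image. The only blemish is your citation for finiteness of the indexing set, which points to the wrong lemma — the relevant fact that any object of $\Uscr_{\u}$ has only finitely many incoming morphisms is observed in \S\ref{ref-3-62} just before Lemma~\ref{ref-3.1.5-66}, and together with Lemma~\ref{ref-6.1.2-106} it is what makes the direct sums effectively finite.
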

	\begin{proof}
	By definition of the~$M(\lambda)$, it suffices to note that $M$ is monoidal and monoidal functors 
	preserve duals. 
	
	For the other part, right dualising the exact sequence defining~$\nabla(\lambda)$ gives an exact sequence:
		\begin{equation}
		0 \to \nabla(\lambda)^* \to M(\lambda)^* \to \bigoplus_{\mu \to \lambda} M(\mu)^*.
		\end{equation}
	Now using the first part and the fact that in a rigid monoidal category, taking right duals is fully faithful we get the
	exact sequence
	 	\begin{equation}
		0 \to \nabla(\lambda)^* \to M(\lambda^*) \to \bigoplus_{\lambda^* \to \mu^*} M(\mu^*),
		\end{equation}
	which is exactly the definition of $\Delta(\lambda^*)$. For left duals the proof is completely similar.
	\end{proof}

	\begin{lemmas}
	\label{ref-6.1.2-106}
	To compute $\nabla(\lambda)$ and $\Delta(\lambda)$, it suffices to sum over all morphisms in $\Uscr_{\u,1}$, respectively $\Uscr_{\d,1}$.
	\end{lemmas}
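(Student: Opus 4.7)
The plan is to use the triangular decomposition of $\Uscr$ (Proposition \ref{ref-3.3.1-80}) together with two observations about the fiber functor $M$: that it sends morphisms in $\Uscr_{\u}$ to injections and morphisms in $\Uscr_{\d}$ to surjections. Combining these I can ``peel off'' the $\Uscr_{\d}$-part of any morphism contributing to $\nabla(\lambda)$, respectively the $\Uscr_{\u}$-part of any morphism contributing to $\Delta(\lambda)$, leaving only morphisms in a single ``direction''. A second reduction then factors each remaining morphism into degree-$1$ generators and uses the partial ordering to discard everything but the last step.

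For $\nabla(\lambda)$: the generator $\phi_{a,b}$ of $\Uscr_{\u}$ maps under $M$ to the inclusion $R_{a+b}\hookrightarrow R_aR_b$, so by monoidality every $M(g)$ with $g\in\Uscr_{\u}$ is injective; dually, since $M$ preserves dualities (cf.\ Proposition \ref{ref-6.1.1-105}) and $\Uscr_{\d}\cong\Uscr_{\u}^\circ$ under $(-)^\ast$, every $M(h)$ with $h\in\Uscr_{\d}$ is surjective. Now write an arbitrary $f\colon\mu\to\lambda$ as $f=f_{\u}\circ f_{\d}$ with $f_{\d}\colon\mu\to\nu$ and $f_{\u}\colon\nu\to\lambda$. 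Surjectivity of $M(f_{\d})$ yields $\im M(f)=\im M(f_{\u})$. Since morphisms in $\Uscr_{\d}$ (resp.\ $\Uscr_{\u}$) weakly decrease (resp.\ increase) the object in the partial ordering, $\mu<\lambda$ forces $\nu<\lambda$. Factoring $f_{\u}$ as a composition $\nu=\nu_0\to\nu_1\to\cdots\to\nu_n=\lambda$ of degree-$1$ generators, one has $\im M(f_{\u})\subseteq\im M(\nu_{n-1}\to\lambda)$, and the final arrow lies in $\Uscr_{\u,1}$ with $\nu_{n-1}<\lambda$ (since any generator $u\phi_{a,b}v\colon ur_{a+b}v\to ur_ar_bv$ strictly increases the element by left-right invariance of the ordering).

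For $\Delta(\lambda)$: the same decomposition $g=g_{\u}\circ g_{\d}$ with $g_{\u}$ injective gives $\ker M(g)=\ker M(g_{\d})$, so only morphisms in $\Uscr_{\d}$ matter; the same $\mu<\lambda$ argument (now using that $\Uscr_{\u}$-arrows go up) plus factorisation of $g_{\d}$ into degree-$1$ generators $\lambda=\nu_0\to\nu_1\to\cdots\to\nu_n=\mu$ show that $\ker M(g_{\d})\supseteq\ker M(\lambda\to\nu_1)$, with $\lambda\to\nu_1\in\Uscr_{\d,1}$ and $\nu_1<\lambda$ by the generating relation $r_ar_d^{-1}r_{d-a}>1$ together with left-right invariance. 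Alternatively the $\Delta$-statement follows formally from the $\nabla$-statement by applying $(-)^\ast$ and invoking Proposition \ref{ref-6.1.1-105}.

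I expect the main technical point to be the bookkeeping around the partial ordering, specifically verifying that every degree-$1$ generator $u\phi_{a,b}v$ (resp.\ $u\theta_{a,b}v$) with target (resp.\ source) equal to $\lambda$ automatically satisfies the strict inequality $\mu<\lambda$ in the partial order on $\Lambda$, so that no morphisms are silently added to or dropped from the sum. The injectivity/surjectivity assertions for $M$ are essentially immediate from the construction in \S\ref{ref-4-85}, and the uniqueness of the triangular decomposition is already given by Proposition \ref{ref-3.3.1-80}, so those are not obstacles.
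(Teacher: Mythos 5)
Your argument matches the paper's proof exactly: triangular decomposition $f=f_{\u}\circ f_{\d}$ from Proposition \ref{ref-3.3.1-80}, surjectivity of $M(f_{\d})$ to pass to the $\Uscr_{\u}$-part, then a reduction to the last degree-$1$ step. The paper's own version is much terser (just ``Since $M(f_{\d})$ is surjective, the statement is clear''), so your write-up usefully spells out the two steps the paper compresses. One small caveat, which you in fact flag yourself at the end: for the $\Delta$-side the inequality $\mu<\lambda$ for a generator $u\theta_{a,b}v$ with $a+b>d$ requires $r_{a+b-d}<r_ar_d^{-1}r_b$, which is not literally one of the listed generating relations of the partial order (those only cover $a+b=d$); it follows once one grants, as the paper asserts, that $(-)^\ast$ is order-preserving, but as written your appeal to ``the generating relation $r_ar_d^{-1}r_{d-a}>1$'' only handles the $a+b=d$ case, so a word on the general $\theta_{a,b}$ would close that loose end.
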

	\begin{proof}
	Let's consider $\nabla(\lambda)$, the other case is analogous. By Proposition~\ref{ref-3.3.1-80} any morphism $f:\mu \to \lambda$ in $\Uscr$ has a factorisation 
	$f=f_\u \circ f_\d$ for $f_\u$ a morphism in $\Uscr_\u$ and $f_\d$ a morphism in $\Uscr_\d$. Since $M(f_{\d})$ is surjective, the 
	statement is clear.
	\end{proof}	
	\begin{corollarys}
	\label{ref-6.1.3-107}
	Let $\lambda=r_d^{a_0}\lambda_1 r_d^{a_1} \lambda_2 r_d^{a_2} \cdots r_d^{a_{k-1}}\lambda_k r_d^{a_k}$, where $\lambda_i \in \langle r_1, \ldots, r_{d-1} \rangle$ and $a_i \neq 0$, then
		\begin{equation}
		\nabla(\lambda)=\nabla(r_d)^{a_0}\nabla(\lambda_1)\nabla(r_d)^{a_1}\nabla(\lambda_2)\nabla(r_d)^{a_2} \cdots 
		\nabla(\lambda_k)\nabla(r_d)^{a_k}.
		\end{equation}
	with in particular, $\nabla(r_d)=M(r_d)=R_d$.
	\end{corollarys}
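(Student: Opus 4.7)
My plan is to combine Lemma~\ref{ref-6.1.2-106} (which lets us restrict the sum in~\eqref{ref-6.3-102} to morphisms in $\Uscr_{\u,1}$) with a careful analysis of where such morphisms can act inside the decomposition $\lambda=r_d^{a_0}\lambda_1 r_d^{a_1}\cdots\lambda_k r_d^{a_k}$, and then invoke right-exactness of the tensor product to factor the cokernel over the blocks.

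The key step is the observation that every morphism $\mu\to\lambda$ in $\Uscr_{\u,1}$ has the form $u\phi_{a,b}v$ with $\phi_{a,b}: r_{a+b}\to r_ar_b$, where $a,b\geq 1$ and $a+b\leq d$. In particular $a,b\in\{1,\ldots,d-1\}$, so both $r_a$ and $r_b$ lie in the sub-monoid $\langle r_1,\ldots,r_{d-1}\rangle$. Consequently the pair $r_ar_b$ appearing as an adjacent factor of $\lambda$ cannot involve any generator coming from an $r_d^{a_i}$-block: a positive $r_d$ factor would force $a+d\leq d$, while a $r_d^{-1}$ factor is never touched by any $\phi_{a,b}$. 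Hence $r_ar_b$ must sit strictly inside one of the blocks $\lambda_i$. By the monoidality of $M$, the induced map $M(\mu)\to M(\lambda)$ therefore acts as the identity on every tensor factor of $M(\lambda)$ except the one corresponding to $M(\lambda_i)$, on which it restricts to a morphism $M(\mu_i)\to M(\lambda_i)$ coming from $\Uscr_{\u,1}$ applied within $\lambda_i$.

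With this block-separation in hand, I would conclude by right-exactness of $\otimes$: for a tensor product $V_1\otimes\cdots\otimes V_n$ with subspaces $W_i\subset V_i$, the quotient by $\sum_i V_1\otimes\cdots\otimes W_i\otimes\cdots\otimes V_n$ equals $\bigotimes_i(V_i/W_i)$. Applying this with $V_i$ the $i$-th block and $W_i$ the sum of images of all $\Uscr_{\u,1}$-morphisms targeting $\lambda_i$ (and $W=0$ on the $r_d^{a_i}$-blocks, which have no incoming morphisms), the cokernel factors as the advertised tensor product. Finally, I would verify $\nabla(r_d)=M(r_d)=R_d$ by noting that no morphism $\mu\to r_d$ exists in $\Uscr_{\u,1}$: it would require writing $r_d=r_ar_b$ with $a,b\geq 1$, contradicting $l(r_d)=1$.

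I do not anticipate any serious obstacle; the only point requiring care is the block-separation argument, which is immediate from the constraint $a,b\in\{1,\ldots,d-1\}$ on the indices of $\phi_{a,b}$. Negative exponents $a_i<0$ cause no additional difficulty, since $r_d^{-1}$-generators are simply inert with respect to every $\phi_{a,b}$-morphism, and $M(r_d)^{a_i}$ is interpreted using duals in the usual way (consistent with $M$ being a monoidal functor).
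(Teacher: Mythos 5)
Your proof is correct and follows essentially the same route as the paper's (very terse) argument, which simply cites Lemma~\ref{ref-6.1.2-106} (reduction to $\Uscr_{\u,1}$-morphisms) together with Remark~\ref{ref-3.1.10-76} (block-separation of such morphisms). The crux in both cases is the observation that $\phi_{a,b}$ requires $a,b\ge 1$ and $a+b\le d$, so neither factor in the target pair $r_ar_b$ can be $r_d^{\pm 1}$, forcing the affected adjacent pair to lie inside a single block $\lambda_i$; you have correctly filled in the right-exactness-of-tensor step and the verification that $\nabla(r_d)=M(r_d)$, both of which the paper leaves implicit. One small presentational remark: the paper's Remark~\ref{ref-3.1.10-76} is phrased so as to isolate only the negative powers of $r_d$ (with the intermediate blocks allowed to be in $\Lambda^+$), whereas the corollary needs the finer decomposition isolating all powers of $r_d$; your argument handles this finer case directly, which is the natural thing to do.
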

\begin{proof} This follows from Lemma \ref{ref-6.1.2-106} combined with Remark \ref{ref-3.1.10-76}.
\end{proof}
	
	\begin{remarks}
	Note that this corollary says that at least on the level of the $\nabla$'s, the representations of $\uaut(A)$ behave as $\Oscr(\GL_n)$, where one can factor out the determinant representation. 
	\end{remarks}
We will consider a refinement of Corollary \ref{ref-6.1.3-107}
Assume that $u$ and $v\in \Ob(\Uscr)$ when written in shortest possible form are given by $wr_ar_d^{-x}$, $r_d^xr_bw'$ and $a+b\le d$, $x\in \ZZ$. Then we put
\[
u\wedge v:=wr_{a+b}w'
\] 
In all other case we consider $u\wedge v$ to be undefined. It will be convenient to put $u\wedge v=\ast$ in that case. 

There is a canonical map in $\Uscr$
\[
\phi_{u,v}:u\wedge v\r uv
\]
derived from $\phi_{a,b}$. 

Similarly assume that $u$ and $v\in \Ob(\Uscr)$ when written in shortest possible form are given by $wr_ar_d^{-x-1}$, $r_d^xr_bw'$ and $a+b\ge d$, $x\in \ZZ$. Then we put
\[
u\vee v:=wr_{a+b-d}w'
\]
(with the convention $r_0=1$). In all other case we consider $u\wedge v$ to be undefined. As above we put $u\wedge v=\ast$ in that case.
There is a canonical map in $\Uscr$
\[
\theta_{u,v}:uv\r u\vee v
\]
derived from $\theta_{a,b}$. The operations $\wedge$, $\vee$ are related by duality
\[
(u\wedge v)^\ast=v^\ast\vee u^\ast
\]
and similarly $\phi_{u,v}^\ast=\theta_{v^\ast,u^\ast}$.
\begin{propositions} \label{ref-6.1.5-108} For $u,v\in \Ob(\Uscr)$
there is a commutative diagram in $\comod(\uaut(A))$ with an exact lower row 
\begin{equation}
\label{ref-6.10-109}
\xymatrix{
&M(u\wedge v)\ar@/^2em/[rr]^{M(\phi_{u,v})}\ar@{->>}[d]\ar@{^(->}[r] &M(u)\otimes M(v)\ar@{->>}[d]\ar@{=}[r]& M(uv)\ar@{->>}[d]\\
0\ar[r]&\nabla(u\wedge v)\ar[r] & \nabla(u)\otimes \nabla(v)\ar[r]& \nabla(uv)\ar[r]&0
}
\end{equation}
with the convention that $M(\ast)=\nabla(\ast)=0$. There is an analogous dual
commutative diagram given by
\begin{equation}
\label{ref-6.11-110}
\xymatrix{
0\ar[r]&\Delta(uv)\ar@{^(->}[d]\ar[r] & \Delta(u)\otimes \Delta(v)\ar@{^(->}[d]\ar[r]\ar[r]& \Delta(u\vee v)\ar@{^(->}[d]\ar[r]\ar[r]&0\\
&M(uv)\ar@/_2em/[rr]_{M(\theta_{u,v})}\ar@{=}[r] &M(u)\otimes M(v)\ar@{->>}[r]& M(u\vee v)
}
\end{equation}
where again $M(\ast)=\Delta(\ast)=0$ and now the upper row is exact.
\end{propositions}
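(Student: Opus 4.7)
The plan is threefold: verify commutativity of~\eqref{ref-6.10-109} directly from definitions, reduce the exactness of the lower row to the analogous statement for $\Uscr_{\u,\infty}^+$ already established in Section~\ref{ref-2.6-24}, and derive~\eqref{ref-6.11-110} from~\eqref{ref-6.10-109} by duality.

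Commutativity of~\eqref{ref-6.10-109} is essentially tautological: $M$ is monoidal, so the top square expresses nothing more than the compatibility of the basic inclusion $R_{a+b}\hookrightarrow R_aR_b$ with ambient tensoring; the vertical maps are the canonical surjections onto $\nabla$. The bottom horizontal maps are induced by the universal properties of the defining cokernels, since any morphism $\mu\to u$ in $\Uscr$ with $\mu<u$ yields, by tensoring with $v$, a morphism $\mu v\to uv$ with $\mu v<uv$, and symmetrically on the other side.

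For exactness of the lower row, write the shortest forms $u=w\,r_a\,r_d^{-x}$, $v=r_d^x\,r_b\,w'$ with $a+b\le d$. By Corollary~\ref{ref-6.1.3-107} each of $\nabla(u)$, $\nabla(v)$, $\nabla(uv)$, $\nabla(u\wedge v)$ factorises as a tensor product of $\nabla$'s of the $r_d$-free blocks together with copies of $R_d^{\pm 1}$ at the $r_d$-transitions. Since the operation $\wedge$ only modifies the single junction $r_a r_d^{-x}\mid r_d^x r_b$, and since the canonical trivialisation $R_d^{-x}\otimes R_d^x\cong k$ (using that $R_d$ is one-dimensional) collapses the inverse $r_d$ factors there, the lower row of~\eqref{ref-6.10-109} becomes isomorphic to the lower row of diagram~\eqref{ref-2.14-31} applied to $u'=\mu_u\,r_a$ and $v'=r_b\,\mu_v$ (where $\mu_u$, $\mu_v$ denote the last, respectively first, $r_d$-free block adjacent to the junction in $w$, $w'$), tensored on the left and right with fixed vector spaces. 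Since tensoring over a field is exact and the latter sequence is exact by the Koszul property of $A$, the required exactness follows. Here we implicitly use that, via the embedding $\Uscr_{\u}^+\hookrightarrow \Uscr_{\u,\infty}^+$ and the vanishing $R_i=0$ for $i>d$, the comodule $\nabla(\lambda)$ computed in $\Uscr$ agrees, for $\lambda\in \Lambda^+$, with the one computed in $\Uscr_{\u,\infty}^+$; this is precisely the content of Lemma~\ref{ref-6.1.2-106} combined with Remark~\ref{ref-3.1.10-76}.

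The dual diagram~\eqref{ref-6.11-110} is obtained by applying the right-duality functor to~\eqref{ref-6.10-109} and invoking Proposition~\ref{ref-6.1.1-105} together with the identity $(u\wedge v)^*=v^*\vee u^*$ and $\phi_{u,v}^*=\theta_{v^*,u^*}$. The main obstacle is the bookkeeping around the junction in the reduction step: one must check that the $\nabla$-factorisations across $r_d$-boundaries are compatible with both $M(\phi_{u,v})$ and the canonical projections, and handle the corner cases where $w$ or $w'$ is empty, $x=0$, or $a+b=d$ (where $\nabla(r_d)=R_d$ is one-dimensional); each of these is routine but needs explicit verification.
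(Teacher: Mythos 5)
Your proof follows essentially the same route as the paper's: commutativity is formal, exactness of the lower $\nabla$-row is reduced via the block factorisation of Corollary~\ref{ref-6.1.3-107} to the Koszul case~\eqref{ref-2.14-31}, and the $\Delta$-diagram~\eqref{ref-6.11-110} is deduced by duality using Proposition~\ref{ref-6.1.1-105}. One small imprecision worth flagging: the universal-property remark in your first paragraph only produces the right-hand map $\nabla(u)\otimes\nabla(v)\to\nabla(uv)$; the left-hand map $\nabla(u\wedge v)\to\nabla(u)\otimes\nabla(v)$ is not a formal consequence of the cokernel's universal property (one would first have to know that the composite $M(u\wedge v)\to\nabla(u)\otimes\nabla(v)$ kills the defining submodule), and in fact it comes out of your reduction in the next paragraph. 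The paper settles the comodule structure of the lower row more economically by noting that, once the linear maps exist and the vertical arrows are surjective, the lower horizontals are automatically $\uaut(A)$-comodule maps.
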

\begin{proof} 
We will only consider \eqref{ref-6.10-109}. The existence of \eqref{ref-6.11-110} then follows by duality.
All the objects in \eqref{ref-6.10-109} as well as the morphisms in the top row and the vertical morphisms are in $\comod(\uaut(A))$. 
To prove the existence of the horizontal morphisms in the lower row \emph{as linear maps} we may
use Corollary \ref{ref-6.1.3-107}  to reduce to the
case $u,v\in \Uscr_{\u}^+$ in which case it follows from \eqref{ref-2.14-31}. Since the vertical maps are surjective we obtain that the lower horizontal maps
must be compatible with the $\uaut(A)$-structure. So the diagram lives in $\comod(\uaut(A))$.
\end{proof}
Denote by $\Fscr(\Delta)$ (respectively $\Fscr(\nabla)$) the categories of $\uaut(A)$-comodules that have a $\Delta$-filtration (respectively $\nabla$-filtration).
	\begin{corollarys}
	\label{ref-6.1.6-111}
\begin{enumerate}
\item \label{ref-1-112} $\Fscr(\Delta)$ and $\Fscr(\nabla)$ are closed under tensor products. 
\item \label{ref-2-113} $M(\lambda)\in \Fscr(\Delta)\cap \Fscr(\nabla)$.
\end{enumerate}
\end{corollarys}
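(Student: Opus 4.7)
The plan rests on two ingredients: Proposition~\ref{ref-6.1.5-108}, which provides short exact sequences computing $\nabla$ and $\Delta$ on tensor products, and a length-theoretic observation identifying $M(r) = \nabla(r) = \Delta(r)$ for every generator $r \in \{r_1, \ldots, r_{d-1}, r_d, r_d^{-1}\}$ of $\Lambda$.

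For (1), I will argue by a double induction on the lengths of the $\nabla$-filtrations of $X$ and $Y$. Using that tensoring over $k$ is exact and that extensions of $\nabla$-filtered objects are $\nabla$-filtered (concatenate filtrations), this reduces to the base case: showing $\nabla(\mu) \otimes \nabla(\nu) \in \Fscr(\nabla)$ for arbitrary $\mu,\nu \in \Lambda$. But~\eqref{ref-6.10-109} provides
\[
0 \to \nabla(\mu \wedge \nu) \to \nabla(\mu) \otimes \nabla(\nu) \to \nabla(\mu\nu) \to 0,
\]
which is either a genuine two-step $\nabla$-filtration (when $\mu \wedge \nu \neq \ast$) or an isomorphism $\nabla(\mu) \otimes \nabla(\nu) \cong \nabla(\mu\nu)$ (when $\mu \wedge \nu = \ast$). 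The $\Delta$-case is strictly dual, using~\eqref{ref-6.11-110}.

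For (2), I will first establish that $M(r) = \nabla(r) = \Delta(r)$ for each generator $r$. By Lemma~\ref{ref-6.1.2-106}, computing $\nabla(r)$ via~\eqref{ref-6.3-102} requires summing only over morphisms in $\Uscr_{\u,1}$. Every non-identity generating morphism in $\Uscr_{\u,1}$ has the shape $u\phi_{a',b'}v: u r_{a'+b'} v \to u r_{a'} r_{b'} v$, whose target has length at least $2 + l(u) + l(v) \geq 2$; since $l(r) = 1$ for every generator, no such morphism can have $r$ as target, so the defining sum is empty and $\nabla(r) = M(r)$. Symmetrically, the generators of $\Uscr_{\d,1}$ have source of length at least $3$, forcing $\Delta(r) = M(r)$. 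Any $\lambda \in \Lambda$ can be written as a word $\lambda = r_{i_1} \cdots r_{i_n}$ in these generators, so monoidality of $M$ gives
\[
M(\lambda) \cong M(r_{i_1}) \otimes \cdots \otimes M(r_{i_n}),
\]
a tensor product of objects lying in $\Fscr(\Delta) \cap \Fscr(\nabla)$. Applying (1) then yields $M(\lambda) \in \Fscr(\Delta) \cap \Fscr(\nabla)$.

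Since the heavy lifting is done by Proposition~\ref{ref-6.1.5-108}, no serious technical obstacle remains; the essential new input is the length-count identifying $M$ with $\nabla$ and $\Delta$ on the one-letter generators, which is precisely what bootstraps the inductive argument in (2).
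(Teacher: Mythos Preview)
Your proposal is correct and follows essentially the same approach as the paper's own proof. The paper is terser---it simply says (1) is ``immediate from Proposition~\ref{ref-6.1.5-108}'' and for (2) asserts $M(r_i)=\nabla(r_i)=\Delta(r_i)$ without justification---whereas you spell out the double induction for (1) and supply the length-count argument showing no $\Uscr_{\u,1}$-morphism can target (and no $\Uscr_{\d,1}$-morphism can have as source) a one-letter generator, which is exactly what the paper's assertion rests on.
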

	\begin{proof}
\begin{enumerate}
\item
This is immediate from
Proposition \ref{ref-6.1.5-108}. 
\item
It follows from (1) that $\Fscr(\Delta)\cap \Fscr(\nabla)$ is also closed under tensor product.
Since $M(r_i)= \nabla(r_i)=\Delta(r_i)\in \Fscr(\nabla)\cap \Fscr(\Delta)$ it follows that every $M(\lambda)$
is in $\Fscr(\nabla)\cap \Fscr(\Delta)$. \qed
\end{enumerate}
\def\qed{}	\end{proof}
	\begin{propositions}
\label{ref-6.1.7-114}
	For any~$\lambda \in \Lambda$, the image of the composition of the canonical inclusion and surjection
		\begin{equation}
		\label{ref-6.12-115}
		\Delta(\lambda) \hookrightarrow M(\lambda) \twoheadrightarrow \nabla(\lambda)
		\end{equation}
	is non-zero. In particular,~$\Delta(\lambda) \neq 0$, $\nabla(\lambda)\neq 0$, $L(\lambda)\neq 0$.
	\end{propositions}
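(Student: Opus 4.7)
I will argue by induction on the reduced length $l(\lambda)$ of $\lambda$ as a word in $r_1,\ldots,r_{d-1},r_d^{\pm 1}$. First, I invoke Lemma~\ref{ref-6.1.2-106} to restrict the intersections and sums defining $\Delta(\lambda)$ and $\nabla(\lambda)$ to single-step morphisms in $\Uscr_{\d,1}$ and $\Uscr_{\u,1}$ respectively. A preliminary but important observation is that both generating relations $r_{a+b}<r_a r_b$ and $1<r_a r_d^{-1} r_{d-a}$ of the partial order on $\Lambda$ strictly increase reduced word length, so $\mu<\lambda$ forces $l(\mu)<l(\lambda)$; a short length count then shows that among single $\theta_{a,b}$-morphisms $\lambda\to\mu$, only the contractions $\theta_{a,d-a}\colon r_a r_d^{-1} r_{d-a}\to 1$ (with $a+b=d$) yield a target strictly below $\lambda$ in the order, since $\theta_{a,b}$ with $a+b>d$ would require an upward chain from a length-one target to a length-three source containing an $r_d^{-1}$, which the generating moves cannot produce.

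The base case $l(\lambda)=1$ is immediate: for $\lambda\in\{r_1,\ldots,r_d,r_d^{-1}\}$ the length count rules out any non-trivial morphism to a smaller element, so $\Delta(\lambda)=\nabla(\lambda)=M(\lambda)$ is one of the non-zero spaces $R_1,\ldots,R_d,R_d^{-1}$, and the composition is the identity. For the inductive step, factor $\lambda=uv$ with $l(u),l(v)\geq 1$, so that by induction $L(u)\otimes L(v)\neq 0$. Proposition~\ref{ref-6.1.5-108} supplies the short exact sequences $0\to\nabla(u\wedge v)\to\nabla(u)\otimes\nabla(v)\to\nabla(uv)\to 0$ and $0\to\Delta(uv)\to\Delta(u)\otimes\Delta(v)\to\Delta(u\vee v)\to 0$, together with the commutative diagrams tying them to $M(uv)=M(u)\otimes M(v)$. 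The desired composition $\Delta(uv)\hookrightarrow M(uv)\twoheadrightarrow\nabla(uv)$ then factors as
\[
\Delta(uv)\hookrightarrow\Delta(u)\otimes\Delta(v)\twoheadrightarrow L(u)\otimes L(v)\hookrightarrow\nabla(u)\otimes\nabla(v)\twoheadrightarrow\nabla(uv),
\]
where the middle surjection is the tensor of the two quotients $\Delta(u)\twoheadrightarrow L(u),\Delta(v)\twoheadrightarrow L(v)$ and the last surjection has kernel $\nabla(u\wedge v)$. The problem reduces to showing the image of $\Delta(uv)$ in $L(u)\otimes L(v)$ is not contained in $\nabla(u\wedge v)\cap (L(u)\otimes L(v))$.

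The main obstacle is precisely this non-containment in the inductive step. I expect to establish it using Proposition~\ref{ref-4.4-89}'s distributive lattice structure combined with the strict Koszul inclusion $R_{a+b}\subsetneq R_a\otimes R_b$ guaranteed by Koszulity and AS-regularity of $A$. The Reedy-type factorization of $\Uscr$ from Proposition~\ref{ref-3.3.1-80} splits the two families $\{\im M(f)\}_{f\in\Uscr_{\u,1}}$ and $\{\ker M(g)\}_{g\in\Uscr_{\d,1}}$ into one governed by Koszul inclusions and one governed by AS-regular pairings; the anticipated technical difficulty lies in combining these two distributive families into a common adapted basis of $M(\lambda)$, which in turn makes the desired non-containment visible as a specific non-zero coordinate. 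If this compatibility fails directly, a fall-back route is to first settle the case $\lambda\in\Lambda^+$ (where $\Delta(\lambda)=M(\lambda)$ since no $\theta_{a,d-a}$-contraction is available and the composition is the Koszul-theoretic quotient $M(\lambda)\twoheadrightarrow \nabla(\lambda)=\overline{M}(S_\lambda)\neq 0$ via Proposition~\ref{ref-4.3-87}), and then transfer to general $\lambda$ via the commutative diagrams of Proposition~\ref{ref-6.1.5-108}.
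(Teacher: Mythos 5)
The approach you take is genuinely different from the paper's, but the key step is left as a gap and I do not see how to fill it by the means you invoke. The paper sidesteps the interplay between $\Delta$ and $\nabla$ entirely: it uses the rigid monoidal structure (Lemma~\ref{ref-6.1.1-105}, the adjunction $\Hom(\Delta(\lambda),\nabla(\lambda))\cong\Hom(k,{}^\ast\Delta(\lambda)\otimes\nabla(\lambda))$) to reduce the claim to the non-vanishing of a single map $k\to\nabla({}^\ast\lambda)\otimes\nabla(\lambda)$, and then peels off the leftmost generator of $\lambda$ in four cases, in each case producing an \emph{injection} $\nabla({}^\ast\lambda')\nabla(\lambda')\hookrightarrow\nabla({}^\ast\lambda)\nabla(\lambda)$ (via Corollary~\ref{ref-6.1.3-107} and the $\nabla$-half of Proposition~\ref{ref-6.1.5-108}), and finally checks by stacking diagrams that the resulting inclusion $k\hookrightarrow\nabla({}^\ast\lambda)\nabla(\lambda)$ really is the coevaluation-type map in question. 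Because each step is an injection, non-vanishing propagates automatically. Your factorisation $\Delta(uv)\hookrightarrow\Delta(u)\otimes\Delta(v)\twoheadrightarrow L(u)\otimes L(v)\hookrightarrow\nabla(u)\otimes\nabla(v)\twoheadrightarrow\nabla(uv)$ is correct, but it interleaves injections and surjections, so the image of $\Delta(uv)$ could in principle die in either surjection, and you must rule that out.

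That is exactly the gap. You acknowledge it (``the main obstacle is precisely this non-containment'') and propose to close it using Proposition~\ref{ref-4.4-89}, but that proposition only asserts that the images $\{\im M(f)\}_{f\in\Uscr_{\u,1}}$ and the kernels $\{\ker M(g)\}_{g\in\Uscr_{\d,1}}$ \emph{each separately} generate a distributive lattice in $M(\lambda)$; it says nothing about distributivity of the combined collection, so the ``common adapted basis'' you want is not available. The fall-back route has the same hole pushed elsewhere: Proposition~\ref{ref-4.3-87} gives exactness of $\overline{M}$ and the identification $\nabla(\lambda)=\overline{M}(S_\lambda)$, but it does \emph{not} give $\overline{M}(S_\lambda)\neq 0$ --- this is precisely where AS-regularity must enter (the paper even notes after \S\ref{ref-7.3-154} that Proposition~\ref{ref-6.1.7-114} fails for general Koszul algebras, e.g.\ $k[x]/(x^2)$ with $\nabla(r_1^2)=0$), and you would still need to justify that the tensor factorisations of $\Delta(\lambda)$ and $\nabla(\lambda)$ are compatible with the composition~\eqref{ref-6.12-115}, which is the stacked-diagrams verification the paper does at the end of its proof. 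Your preliminary observation about $\theta_{a,b}$ is correct and useful (indeed $d_{\u}$ already forces $\mu\not<\lambda$ for $\theta_{a,b}$ with $a+b>d$), but it does not reach the heart of the statement.
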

	\begin{proof}
	By Lemma~\ref{ref-6.1.1-105} we already know that~$\Delta(\lambda^*) \cong \nabla(\lambda)^*$, so also 
	$\nabla({}^*\lambda) \cong {}^*\Delta(\lambda)$ and by tensoring~\eqref{ref-6.12-115} with ${}^*\Delta(\lambda)$ on 
	the left and precomposing with $\coeval_{\Delta(\lambda)}$, it suffices to check that the morphism
		\begin{equation}
		\label{ref-6.13-116}
		k \xrightarrow{i} \nabla({}^*\lambda) \ot \nabla(\lambda)
		\end{equation}
	induced by~\eqref{ref-6.12-115} is non-zero. The morphism $i$ is characterized by the fact that it fits in commutative
diagram of the form:
		\begin{equation}
		\label{ref-6.14-117}
		\begin{tikzcd}
		k \rar[equals] \dar[swap]{M(\coeval_{\lambda})} & k \dar{i} \\ M({}^*\lambda)M(\lambda) \rar & \nabla({}^*\lambda)\nabla(\lambda)
		\end{tikzcd}
		\end{equation}
	
	Consider the following possibilities for $\lambda$:
	\begin{enumerate}
		\item $\lambda$ starts with a strictly negative power of $r_d$. Put $\lambda=r_d^{-1}\lambda'$: 
			in this case, ${}^*\lambda={}^*\lambda' r_d$ and using 
			Corollary~\ref{ref-6.1.3-107}, we get
				\begin{align}
				\nabla({}^*\lambda')\nabla(\lambda') &= \nabla({}^*\lambda'r_d)\nabla(r_d^{-1})\nabla(\lambda') \\
				&= \nabla({}^*\lambda)\nabla(\lambda).
				\end{align}
                              \item $\lambda$ starts with a strictly positive power of $r_d$. This case is similar.
		\item $\lambda=r_i\lambda'$, $1\le i\le d-1$ and $\lambda'$ does not start with a strictly negative power of $r_d$:
			in this case, ${}^*\lambda={}^*\lambda' r_d^{-1} r_{d-i}$.
				\begin{align}
				\nabla({}^*\lambda')\nabla(\lambda') &= \nabla({}^*\lambda')\nabla(r_d^{-1}r_d\lambda') \\
				&= \nabla({}^*\lambda')\nabla(r_d^{-1})\nabla(r_d\lambda') 
				\end{align}
			By Proposition~\ref{ref-6.1.5-108}, this embeds into
				$$
				\nabla({}^*\lambda')\nabla(r_d^{-1})\nabla(r_{d-i})\nabla(r_i\lambda')=\nabla({}^*\lambda'r_d^{-1}r_{d-i})\nabla(r_i\lambda')=
\nabla({}^*\lambda)\nabla(\lambda).
				$$
 where in the first equality we have used Corollary \ref{ref-6.1.3-107} and the easily verified fact that in this case ${}^\ast\lambda'$ does not end with a strictly positive power of $r_d$.
		\item $\lambda=r_i\lambda'$ and $\lambda'$ does start with a strictly negative power of $r_d$. Put 
			 $\lambda'=r_d^{-1}\lambda''$, so ${}^*\lambda={}^*\lambda'' r_{d-i}$ 
				\begin{equation}
				\nabla({}^*\lambda')\nabla(\lambda') = \nabla({}^*\lambda'' r_d) \nabla(\lambda')
				\end{equation}
			which by Proposition~\ref{ref-6.1.5-108} is contained in 
				\begin{equation}
				\nabla({}^*\lambda'' r_{d-i})\nabla(r_i)\nabla(\lambda') = \nabla({}^*\lambda'' r_{d-i})\nabla(r_i\lambda')
=\nabla({}^*\lambda)\nabla(\lambda)
				\end{equation}
		where we also use Corollary \ref{ref-6.1.3-107} for the first equality.
	\end{enumerate}
	
	Summarising, for all $\lambda \in \Lambda$ we have constructed an injective map
		\begin{equation}
		\nabla({}^*\lambda')\nabla(\lambda') \hookrightarrow \nabla({}^*\lambda)\nabla(\lambda)
		\end{equation}
	where $\lambda'$ has length $l(\lambda)-1$. Repeating this process and composing we eventually end up with $\lambda'=1$. Since $\nabla(1)=k$ we have obtained an
injective map  $k\r  \nabla({}^*\lambda)\nabla(\lambda)$. We still have to verify that it is the same map as in \eqref{ref-6.13-116}.

To see this we note that
	the exact same process can be applied to the $M({}^*\lambda)M(\lambda)$. This allow us to write the diagram~\eqref{ref-6.14-117} as a 
	composition of commutative diagrams of the form
		\begin{equation}
		\label{ref-6.22-118}
		\begin{tikzcd}
		M({}^*\mu_i)M(\mu_i) \rar \dar & \nabla({}^*\mu_i)\nabla(\mu_i) \dar \\ M({}^*\mu_{i+1})M(\mu_{i+1}) \rar & \nabla({}^*\mu_{i+1})\nabla(\mu_{i+1})
		\end{tikzcd}
		\end{equation}
	for $0\leq l(\mu_i)<l(\mu_{i+1}) \leq l(\lambda)$. It remains to check that the composition of all the lefthand vertical maps in these stacked diagrams is the map 
	$k \to M({}^*\lambda)M(\lambda)$ induced by the coevaluation $1 \to {}^*\lambda \lambda$. This is an easy verification.
	\end{proof}
	\begin{examples}
AS-regularity is crucial for Proposition \ref{ref-6.1.7-114}.
	The algebra $A=k[x]/(x^2)$ is Koszul but $\nabla(r_1^2)=0$. This is because $A$ had infinite global dimension, so it is not AS-regular.
	\end{examples}

\subsection{The fundamental exact sequences}
In this section we fix the following setting: $\Lambda_1\subset \Lambda_2$ are two saturated subposets of $(\Lambda,<)$ such that the elements of $\Lambda_2-\Lambda_1$
are incomparable. Let $\Uscr_i$ be
the full subcategory of $\Uscr$ whose object set is~$\Lambda_i$.

\begin{remarks}
\label{ref-6.2.1-119} It follows from Lemma~\ref{ref-3.3.1-80}.
If  $f:u\r v$ is a morphism in $\Uscr$ with $u,v\in \Lambda_{1}$ then we may write $f$ as a product of
elements of $U_{\u,1}$ and $U_{\d,1}$ which are all contained in $\Uscr_{1}$.
\end{remarks}
\begin{remarks} If we compute 
$\Delta(\lambda),\nabla(\lambda),L(\lambda)$ with the formulas \eqref{ref-6.4-103}\eqref{ref-6.3-102}\eqref{ref-6.5-104}  for $\comod(\coend_{\Uscr_i}(M))$ 
we get the same result  as in $\comod(\coend_{\Uscr}(M))$, assuming  of course $\lambda\in \Lambda_i$. This will be used without further comment below.
\end{remarks}

	\begin{theorems}
	\label{ref-6.2.3-120}
	There is an exact sequence
	\begin{equation}
	\label{ref-6.23-121}
	0 \to \prod_{\lambda\in\Lambda_2-\Lambda_1}\Hom_k(\nabla(\lambda),\Delta(\lambda)) \xrightarrow{\alpha} \End_{\Uscr_2}(M) \xrightarrow{\beta} \End_{\Uscr_1}(M) \to 0,
	\end{equation}
where $\beta$ is the restriction morphism and	where for $f \in \Hom_k(\nabla(\lambda),\Delta(\lambda))$, $\alpha(f):M \r M$ is determined by
	\begin{enumerate}
		\item $\alpha(f)(\mu)=0$ if $\lambda \neq \mu$,
		\item $\alpha(f)(\lambda)=M(\lambda) \to \nabla(\lambda) \xrightarrow{f} \Delta(\lambda) \to M(\lambda)$.
	\end{enumerate}

	\end{theorems}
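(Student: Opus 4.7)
The plan is to verify in turn: (1) $\alpha$ is injective and well-defined, (2) $\im\alpha = \ker\beta$, and (3) $\beta$ is surjective. Injectivity of $\alpha$ is immediate since $\alpha((f_\lambda))(\lambda) = \iota_\lambda\circ f_\lambda\circ\pi_\lambda$ with $\pi_\lambda:M(\lambda)\twoheadrightarrow\nabla(\lambda)$ surjective and $\iota_\lambda:\Delta(\lambda)\hookrightarrow M(\lambda)$ injective. To see that each $\alpha(f)$ is a natural transformation, I would check the naturality square for a morphism $g:\mu\to\nu$ in $\Uscr_2$. The nontrivial cases all involve some endpoint in $\Lambda_2-\Lambda_1$; here the triangular decomposition $g=g_\u\circ g_\d$ (Proposition~\ref{ref-3.3.1-80}) factors $g$ through some $\mu'\in\Lambda_1$, using that elements of $\Lambda_2-\Lambda_1$ are forced to be maximal in $\Lambda_2$ (by saturation of $\Lambda_1$ combined with the antichain property). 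Then $M(g_\u)$ factors through $\ker\pi_\nu$ and $M(g_\d)$ annihilates $\Delta(\mu)$, so both sides of the naturality square vanish.

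For the identification $\im\alpha = \ker\beta$, the inclusion $\subseteq$ is immediate from the definition of $\alpha$. For the reverse, take $\eta\in\ker\beta$ (so $\eta_\mu = 0$ for all $\mu\in\Lambda_1$) and $\lambda\in\Lambda_2-\Lambda_1$. For any $g:\mu\to\lambda$ in $\Uscr_\u$ with $\mu<\lambda$, saturation of $\Lambda_1$ forces $\mu\in\Lambda_1$, so naturality yields $\eta_\lambda\circ M(g) = M(g)\circ\eta_\mu = 0$. Hence $\eta_\lambda$ vanishes on $\ker\pi_\lambda$ and descends to $\nabla(\lambda)$; dually it lands in $\Delta(\lambda)$, producing a factorization $\eta_\lambda = \iota_\lambda\circ f_\lambda\circ\pi_\lambda$ with $f_\lambda\in\Hom_k(\nabla(\lambda),\Delta(\lambda))$, and so $\eta = \alpha((f_\lambda))$.

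The main step is surjectivity of $\beta$, which I would approach dually by analyzing the coalgebra quotient $C_2/C_1$ where $C_i := \coend_{\Uscr_i}(M)$. By induction on $|\Lambda_2-\Lambda_1|$, one reduces to the case $\Lambda_2-\Lambda_1 = \{\lambda_0\}$ with $\lambda_0$ maximal; the coend presentation
\[
C_i = \bigoplus_{\lambda \in \Lambda_i} M(\lambda)\otimes M(\lambda)^* \,\big/\, \langle M(f)(x)\otimes\xi - x\otimes M(f)^*(\xi) : f\in\Uscr_i\rangle
\]
then allows an explicit analysis of $R_2\setminus R_1$. The additional relations come from morphisms $f:\mu\to\lambda_0$ and $h:\lambda_0\to\nu$ with $\mu,\nu\in\Lambda_1$; working modulo $C_1$, they respectively kill $\ker\pi_{\lambda_0}\otimes M(\lambda_0)^*$ and $M(\lambda_0)\otimes\Delta(\lambda_0)^\perp$ in the $\lambda_0$-component, leaving $\nabla(\lambda_0)\otimes\Delta(\lambda_0)^*$. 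Relations involving non-identity endomorphisms of $\lambda_0$ (which factor through $\Lambda_1$) become $0\equiv 0$ modulo $C_1$. Dualising the resulting short exact sequence $0\to C_1\to C_2\to\nabla(\lambda_0)\otimes\Delta(\lambda_0)^*\to 0$ gives the claimed exact sequence.

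The hard part is establishing the injectivity $C_1\hookrightarrow C_2$ (equivalently surjectivity of $\beta$), i.e.~that these additional relations create no new identifications among $\bigoplus_{\mu\in\Lambda_1}M(\mu)\otimes M(\mu)^*$. I expect this to follow from the distributive-lattice structure on $M(\lambda_0)$ provided by Proposition~\ref{ref-4.4-89}, which furnishes a basis of $M(\lambda_0)$ adapted simultaneously to all subspaces $\im M(f)$ and $\ker M(g)$. Such a basis allows construction of an explicit section of $\beta$: given $\eta \in \End_{\Uscr_1}(M)$, define $\eta_{\lambda_0}$ basis-vector by basis-vector, so that it restricts to the latching data on $L = \sum_{g\in\Uscr_\u}\im M(g)$ (determined by naturality of $\eta$ on $\Uscr_1$) and is compatible with the matching map $M(\lambda_0)\to\prod_{h\in\Uscr_\d}M(\nu)$; the compatibility of these prescriptions on the overlap $L\to\prod M(\nu)$ is automatic from naturality of $\eta$ on $\Uscr_1$, and the indeterminacy in the extension is exactly $\Hom_k(\nabla(\lambda_0),\Delta(\lambda_0))$, matching the first term of the sequence.
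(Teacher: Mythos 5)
Your overall strategy coincides with the paper's: triangular decomposition (Proposition~\ref{ref-3.3.1-80}) for naturality, maximality of the elements of $\Lambda_2-\Lambda_1$ via the saturation and antichain hypotheses, and a distributivity-based extension argument for surjectivity of $\beta$. The coalgebra detour computing $C_2/C_1 \cong \nabla(\lambda_0)\otimes\Delta(\lambda_0)^*$ is correct but does not replace the hard step: as you note yourself, one still needs injectivity of $C_1\to C_2$, and for that you return to the primal extension problem, which is what the paper does directly. So the detour is presentational rather than a genuinely different route; it also does not by itself handle infinite $\Lambda_2-\Lambda_1$, which the primal argument does one $\lambda$ at a time.

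The one genuine gap is the assertion that Proposition~\ref{ref-4.4-89} ``furnishes a basis of $M(\lambda_0)$ adapted simultaneously to all subspaces $\im M(f)$ and $\ker M(g)$.'' That proposition establishes distributivity of the collection $\{\im M(f)\}$ and of the collection $\{\ker M(g)\}$ \emph{separately}; a simultaneous adapted basis would require the \emph{union} of the two collections to generate a distributive lattice, which is neither proved nor needed in the paper and need not hold in general (three distinct lines in $k^2$ are pairwise distributive, hence the one-element and two-element families are distributive, but the three together are not). The repair is exactly what the paper's Lemma~\ref{ref-6.2.4-122} accomplishes: use distributivity of $\{\im M(f_i)\}$ together with Corollary~\ref{ref-4.2-86} to define a well-posed endomorphism of $\sum_i \im M(f_i)$ agreeing with the latching data; use distributivity of $\{\ker M(g_j)\}$ (dualised) to define a well-posed endomorphism of $M(\lambda_0)/\bigcap_j \ker M(g_j)$ agreeing with the matching data; check the two are compatible on the overlap (by naturality over $\Uscr_1$, as you observe); and then extend to $M(\lambda_0)$ by choosing an arbitrary vector-space complement of $\sum_i\im M(f_i)$ and lifting, with no joint adapted basis required. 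With that substitution your argument goes through, and its indeterminacy analysis correctly recovers $\Hom_k(\nabla(\lambda_0),\Delta(\lambda_0))$.
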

	\begin{proof}
	That $\alpha$ is a natural transformation follows from the definitions~\eqref{ref-6.3-102} and~\eqref{ref-6.4-103} of 
	$\nabla$ and~$\Delta$: indeed, for a map $\lambda \xrightarrow{p} \mu$ with $\mu \neq \lambda$ we should get a commuting diagram
		\begin{equation}
		\begin{tikzcd}
		M(\lambda) \dar{M(p)} \rar & \nabla(\lambda) \rar{f} & \Delta(\lambda) \rar & M(\lambda) \dar{M(p)} \\
		M(\mu) \arrow{rrr}{0} &  & & M(\mu)
		\end{tikzcd}
		\end{equation}
	but since by definition $\Delta(\lambda) \subset \ker(M(\lambda) \xrightarrow{M(p)} M(\mu))$, this is clear. The case $\mu \to \lambda$ is analogous and $\mu \to \mu'$, with $\mu,\mu' \neq \lambda$ is trivial. 
	Remains to check what happens for $\lambda \xrightarrow{p} \lambda$. By Proposition~\ref{ref-3.3.1-80}, $p$ can be factored as $p_\u \circ p_\d$ for $p_\u$ in $\Uscr_\u$ and $p_\d$ in 
	$\Uscr_\d$, and naturality now follows from the previous cases.

We now prove the surjectivity of $\beta$.
For a given $(\phi_u)_{u\in\Lambda_1}\in \End_{\Uscr_1}(M)$, it obviously suffices to be able to
          define all the $\phi_v$, for $v\in \Lambda_2-\Lambda_1$, in such a way as
          to obtain a natural transformation $M\r M$ over $\Uscr_2$

The defining property of a natural transformation only needs to be checked on generators. 
Taking into account that $(\phi_u)_{u\in \Lambda_1}$ already forms a natural transformation
it follows from Remark \ref{ref-6.2.1-119} below and the incomparability of $\Lambda_2-\Lambda_1$ that there are the following constraints on $(\phi_v)_{v\in \Lambda_2-\Lambda_1}$. 
\begin{enumerate} 
\item
For every $\alpha:u\r v\in \Uscr_{\u,1}$ with $v\in \Lambda_2$ (and thus necessarily $u\in \Lambda_1$) we have
a commutative diagram
\[
\xymatrix{
M(u)\ar[d]_{\phi_u}\ar[r]^{M(\alpha)} & M(v)\ar[d]^{\phi_v}\\
M(u)\ar[r]_{M(\alpha)} & M(v)
}
\]
\item
For every $\beta:v\r w\in \Uscr_{\d,1}$ with $v\in \Lambda_2-\Lambda_1$ (and thus necessarily $u\in \Lambda_1$) we have
a commutative diagram
\[
\xymatrix{
M(v)\ar[d]_{\phi_v}\ar[r]^{M(\beta)} & M(w)\ar[d]^{\phi_w}\\
M(v)\ar[r]_{M(\beta)} & M(w)
}
\]
\end{enumerate}
In other words finding each individual  $\phi_v$, for $v\in \Lambda_2-\Lambda_1$ (they do not interfere) is equivalent to finding $h$ in a diagram like \eqref{ref-6.25-123} below
where the $U_i$ are of the form $\im M(\alpha_i)$
for $\alpha_i:u_i\r v\in \Uscr_{\u,1}$ and the $W_j$ are of the form $\ker M(\beta_j)$ for $\beta_j:v\r w_j\in \Uscr_{\d,1}$.
Thus we must check the conditions for Lemma \ref{ref-6.2.4-122} below in this situation. We do this next.
\begin{enumerate}
\item Distributivity of $\im M(\alpha_i)$ for $\alpha_i:u_i\r v\in \Uscr_{\u,1}$. This follows from Proposition \ref{ref-4.4-89}.
\item Distributivity of $\ker M(\beta_j)$ for $\beta_j:v\r w_j\in \Uscr_{\d,1}$. This also follows from Proposition \ref{ref-4.4-89}.
\item Existence of commutative diagrams
\[
\xymatrix{
M(u_i)\ar[d]_{\phi_{u_i}} \ar[r]^{M(\alpha_i)}& M(v) \ar[r]^{M(\beta_j)} & M(w_j)\ar[d]^{\phi_{w_j}}\\
M(u_i) \ar[r]_{M(\alpha_i)}&M(v)\ar[r]_{M(\beta_j)} &M(w_j)
}
\]
This follows from the fact that $\beta_j\alpha_i\in \Uscr_{1}$ and the fact that the $(\phi_u)_{u\in\Lambda_1}$ form a natural transformation.
\item The $\phi_{u_i}$ agree on pairwise intersections. This follows from Corollary \ref{ref-4.2-86}.
\item The $\phi_{w_i}$ agree on pairwise pushouts. This follows from Corollary \ref{ref-4.2-86} by duality.
\end{enumerate}
So  the conditions for Lemma \ref{ref-6.2.4-122} are indeed satisfied which yields that  $\beta$ is indeed surjective.
	
To compute the kernel of $\beta$ we now assume that $(\phi_u)_{u\in \Lambda_1}=0$. By the above discussion and Lemma \ref{ref-6.2.4-122} we obtain that now $\phi_v$ for $v\in \Lambda_2-\Lambda_1$ is given as a composition
\[
M(v)\r \coker\left(\bigoplus_{u_i\r v\in \Uscr_{\u,1}} M(u_i)\r M(v)\right)\xrightarrow{\phi'_v} \ker\left(M(v)\r \bigoplus_{v\r w_i\in \Uscr_{\d,1}} M(w_j)\right)\r M(v)
\]
In other words $\phi'_v\in \Hom(\Delta(v),\nabla(v)$ and $(\phi_v)_{v\in\Lambda_2}=\alpha((\phi'_v)_v)$.
\end{proof}
We have  used the following linear algebra lemma. 
	\begin{lemmas}
	\label{ref-6.2.4-122}
        \begin{enumerate}
\item
	For a finite dimensional vector space $V$ with  distributive collections of subspaces $\{U_1, \ldots, U_k\}$, $\{W_1, \ldots, W_l\}$, consider the diagram
	\begin{equation}
	\label{ref-6.25-123}
		\begin{tikzpicture}[baseline=0pt]
		\node (0) {$\vdots$};
		\node (1) [above of = 0] {$U_1$};
		\node (2) [below of =0] {$U_k$};
		\node (3) [right of =0] {$V$};
		\node (4) [right of =3] {$\vdots$};
		\node (5) [above of =4] {$V/W_1$};
		\node (6) [below of =4] {$V/W_l$};
		
		\draw[->] (1) to (3);
		\draw[->] (2) to (3);
		\draw[->] (3) to (5);
		\draw[->] (3) to (6);
		\draw[loop left] (1) to node[left]{$f_1$} (1);
		\draw[loop left] (2) to node[left]{$f_k$} (2);
		\draw[loop right] (5) to node[right]{$g_1$} (5);
		\draw[loop right] (6) to node[right]{$g_l$} (6);
		\draw[loop above,red] (3) to node[above]{$h$} (3);
		\end{tikzpicture}
		\end{equation}
	where the $f_i:U_i \to U_i$ agree on pairwise intersections, the $g_i:V/W_i \to V/W_i$ induce endomorphisms on $V/(W_i + W_j)$ and the $f_i$ are compatible with the $g_j$
in the sense that they form commutative diagrams
\[
\xymatrix{
U_i\ar[d]_{f_i}\ar[r] & V \ar[r] & V/W_j\ar[d]^{g_j}\\
U_i\ar[r] & V \ar[r] & V/W_j\\
}
\]
	Then there exists an endomorphism $h:V \to V$ compatible with~\eqref{ref-6.25-123}.
\item If in \eqref{ref-6.25-123} both $(f_i)_i$ and $(g_j)_j$ are all zero then any $h$ is obtained as a composition
\[
V\r \coker(\oplus_i U_i\r V)\xrightarrow{h'} \ker(V\r \oplus_j V/W_j)\r V
\]
\end{enumerate}
	\end{lemmas}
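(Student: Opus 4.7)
The plan is to reduce the problem to a standard linear algebra extension problem, by first combining the $f_i$ into a single map $f:U\to U$ with $U=\sum_i U_i$, and the $g_j$ into a single map $\bar{g}:V/W\to V/W$ with $W=\bigcap_j W_j$. For the first task, I would invoke Proposition~\ref{ref-2.3.1-13}(1) to pick a basis $\{v_r\}$ of $V$ such that each $U_i$ is the span of a subset $S_i$ of the basis vectors. Since the $f_i$ agree on the pairwise intersections $U_i\cap U_j$, which by the choice of basis equals the span of $\{v_r:r\in S_i\cap S_j\}$, the rule $v_r\mapsto f_i(v_r)$ for any $i$ with $r\in S_i$ is unambiguous and extends linearly to a map $f:U\to U$ with $f|_{U_i}=f_i$ for every $i$.

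For $\bar{g}$ I would dualize: by Proposition~\ref{ref-2.3.1-13}(2) the collection $\{W_j^\perp\}\subset V^\ast$ is distributive, and $\sum_j W_j^\perp=(\bigcap_j W_j)^\perp=W^\perp$. The condition that each $g_j$ induces an endomorphism on $V/(W_i+W_j)$ dualizes exactly to the statement that the dual maps $g_j^\ast:W_j^\perp\to W_j^\perp$ agree on the pairwise intersections $W_i^\perp\cap W_j^\perp=(W_i+W_j)^\perp$. Applying the gluing argument from the previous paragraph to this dual setup inside $V^\ast$ yields a map $W^\perp\to W^\perp$, whose dual is the desired $\bar{g}:V/W\to V/W$, restricting via the natural quotient $V/W\twoheadrightarrow V/W_j$ to $g_j$ for every $j$.

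Next I would verify the compatibility $p\circ f=\bar{g}\circ p|_U$, where $p:V\to V/W$ is the projection and $p_j:V\to V/W_j$, $q_j:V/W\twoheadrightarrow V/W_j$ are the natural projections. For $u\in U_i$, both compositions $q_j\circ p\circ f$ and $q_j\circ\bar{g}\circ p$ equal $g_j(p_j(u))$ by the hypothesised compatibility between $f_i$ and $g_j$; since $W=\bigcap_j W_j$, the product map $V/W\to\prod_j V/W_j$ is injective, so the two compositions agree on $U_i$, hence on $U$. With this compatibility established, constructing $h$ is pure linear algebra: extend $f:U\to V$ arbitrarily to $h_0:V\to V$, observe that $\bar{g}\circ p-p\circ h_0$ vanishes on $U$ and hence factors as $V\twoheadrightarrow V/U\xrightarrow{\bar\phi}V/W$, lift $\bar\phi$ along the surjection $p$ to a map $\psi:V/U\to V$ via any linear section of $p$, and set $h=h_0+\psi\circ q$, where $q:V\to V/U$ is the projection. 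One verifies directly that $h|_U=f$ and $p\circ h=\bar{g}\circ p$, which upon post-composing with $q_j$ give exactly the required commutativity conditions.

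Part (2) follows immediately from the construction: if all $f_i$ and all $g_j$ vanish, then any $h$ compatible with the diagram must satisfy $h|_U=0$, so $h$ factors through $V/U=\coker(\bigoplus_i U_i\to V)$, and $p_j\circ h=0$ for every $j$, so $h(V)\subset\bigcap_j W_j=\ker(V\to\bigoplus_j V/W_j)$; this gives the claimed factorization. The main obstacle I anticipate is the careful dualization step in constructing $\bar{g}$, and ensuring that the somewhat asymmetric compatibility hypotheses in the lemma (pairwise agreement for the $f_i$ versus an induced-endomorphism condition for the $g_j$) fit together coherently as one argument; everything else is routine once the glued maps $f$ and $\bar{g}$ are in place.
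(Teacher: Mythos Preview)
Your proposal is correct and follows essentially the same route as the paper: glue the $f_i$ into a single $f$ on $\sum_i U_i$ using distributivity, dualize to glue the $g_j$ into a single $\bar g$ on $V/\bigcap_j W_j$, verify compatibility, and then fill in $h$ by linear algebra. The only cosmetic differences are that the paper uses the identity $(\sum_{i\le m}U_i)\cap U_{m+1}=\sum_{i\le m}(U_i\cap U_{m+1})$ inductively rather than an adapted basis, and it dismisses the final step as ``elementary diagram chasing'' where you spell out the extend-then-correct construction of $h$ explicitly.
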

	\begin{proof}
\begin{enumerate}
\item
	By distributivity, one has for every $1 \leq m < k$ that 
		\begin{equation}
		(\sum_{i=1}^m U_i) \cap U_{m+1}=\sum_{i=1}^m (U_i \cap U_{m+1})
		\end{equation} 
	and one constructs inductively a (necessarily unique) endomorphism $f$ of $\sum U_i$ compatible with the $f_i$. A similar reasoning holds for the $(V/W_j)_j$ 
(using Proposition \ref{ref-2.3.1-13} one may pass to subspaces $(V/W_j)^\ast=W^{\perp}_j\subset V^\ast$) and one obtains a 
	unique endomorphism $g$ of $V/\cap W_i$ compatible with the $g_i$ which fits in a diagram
		\begin{equation}
		\label{ref-6.27-124}
\xymatrix{
		\sum_i U_i \ar[d]_{f}\ar@{^(->}[r]^-i &V \ar@{->>}[r]^-p& V/\cap_j W_j\ar[d]^g\\
		\sum_i U_i \ar@{^(->}[r]_-i &V \ar@{->>}[r]_-p& V/\cap_j W_j
}
		\end{equation}
From the compatibility of $(f_i)_i$ and $(g_j)_j$ one obtains that this diagram is commutative. We have to complete it with a middle arrow $h:V\r V$. 
Elementary diagram chasing shows this is possible.
\item This is obvious.\qed
\end{enumerate}
\def\qed{}	\end{proof}

\begin{remarks}
  The classical example of three lines through the origin
  in the plane shows that the distributivity hypotheses are crucial.
\end{remarks}

\begin{corollarys}
\label{ref-6.2.6-125}
Let $\Lambda_1\subset\Lambda$ be a saturated subset and let $\Uscr_1$ be the corresponding full subcategory in $\Uscr$. Then the restriction map
\[
\End_{\Uscr}(M)\r \End_{\Uscr_1}(M)
\]
is surjective. 
\end{corollarys}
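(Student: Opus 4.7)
My strategy is to reduce to Theorem \ref{ref-6.2.3-120} by exhausting $\Lambda$ through a chain of saturated subsets whose consecutive differences are antichains, and then to lift the given natural transformation one layer at a time.

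The combinatorial input I will need is that $\mu<\lambda$ in $\Lambda$ implies $l(\mu)<l(\lambda)$. Both generating inequalities $r_{a+b}<r_ar_b$ (with $a,b\ge 1$, $a+b\le d$) and $1<r_ar_d^{-1}r_{d-a}$ (with $1\le a\le d-1$) manifestly increase length (by $1$ and by $3$ respectively), and this survives left-right invariance: since the letters $r_1,\ldots,r_{d-1}$ cannot cancel with any neighbor in a reduced word, the ``larger'' sides $u\,r_ar_b\,v$ and $u\,r_ar_d^{-1}r_{d-a}\,v$ admit no cancellations, while the ``smaller'' sides can only shrink. A brief case analysis in each of the two cases confirms $l(u\mu v)<l(u\nu v)$, and transitivity extends this to all of~$<$. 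This is the only genuine verification I will have to perform.

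Granting this, I set $\Lambda^{(-1)}:=\Lambda_1$ and $\Lambda^{(n)}:=\Lambda_1\cup\{\lambda\in\Lambda \mid l(\lambda)\le n\}$ for $n\ge 0$. Length monotonicity makes each $\Lambda^{(n)}$ saturated in $\Lambda$; the difference $\Lambda^{(n)}-\Lambda^{(n-1)}$ consists of elements of common length $n$ and is therefore an antichain; and $\bigcup_n\Lambda^{(n)}=\Lambda$ because every element of $\Lambda$ has finite length. Starting from $\phi_{-1}:=\phi\in\End_{\Uscr_1}(M)$, I will iteratively apply the surjectivity part of Theorem \ref{ref-6.2.3-120} to the inclusions $\Lambda^{(n-1)}\subset\Lambda^{(n)}$ to produce a compatible family of extensions $\phi_n\in\End_{\Uscr_{\Lambda^{(n)}}}(M)$ all restricting to $\phi$. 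Since every morphism of $\Uscr$ has source and target of some finite length and hence lies in some $\Uscr_{\Lambda^{(n)}}$, the family $(\phi_n)_n$ assembles into a natural transformation $\psi\in\End_\Uscr(M)$ restricting to~$\phi$, which is the required lift.
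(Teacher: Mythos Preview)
Your proof is correct and follows essentially the same strategy as the paper: exhaust $\Lambda$ by a chain of saturated subsets with pairwise incomparable successive differences, lift step by step via the surjectivity in Theorem~\ref{ref-6.2.3-120}, and assemble (equivalently, pass to the inverse limit $\End_{\Uscr}(M)=\invlim_i\End_{\Uscr_i}(M)$). Your version is in fact a bit more explicit than the paper's, since you construct the filtration concretely via the length function $l$ (and verify that $<$ strictly increases length), whereas the paper simply asserts such a chain exists.
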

\begin{proof}
Choose saturated sets $\Lambda_1\subset\Lambda_2\subset\cdots\subset\Lambda$ such that $\Lambda=\bigcup_i\Lambda_i$ and such that the elements of
$\Lambda_{i+1}-\Lambda_i$ are incomparable for all $i$. Let $\Uscr_i$ be the corresponding full categories of $\Uscr$. Then
\[
\End_{\Uscr}(M)=\invlim_i \End_{\Uscr_i}(M)
\]
Since all the transition maps $\End_{\Uscr_{i+1}}(M)\r \End_{\Uscr_i}(M)$ are surjective by Theorem \ref{ref-6.2.3-120} the conclusion follows. 
\end{proof}
	\begin{propositions}
	\label{ref-6.2.7-126}
	For a given~$\lambda \in \Lambda$, the comodule $L(\lambda)=\im(\Delta(\lambda) \hookrightarrow M(\lambda) \twoheadrightarrow \nabla(\lambda))$
defined in \eqref{ref-6.5-104}
	is simple.
	\end{propositions}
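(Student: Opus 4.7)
The plan is to apply Theorem~\ref{ref-6.2.3-120} in the simplest non-trivial case and use the resulting endomorphisms of $M(\lambda)$ to witness simplicity. Set $\Lambda_1 = \pi(\lambda) = \{\mu \in \Lambda \mid \mu < \lambda\}$, which is saturated, and $\Lambda_2 = \Lambda_1 \cup \{\lambda\}$; since $\Lambda_2 \setminus \Lambda_1$ is a singleton (trivially an antichain), the hypotheses of Theorem~\ref{ref-6.2.3-120} hold. This yields an injection
\[
\alpha \colon \Hom_k(\nabla(\lambda), \Delta(\lambda)) \hookrightarrow \End_{\Uscr_2}(M),
\]
where $\alpha(f)$ vanishes on $M(\mu)$ for $\mu \neq \lambda$ and acts on $M(\lambda)$ as the composition $\psi \circ f \circ \phi$, with $\phi \colon M(\lambda) \twoheadrightarrow \nabla(\lambda)$ and $\psi \colon \Delta(\lambda) \hookrightarrow M(\lambda)$ the canonical maps defining $L(\lambda)$.

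Since $\alpha(f)$ kills $\ker \phi$ and has image contained in $\psi(\Delta(\lambda))$, it induces a well-defined endomorphism of $\nabla(\lambda)$ given by $v \mapsto (\phi \circ \psi)(f(v))$, which manifestly preserves $L(\lambda) = (\phi \circ \psi)(\Delta(\lambda))$. The crucial transitivity statement then reduces to elementary linear algebra: given $0 \neq v \in L(\lambda)$ and arbitrary $w \in L(\lambda)$, I would write $w = (\phi \circ \psi)(\tilde{w})$ for some $\tilde{w} \in \Delta(\lambda)$, and extend $v$ to a basis of $\nabla(\lambda)$ to choose $f \colon \nabla(\lambda) \to \Delta(\lambda)$ with $f(v) = \tilde{w}$. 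Then $\alpha(f) \cdot v = w$.

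To conclude, I would invoke Corollary~\ref{ref-6.2.6-125} to lift each $\alpha(f)$ from $\End_{\Uscr_2}(M)$ to an element of $\End_\Uscr(M)$. Under the Tannakian dictionary of \S\ref{ref-2.9-39} combined with Theorem~\ref{ref-5.1-92}, the algebra $\End_\Uscr(M)$ is the pseudocompact dual of $\uaut(A)$, and $\uaut(A)$-subcomodules of $L(\lambda)$ correspond to closed $\End_\Uscr(M)$-submodules. The transitivity established above shows that any nonzero element of $L(\lambda)$ generates all of $L(\lambda)$ under this action, whence $L(\lambda)$ is simple.

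The only delicate point I foresee is bookkeeping the identification of the action induced by $\alpha(f)$ on the subquotient $L(\lambda)$ of $M(\lambda)$, and checking that the freedom to prescribe $f$ on a one-dimensional subspace of $\nabla(\lambda)$ really is enough to move any given nonzero element of $L(\lambda)$ to any other element; the substantive work, namely the construction of $\alpha$ via the exact sequence in Theorem~\ref{ref-6.2.3-120}, has already been carried out.
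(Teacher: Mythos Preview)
Your proof is correct and follows essentially the same approach as the paper: both set up $\Lambda_2-\Lambda_1=\{\lambda\}$, invoke Theorem~\ref{ref-6.2.3-120} to obtain the ideal $\Hom_k(\nabla(\lambda),\Delta(\lambda))\subset\End_{\Uscr_2}(M)$, and use Corollary~\ref{ref-6.2.6-125} to reduce from $\End_\Uscr(M)$ to $\End_{\Uscr_2}(M)$. The only cosmetic difference is that the paper shows the action map $\End_{\Uscr_2}(M)\to\End_k(L(\lambda))$ is surjective (by choosing $k$-linear splittings $j',p'$ and lifting an arbitrary $f\in\End_k(L(\lambda))$ to $h=i\circ p'\circ f\circ j'\circ\pi$), whereas you argue transitivity on elements; both are equivalent for establishing simplicity, and your formulation is arguably cleaner since it avoids fixing the splittings.
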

	\begin{proof}
Let $\Lambda_1$, $\Lambda_2$ be a usual but assume in addition $\Lambda_2-\Lambda_1=\{\lambda\}$. By Corollary \ref{ref-6.2.6-125} it is sufficient to prove that $L(\lambda)$
is simple as $\End_{\Uscr_2}(M)$-module.

Since $L(\lambda)$ is an $\End_{\Uscr_2}(M)$-module we have an ``action map''
	$$
	\gamma:\End_{\Uscr_2}(M) \to \End_k(L(\lambda))
	$$
 To prove simplicity of $L(\lambda)$, we will show that $\gamma$
	is surjective. Consider the following diagram
	$$
	\begin{tikzcd}
	M(\lambda) \rar{h} \dar{\pi} & M(\lambda) \rar[equals] \dar{\pi} & M(\lambda) \\
	\nabla(\lambda) \rar{g} \dar[bend right=50,dashed,red,swap]{j'} & \nabla(\lambda) & \\
	L(\lambda) \uar[hook]{j} \rar{f} & L(\lambda) \uar[hook]{j} \rar[bend left=50,dashed,red]{p'} & \Delta(\lambda) 
	\lar{p}  \arrow{uu}[hookrightarrow]{i} \\
	\end{tikzcd}
	$$ 
	Here~$j,i,p$ and~$\pi$ are the obvious inclusion and quotient maps and the red arrows denote fixed~$k$-splittings. For an arbitrary~$f \in \End_k(L(\lambda))$, the linear maps~$g$ and~$h$ are defined 
	as follows:
	
	\[\lefteqn{
		\overbrace{
			\phantom{M(\lambda) \xrightarrow{\pi} \nabla(\lambda) \xrightarrow{j'} L(\lambda) \xrightarrow{f} 
			L(\lambda) \xrightarrow{p'} \Delta(\lambda) \xrightarrow{i} M(\lambda)}}
		^{h}} M(\lambda) \xrightarrow{\pi}
		\underbrace{\nabla(\lambda) \xrightarrow{j'} L(\lambda) \xrightarrow{f} L(\lambda) \xrightarrow{p'} 	
			\Delta(\lambda) \xrightarrow{i} M(\lambda) \xrightarrow{\pi} \nabla(\lambda)}_{g}\]
	
	In particular, the two squares on the left of the diagram are commutative. Also,~$h$ factorizes as~
	$M \xrightarrow{\pi} \nabla(\lambda) \to \Delta(\lambda) \xrightarrow{i} M(\lambda)$ and is thus in 
	$\End_{\Uscr_2}(M)$. From the commutativity it follows that~$h$ induces~$f$ so~$\gamma(h)=f$.
	\end{proof}
        Let $I:=\prod_{\lambda\in\Lambda_2-\Lambda_1} I_\lambda:=\prod_{\lambda\in\Lambda_2-\Lambda_1}
        \Hom_k(\nabla(\lambda),\Delta(\lambda))=\ker\beta$ be the ideal
        constructed in Theorem \ref{ref-6.2.3-120}.  

We will show
        that $I$ satisfies the conditions for being a heredity ideal
        as given in Definition \ref{ref-2.10.1-46} although we need not
        assume that our algebras are finite dimensional since they are
        pseudo-compact (see \S\ref{ref-2.9-39}). 
We denote by $\mod(\End_{\Uscr_i}(M))$ the category of discrete
finite dimensional left $\End_{\Uscr_i}(M)$-modules.
\begin{lemmas}
\label{ref-6.2.8-127}
The ideal~$I$ has an idempotent generator given by
$$
e=\prod_{\lambda\in \Lambda_2-\Lambda_1} e_{\lambda}:\nabla(\lambda) \xrightarrow{j'} L(\lambda) \xrightarrow{p'} \Delta(\lambda),
$$
where~$j'$ and~$p'$ are arbitrary splittings of the canonical inclusion and surjection. 
\end{lemmas}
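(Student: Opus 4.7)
The plan is to verify two properties of $e$: that it is idempotent in $B := \End_{\Uscr_2}(M)$, and that it generates $I$ as a two-sided ideal, i.e.\ $I = BeB$. The key structural ingredient is the factorisation of the canonical composition $\iota_\lambda := \pi \circ i : \Delta(\lambda) \to M(\lambda) \to \nabla(\lambda)$ as $j \circ p : \Delta(\lambda) \twoheadrightarrow L(\lambda) \hookrightarrow \nabla(\lambda)$, together with the splitting identities $j' \circ j = \id_{L(\lambda)}$ and $p \circ p' = \id_{L(\lambda)}$. Under the map $\alpha$ of Theorem \ref{ref-6.2.3-120}, one obtains the multiplication formula $\alpha(\phi_1) \cdot \alpha(\phi_2) = \alpha(\phi_1 \circ \iota_\lambda \circ \phi_2)$ in each $I_\lambda$, and $\alpha(\phi_1) \cdot \alpha(\phi_2) = 0$ when $\phi_1 \in I_\lambda$, $\phi_2 \in I_\mu$ with $\lambda \neq \mu$ (since the supports of these endomorphisms of $M$ are disjoint).

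For idempotency I compute directly, using the multiplication formula and the splitting identities:
\begin{equation*}
e_\lambda^2 \;=\; \alpha(p'j' \circ \iota_\lambda \circ p'j') \;=\; \alpha\bigl(p' \circ (j' j)\circ(p p') \circ j'\bigr) \;=\; \alpha(p'j') \;=\; e_\lambda,
\end{equation*}
while $e_\lambda e_\mu = 0$ for $\lambda \neq \mu$ by disjoint support. Hence $e = \prod_\lambda e_\lambda$ is an idempotent in the pseudo-compact algebra $B$.

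For generation, $BeB \subseteq I$ is immediate since $e \in I$ and $I$ is two-sided. For the reverse inclusion, since rank-one maps span $\Hom_k(\nabla(\lambda),\Delta(\lambda)) \cong \Delta(\lambda) \otimes \nabla(\lambda)^*$, it suffices to show that $\alpha(\delta \otimes v^*) \in B e_\lambda B$ for every $\delta \in \Delta(\lambda)$ and $v^* \in \nabla(\lambda)^*$. Fix nonzero $l_0 \in L(\lambda)$ and $l_0^* \in L(\lambda)^*$ with $l_0^*(l_0) = 1$, and set $\psi := \delta \otimes (l_0^* \circ j')$ and $\tau := p'(l_0) \otimes v^*$ in $\Hom_k(\nabla(\lambda),\Delta(\lambda))$. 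Using the multiplication formula, together with $pp' = j'j = \id_{L(\lambda)}$,
\begin{equation*}
\alpha(\psi) \cdot e_\lambda \cdot \alpha(\tau) \;=\; \alpha(\psi \circ \iota_\lambda \circ p'j' \circ \iota_\lambda \circ \tau) \;=\; \alpha(\psi \circ \iota_\lambda \circ \tau),
\end{equation*}
and unfolding on $w \in \nabla(\lambda)$ yields $(\psi \circ \iota_\lambda \circ \tau)(w) = l_0^*(j'(j(l_0)))\cdot v^*(w)\cdot \delta = v^*(w)\,\delta$. Hence $\alpha(\delta \otimes v^*) = \alpha(\psi)\cdot e_\lambda\cdot\alpha(\tau) \in B e_\lambda B$, and by linearity $I = BeB$.

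The main technical step is the multiplication formula $\alpha(\phi_1)\alpha(\phi_2) = \alpha(\phi_1 \circ \iota_\lambda \circ \phi_2)$, which follows from the explicit description of $\alpha$ in Theorem \ref{ref-6.2.3-120} and the factorisation $\pi \circ i = j \circ p$; everything else reduces to elementary manipulations with the splittings. It is worth noting that the generating elements $\psi$ and $\tau$ constructed above already lie in $I$, so one even has the stronger statement $I \cdot e \cdot I = I$.
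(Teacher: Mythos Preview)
Your proof is correct and follows essentially the same approach as the paper's. Both arguments identify the multiplication on $I$ as $\alpha(\phi_1)\alpha(\phi_2)=\alpha(\phi_1\circ\iota_\lambda\circ\phi_2)$ (the paper writes this as the non-unital ring structure $f\star g=fcg$ with $c=\iota_\lambda$), and both reduce generation to a matrix-ring-is-simple style computation. Your version is organised somewhat more directly: rather than first writing $f=\sum g_i e f_i$ with $g_i\in\End(\Delta)$, $f_i\in\End(\nabla)$ and then factoring these through $c$, you go straight to rank-one maps and exhibit $\psi,\tau\in I$ with $\alpha(\psi)\,e_\lambda\,\alpha(\tau)=\alpha(\delta\otimes v^*)$. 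You also spell out the idempotency check explicitly, which the paper leaves implicit.
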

\begin{proof}
Consider~$I$ as a ring without unit by using the canonical map
$$
c=\oplus c_{\lambda}: \Delta(\lambda) \to \nabla(\lambda),
$$
i.e. for~$f,g \in I$, the multiplication is defined by~$f \star g = f c g$.

Let~$f \in I$ be arbitrary. Since by defining~$e$ we fixed a splitting, it can be represented as 
$$
e: \nabla=L \oplus X \xrightarrow{\begin{pmatrix} \mathbf{1} & 0 \\ 0 & 0 \end{pmatrix}} L \oplus Y=\Delta,
$$
and similarly for~$c$.

Now it is easy to find some finite number of linear endomorphisms~$f_1, \ldots, f_n$ of~$\nabla$ and~$g_1, \ldots, g_n$ of~$\Delta$ such that
$$
f=g_1 \circ e \circ f_1 + \ldots + g_n \circ e \circ f_n.
$$
This is similar to proving that a matrix ring is simple. To finish the proof, it suffices to factor each of the~$f_i$ (respectively~$g_i$) as 
\begin{equation}
\begin{aligned}
f_i &= c \circ f_i^1 + \ldots + c \circ f_i^k, \\
g_i &=g_i^1 \circ c + \ldots g_i^l \circ c.
\end{aligned}
\end{equation}
Diagrammatically, this can be represented as:
$$
\begin{tikzcd}
M \rar{\pi} & \nabla \rar{f} \dar{\sum f_i} \dlar[swap]{\sum f_i^j} & \Delta \rar{i} & M \\
\Delta \rar[swap]{c} & \nabla \rar[swap]{e} & \Delta \uar{\sum g_i} \rar[swap]{c} & \nabla \ular[swap]{\sum g_i^j}
\end{tikzcd}
$$
where all diagrams commute.
\end{proof}

\begin{lemmas}
\label{ref-6.2.9-128}
If~$\rad_{\Uscr_2}(M)$ denotes the radical of~$\End_{\Uscr_2}(M)$, then
	\begin{equation}
	\label{ref-6.29-129}
	I \rad_{\Uscr_2}(M) I=0.
	\end{equation}
\end{lemmas}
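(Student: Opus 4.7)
My plan is to reduce $I\cdot \rad_{\Uscr_2}(M)\cdot I=0$ to showing, for each $\nu\in \Lambda_2-\Lambda_1$ and each $r\in \rad_{\Uscr_2}(M)$, the vanishing of the composition
\[
\bar r_\nu\colon \Delta(\nu)\hookrightarrow M(\nu)\xrightarrow{r_\nu} M(\nu)\twoheadrightarrow \nabla(\nu).
\]
To justify this reduction, I will expand $frg$ for $f=(\tilde f_\lambda)_\lambda,\ g=(\tilde g_\lambda)_\lambda\in I$: the component $(frg)_\mu$ vanishes outside $\Lambda_2-\Lambda_1$ (since $I$ acts as zero on $M(\mu)$ there), and for $\mu\in \Lambda_2-\Lambda_1$ it equals $\iota\circ \tilde f_\mu\circ \bar r_\mu\circ \tilde g_\mu\circ \pi$, with $\iota\colon \Delta(\mu)\hookrightarrow M(\mu)$ and $\pi\colon M(\mu)\twoheadrightarrow \nabla(\mu)$ canonical. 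Hence $\bar r_\mu=0$ for every such $\mu$ will immediately yield $frg=0$.

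Next, I will upgrade $\Delta(\nu)$, $\nabla(\nu)$, and $L(\nu)$ to canonical $\End_{\Uscr_2}(M)$-modules. The key observation is that, because the elements of $\Lambda_2-\Lambda_1$ are pairwise incomparable, each morphism appearing in the defining limit (resp.\ colimit) for $\Delta(\nu)$ (resp.\ $\nabla(\nu)$) has its other endpoint in $\Lambda_1\subset \Lambda_2$ and therefore lies in $\Uscr_2$. Naturality of $r$ on $\Uscr_2$ then forces $r_\nu$ to restrict to an endomorphism of $\Delta(\nu)$ and to descend to an endomorphism of $\nabla(\nu)$, compatibly with the canonical map $c_\nu\colon \Delta(\nu)\to \nabla(\nu)$ whose image is $L(\nu)$. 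This will let me factor
\[
\bar r_\nu=c_\nu\circ r|_{\Delta(\nu)}=\bigl(\Delta(\nu)\twoheadrightarrow L(\nu)\xrightarrow{\,r|_{L(\nu)}\,}L(\nu)\hookrightarrow \nabla(\nu)\bigr).
\]

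Finally, I will invoke Proposition~\ref{ref-6.2.7-126}, which tells me that $L(\nu)$ is simple as an $\End_{\Uscr_2}(M)$-module, and hence is a finite-dimensional discrete simple module for the pseudo-compact algebra $\End_{\Uscr_2}(M)$. Standard pseudo-compact ring theory (as recalled in \S\ref{ref-2.9-39}) then gives $r|_{L(\nu)}=0$ for every $r\in \rad_{\Uscr_2}(M)$, whence $\bar r_\nu=0$ and the lemma follows. The main point requiring care is the second step, namely verifying that $\Delta(\nu)$, $\nabla(\nu)$ and $L(\nu)$ genuinely carry $\End_{\Uscr_2}(M)$-actions; this depends essentially on the antichain hypothesis on $\Lambda_2-\Lambda_1$, after which the rest of the argument is formal.
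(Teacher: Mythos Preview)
Your proposal is correct and follows essentially the same route as the paper's proof: both reduce to the fact that $r\in\rad_{\Uscr_2}(M)$ acts trivially on the simple modules $L(\nu)$ for $\nu\in\Lambda_2-\Lambda_1$ (Proposition~\ref{ref-6.2.7-126}), and both observe that the product $I\cdot r\cdot I$ evaluated at $M(\nu)$ factors through this action. The only cosmetic difference is that the paper first invokes Lemma~\ref{ref-6.2.8-127} to reduce $I\,\rad\, I=0$ to $e\,\rad\, e=0$ for the idempotent generator $e$, whereas you unpack $(frg)_\mu$ directly for arbitrary $f,g\in I$; your intermediate verification that $\Delta(\nu)$, $\nabla(\nu)$, $L(\nu)$ are genuine $\End_{\Uscr_2}(M)$-modules is a pleasant bonus that the paper leaves implicit.
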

\begin{proof}
Since we already know $I$ is idempotent,~\eqref{ref-6.29-129} follows if $e \big(\rad_{\Uscr_2}(M)\big) e=0$. Now by Proposition~\ref{ref-6.2.7-126}, it suffices to show that
$$
e \big( \cap_{\lambda\in \Lambda_2-\Lambda_1} \ann L(\lambda) \big) e=0,
$$
which is clear since~$L(\lambda)$ is by definition the image of~$c$.
\end{proof}

\begin{lemmas}
\label{ref-6.2.10-130}
The category $\mod(\End_{\Uscr_1}(M))$ is closed under extensions in $\mod(\End_{\Uscr_2}(M))$.
\end{lemmas}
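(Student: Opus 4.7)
The plan is to observe that via the surjection $\beta:\End_{\Uscr_2}(M) \to \End_{\Uscr_1}(M)$ of Theorem \ref{ref-6.2.3-120}, the category $\mod(\End_{\Uscr_1}(M))$ is identified with the full subcategory of $\mod(\End_{\Uscr_2}(M))$ consisting of modules annihilated by the ideal $I = \ker \beta$. Hence closure under extensions is equivalent to the following statement: given a short exact sequence
\begin{equation}
0 \to V \to W \to V' \to 0
\end{equation}
in $\mod(\End_{\Uscr_2}(M))$ with $IV = 0$ and $IV' = 0$, one has $IW = 0$.

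The first step is the obvious one: since $I$ acts as $0$ on $V'$, the image of $IW$ in $V'$ vanishes, so $IW \subseteq V$. Applying $I$ once more and using $IV=0$ yields $I^2 W = 0$. The entire argument is then reduced to the identity $I = I^2$ as a two-sided ideal in $\End_{\Uscr_2}(M)$.

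For this I would invoke Lemma \ref{ref-6.2.8-127}: the element $e = \prod_\lambda e_\lambda$ is an idempotent generator of $I$ with respect to the product $f \star g = f c g$, and the proof of that lemma shows that every $f \in I$ can be written as a finite sum $\sum_i g_i^{\prime} \star e \star f_i^{\prime}$ with each factor in $I$. The one subtlety is that the $\star$-product must coincide with the ordinary composition within $I$. This is immediate on each component $\lambda \in \Lambda_2 - \Lambda_1$: any $f,g \in I$ factor through $\nabla(\lambda) \xrightarrow{f_\lambda} \Delta(\lambda)$ and $\nabla(\lambda) \xrightarrow{g_\lambda} \Delta(\lambda)$, and the composition $f(\lambda) \circ g(\lambda)$ reads off as $M(\lambda) \to \nabla(\lambda) \xrightarrow{f_\lambda \circ c \circ g_\lambda} \Delta(\lambda) \to M(\lambda)$, which is exactly $f \star g$ on that component. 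Hence $I = I \star I = I \circ I$, and $I W = I^2 W = 0$.

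The main (and essentially only) obstacle is thus the identification of the two multiplications on $I$, which I have just indicated; once this is in place the lemma follows by the two-line argument above. I expect the write-up to be no more than a short paragraph.
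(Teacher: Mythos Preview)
Your proposal is correct and is exactly the paper's argument: the paper's proof is the single sentence ``This follows immediately from the exact sequence $0 \to I \to \End_{\Uscr_2}(M) \to \End_{\Uscr_1}(M) \to 0$, since $I=I^2$,'' and you have simply unpacked both halves of that sentence. Your observation that the $\star$-product on $I$ agrees with composition in $\End_{\Uscr_2}(M)$ (so that Lemma~\ref{ref-6.2.8-127} really gives $I=I^2$ for the ambient ring product) is the only point worth making explicit, and you have done so correctly.
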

\begin{proof}
This follows immediately from the exact sequence
$$
0 \to I \to \End_{\Uscr_2}(M) \to \End_{\Uscr_1}(M) \to 0,
$$
since $I=I^2$.
\end{proof}

\begin{lemmas}
\label{ref-6.2.11-131}
For $\lambda\in \Lambda_2-\Lambda_1$, the modules~$\Delta(\lambda)$ and~$\nabla(\lambda)$ are projective, respectively injective in $\mod(\End_{\Uscr_2}(M))$.
\end{lemmas}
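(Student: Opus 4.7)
The plan is to realise $\Delta(\lambda)$ as $Af$ for an explicit primitive idempotent $f \in A := \End_{\Uscr_2}(M)$, and dually to identify $\nabla(\lambda)^*$ with $fA$ as a right $A$-module, from which the injectivity of $\nabla(\lambda)$ will follow by finite-dimensional duality. Before anything else one checks that $\Delta(\lambda)$ and $\nabla(\lambda)$ are indeed $A$-modules: the antichain hypothesis together with saturation forces every $\mu \in \Lambda_2$ with $\mu < \lambda$ to lie in $\Lambda_1$, so the morphisms $\lambda \to \mu$ and $\mu \to \lambda$ appearing in the definitions of $\Delta(\lambda)$ and $\nabla(\lambda)$ really do belong to $\Uscr_2$, and naturality of any $\phi \in A$ with respect to them preserves the corresponding kernel and cokernel.

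To build $f$, pick a non-zero $v \in L(\lambda)$ and a functional $\xi \in L(\lambda)^*$ with $\xi(v) = 1$. Fix splittings $j':\nabla(\lambda) \twoheadrightarrow L(\lambda)$ and $p':L(\lambda) \hookrightarrow \Delta(\lambda)$ of the canonical maps, and set $\tilde e(n) := \xi(j'(n))\, p'(v) \in \Hom_k(\nabla(\lambda),\Delta(\lambda)) = I_\lambda$. A short computation, using that the canonical map $c:\Delta(\lambda) \to \nabla(\lambda)$ sends $p'(v)$ to $v$, yields $\tilde e \star \tilde e = \tilde e$; hence $f := \alpha(\tilde e) \in A$ is a genuine idempotent and $Af$ is automatically projective.

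The key step will be identifying $Af \cong \Delta(\lambda)$ via the $A$-linear map $\phi f \mapsto \phi(\lambda)(p'(v))$. Well-definedness and injectivity are immediate from the rank-one structure of $f(\lambda)$, while surjectivity amounts to the claim $A \cdot p'(v) = \Delta(\lambda)$. This is where I expect the real content to sit: given $m \in \Delta(\lambda)$, choose any linear map $\tilde\phi:\nabla(\lambda) \to \Delta(\lambda)$ with $\tilde\phi(v) = m$, and apply the surjectivity half of Theorem~\ref{ref-6.2.3-120}, setting $(\phi_u)_{u \in \Lambda_1} = 0$ and $\phi_{v'} = 0$ for every other $v' \in \Lambda_2 - \Lambda_1$ (permissible because the individual components of the kernel of $\beta$ do not interfere with one another). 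The result is an element $\phi \in A$ with $\phi(\lambda) = i \circ \tilde\phi \circ \pi$, and then $\phi(\lambda)(p'(v)) = i(\tilde\phi(v)) = m$, as desired.

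For $\nabla(\lambda)$ the argument is dual: $fA$ is a projective right $A$-module, and the right $A$-linear map $fA \to \nabla(\lambda)^*$, $f\phi \mapsto \xi \circ j' \circ \phi_{\nabla}(\lambda)$ (where $\phi_\nabla$ denotes the action induced on the quotient $\nabla(\lambda)$), is again injective by rank-one considerations. Surjectivity is obtained by realising an arbitrary $\eta \in \nabla(\lambda)^*$ through the choice $\tilde\phi = p'(v) \cdot \eta$, which gives $\phi_\nabla(\lambda) = c \circ \tilde\phi$ and $\xi \circ j' \circ \phi_\nabla(\lambda) = \eta$. Taking $k$-linear duals then promotes $fA$ to an injective left $A$-module $(fA)^* \cong \nabla(\lambda)$. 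The main obstacle in executing this plan is the surjectivity statement for $Af \to \Delta(\lambda)$, but that is precisely the content of the surjectivity of $\beta$ already established in Theorem~\ref{ref-6.2.3-120}, so the hard work has been done.
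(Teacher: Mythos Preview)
Your argument is correct and genuinely different from the paper's. The paper proceeds indirectly: it first shows $I_\lambda\cdot\Delta(\lambda)=\Delta(\lambda)$ and that $\Delta(\lambda)':=\ker(\Delta(\lambda)\to L(\lambda))$ is annihilated by $I_\lambda$, deduces $\Hom(\Delta(\lambda)',L(\lambda))=0$, uses this to prove $\Delta(\lambda)$ is Schurian, and finally shows by hand that every surjection $M\twoheadrightarrow\Delta(\lambda)$ splits. Your route is more direct: you exhibit a rank-one idempotent $f=\alpha(\tilde e)\in A$ and identify $\Delta(\lambda)\cong Af$, $\nabla(\lambda)\cong(fA)^*$ explicitly. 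This buys you both statements at once and avoids the Schurian detour (which is anyway a formal consequence of quasi-heredity once that is established). The paper's approach, on the other hand, yields the Schurian property as an intermediate result without appealing to the full heredity machinery.

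One small correction of attribution: in your surjectivity step for $Af\to\Delta(\lambda)$ you say you are using ``the surjectivity of $\beta$'' from Theorem~\ref{ref-6.2.3-120}, but what you actually invoke is that $\alpha$ is a well-defined map $\prod_{\lambda'} I_{\lambda'}\to A$, i.e.\ that your chosen $\tilde\phi\in I_\lambda$ (extended by zero) really is a natural transformation of $M$ over $\Uscr_2$. This is the content of the first paragraph of the proof of Theorem~\ref{ref-6.2.3-120}, not the surjectivity of $\beta$. The argument stands; only the citation should be adjusted.
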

\begin{proof}
We only prove that~$\Delta(\lambda)$ is projective. The proof of injectivity for~$\nabla(\lambda)$ is similar. One has~$I_{\lambda} \cdot \Delta(\lambda)=\Delta(\lambda)$: this is linear algebra just like in Lemma~\ref{ref-6.2.8-127}. In particular~$I_{\lambda} \cdot L(\lambda)=L(\lambda)$ since~$L(\lambda)$ is a quotient of~$\Delta(\lambda)$.

Also~$\Delta(\lambda)':=\Ker(\Delta(\lambda) \to L(\lambda)) \in \mod(\End_{\Uscr_1}(M))$ since
\begin{equation}
\begin{aligned}
\Ker(\Delta(\lambda) \to L(\lambda)) &= \Ker(\Delta(\lambda) \to \nabla(\lambda)) \\
&\subset \Ker(M(\lambda) \to \nabla(\lambda)),
\end{aligned}
\end{equation}
and~$I_{\lambda}$ clearly works trivially on this last module. In particular,  
\begin{equation}
\label{ref-6.31-132}
\Hom_{\End_{\Uscr_2}(M)}(\Delta(\lambda)',L(\lambda))=0,
\end{equation}
since otherwise we would get a surjective map~$0=I_{\lambda}\cdot \Delta' \to I_{\lambda} \cdot L(\lambda)=L(\lambda)$.

We will now use this to show that~$\Delta(\lambda)$ is Schurian. By composing with the quotient map, any element~$u \in \End_{\End_{\Uscr_2}(M)}(\Delta(\lambda))$ induces a morphism~$\Delta(\lambda) \to L(\lambda)$, which by~\eqref{ref-6.31-132} descends to an endomorphism of~$L(\lambda)$, given by a scalar~$c$. Now consider the endomorphism~$u-c:\Delta(\lambda) \to \Delta(\lambda)$, which defines a morphism~$\Delta(\lambda) \to \Delta(\lambda)'$. Now since~$I_{\lambda} \cdot \Delta(\lambda)=\Delta(\lambda)$ and~$I_{\lambda} \cdot \Delta(\lambda)'=0$ by the previous paragraph, the image of the endomorphism~$u-c$ is zero. 

Now we show that any surjective map~$p:M \to \Delta(\lambda)$ splits. Since~$I_{\lambda} \Delta(\lambda)=\Delta(\lambda)$, we can assume without loss of generality that~$I_{\lambda}M=M$. Now take
$$
m=\sum_{i=1}^u \phi_i m_i \notin \Ker(p),
$$
with~$\phi_i \in I_{\lambda}, m_i \in M$. Then~$m$ is in the image of
$$
I_{\lambda}^u \to M:(\phi_i)_i \mapsto \sum_i \phi_i m_i,
$$
and since~$I_{\lambda} \cong \Delta(\lambda)^v$ for some power~$v$, this gives a map~$q:\Delta(\lambda)^w \to M$. By choice of~$m$ the composition~$\Delta(\lambda)^w \to M \to \Delta(\lambda)$ is non-zero and since~$\Delta(\lambda)$ is Schurian, one of the components~$q_i:\Delta(\lambda) \to M$ of~$q$ has to give a non-zero scalar after the composition~$\Delta(\lambda) \to M \to \Delta(\lambda)$. This~$q_i$ is the required splitting. 
\end{proof}

\subsection{Statement and proof of the main theorem}

	\begin{theorems}
	\label{ref-6.3.1-133}
Let $\Lambda_1\subset \Lambda$ be a saturated subset with associated full subcategory $\Uscr_1\subset \Uscr$.
		\begin{enumerate}
		\item the coalgebra $\coend_{\Uscr_1}(M)$ is quasi-hereditary, with respect to the partially ordered set $(\Lambda_1,\le)$,
		\item the simple, standard and costandard comodules are given by~\eqref{ref-6.5-104}\eqref{ref-6.4-103}, and~\eqref{ref-6.3-102}
for $\lambda\in \Lambda_1$.
		\end{enumerate}
	\end{theorems}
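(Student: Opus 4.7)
The plan is to verify the Donkin/comodule-theoretic characterization (Definition~\ref{ref-2.11.2-58}) of quasi-hereditary coalgebras directly for $\coend_{\Uscr_1}(M)$ with the partial order inherited from $(\Lambda,\le)$, feeding the fundamental exact sequence of Theorem~\ref{ref-6.2.3-120} together with Lemmas~\ref{ref-6.2.8-127}--\ref{ref-6.2.11-131} into the standard heredity-chain machinery.

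First I would verify condition~(1) of Definition~\ref{ref-2.11.2-58}: for every $\lambda \in \Lambda_1$, the set $\pi(\lambda)=\{\mu \in \Lambda_1 : \mu < \lambda\}$ is finite. Each of the defining covers $r_{a+b} < r_a r_b$ and $1 < r_a r_d^{-1} r_{d-a}$ strictly increases the reduced word length $l(\cdot)$, and this is preserved under the two-sided invariant closure. Hence any $\mu < \lambda$ is a reduced word of length less than $l(\lambda)$ over the finite alphabet $\{r_1,\ldots,r_{d-1},r_d^{\pm 1}\}$, of which there are only finitely many.

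For condition~(2), let $\pi \subset \Lambda_1$ be a finite saturated subset. Since $\pi$ is finite and each $M(\lambda)$ is finite dimensional, $\End_{\Uscr_\pi}(M)$ embeds into $\prod_{\lambda\in\pi}\End_k M(\lambda)$ and so is finite dimensional, and dually so is $C(\pi):=\coend_{\Uscr_\pi}(M)$. To build a heredity chain in $A:=\End_{\Uscr_\pi}(M)$, choose any linear extension $\lambda_1,\ldots,\lambda_n$ of $(\pi,\le)$ with $\lambda_i$ minimal in $\pi\setminus\{\lambda_1,\ldots,\lambda_{i-1}\}$, so each $\pi_i:=\{\lambda_1,\ldots,\lambda_i\}$ is saturated. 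Applying Theorem~\ref{ref-6.2.3-120} to every pair $\pi_{i-1}\subset\pi_i$ produces short exact sequences
\[
0 \to I_i \to A_i \to A_{i-1} \to 0, \qquad A_i := \End_{\Uscr_{\pi_i}}(M),
\]
with $I_i=\Hom_k(\nabla(\lambda_i),\Delta(\lambda_i))$. Setting $K_j:=\ker(A\twoheadrightarrow A_{n-j})$ yields a filtration $0=K_0\subset K_1\subset\cdots\subset K_n=A$ with $K_j/K_{j-1}\cong I_{n-j+1}$ as a two-sided ideal of $A/K_{j-1}\cong A_{n-j+1}$. The technical heart is then to check that each $K_j/K_{j-1}$ is a heredity ideal: idempotence is Lemma~\ref{ref-6.2.8-127}, the vanishing $I\,\rad\,I = 0$ is Lemma~\ref{ref-6.2.9-128}, and projectivity of $I_i\cong\Delta(\lambda_i)^{\oplus\dim\nabla(\lambda_i)}$ as a (left, and dually right) module follows from Lemma~\ref{ref-6.2.11-131}. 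This exhibits $A$ as quasi-hereditary in the sense of Definition~\ref{ref-2.10.3-47}, and dualizing shows $C(\pi)$ is quasi-hereditary as a coalgebra.

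It remains to identify the simples, standards and costandards with those in \eqref{ref-6.5-104}--\eqref{ref-6.3-102}. The general theory of heredity chains identifies the standard modules as the indecomposable summands of $K_j/K_{j-1}\cong\Delta(\lambda_{n-j+1})^{\oplus\dim\nabla(\lambda_{n-j+1})}$, which are precisely the $\Delta(\lambda)$ of \eqref{ref-6.4-103}; the dual identification recovers the $\nabla(\lambda)$, and Proposition~\ref{ref-6.2.7-126} identifies $L(\lambda)$ as their common image. Since these (co)standards are independent of the chosen linear extension, Lemma~\ref{ref-2.10.7-52} upgrades the quasi-hereditary property to hold with respect to the original partial order $(\pi,\le)$, verifying condition~(2) of Definition~\ref{ref-2.11.2-58}. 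The main delicate point I expect is matching the enumeration-dependent heredity-chain picture with the partial-order formulation of Donkin's definition; once handled via Lemma~\ref{ref-2.10.7-52} the two halves of the theorem fall out simultaneously.
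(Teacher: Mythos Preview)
Your approach uses the same technical ingredients as the paper (Theorem~\ref{ref-6.2.3-120} and Lemmas~\ref{ref-6.2.8-127}--\ref{ref-6.2.11-131}), and the core argument---building heredity chains for finite truncations via a chosen linear extension and then invoking Lemma~\ref{ref-2.10.7-52}---is identical. There is, however, a packaging issue that creates a genuine gap.

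You aim to verify Definition~\ref{ref-2.11.2-58} directly, and to do so you write ``$C(\pi):=\coend_{\Uscr_\pi}(M)$''. But in Definition~\ref{ref-2.11.2-58} the symbol $C(\pi)$ already has a meaning: it is $O_\pi C$, the maximal subcoalgebra of $C=\coend_{\Uscr_1}(M)$ whose comodules have composition factors among $\{L(\lambda):\lambda\in\pi\}$. The identification $O_\pi C \cong \coend_{\Uscr_\pi}(M)$ is not available at this stage---it is precisely the content of Theorem~\ref{ref-6.3.4-140}, which the paper proves \emph{after} Theorem~\ref{ref-6.3.1-133} and whose proof invokes Theorem~\ref{ref-6.3.1-133} to identify the simples. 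Even formulating $O_\pi C$ presupposes that $\{L(\lambda):\lambda\in\Lambda_1\}$ exhausts the simple $C$-comodules, which you have not yet established.

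The paper sidesteps this circularity by verifying the coring-theoretic Definition~\ref{ref-2.11.1-56} instead: it fixes one total refinement $\lambda_1<'\lambda_2<'\cdots$ of all of $(\Lambda_1,\le)$, and the surjections of Theorem~\ref{ref-6.2.3-120} dualize to an exhaustive chain of finite-dimensional subcoalgebras $\End_{\Uscr^{(n)}}(M)^*\subset C$. That definition asks only for \emph{some} heredity cochain, with no reference to $O_\pi C$, so your heredity-ideal verification via Lemmas~\ref{ref-6.2.8-127}--\ref{ref-6.2.11-131} plugs in unchanged. Proposition~\ref{ref-2.11.5-61} then converts this to the Donkin picture (in particular supplying the labelling of simples by $\Lambda_1$), after which Lemma~\ref{ref-2.10.7-52} upgrades from $\le'$ to $\le$ exactly as you do. The fix is therefore minimal: replace the appeal to Definition~\ref{ref-2.11.2-58} by Definition~\ref{ref-2.11.1-56}, using a single global linear extension of $\Lambda_1$ rather than ranging over arbitrary finite saturated $\pi$.
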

Specialising to $\Lambda_1=\Lambda$ and using Theorem \ref{ref-5.1-92} we obtain
\begin{corollarys}
\label{ref-6.3.2-134}
	For any Koszul Artin-Schelter regular algebra $A$ of global dimension $d$, the following statements hold:
		\begin{enumerate}
		\item the coalgebra $\uaut(M)$ is quasi-hereditary, with respect to the partially ordered set $(\Lambda,\le)$,
		\item the simple, standard and costandard comodules are given by~\eqref{ref-6.5-104}\eqref{ref-6.4-103}, and~\eqref{ref-6.3-102}.
		\end{enumerate}
\end{corollarys}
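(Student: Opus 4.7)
The plan is to verify the (co)ring-theoretical Definition~\ref{ref-2.11.1-56} by producing an exhaustive heredity cochain in $\coend_{\Uscr_1}(M)$. First, I would pick an ascending chain of finite saturated subsets
\[
\emptyset = \Lambda^{(0)} \subset \Lambda^{(1)} \subset \Lambda^{(2)} \subset \cdots \subset \Lambda_1
\]
with $\bigcup_i \Lambda^{(i)} = \Lambda_1$, arranged so that each difference $\Lambda^{(i+1)} - \Lambda^{(i)}$ is an antichain. Such a chain exists because $\Lambda_1$ is countable and every principal down-set $\pi(\lambda)$ is finite (each generating relation $r_{a+b}<r_ar_b$ and $1<r_ar_d^{-1}r_{d-a}$ strictly decreases $l(-)$), so one may enumerate $\Lambda_1$ by collecting maximal antichains of currently-available elements at each stage.

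For each $i$, Theorem~\ref{ref-6.2.3-120} applied to the pair $\Lambda^{(i)} \subset \Lambda^{(i+1)}$ supplies a short exact sequence of finite-dimensional algebras
\[
0 \to I^{(i)} \to \End_{\Uscr^{(i+1)}}(M) \to \End_{\Uscr^{(i)}}(M) \to 0,
\]
where $I^{(i)} = \bigoplus_{\lambda \in \Lambda^{(i+1)} - \Lambda^{(i)}} \Hom_k(\nabla(\lambda),\Delta(\lambda))$. Lemmas~\ref{ref-6.2.8-127}, \ref{ref-6.2.9-128} and~\ref{ref-6.2.11-131} jointly verify the three conditions of Definition~\ref{ref-2.10.1-46}: $I^{(i)}$ is idempotent; $I^{(i)}\rad\big(\End_{\Uscr^{(i+1)}}(M)\big) I^{(i)}=0$; and $I^{(i)}$ is projective as a left module, since (as extracted from the proof of Lemma~\ref{ref-6.2.11-131}) $I^{(i)}$ is a finite direct sum of copies of the projective modules $\Delta(\lambda)$. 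Hence $I^{(i)}$ is a heredity ideal, and chaining these together yields a heredity chain in each finite-dimensional algebra $\End_{\Uscr^{(i)}}(M)$.

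Dualising, the subcoalgebras $C_i := \coend_{\Uscr^{(i)}}(M)$ are finite-dimensional and form an ascending chain whose union is $\coend_{\Uscr_1}(M)$; exhaustiveness follows from $\End_{\Uscr_1}(M) = \varprojlim_i \End_{\Uscr^{(i)}}(M)$ together with the surjectivity of restriction maps supplied by Corollary~\ref{ref-6.2.6-125}. The duals of the heredity chains assemble into a heredity cochain for $\coend_{\Uscr_1}(M)$, establishing part~(1) via Definition~\ref{ref-2.11.1-56} and Proposition~\ref{ref-2.11.5-61}. For part~(2), the standard description (recalled after Proposition~\ref{ref-2.10.6-50}) identifies the standard and costandard modules of a heredity chain with the indecomposable summands of the successive subquotients and their duals; direct inspection of $I^{(i)}\cong\bigoplus_\lambda \Delta(\lambda)^{\dim \nabla(\lambda)}$ matches these summands exactly with the $\Delta(\lambda)$ and $\nabla(\lambda)$ of \eqref{ref-6.4-103} and \eqref{ref-6.3-102}, while the simplicity of the $L(\lambda)$ from \eqref{ref-6.5-104} was already proven in Proposition~\ref{ref-6.2.7-126}.

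The principal obstacle has been dispatched in advance in Section~\ref{ref-6-101}, namely setting up the fundamental exact sequence of Theorem~\ref{ref-6.2.3-120} and verifying the heredity-ideal axioms in Lemmas~\ref{ref-6.2.8-127}--\ref{ref-6.2.11-131}; the present theorem is essentially a bookkeeping step that packages those results into the framework of quasi-hereditary coalgebras. The only residual subtlety is the independence of $\Delta(\lambda)$, $\nabla(\lambda)$, and $L(\lambda)$ from the choice of finite saturated $\Lambda^{(i)}$ containing $\lambda$, which is handled by Theorem~\ref{ref-2.11.3-59}.
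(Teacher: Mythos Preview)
Your approach is essentially the paper's: the corollary is obtained by specialising Theorem~\ref{ref-6.3.1-133} to $\Lambda_1=\Lambda$ and invoking Theorem~\ref{ref-5.1-92} for the identification $\coend_{\Uscr}(M)\cong\uaut(A)$ (a step you should state), and Theorem~\ref{ref-6.3.1-133} is proved exactly by assembling the heredity ideals from Lemmas~\ref{ref-6.2.8-127}--\ref{ref-6.2.11-131} along an exhaustive chain of finite saturated subsets. The paper refines $<$ to a total order and adds one $\lambda$ at a time rather than an antichain, but this is cosmetic. The one substantive step you omit is that your chain only yields quasi-heredity with respect to the total order it induces; the paper closes this gap by observing that the construction goes through for \emph{every} total refinement of $<$ with the same $\Delta(\lambda),\nabla(\lambda)$, and then invokes Lemma~\ref{ref-2.10.7-52} to conclude that $(\Lambda,\le)$ itself is an admissible poset for the quasi-hereditary structure.
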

	\begin{proof}[Proof of Theorem \ref{ref-6.3.1-133}]
We first show that $\coend_{\Uscr_1}(M)$ is quasi-hereditary
	according to Definition~\ref{ref-2.11.1-56}. Choose a total ordering $\lambda_1<'\lambda_2<'\cdots$ on $\Lambda_1$ refining $<$. Let
$\Uscr^{(i)}=\{\lambda_1,\ldots,\lambda_i\}\subset \Uscr$ and $\End_n(M):=\End_{\Uscr^{(n)}}(M)$.

By Theorem~\ref{ref-6.2.3-120}, the coalgebra $\End_{\Uscr_1}(M)^*$ admits an exhaustive filtration
		\begin{equation}
		0 \subset \End_1^*(M) \subset \End_2^*(M) \subset \cdots \subset \End_{\Uscr_1}^*(M)  \subset \cdots  
		\end{equation}
	of finite dimensional subcoalgebras, and we claim that this filtration is a heredity cochain. To this end, we need to check that for every $i$
		\begin{equation}
		0=(\End^*_i(M)/\End^*_i(M))^* \subset (\End^*_i(M)/\End^*_{i-1}(M))^* \subset \cdots \subset \End_i(M)
		\end{equation}
	is a heredity chain. Now since 
	$$
	(\End^*_i(M)/\End^*_{i-t}(M))^*\cong \ker(\End_i(M) \to \End_{i-t}(M)),
	$$ 
	we need to check that 
		\begin{equation}
		\frac{\ker(\End_i(M) \to \End_{i-t-1}(M))}{\ker(\End_i(M) \to \End_{i-t}(M))} \vartriangleleft \frac{\End_i(M)}{\ker(\End_i(M) \to \End_{i-t}(M))}
		\end{equation}
	is a heredity ideal, or equivalently, that $I_{i-t} \vartriangleleft \End_{i-t}(M)$ is.
	
	By Lemma~\ref{ref-6.2.8-127}, $I_{i-t}$ is generated by an idempotent and by Lemma~\ref{ref-6.2.9-128} we know that~$I_{i-t} \rad_{i-t}(M) I_{i-t}=0$. Now since
		\begin{equation}
		I_{\lambda} \cong \Delta(\lambda)^{\oplus \dim_k \nabla(\lambda)},
		\end{equation}
	as one-sided module, we know that~$I_{\lambda}$ is also projective as a one-sided module by Lemma~\ref{ref-6.2.11-131} and we are done.

It follows from	Proposition~\ref{ref-2.11.5-61} and the specific form of the heredity cochain that $\coend_{\Uscr_1}(M)$ is quasi-hereditary with respect
to $(\Lambda,\le')$ with simples, standard, costandard comodules given by~\eqref{ref-6.5-104}\eqref{ref-6.4-103}, and~\eqref{ref-6.3-102}.

The fact that $\End_{\Uscr_1}(M)$ is quasi-hereditary with respect to $<$ follows from Lemma \ref{ref-2.10.7-52}.
	\end{proof}
	\begin{corollarys}
	\label{ref-6.3.3-135}
	Let $A$ be as above, then
	\begin{enumerate}
		\item For all $\lambda,\mu\in \Lambda$ \label{ref-1-136}
				\begin{align}
				{}[M(\lambda):\nabla(\mu)]&=|\Uscr_{\u}(\mu,\lambda)|\label{ref-6.36-137}\\
				[M(\lambda):\Delta(\mu)]&=|\Uscr_{\d}(\lambda,\mu)|\label{ref-6.37-138}
				\end{align}
		\item Let $G_0(\uaut(A))$ be the representation ring of $\uaut(A)$. There is a ring isomorphism 
			\begin{equation}
			\mathbb{Z} \langle r_1,\ldots,r_{d-1},r_d^{\pm 1} \rangle \to G_0(\uaut(A)): r_i \mapsto [R_i]. 
			\end{equation}
	\end{enumerate}
	\end{corollarys}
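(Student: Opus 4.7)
The plan is to transport the combinatorics of $\Uscr_{\u}$ into $\comod(\uaut(A))$ via the exact functor $\bar{M}$ of Proposition~\ref{ref-4.3-87}, regarded as landing in $\comod(\uaut(A))$ rather than just $\Vect$, and to combine this with the quasi-hereditary structure of Corollary~\ref{ref-6.3.2-134}. The key identification I will use is $\bar{M}(S_\mu) = \nabla(\mu)$ for each $\mu \in \Lambda$: applying $\bar{M}$ to the linear resolution~\eqref{ref-3.12-75} of $S_\mu$ presents $\bar{M}(S_\mu)$ as the cokernel of $\bigoplus M(\operatorname{source}(f_i)) \to M(\mu)$ indexed by $\Uscr_{\u,1}$-arrows $f_i$ with target $\mu$, which by Lemma~\ref{ref-6.1.2-106} is precisely $\nabla(\mu)$.

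For~\eqref{ref-6.36-137}: the module $P_\lambda := k\Uscr_{\u}\,\tilde{}\,(-,\lambda)$ is finite-dimensional as a vector space (only finitely many $\mu$ admit morphisms to $\lambda$, and each $\Uscr_{\u}(\mu,\lambda)$ is finite), hence admits a finite composition series in $\mod((\Uscr_{\u}\,\tilde{}\,)^\circ)$ whose simple factor $S_\mu$ appears with multiplicity $\dim P_\lambda(\mu) = |\Uscr_{\u}(\mu,\lambda)|$. Applying the exact functor $\bar{M}$ to such a series produces a filtration of $M(\lambda)$ in $\comod(\uaut(A))$ with nonzero subquotients $\nabla(\mu)$ having those same multiplicities; since $M(\lambda) \in \Fscr(\nabla)$ by Corollary~\ref{ref-6.1.6-111}, $\nabla$-filtration multiplicities are intrinsic in the quasi-hereditary setting, yielding the formula.

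Identity~\eqref{ref-6.37-138} will then follow by applying the exact contravariant monoidal equivalence $(-)^*$: by Proposition~\ref{ref-6.1.1-105} one has $M(\lambda)^* = M(\lambda^*)$ and $\nabla(\mu)^* = \Delta(\mu^*)$, so $[M(\lambda):\Delta(\mu)] = [M(\lambda^*):\nabla(\mu^*)] = |\Uscr_{\u}(\mu^*,\lambda^*)|$ by the previous step; the defining identification $\Uscr_{\d}(u^*,v^*) = \Uscr_{\u}(v,u)$ combined with the conjugation relation $(-)^* = r_d \cdot {}^*(-) \cdot r_d^{-1}$ from the Remark after Proposition~\ref{ref-3.3.1-80} then rewrites this as $|\Uscr_{\d}(\lambda,\mu)|$, using that conjugation by $r_d$ is an autoequivalence of $\Uscr_{\u}$.

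For~(2), by~\eqref{ref-6.36-137} the class of each $M(\lambda)$ expands as an upper-triangular combination (with respect to $\le$) of the $\ZZ$-basis $\{[\nabla(\mu)]\}_{\mu\in\Lambda}$ of $G_0(\uaut(A))$ with diagonal coefficient $|\Uscr_{\u}(\lambda,\lambda)| = 1$ (the generators of $\Uscr_{\u}$ all strictly increase $d_{\u}$, so there are no non-identity endomorphisms). Hence $\{[M(\lambda)]\}_{\lambda\in\Lambda}$ is itself a $\ZZ$-basis of $G_0(\uaut(A))$, and monoidality of $M$ promotes the assignment $\lambda \mapsto [M(\lambda)]$ to a ring isomorphism $\ZZ\langle r_1,\ldots,r_{d-1},r_d^{\pm 1}\rangle \xrightarrow{\sim} G_0(\uaut(A))$. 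The main technical subtlety I foresee is justifying that $\bar{M}$ lands in $\comod(\uaut(A))$ rather than only in $\Vect$, so that the filtration of $M(\lambda)$ is really a filtration by $\uaut(A)$-subcomodules; this I would handle by viewing $\bar{M}$ as the right-exact extension along Yoneda of the composite $M: \Uscr_{\u} \to \comod(\uaut(A))$, with exactness in $\comod(\uaut(A))$ inherited from exactness over $\Vect$ (Proposition~\ref{ref-4.3-87}) via the faithful forgetful functor.
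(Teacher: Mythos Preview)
Your argument is correct and, for \eqref{ref-6.36-137}, takes a genuinely different route from the paper. You transport a composition series of $P_\lambda$ through the exact functor $\bar M$ (lifted to $\comod(\uaut(A))$) to produce an explicit $\nabla$-filtration of $M(\lambda)$ with the desired multiplicities, and then appeal to the fact that in a quasi-hereditary category $\nabla$-multiplicities are intrinsic. The paper instead works indirectly: it extracts from the resolution~\eqref{ref-3.12-75} and from~\eqref{ref-6.39-139} two recursion relations, one for $|\Uscr_{\u}(v,u)|$ and one for $\dim\Hom_{\uaut(A)}(\Delta(v),M(u))=[M(u):\nabla(v)]$, and observes they coincide. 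Your approach is cleaner and more conceptual; the paper's has the minor advantage that it never needs to justify lifting $\bar M$ to comodules, since it stays entirely inside $\comod(\uaut(A))$ via the functor $\Hom(\Delta(v),-)$. Your handling of that lift (right-exact Kan extension plus reflection of exactness along the faithful exact forgetful functor) is fine; just note that $S_\ast$ also occurs in the composition series of $P_\lambda$ but is killed by $\bar M$.

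For \eqref{ref-6.37-138} the paper simply says ``the proof is the same'', meaning the dual argument; your explicit duality computation is that dual argument. One small slip: what you need is $\Delta(\mu)^\ast\cong\nabla(\mu^\ast)$, not the identity $\nabla(\mu)^\ast\cong\Delta(\mu^\ast)$ you quote; the former follows from Proposition~\ref{ref-6.1.1-105} by applying $(-)^\ast$ to ${}^\ast\nabla(\lambda)\cong\Delta({}^\ast\lambda)$ and relabelling. Part~(2) is essentially identical to the paper's argument.
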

	\begin{proof}
	For the first statement, evaluating the resolution~\eqref{ref-3.12-75} for $S_u$ at $v$ we obtain exact sequences
		\begin{multline*}
		\cdots \r\bigoplus_{i_1<\cdots<i_n} k\Uscr_{\u}(v,{\operatorname{source} (f_{i_1}\times_u \dots \times_u f_{i_n})})\r \cdots \\
		\r \bigoplus_{i<j} k\Uscr_{\u}(v,{\operatorname{source}(f_i\times_x f_j)})\r \bigoplus_{f_i:u_i\r u\in \Uscr_{\u,1}}k\Uscr_{\u}(v,u_i)\r k\Uscr_{\u}(v,u)\r k\delta_{u,v}\r 0
		\end{multline*}
	which gives us a recursion relation between $|\Uscr_{\u}(v,u)|$ and $|\Uscr_{\u}(v,u')|$ for $u'<u$. Since $|\{u'|u'<u\}|$ is finite this
	recursion relation completely determines $|\Uscr_{\u}(-,-)|$.

	By Lemma~\ref{ref-4.4-89} there are similar exact sequences of the form: 
		\begin{multline}
		\label{ref-6.39-139}
		\cdots \r\bigoplus_{i_1<\cdots<i_n} M({\operatorname{source} (f_{i_1}\times_u \dots \times_u f_{i_n})}) \r \cdots \\
		\cdots \r \bigoplus_{i<j} M({\operatorname{source}(f_i\times_x f_j)})\r \bigoplus_{f_i:u_i\r u\in U_{\u,1}}M(u_i)\r M(u)\r \nabla(u)\r 0
		\end{multline}
	where   by Corollary \ref{ref-4.2-86}.
		\[
		 M({\operatorname{source} (f_{i_1}\times_u \dots \times_u f_{i_n})})=\bigcap_i M(u_i).
		\]
	As $\uaut(A)$ is quasi-hereditary we have $\Ext^i_{\uaut(A)}(\Delta(v),\nabla(u))=k\delta_{i,0}$ (see Proposition \ref{ref-2.10.8-53}(2)) and by hence by Corollary \ref{ref-6.1.6-111}(1):

   $\Ext^i_{\uaut(A)}(\Delta(v),M(u))=0$
for $i>0$. Thus~\eqref{ref-6.39-139} remains exact after 
	applying $\Hom_{\uaut(A)}(\Delta(v),-)$.

From Corollary \ref{ref-6.1.6-111}(1) and Proposition \ref{ref-2.10.8-53}(2) we also deduce the formula
		\[
		[M(u)	:\nabla(v)]=\dim \Hom_{\uaut(A)}(\Delta(v),M(u)).
		\]
	and we see that $\dim\Hom_{\uaut(A)}(\Delta(v),M(u))$ satisfies exactly the same recursion relation
	as $|\Uscr_{\u}(v,u)|$. The equality \eqref{ref-6.36-137} follows. The proof of \eqref{ref-6.37-138} is the same.
	
For the second statement note that 
since in  a quasi-hereditary (co)algebra $C$, the costandard comodules are related
by a triangular matrix to the simple comodules it is clear that they
form a $\mathbb{Z}$-basis of $G_0(C)$. Using that $M(\lambda) \in \Fscr(\nabla)$, and that 
according to \eqref{ref-6.36-137}
the $M(\lambda)$ are 
	related to the costandard comodules by a unitriangular matrix,  it is clear that the $[M(\lambda)]$, for
	$\lambda \in \Lambda$ form a basis of $G_0(\uaut(A))$. It now suffices to note that the functor $M$ is monoidal and maps $r_i$ to $R_i$. This proves the last statement.
	\end{proof}
\begin{theorems} \label{ref-6.3.4-140}
Let $\Lambda_1\subset \Lambda$ be a saturated subset and let $\Uscr_1$ be the corresponding full subcategory of $\Uscr$. Then the restriction map 
\begin{equation}
\label{ref-6.40-141}
\coend_{\Uscr_1}(M)\r \coend_{\Uscr}(M)
\end{equation}
identifies
$
\coend_{\Uscr_1}(M)
$
with $(\coend_{\Uscr}(M))(\Lambda_1)$ (see \S\ref{ref-2.11-55} for notation). 
\end{theorems}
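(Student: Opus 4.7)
The plan is to show that the restriction morphism $r : \coend_{\Uscr_1}(M) \to \coend_\Uscr(M)$ is injective and identifies $\coend_{\Uscr_1}(M)$ with $(\coend_\Uscr(M))(\Lambda_1)$. Injectivity comes for free: by Corollary~\ref{ref-6.2.6-125} the restriction $\End_\Uscr(M) \twoheadrightarrow \End_{\Uscr_1}(M)$ is surjective, and dualizing yields the injective map $r$. Next, for any $\lambda \in \Lambda_1$, the defining formulas \eqref{ref-6.3-102}, \eqref{ref-6.4-103}, \eqref{ref-6.5-104} for $\nabla(\lambda)$, $\Delta(\lambda)$, $L(\lambda)$ only range over $\mu < \lambda$, and by saturation every such $\mu$ lies in $\Lambda_1$. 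Thus these (co)standard and simple comodules---as vector spaces and with their comodule structures---are the same whether computed in $\Uscr$ or in $\Uscr_1$. Theorem~\ref{ref-6.3.1-133} applied to $\Uscr_1$ then identifies the simples of $\coend_{\Uscr_1}(M)$ with the $L(\lambda)$ for $\lambda \in \Lambda_1$, so $\im r$ has composition factors only among these, and $\im r \subseteq (\coend_\Uscr(M))(\Lambda_1)$.

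The core of the proof is the reverse inclusion. I would reduce it to checking, for each finite saturated $\pi \subset \Lambda_1$, that the induced injection $(\coend_{\Uscr_1}(M))(\pi) \hookrightarrow (\coend_\Uscr(M))(\pi)$ is an isomorphism; the result then follows by passing to the union over such $\pi$, since both $\coend_{\Uscr_1}(M)$ and $(\coend_\Uscr(M))(\Lambda_1)$ are the directed unions of their $(\cdot)(\pi)$-subcoalgebras. Invoking Theorem~\ref{ref-2.11.3-59} applied to both $\coend_\Uscr(M)$ and $\coend_{\Uscr_1}(M)$, both sides are finite-dimensional quasi-hereditary coalgebras for the poset $(\pi,\le)$ with the \emph{same} $\Delta(\lambda)$ and $\nabla(\lambda)$ for $\lambda \in \pi$. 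The key technical input is the BGG-type dimension formula
\[
\dim C = \sum_{\lambda \in \pi} \dim \Delta(\lambda) \cdot \dim \nabla(\lambda),
\]
valid for any finite-dimensional quasi-hereditary coalgebra $C$ and obtained from the reciprocity $[I(\lambda):\nabla(\mu)] = [\Delta(\mu):L(\lambda)]$---itself a direct consequence of Proposition~\ref{ref-2.10.8-53}(2). This forces the two coalgebras to have the same finite dimension, so the injection is an isomorphism.

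The main obstacle I anticipate lies in this last step: neither the dimension comparison nor the identification of standard/costandard comodules across the two quasi-hereditary structures is conceptually deep, but one has to trace through Theorem~\ref{ref-2.11.3-59} and its stated extension to infinite $\pi$ carefully to ensure that the two sides really are being compared for the same poset data. Everything else is bookkeeping.
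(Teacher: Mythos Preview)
Your proof is correct but follows a genuinely different route from the paper's. The paper argues that $\comod(\coend_{\Uscr_1}(M))$, viewed as a full subcategory of $\comod(\coend_\Uscr(M))$ via the injection $r$, is closed under extensions. This is done by reducing to a finite saturated setting and then repeatedly invoking Lemma~\ref{ref-6.2.10-130}, which in turn rests on the idempotence of the heredity ideal $I$ (Lemma~\ref{ref-6.2.8-127}). Once extension-closure is established, the identification with $(\coend_\Uscr(M))(\Lambda_1)$ is immediate from the description of simples. Your argument instead treats Theorem~\ref{ref-6.3.1-133} as a black box on both sides, passes to finite saturated $\pi$, and kills the problem with a dimension count via BGG reciprocity. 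This is slicker and more conceptual once Theorem~\ref{ref-6.3.1-133} is in hand: you never touch the heredity ideals again. The paper's approach, by contrast, is more self-contained in that it only uses the idempotence of $I$ rather than the full quasi-hereditary package, and it gives the extension-closure statement as an explicit intermediate result, which is of independent interest. Both are fine; yours is arguably the more natural argument once the quasi-hereditary structure is established.
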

\begin{proof}
We consider the dual morphism
\begin{equation}
\label{ref-6.41-142}
\End_{\Uscr}(M)\r \End_{\Uscr_1}(M)
\end{equation}
We have already shown it is surjective in Corollary \ref{ref-6.2.6-125} which implies in particular that \eqref{ref-6.40-141} is injective. 

We now claim that $\mod(\End_{\Uscr_1}(M))$ is closed under extensions. Consider an exact sequence 
\[
0\r A\r B\r C\r 0
\]
with $A,C\in \mod(\End_{\Uscr_1}(M))$ and $B\in \mod(\End_{\Uscr}(M))$. Using the fact that $A,B,C$ are discrete and the nature of the topology on $\End_{\Uscr_1}(M)$ (see \eqref{ref-2.16-40}) there exists
a finite saturated subset $\Lambda_2\subset \Lambda$ with $\Lambda_3:=\Lambda_2\cap\Lambda_1$ and associated full subcategories $\Uscr_i\subset\Uscr$
such that $B\in \mod(\End_{\Uscr_2}(M))$ and $A,C\in \mod(\End_{\Uscr_3}(M))$.

By repeatedly applying \eqref{ref-6.23-121} and Lemma \ref{ref-6.2.10-130} we obtain $B\in \mod(\End_{\Uscr_3}(M))$ and thus $B\in\mod(\End_{\Uscr_1}(M))$. So $\mod(\End_{\Uscr_1}(M))$ is indeed closed under extensions.
According to Theorem \ref{ref-6.3.1-133} the simple objects in  $\mod(\End_{\Uscr_1}(M))$ are given by $L(\lambda)$ for $\lambda\in \Lambda_1$. It is easy to see that this
implies $(\coend_{\Uscr}(M))(\Lambda_1)=\coend_{\Uscr_1}(M)$.
\end{proof}
\begin{corollarys} 
\label{ref-6.3.5-143} We have
\[
\uend(A)\cong \uaut(A)(\Lambda^+)
\]
\end{corollarys}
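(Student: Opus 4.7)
The plan is to combine Theorem \ref{ref-6.3.4-140} (applied to $\Lambda_1=\Lambda^+$) with the identification $\uend(A)\cong \coend_{\Uscr_{\u}^+}(M^+)$ from Theorem \ref{ref-5.1-92}. What must be checked is that (a) $\Lambda^+$ is a saturated subset of $(\Lambda,\le)$, and (b) the full subcategory of $\Uscr$ with object set $\Lambda^+$ coincides with $\Uscr_{\u}^+$, so that $\coend$ computed over the former equals $\coend_{\Uscr_{\u}^+}(M^+)$.

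For (a), recall that $<$ is the minimal left- and right-invariant ordering generated by $r_{a+b}<r_ar_b$ (for $a+b\le d$) and $1<r_ar_d^{-1}r_{d-a}$. If $\lambda\in\Lambda^+$ and $\mu$ is obtained from $\lambda$ by one generating step, then in the first case $\lambda=ur_ar_bv$ with $u,v\in\Lambda^+$ (since $\lambda$ has no $r_d^{-1}$), so $\mu=ur_{a+b}v\in\Lambda^+$; the second case is vacuous since its larger element contains $r_d^{-1}$ and therefore cannot equal $\lambda$. A straightforward induction on the length of a chain $\mu<\cdots<\lambda$ then shows $\Lambda^+$ is saturated.

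For (b), let $f:u\to v$ be a morphism in $\Uscr$ with $u,v\in\Lambda^+$. By the triangular decomposition of Proposition \ref{ref-3.3.1-80} we may uniquely write $f=f_{\u}\circ f_{\d}$. Every generator of $\Uscr_{\d}$ is of the form $w\,\theta_{a,b}\,w'$ with source containing the factor $r_ar_d^{-1}r_b$, hence any nontrivial morphism in $\Uscr_{\d}$ requires an occurrence of $r_d^{-1}$ in its source. Since $u\in\Lambda^+$ has no such factor, $f_{\d}=\id_u$, and so $f=f_{\u}\in\Uscr_{\u}$. Moreover every generator $w\phi_{a,b}w'$ of $\Uscr_{\u}$ preserves the property of belonging to $\Lambda^+$ (it only splits some $r_{a+b}$ into $r_ar_b$, introducing no $r_d^{-1}$). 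Hence any such $f$ factors through objects of $\Lambda^+$ and therefore lies in $\Uscr_{\u}^+$, proving the identification.

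With (a) and (b) in hand, Theorem \ref{ref-6.3.4-140} applied to $\Lambda_1=\Lambda^+$ (with associated full subcategory $\Uscr_{\u}^+$, and noting that the restriction of $M$ to $\Uscr_{\u}^+$ is $M^+$) gives
\[
\uaut(A)(\Lambda^+)\cong \coend_{\Uscr_{\u}^+}(M^+),
\]
and the right-hand side is $\uend(A)$ by Theorem \ref{ref-5.1-92}. The argument involves no real obstacle; the only subtle point is the identification (b), which reduces to the asymmetry between the $\phi$- and $\theta$-generators via Proposition \ref{ref-3.3.1-80}.
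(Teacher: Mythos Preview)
Your approach is exactly the one the paper intends: apply Theorem~\ref{ref-6.3.4-140} with $\Lambda_1=\Lambda^+$ and combine with Theorem~\ref{ref-5.1-92}. Checking saturation (a) is fine, and the first half of (b) --- that any $\Uscr$-morphism between objects of $\Lambda^+$ lies in $\Uscr_{\u}$ --- is correct via Proposition~\ref{ref-3.3.1-80}.

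There is, however, a genuine slip in the second half of (b). The claim that ``every generator $w\phi_{a,b}w'$ of $\Uscr_{\u}$ preserves the property of belonging to $\Lambda^+$'' is false when $a+b=d$: take $d=2$, $w=r_2^{-1}$, $w'=1$; then $w\phi_{1,1}w'$ has source $r_2^{-1}r_2=1\in\Lambda^+$ but target $r_2^{-1}r_1r_1\notin\Lambda^+$. So arguing forward from the source does not work. The fix is to argue backward from the target (this is essentially the content of Remark~\ref{ref-3.1.10-76}): for a degree-one generator $w\phi_{a,b}w'$ with target $wr_ar_bw'\in\Lambda^+$, note that $r_a,r_b\in\{r_1,\dots,r_{d-1}\}$ admit no cancellation with $w$ or $w'$, so the normal form of the target forces $w,w'\in\Lambda^+$; hence the source $wr_{a+b}w'\in\Lambda^+$ and the generator lies in $\Uscr_{\u}^+$. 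Inducting on degree from $v\in\Lambda^+$ then shows that every factorisation of $f_{\u}$ into degree-one morphisms stays in $\Lambda^+$, so $f\in\Uscr_{\u}^+$ as you wanted.
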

\begin{proof} This follows immediately from Theorem \ref{ref-6.3.4-140} with
$\Lambda_1=\Lambda^+$.
\end{proof}

\section{Schur-Weyl duality and derived Tannaka-Krein}
\label{ref-7-144}
\subsection{Schur-Weyl duality}
Consider a pair $(\Cscr,F)$, where $\Cscr$ is a category and $F:\Cscr \to \Vect$ is a functor, just like in Section~\ref{ref-2.9-39}. We want to argue that intermediate properties of the associated evaluation functor $\eval_F:\Cscr \to \comod(\coend(F))$  can be very useful for studying the representations of $\coend(F)$. In particular we propose the following definition.

	\begin{definitions}
	\label{ref-7.1.1-145}
	The pair $(\Cscr,F)$ satisfies Schur-Weyl duality if $\eval_F$ is full. It satisfies strong Schur-Weyl duality if $\eval_F$ is fully faithful.
	\end{definitions}
	
	\begin{examples}
	The definition is inspired by the classical case: let $V$ denote a finite dimensional vector space over a field of characteristic zero, and $kS_n$ the group ring of the symmetric group. Schur-Weyl 
	duality says that the natural algebra morphism
		\begin{equation}
		\label{ref-7.1-146}
		kS_n \to \End_{\GL(V)}(V^{\otimes n})
		\end{equation}
	is surjective for any $n$. This phrasing highlights the importance of the double commutant theorem. 
	
	Take $\Cscr$ to be the free $k$-linear symmetric strict monoidal category on 1 object $v$ and let $F$ be defined by $F(v)=V$. Then one easily checks that surjectivity of~\eqref{ref-7.1-146} is equivalent to 
	surjectivity of
		\begin{equation}
		\Hom_{\Cscr}(v^{\ot n},v^{\ot n}) \to \Hom_{\coend(F)}(V^{\ot n},V^{\ot n}),
		\end{equation}  
	so Definition~\ref{ref-7.1.1-145} is satisfied.
	\end{examples}
Taking into account that $\uaut(A)=\coend_{\Uscr}(M)$ by Theorem \ref{ref-5.1-92}, the following is
a version of Schur-Weyl duality in our setting.
	\begin{theorems}[Schur-Weyl duality]
\label{ref-7.1.3-147}
	\label{ref-7.1.3-148} 
	The linearised functor 
		\begin{equation}
		\eval_M:k\Uscr \r \comod(\uaut(A))
		\end{equation} 
	is fully faithful, i.e. $(k\Uscr,M)$ satisfies strong Schur-Weyl duality.
	\end{theorems}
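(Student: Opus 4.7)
My strategy combines a dimension count with an injectivity argument: once the two sides are shown to have the same finite dimension, full faithfulness reduces to either injectivity or surjectivity of the canonical map.

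\emph{Dimension count.} I would first compute $\dim_k \Hom_{\uaut(A)}(M(\lambda), M(\mu))$ using the quasi-hereditary structure on $\uaut(A)$ from Corollary \ref{ref-6.3.2-134}. By Corollary \ref{ref-6.1.6-111}(2), both $M(\lambda)$ and $M(\mu)$ lie in $\Fscr(\Delta) \cap \Fscr(\nabla)$, and the Ext-orthogonality $\Ext^i(\Delta(\nu), \nabla(\nu')) = k\,\delta_{i,0}\delta_{\nu,\nu'}$ from Proposition \ref{ref-2.10.8-53}(2) yields
$$\dim_k \Hom_{\uaut(A)}(M(\lambda), M(\mu)) = \sum_\nu [M(\lambda):\Delta(\nu)] \cdot [M(\mu):\nabla(\nu)].$$
Plugging in the formulas $[M(\lambda):\Delta(\nu)] = |\Uscr_\d(\lambda,\nu)|$ and $[M(\mu):\nabla(\nu)] = |\Uscr_\u(\nu,\mu)|$ from Corollary \ref{ref-6.3.3-135}, this becomes $\sum_\nu |\Uscr_\d(\lambda, \nu)| \cdot |\Uscr_\u(\nu, \mu)| = |\Uscr(\lambda,\mu)| = \dim_k k\Uscr(\lambda, \mu)$, where the middle equality is the unique triangular decomposition of Proposition \ref{ref-3.3.1-80}. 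Incidentally, finiteness of the sum follows from the remark that each object of $\Uscr_\u$ has only finitely many incoming morphisms (and dually for $\Uscr_\d$).

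\emph{Injectivity.} Given $f \in k\Uscr(\lambda, \mu)$ with $M(f) = 0$, I decompose $f$ as $\sum_\nu f^{(\nu)}$ with each $f^{(\nu)} \in k\Uscr_\u(\nu,\mu) \otimes_k k\Uscr_\d(\lambda,\nu)$ using Proposition \ref{ref-3.3.1-80}, so that $M(f^{(\nu)})$ factors as a surjection $M(\lambda) \twoheadrightarrow M(\nu)$ followed by an injection $M(\nu) \hookrightarrow M(\mu)$. The distributivity results of Proposition \ref{ref-4.4-89} applied to $\{\im M(g)\}_{g\in\Uscr_\u} \subset M(\mu)$ and $\{\ker M(h)\}_{h\in\Uscr_\d} \subset M(\lambda)$ allow one (via Proposition \ref{ref-2.3.1-13}(1)) to pick compatible bases isolating the $\nu$-strata, reducing the problem to the faithfulness of $M$ restricted separately to $k\Uscr_\u$ and $k\Uscr_\d$, which in turn follows from their confluent Koszul presentations (Lemma \ref{ref-2.5.1-23}).

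\emph{Main obstacle.} The crux is the decoupling of $\nu$-contributions: although each subspace family is individually distributive, arguing that $\sum_\nu M(f^{(\nu)}) = 0$ forces each $M(f^{(\nu)}) = 0$ is delicate because the surjections onto different $M(\nu)$ and the injections out of them interact in the middle. If this direct decoupling proves too fragile, a robust fallback is to reduce to the finite-dimensional setting via Theorem \ref{ref-6.3.4-140} and work instead with $\coend_{\Uscr'}(M) = \uaut(A)(\Lambda')$ for a suitable finite saturated $\Lambda' \subset \Lambda$ containing $\lambda,\mu$ and their downset. One can then prove surjectivity of $k\Uscr'(\lambda,\mu) \to \Hom_{\coend_{\Uscr'}(M)}(M(\lambda), M(\mu))$ by induction on $|\Lambda'|$, using the fundamental exact sequence of Theorem \ref{ref-6.2.3-120}: the ideal $\prod_{\nu \in \Lambda' - \Lambda''} \Hom_k(\nabla(\nu), \Delta(\nu))$ appearing there encodes precisely the ``new'' $\Hom$-contributions arising from an antichain of maximal elements, which can be realised by explicit $k\Uscr'$-morphisms $\lambda \to \nu \to \mu$ factoring through those $\nu$.
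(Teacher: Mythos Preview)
Your dimension count is exactly the paper's argument, line for line.

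For the second half, the paper proves \emph{surjectivity} rather than injectivity, by a cleaner route than either of your two plans. From the resolution \eqref{ref-6.39-139} of $\nabla(\nu)$ by the $M(-)$'s (and the dual coresolution of $\Delta(\mu)$), together with the vanishing $\Ext^i(\Delta(v),M(u))=0$ for $i>0$, one obtains an exact sequence
\[
\bigoplus_{\substack{f_i:\nu_i\to\nu\\ \in\,\Uscr_{\u,1}}}\Hom_{\uaut(A)}(M(\mu),M(\nu_i))\ \oplus\ \bigoplus_{\substack{g_j:\mu\to\mu_j\\ \in\,\Uscr_{\d,1}}}\Hom_{\uaut(A)}(M(\mu_j),M(\nu))\ \longrightarrow\ \Hom_{\uaut(A)}(M(\mu),M(\nu))\ \longrightarrow\ k\,\delta_{\mu,\nu}\ \longrightarrow\ 0.
\]
Thus every $p:M(\mu)\to M(\nu)$ is a scalar multiple of the identity (if $\mu=\nu$) plus a sum of maps that factor through some $M(\nu_i)$ or $M(\mu_j)$ with strictly smaller label; surjectivity of $\eval_M$ on Hom-sets follows by induction on the partial order. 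Combined with your dimension count, this gives full faithfulness.

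Your injectivity route has two weak points. The decoupling problem you flag is genuine: the images $\im M(g)$ and the kernels $\ker M(h)$ are each distributive in $M(\mu)$ and $M(\lambda)$ separately, but your argument needs a compatible simultaneous splitting across all intermediate $M(\nu)$'s, which Proposition~\ref{ref-4.4-89} does not provide. More seriously, your endgame ``faithfulness of $M$ on $k\Uscr_\u$ follows from the confluent Koszul presentation'' is a non sequitur: Lemma~\ref{ref-2.5.1-23} concerns the internal structure of $k\Uscr_\u$, not the behaviour of the functor $M$ on it, and indeed $\Uscr_\u$ is not a poset (see the example after its definition), so faithfulness of $M|_{\Uscr_\u}$ is not automatic. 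Your fallback via Theorem~\ref{ref-6.2.3-120} is closer in spirit to the paper's induction but is less sharp: it does not immediately identify the cokernel as the one-dimensional space $k\,\delta_{\mu,\nu}$, which is what makes the paper's induction go through without further bookkeeping.
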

	\begin{proof}
	Using the fact that $M(\lambda)\in \Fscr(\Delta)\cap \Fscr(\nabla)$ we find
		\begin{align*}
		\dim \Hom_{\uaut(A)}(M(\mu),M(\nu))&=\sum_{\lambda \in \Lambda} [M(\mu):\Delta(\lambda)][M(\nu):\nabla(\lambda)]\\
		&=\sum_{\lambda \in \Lambda} |\Uscr_{\d}(\mu,\lambda)|\cdot |\Uscr_{\u}(\lambda,\nu)|\\
		&=|\Uscr(\mu,\nu)|
		\end{align*}
where the first equality follows from Corollary \ref{ref-6.3.3-135} combined with the orthogonality of $\Delta$ and $\nabla$, the second
equality follows from \eqref{ref-6.36-137}\eqref{ref-6.37-138} and
	 the last equality follows from Proposition~\ref{ref-3.3.1-80}. 

Hence it is sufficient to prove that $k\Uscr(\mu,\nu)\r \Hom_{\uaut(A)}(M(\mu),M(\nu))$ is surjective.
	Using the resolutions \eqref{ref-6.39-139} for $\Delta(\nu)$ and the dual coresolutions for $\nabla(\mu)$ we get an exact sequence
		\begin{multline*}
		\bigoplus_{\substack{ f_i:\nu_i\r \nu \\ \in \Uscr_{\u,1}}}\Hom_{\uaut(A)}(M(\mu),M(\nu_i))
		\oplus
		\bigoplus_{\substack{g_i:\mu\r \mu_i \\ \in \Uscr_{\d,1}}}\Hom_{\uaut(A)}(M(\mu_i),M(\nu))\\
		\r \Hom_{\uaut(A)}(M(\mu),M(\nu))\r k\delta_{\mu,\nu}\r 0
		\end{multline*}
	It follows that every $p:M(\mu)\r M(\nu)$ can be written as a sum of maps which factor through some $M(\nu_i)$ or $M(\mu_i)$ 
	together with a scalar multiple of the identity morphism (if $\nu=\mu$). The surjectivity claim now follows by induction.
	\end{proof}
	
	\subsection{Derived Tannaka-Krein}
\label{ref-7.2-149}

	Let $\perf(\Uscr^\circ)$
be the triangulated category of finite complexes of finitely generated
projective right $\Uscr$-modules.
	One may make $\perf(\Uscr^\circ)$ into a triangulated monoidal category by putting
	\[
	k\Uscr(-,u)\otimes k\Uscr(-,v)=k\Uscr(-,uv)
	\]
	and extending to complexes. The functor $M$ extends to an exact monoidal functor
	\begin{equation}
	\label{ref-7.4-150}
	M:\perf(\Uscr^\circ)\r D^b({\uaut(A)}):\Uscr(-,u)\mapsto M(u)
	\end{equation}
	where we have written $D^b(\uaut(A))$ for $D^b(\comod(\uaut(A)))$. At the risk of
	confusing various tensor products the functor $M$ can be written
	intrinsically as $-\Lotimes_{\Uscr}M$. 
	
	\begin{corollarys}[Derived Tannaka-Krein]
\label{ref-7.2.1-151}
	The functor~\eqref{ref-7.4-150} is an equivalence of monoidal triangulated categories. 
	\end{corollarys}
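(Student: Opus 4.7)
The plan is to verify that $M: \perf(\Uscr^\circ) \to D^b(\comod(\uaut(A)))$ is fully faithful, essentially surjective, and compatible with the monoidal structures. The monoidal compatibility will be essentially by construction, since on generators $k\Uscr(-,u) \otimes k\Uscr(-,v) = k\Uscr(-,uv)$ corresponds exactly to $M(u) \otimes M(v) = M(uv)$, and both tensor products are biexact. Full faithfulness would reduce, via the standard dévissage argument using that the objects $k\Uscr(-,u)$ generate $\perf(\Uscr^\circ)$ as a thick subcategory, to showing the natural map
\[
\Hom_{\perf(\Uscr^\circ)}(k\Uscr(-,u),\, k\Uscr(-,v)[n]) \to \Ext^n_{\uaut(A)}(M(u), M(v))
\]
is an isomorphism for all $u,v \in \Lambda$ and all $n \in \mathbb{Z}$. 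The source vanishes for $n \neq 0$ and equals $k\Uscr(u,v)$ for $n = 0$; on the target, Theorem \ref{ref-7.1.3-147} handles $n = 0$, while for $n > 0$ I would combine Corollary \ref{ref-6.1.6-111}(2) (which places $M(u)$ in $\Fscr(\Delta)$ and $M(v)$ in $\Fscr(\nabla)$) with the Ext-orthogonality $\Ext^i(\Delta(\lambda), \nabla(\mu)) = k^{\delta_{i,0}\delta_{\lambda\mu}}$ of Proposition \ref{ref-2.10.8-53}(2), followed by a routine filtration argument.

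For essential surjectivity, the essential image is a thick triangulated subcategory, so it suffices to produce every simple $L(\lambda)$ inside it; since $\uaut(A)$ is quasi-hereditary (Corollary \ref{ref-6.3.2-134}), each object of $D^b(\comod(\uaut(A)))$ has finitely many cohomologies of finite length, so the simples are enough. First I would place $\nabla(\lambda)$ in the essential image. The Koszul-type resolution \eqref{ref-3.12-75} of $S_\lambda$ in $\mod((\Uscr_{\u}\,\tilde{})^\circ)$ is bounded (each iterated fibre product strictly decreases the word length, so the resolution has length at most $l(\lambda) - 1$ in the positive case) and, by Proposition \ref{ref-4.3-87}, it remains exact under the functor $\overline{M}$, yielding a bounded resolution of $\nabla(\lambda)$ by finite direct sums of $M(u_j)$'s. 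Using the full faithfulness proved above, this resolution lifts uniquely to a bounded complex in $\perf(\Uscr^\circ)$ whose image realises $\nabla(\lambda)$; the general $\lambda$ with $r_d^{\pm 1}$-factors reduces to the positive case via Corollary \ref{ref-6.1.3-107} combined with the fact that $M(r_d) = R_d$ is already invertible in $\perf(\Uscr^\circ)$. The dual argument carried out with $\Uscr_{\d}$ places $\Delta(\lambda)$ in the essential image. Finally, induction on $(\Lambda, \le)$ places each $L(\lambda)$: from the short exact sequences $0 \to K \to \Delta(\lambda) \to L(\lambda) \to 0$ and $0 \to L(\lambda) \to \nabla(\lambda) \to C \to 0$, the kernel $K$ and cokernel $C$ have composition factors only among $\{L(\mu) : \mu < \lambda\}$ by Proposition \ref{ref-2.10.8-53}, so are in the image by induction, whence $L(\lambda)$ is too.

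The main technical hurdle I anticipate is the combinatorial verification that the Koszul resolution is sharply bounded and that lifting it to $\perf(\Uscr^\circ)$ via full faithfulness genuinely yields a bounded complex representing $\nabla(\lambda)$ (as opposed to some unbounded object), together with bookkeeping the $r_d^{-1}$-factors cleanly. Once these are in place, the rest of the argument is largely formal, driven by the quasi-hereditary structure and Schur-Weyl duality.
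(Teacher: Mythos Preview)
Your argument is correct. Full faithfulness is handled exactly as in the paper, only you spell out the Ext-vanishing for $n>0$ explicitly (the paper compresses this into the phrase ``induction over distinguished triangles''); your use of Corollary~\ref{ref-6.1.6-111}(2) together with Proposition~\ref{ref-2.10.8-53}(2) is the right justification.

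For essential surjectivity you take a more constructive route than the paper. The paper observes that the $\nabla(u)$ generate $D^b(\uaut(A))$ by quasi-heredity, and then uses the unitriangularity $[M(u):\nabla(u)]=1$, $[M(u):\nabla(v)]\neq 0 \Rightarrow v\le u$ from \eqref{ref-6.36-137} to conclude by induction that the $M(u)$ generate as well. You instead realise each $\nabla(\lambda)$ explicitly as the image of a bounded complex of $k\Uscr(-,v)$'s, using the resolution \eqref{ref-6.39-139}. This is perfectly valid: the set of morphisms $u'\to u$ in $\Uscr_{\u,1}$ is finite (so the resolution is bounded), and the differentials lift by full faithfulness. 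Note however that once you have all $\nabla(\lambda)$ in the essential image you are done, since they already generate; the further steps producing $\Delta(\lambda)$ and then $L(\lambda)$ by induction are unnecessary. Likewise the reduction to the positive case via Corollary~\ref{ref-6.1.3-107} is not needed, since the finiteness of incoming degree-one maps holds for all of $\Lambda$. The paper's unitriangularity argument is shorter and sidesteps the bookkeeping you flag as the main technical hurdle, but your approach has the mild advantage of producing explicit preimages of the costandard objects in $\perf(\Uscr^\circ)$.
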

	\begin{proof} It follows from Theorem~\ref{ref-7.1.3-148} and induction over distinguished triangles
	that \eqref{ref-7.4-150} is fully faithful. By the theory of quasi-hereditary coalgebras we know
	that $\nabla(u)_u$ generates $D^b({\uaut(A)})$. From \eqref{ref-6.36-137} we deduce $[M(u):\nabla(u)]=1$ and all other
	composition factors $\nabla(v)$ satisfy $v<u$. By induction it then follows that $D^b({\uaut(A)})$ is generated
	by $M(u)_u$ as well. From this it follows that \eqref{ref-7.4-150} is essentially surjective. 
	\end{proof}
\begin{remarks} 
\label{ref-7.2.2-152}
Since by Corollary \ref{ref-6.1.6-111} the $M(\lambda)$ are partial tilting modules 
(see \S\ref{ref-2.10-45}) for $\comod(\uaut(A))$, Theorem \ref{ref-7.1.3-147} can also be expressed as saying that $k\Uscr$ is Morita equivalent to a suitably defined categorical version of the Ringel dual (see \ref{ref-2.10-45}) of $\uaut(A)$.
Corollary \ref{ref-7.2.1-151} is then expressing the fact that Ringel duality
yields a derived equivalence.
\end{remarks}
The exact sequences \eqref{ref-6.39-139} and their dual versions allow us to obtain the following result.
\begin{theorems}
\label{ref-7.2.3-153}
 $\comod({\uaut(A)})$ as a monoidal category depends only on the global dimension of $A$.
\end{theorems}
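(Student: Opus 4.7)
The whole plan is to leverage the derived Tannaka-Krein equivalence of Corollary \ref{ref-7.2.1-151}, which already gives
\[
\perf(\Uscr^\circ)\simeq D^b(\comod(\uaut(A)))
\]
as monoidal triangulated categories. Since the category $\Uscr$ is constructed in Section \ref{ref-3-62} by pure combinatorics from the integer $d=\gldim A$, the left-hand side depends only on $d$. All that remains is to recover the abelian category $\comod(\uaut(A))$ from this derived datum, and for that it suffices to identify the standard $t$-structure on $D^b(\comod(\uaut(A)))$ with a $t$-structure on $\perf(\Uscr^\circ)$ whose definition only refers to the combinatorics of $\Uscr$.

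First I would transport the standard $t$-structure across the equivalence and observe that, since each $M(\lambda)$ is a partial tilting object in $\comod(\uaut(A))$ (Corollary \ref{ref-6.1.6-111} and Proposition \ref{ref-2.10.8-53}(2)), we have $\Hom(k\Uscr(-,\lambda),k\Uscr(-,\mu)[i])=\Ext^i_{\uaut(A)}(M(\lambda),M(\mu))=0$ for $i\neq 0$. Thus every representable sits in the heart of the transported $t$-structure, so the heart contains at least the additive hull of $\{k\Uscr(-,\lambda)\}_\lambda$ in degree $0$. This heart is not the naive one on $\perf(\Uscr^\circ)$ (under which $k\Uscr(-,\lambda)$ is projective rather than tilting); it is the \emph{tilted} heart.

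Next I would construct intrinsic costandard/standard complexes $\nabla^\bullet(\lambda),\Delta^\bullet(\lambda)\in\perf(\Uscr^\circ)$ by Koszul-resolving $S_\lambda$ in $\mod((\Uscr_\u\tilde{}\,)^\circ)$ via \eqref{ref-3.12-75} (and the dual construction for $\Uscr_\d$), then assembling the pieces by using the Reedy/triangular factorisation $\Uscr=\Uscr_\u\cdot\Uscr_\d$ of Proposition \ref{ref-3.3.1-80}. These complexes are built from the morphism sets of $\Uscr$ alone. By Proposition \ref{ref-4.3-87} and the exact sequence \eqref{ref-6.39-139}, the functor $M$ sends them to $\nabla(\lambda)$ and $\Delta(\lambda)$ placed in degree $0$. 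I would then characterise the tilted $t$-structure intrinsically as the unique $t$-structure on $\perf(\Uscr^\circ)$ whose heart contains all $\nabla^\bullet(\lambda)$ and $\Delta^\bullet(\lambda)$, or equivalently by the Hom-vanishing conditions
\[
X\in D^{\leq 0}\iff \Hom(X,\nabla^\bullet(\lambda)[i])=0\text{ for all }\lambda\text{ and all }i<0,
\]
and dually using $\Delta^\bullet(\lambda)$, together with the orthogonality $\Hom(\Delta^\bullet(\lambda),\nabla^\bullet(\mu)[i])=k\,\delta_{\lambda\mu}\delta_{i,0}$, which again only references $\Uscr$. The heart of this $t$-structure is then equivalent to $\comod(\uaut(A))$ as an abelian category, and because the equivalence of Corollary \ref{ref-7.2.1-151} is monoidal, this equivalence transports the monoidal structure as well.

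The main obstacle is verifying that the $t$-structure just described on $\perf(\Uscr^\circ)$ is well-defined (i.e.\ that the aisle and co-aisle are genuinely orthogonal and generate) using only properties of $\Uscr$, without secretly invoking $\uaut(A)$; once transported to the derived side this amounts to checking that $\{\nabla(\lambda)\}_\lambda$ generates, that the quasi-hereditary orthogonality of Proposition \ref{ref-2.10.8-53} is captured combinatorially, and that the truncation functors can be constructed intrinsically. Once the $t$-structure is shown to be independent of $A$, the heart $\comod(\uaut(A))$ inherits its monoidal structure from $\perf(\Uscr^\circ)$, completing the proof.
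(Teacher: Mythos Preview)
Your proposal is correct and follows essentially the same approach as the paper: transfer the standard $t$-structure across the equivalence of Corollary~\ref{ref-7.2.1-151}, construct $\Delta_{\Uscr}(\lambda)$ and $\nabla_{\Uscr}(\lambda)$ intrinsically in $\perf(\Uscr^\circ)$ via the resolution~\eqref{ref-6.39-139} and its dual, and characterise the aisle and co-aisle by the Hom-vanishing conditions against these objects. One simplification: the ``main obstacle'' you identify is not actually an obstacle, since you never need to verify abstractly that the intrinsic description defines a $t$-structure---for each fixed $A$ you already know it is one (it is the transferred standard $t$-structure), and the point is only that the resulting description on $\perf(\Uscr^\circ)$ is literally the same formula for every $A$ of the given global dimension.
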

\begin{proof} Let $M$ be as in \eqref{ref-7.4-150} and put
\begin{align*}
\Delta_{\Uscr}(u)&=M^{-1}(\Delta(u))\in \perf(\Uscr^\circ)\\
\nabla_{\Uscr}(u)&=M^{-1}(\nabla(u))\in \perf(\Uscr^\circ)
\end{align*}
From the explicit resolution \eqref{ref-6.39-139} and its dual version we see that
$\Delta_{\Uscr}(u)$ and $\nabla_{\Uscr}(u)$ only depend on $\Uscr$.

Let $\Dscr=D^b({\uaut(A)})$ and let $(\Dscr_{\le 0},\Dscr_{\ge 0})$ be its natural $t$-structure. Since
$\Delta(u)\twoheadrightarrow
L(u)\hookrightarrow \nabla(u)$ it is easy to see that we have
\begin{align*}
\Dscr_{\ge 0}&=\{P\in \Dscr\mid \forall u\in \Lambda:\forall i<0:\Ext^i_{\uaut(A)}(\Delta(u),P)=0\}\\
\Dscr_{\le 0}&=\{P\in \Dscr\mid \forall u\in \Lambda:\forall i<0:\Ext^i_{\uaut(A)}(P,\nabla(u))=0\}
\end{align*}
Using the derived equivalence $M$ we may transfer this $t$-structure to a $t$-structure on $\perf(\Uscr^\circ)$. We find a monoidal equivalence
\begin{multline*}
\comod({\uaut(A)})\cong \{P\in \perf(\Uscr^\circ)\mid \forall u\in \Lambda:\forall i<0: \\ \Ext^i_{\uaut(A)}(\Delta_{\Uscr}(u),P)=0,\Ext^i_{\uaut(A)}(P,\nabla_{\Uscr}(u))=0\}\\
\end{multline*}
and the righthandside only depends on $\Uscr$.
\end{proof}
\subsection{Comparison with the results in \cite{kriegk-vandenbergh} for $\uend(A)$}
\label{ref-7.3-154}
By Theorem \ref{ref-6.3.1-133} and Theorem \ref{ref-5.1-92} we know that $\uend(A)$ is quasi-hereditary with $\Delta(\lambda)=M(\lambda)$. Hence by 
\eqref{ref-6.5-104} $L(\lambda)=\nabla(\lambda)$. Using Proposition \ref{ref-2.10.8-53}(2)
one obtains that 
$M(\lambda)=\Delta(\lambda)$ is projective. Hence the $(\Delta(\lambda) )_{\lambda\in \Lambda^+}$ form a system of projective generators for 
$\comod(\uend(A))$ and we conclude by \eqref{ref-7.1.3-148} that there is an equivalence of categories
\[
M^+:\mod(\Uscr_{\u}^{+,\circ})\r \comod(\uend(A)):\Uscr^+_{\u}(-,u)\mapsto M^+(u)
\]

\begin{remarks} Although in this section we have kept our blanket assumption that $A$ is Koszul Artin-Schelter
  regular of global dimension $d$ the results generalise
  with little modification to the case that $A$ is just Koszul if we
  replace $\Uscr_{\u}^+$ by $\Uscr_{\u,\infty}^+$ (see
  \S\ref{ref-2.6-24}). The only additional ingredient is that Proposition
  \ref{ref-6.1.7-114} is no longer true. So one has to replace $\Lambda_{\infty}^+$ by a smaller poset corresponding to the non-zero $\nabla(\lambda)=L(\lambda)$.
This poset depends only on the Hilbert series of $A$ since the $\nabla(\lambda)=0$  is equivalent to $\dim \nabla(\lambda)=0$ and the dimension of $\nabla(\lambda)$ can be computed using the standard resolution \eqref{ref-6.39-139}. In this way one recovers all the results in \cite{kriegk-vandenbergh}.
\end{remarks}

\appendix
\label{ref-7.3-155}
\section{Explicit presentations}
\label{ref-A-156}
In this appendix we discuss a presentation of $\uaut(A)$ in the case
that $A=k[x_1,\ldots,x_d]$.

If $F:\Cscr\r \Vect$ is a monoidal functor then a presentation of
$C=\coend_{\Cscr}(F)$ can be obtained directly from a presentation of
the underlying category $\Cscr$. Roughly speaking generators of $C$ as
an algebra correspond to generating objects in $\Cscr$ and generating
relations correspond to generating morphisms in $\Cscr$. In other
words by writing an arbitrary bialgebra $C$ in the form $\coend_{\Cscr}(F)$ we reduce its conceptual
complexity by one degree!

We now discuss this more precisely.  Adding isomorphisms if necessary we may assume that $\Ob(\Cscr)$ is a free monoid generated by a set $(X_k)_k$. Choose
bases~$e_{ki}$ for each $F(X_k)$. Then the corresponding ``matrix coefficients'' $(z_{kij})_{kij}\in C$ are defined via the coaction
\begin{equation}
\label{ref-A.1-157}
\delta(e_{ki})=\sum_j z_{kij} \otimes e_{kj}
\end{equation}
The matrix coefficients generate $C$ as an algebra, and they determine the coalgebra structure
via the following formulas
\begin{align*}
\Delta(z_{kij})&=\sum_p z_{kip}\otimes z_{pj}\\
\epsilon(z_{kij})&=\delta_{ij}
\end{align*}
Writing out the compatibility of \eqref{ref-A.1-157} with a morphism
in $\Cscr$ yields relations among the $(z_{kij})_{kij}$ and to obtain
a presentation of $C$ is its sufficient to do this for a set of generating morphisms.
Note that the relations among the morphisms in $\Cscr$ play no
role.
\begin{remark} Note however that the relations in $\Cscr$ play a vital role if one wants to use $\Cscr$ to 
derive properties of $\comod(C)$.
\end{remark}
When $A$ is as usual a Koszul Artin-Schelter regular algebra of global dimension $d$ then using a similar argument as in the proof of Theorem \ref{ref-5.1-92} one has the following economic presentation
\begin{equation}
\label{ref-A.2-158}
		\uaut(A)=\coend\bigg(\langle r_1, \ldots, r_{d-1},r_d^{\pm 1}  |
r_i \to r_1^i, r_a r_d^{-1} r_{d-a} \to 1 \rangle, M \bigg),
\end{equation}

To apply this with $A=k[x_1,\ldots,x_d]$ we need the concept of a Manin matrix \cite{chevrov-falqui-rubtsov}.
A $2\times 2$-Manin matrix is a matrix of the form
\begin{equation}
\label{ref-A.3-159}
\begin{pmatrix}
a&b\\
c&d
\end{pmatrix}
\end{equation}
satisfying the relations $ac=ca$, $db=db$, $ad-cb=da-bc$. A $d\times d$-Manin
is a matrix $X=(x_{ij})_{i,j=1,\ldots,d}$ such that every $2\times 2$-submatrix is a 
$2\times2$-Manin matrix. The Manin determinant of a Manin matrix is given by
\[
|X|:=\sum_{\sigma\in S_d} (-1)^{\sigma} x_{1\sigma(1)}\cdots x_{d\sigma(d)}
\]
The Manin determinant behaves correctly when exchanging rows or columns
and moreover it is zero if there are duplicate rows or columns.

We write
$X_{i_1,\ldots,i_p,j_1,\ldots,j_p}$
for the matrix $(x_{i_p,j_q})_{pq}$.
Using \eqref{ref-A.2-158} we find 
\begin{lemma}
 $\uaut(A)$ is generated by the entries of a generic Manin
matrix~$X$ with $|X|$ formally inverted satisfying the following additional relations for $b=1,\ldots,d-1$.
\begin{equation}
\label{ref-A.4-160}
\epsilon(\sigma)=
\sum_{\tau} (-1)^{\tau}
|X_{\sigma(1)\cdots \sigma(b),\tau(1)\cdots \tau(b)}|\delta^{-1}
|X_{\sigma(b+1)\cdots \sigma(d),\tau(b+1)\cdots \tau(d)}|
\end{equation}
where
\begin{enumerate}
\item $\delta=|X|$.
\item $\sigma$ is a map $\{1,\ldots,d\}\r \{1,\ldots,d\}$ which is ascending
on $\{1,\ldots,b\}$ and $\{b+1,\dots,d\}$.
\item If $\sigma$ is a permutation then $\epsilon(\sigma)$ is its sign. Otherwise $\epsilon(\sigma)=0$.
\item $\tau$ is a permutation of $\{1,\ldots,d\}$ which is ascending
on $\{1,\ldots,b\}$, $\{b+1,\ldots,d\}$.
\end{enumerate}
\end{lemma}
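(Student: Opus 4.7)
The plan is to specialise the general Tannaka-Krein recipe described just before the lemma (and the economic presentation \eqref{ref-A.2-158}) to the case $A=k[x_1,\ldots,x_d]$, converting each generating morphism into an explicit relation among matrix coefficients.

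First I would fix bases and name the matrix coefficients. For $A=k[x_1,\ldots,x_d]$, $F(r_1)=V=\operatorname{span}(x_1,\ldots,x_d)$ with the obvious basis, while more generally $F(r_a)=R_a=\Lambda^a V$ has the natural basis $\{x_I : I\subset\{1,\ldots,d\},\ |I|=a\}$ indexed by $a$-subsets. Following \eqref{ref-A.1-157}, this yields matrix coefficients $x_{ij}\in\uaut(A)$ for $r_1$, a single element $\delta$ for $r_d$ together with its formal inverse $\delta^{-1}$ for $r_d^{-1}$ (these are mutually inverse in $\uaut(A)$ since $r_d\cdot r_d^{-1}=1$ in $\Uscr$), and auxiliary matrix coefficients $y^a_{IJ}$ for each $r_a$ with $2\le a\le d-1$. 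At this stage the $x_{ij}$, the $y^a_{IJ}$, and $\delta^{\pm 1}$ generate $\uaut(A)$ as an algebra, and all relations come from the two families of generating morphisms in \eqref{ref-A.2-158}.

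Step (i): translating $r_a\to r_1^a$. In $\Vect$ this is the antisymmetrization $\Lambda^a V\hookrightarrow V^{\otimes a}$ sending $x_I\mapsto\sum_{\sigma\in S_a}(-1)^\sigma x_{i_{\sigma(1)}}\otimes\cdots\otimes x_{i_{\sigma(a)}}$. Writing out the compatibility of the coaction (as explained just before the lemma) and equating coefficients of each ordered tensor word, I would extract two kinds of identities: the identities with at least one repeated column-index force the Manin matrix relations on $X=(x_{ij})$, while the identities with distinct indices identify $y^a_{IJ}$ with the Manin minor $|X_{IJ}|$. The $a=2$ case is the classical computation that recovers Manin's original relations, and the general case is an inductive bookkeeping exercise that I would verify in detail for the reader. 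Having done so, the $y^a_{IJ}$ can be eliminated as independent generators, leaving only $(x_{ij})$ and $\delta^{\pm 1}$.

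Step (ii): translating $r_a r_d^{-1} r_{d-a}\to 1$. In $\Vect$ this is the map $x_I\otimes e^{-1}\otimes x_J\mapsto \varepsilon(I,I^c)\,\delta_{J,I^c}$, where $e$ is the basis of $\Lambda^d V$ and $\varepsilon(I,I^c)$ is the sign of the shuffle. Compatibility with the coaction (using that the coaction on $k$ is trivial) produces the identity
\[
\sum_{I'\subset\{1,\ldots,d\},\,|I'|=a}\varepsilon(I',(I')^c)\,y^a_{II'}\,\delta^{-1}\,y^{d-a}_{J,(I')^c}\;=\;\varepsilon(I,I^c)\,\delta_{J,I^c}
\]
for every pair $(I,J)$ of subsets of size $a$ and $d-a$ respectively. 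After substituting $y^a_{II'}=|X_{II'}|$ and $y^{d-a}_{J,(I')^c}=|X_{J,(I')^c}|$ from Step (i), this is exactly \eqref{ref-A.4-160} once one translates the pair $(I,J)$ into the map $\sigma$ and $I'$ into the shuffle $\tau$ (a shuffle which fails to be a permutation corresponds to $J\neq I^c$, i.e.\ the vanishing case $\epsilon(\sigma)=0$).

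The main obstacle is the combinatorial bookkeeping in Step (i): disentangling the tensor-index relations coming from the antisymmetrizer in order to simultaneously identify the Manin conditions and the Manin-minor expressions for $y^a_{IJ}$. Once this is done, Step (ii) is a direct substitution and the universal property of the presentation \eqref{ref-A.2-158} yields the claimed presentation of $\uaut(A)$.
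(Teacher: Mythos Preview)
Your proposal is correct and follows exactly the route the paper indicates: the paper simply writes ``Using \eqref{ref-A.2-158} we find'' before stating the lemma, leaving the translation of the generating morphisms $r_a\to r_1^a$ and $r_ar_d^{-1}r_{d-a}\to 1$ into explicit relations to the reader, and your Steps (i) and (ii) carry out precisely that translation. Your identification of $R_a$ with $\Lambda^a V$, the extraction of the Manin relations and Manin minors from the antisymmetriser in Step~(i), and the dictionary $(I,J)\leftrightarrow\sigma$, $I'\leftrightarrow\tau$ in Step~(ii) are all on the mark.
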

It is not clear to us how many of the equations in \eqref{ref-A.4-160} are independent. 
\begin{example} If $d=2$ then $\aut(k[x,y])$ is generated by $a,b,c,d,\delta^{-1}$ with the following presentation
\begin{equation}
\label{ref-A.5-161}
\begin{aligned}
ac-ca & =  0 =  bd-db \\ 
ad-cb & =  \delta  =  da-bc\\
\delta \delta^{-1} & =  1  =  \delta^{-1}\delta,\\ 
a\delta^{-1}d-b\delta^{-1}c & =  1  =  d\delta^{-1}a-c\delta^{-1}b,\\
b\delta^{-1}a-a\delta^{-1}b & =  0  =  c\delta^{-1}d-d\delta^{-1}c
\end{aligned}
\end{equation}
\end{example}

\providecommand{\bysame}{\leavevmode\hbox to3em{\hrulefill}\thinspace}
\providecommand{\MR}{\relax\ifhmode\unskip\space\fi MR }
\providecommand{\MRhref}[2]{%
  \href{http://www.ams.org/mathscinet-getitem?mr=#1}{#2}
}
\providecommand{\href}[2]{#2}

\end{document}